\numberwithin{equation}{section}
\def\cprime{$'$} 
\newcommand{\bb}{{b}}
\newcommand{\cdeux}{{C_2}}
\newcommand\D{{\mathcal D}}
\newcommand{\ddeux}{{\delta_2}}
\newcommand{\ds}{\displaystyle}
\newcommand\dv{\mathop{\rm div}}
\newcommand\dun{\delta_1}
\newcommand{\eb}{{\epsilon_0}}
\newcommand{\edeux}{{\epsilon_2}}
\newcommand{\ee}{{\bar\epsilon}}
\newcommand{\equatre}{{\epsilon_{20}}}
\newcommand{\etrois}{{\epsilon_3}}
\newcommand{\eun}{{\epsilon_1}}
\newcommand{\evingt}{{\epsilon_3}}
\newcommand\fe{\varphi_{d, \epsilon}}
\renewcommand{\H}{{\mathcal H}}
\newcommand{\hz}{{\bar c_0}}
\newcommand{\hu}{{\bar c_1}}
\newcommand{\hd}{{\bar c_2}}
\renewcommand\iint{\displaystyle\int_{|y|<1}}
\newcommand{\m}[1]{\mathbbm{#1}}
\newcommand{\pnu}{\partial_\nu}
\newcommand{\q}[1]{\mathcal{#1}}
\newcommand\RR{{\mathcal R}}
\newcommand\spa{\mathop{\rm span}}
\renewcommand\SS{{\mathcal S}}
\newcommand{\vc}[2]{\begin{pmatrix} #1\\#2\end{pmatrix}}
\theoremstyle{plain}
\newtheorem{thm}{Theorem}
\newtheorem*{thm*}{Theorem}
\newtheorem{prop}{Proposition}[section]
\newtheorem{cl}[prop]{Claim}
\newtheorem{cor}[prop]{Corollary}
\newtheorem{lem}[prop]{Lemma}
\newtheorem{defi}[prop]{Definition}
\theoremstyle{definition}
\theoremstyle{remark}
\newtheorem*{nb}{Remark}
\def\blfootnote{\xdef\@thefnmark{}\@footnotetext}
\title{\bf Dynamics near explicit stationary solutions in similarity variables for solutions of a semilinear wave equation in higher dimensions
\footnote{Both authors are supported by the ERC Advanced Grant no. 291214, BLOWDISOL. H.Z. is also supported by ANR Project ANA\'E ref. ANR-13-BS01-0010-03.}
}
\author{Frank Merle\\
{\it \small Universit\'e de Cergy Pontoise and IHES}\\
Hatem Zaag\\
%{\it \small CNRS UMR 7539 LAGA Universit\'e Paris 13}\\
{\it \small Université Paris 13, Sorbonne Paris Cité,}\\
{\it \small LAGA, CNRS (UMR 7539), F-93430, Villetaneuse, France.}
}
\begin{document}

\maketitle

\begin{abstract}
This is the first of two papers devoted to the study of the properties of the blow-up
surface for the $N$ dimensional semilinear wave equation with subconformal power nonlinearity. In a series of papers, we have clarified the situation in one space dimension. Our goal here is to extend some of the properties to higher dimension. In dimension one, an essential tool was to study the dynamics of the solution in similarity variables, near the set of non-zero equilibria, which are obtained by a Lorentz transform of the space-independent solution. As a matter of fact, the main part of this paper is to study similar objects in higher dimensions. More precisely, near that set of equilibria, we show that solutions are either non-global, or go to zero, or converge to some explicit equilibrium.
%
%
%We consider the semilinear wave equation in $N$ dimensions, with subconformal power nonlinearity. 
%
%Studying the dynamics of the solution in similarity variables near $S_0\equiv \{\pm\kappa(d,y)\;|\;|d|<1\}$, an explicit subset of stationary solutions which depend only on one coordinate (along $d$), we show that global solutions either go to zero, or converge to some $\pm \kappa(d^*,y)$. 
We also show that the first case cannot occur in the characteristic case, and that only the third possibility occurs in the non-characteristic case, thanks to the non-degeneracy of the blow-up limit, another new result in our paper. 
As a by-product of our techniques, we obtain the stability of the zero solution.
\end{abstract}

\medskip

 {\bf MSC 2010 Classification}:  35L05, 35L71, 35L67, 35B44, 35B40

% 35B40    	Asymptotic behavior of solutions
% 35B44    	Blow-up
% 35L05    	Wave equation
% 35L67    	Shocks and singularities
% 35L71    	Semilinear second-order hyperbolic equations
%
\medskip
{\bf Keywords}: Semilinear wave equation, blow-up, higher-dimensional case.
% non-characteristic points. 

%\parindent=0pt
%\newpage

%%%%%%%%%%%%%%%%%%%%%%%%%%%%%%%%%%%%%%%%%%%%%%%%%%%%%%%%%%%%%%%%%%%%%%%%%%%%%
%%%%%%%%%%%%%%%%%%%%%%%%%%%%%%%%%%%%%%%%%%%%%%%%%%%%%%%%%%%%%%%%%%%%%%%%%%%%%
\section{Introduction}
%%%%%%%%%%%%%%%%%%%%%%%%%%%%%%%%%%%%%%%%%%%%%%%%%%%%%%%%%%%%%%%%%%%%%%%%%%%%%
%%%%%%%%%%%%%%%%%%%%%%%%%%%%%%%%%%%%%%%%%%%%%%%%%%%%%%%%%%%%%%%%%%%%%%%%%%%%%
We consider the following semilinear wave equation:
\begin{equation}\label{equ}
\left\{
\begin{array}{l}
\partial_t^2 u =\Delta u+|u|^{p-1}u,\\
u(0)=u_0\mbox{ and }\partial_t u(0)=u_1,
\end{array}
\right.
\end{equation}
where $u(t):x\in{\m R}^N \rightarrow u(x,t)\in{\m R}$, $u_0\in \rm H^1_{\rm loc,u}$
and $u_1\in \rm L^2_{\rm loc,u}$.\\
 The space $\rm L^2_{\rm loc,u}$ is the set of all $v$ in $\rm L^2_{\rm loc}$ such that 
\[
\|v\|_{\rm L^2_{\rm loc,u}}\equiv\ds\sup_{a\in {\m R}^N}\left(\int_{|x-a|<1}|v(x)|^2dx\right)^{1/2}<+\infty,
\]
 and the space ${\rm H}^1_{\rm loc,u}= \{ v\;|\;v, \nabla v \in {\rm L}^2_{\rm loc,u}\}$.\\
 We assume in addition that 
\begin{equation}\label{condp}
1<p\mbox{ and }p< 1+\frac 4{N-1}\mbox{ if } N\ge 2.
\end{equation}
We solve equation \eqref{equ} locally in time in ${\rm H}^1_{\rm loc,u}\times {\rm L}^2_{\rm loc,u}({\m R}^N)$ (see Ginibre, Soffer and Velo \cite{GSVjfa92}, Lindblad and Sogge \cite{LSjfa95}).
Existence of blow-up solutions follows from ODE techniques or the energy-based blow-up criterion of Levine \cite{Ltams74}. More blow-up results can be found in
Caffarelli and Friedman \cite{CFtams86}, \cite{CFarma85},
Alinhac \cite{Apndeta95}, \cite{Afle02}, Kichenassamy and Littman \cite{KL1cpde93}, \cite{KL2cpde93}, Killip and Vi\c san \cite{KV11}.

\bigskip

The one-dimensional case has been understood completely, in a series of papers by the authors \cite{MZjfa07}, \cite{MZcmp08}, \cite{MZajm11} and \cite{MZisol10} and in C\^ote and Zaag \cite{CZmulti11} (see also the note \cite{MZxedp10}).\\
In higher dimensions, only the blow-up rate is known (see \cite{MZajm03}, \cite{MZma05} and \cite{MZimrn05}; see also the extension by Hamza and Zaag in \cite{HZnonl12} and \cite{HZjhde12} to the Klein-Gordon equation and other damped lower-order perturbations of equation \eqref{equ}). In fact, the program of dimension $1$ breaks down because there is no classification of selfsimilar solutions of equation \eqref{equ} in the energy space, except in the radial case outside the origin (see \cite{MZbsm11}). Considering the behavior of radial solutions at the origin, Donninger and Sch{\"o}rkhuber were able to prove the stability of the space-independent solution (i.e. the solution of the associated ODE $u"=u^p$) with respect to perturbations in initial data, in the Sobolev subcritical range \cite{DSdpde12} and also in the supercritical range in \cite{DStams14}. Some numerical results are available in a series of papers by Bizo\'n and co-authors (see \cite{Bjnmp01}, \cite{BCTnonl04}, \cite{BZnonl09}). Let us also mention that in the superconformal and Sobolev subcritical range, an upper bound on the blow-up rate is available  was proved by Killip, Stoval and Vi\c san in \cite{KSVma14}, then refined by Hamza and Zaag in \cite{HZdcds13}. \\
%In higher dimensions, we could only obtain the blow-up rate (see \cite{MZajm03}, \cite{MZma05} and \cite{MZimrn05}). We couldn't go further in the analysis, because we had no classification of selfsimilar blow-up solutions of equation \eqref{equ} in the energy space, unlike the one-dimensional case. However, when considering radial solutions outside the origin, we could see the equation as an exponentially small perturbation of the one-dimensional case, and perform the same analysis as in this latter case (see \cite{MZbsm11}). On the contrary, considering the behavior of radial solutions at the origin, Donninger and Sch{\"o}rkhuber \cite{DSdpde12} were able to prove the stability of the space-independent solution (i.e. the solution of the associated ODE $u"=u^p$) with respect to perturbations in the initial data.\\  
In a companion paper \cite{MZods}, we address the blow-up behavior for general solutions to equation \eqref{equ}, and prove the openness of the set of non-characteristic points 
%(defined below in \eqref{nonchar}) 
with some explicit profile 
%(defined below in \eqref{defkd}). 
(that set is defined below in \eqref{defr0}).
A key step is to understand the dynamics of the similarity variables' version \eqref{eqw}, and to prove a trapping result near the set \eqref{defkd} made of Lorentz transform \eqref{defW} of the constant solution.
% explicit solutions which depend only on one coordinate \eqref{defkd}. 
This is the aim of this paper.

\bigskip

Let us first introduce some notations before stating our result.

\medskip

In this paper, we consider $u(x,t)$ a blow-up solution to equation \eqref{equ}.
Following Alinhac \cite{Apndeta95}, we define 
%If $u$ is a blow-up solution, we define (see for example Alinhac \cite{Apndeta95}) 
a 1-Lipschitz surface $\Gamma=\{(x,T(x))\}$ 
%where $x\in{\m R}^N$ 
such that the maximal influence domain $D$ of $u$ (or the domain of definition of $u$) is written as 
\begin{equation*}%\label{defdu}
D=\{(x,t)\;|\; t< T(x)\}.
\end{equation*}
The surface $\Gamma$ is called the blow-up graph of $u$. 
A point $x_0\in{\m R}^N$ is a non-characteristic point if there are 
\begin{equation*}%\label{nonchar}
\delta_0\in(0,1)\mbox{ and }t_0<T(x_0)\mbox{ such that }
u\;\;\mbox{is defined on }{\mathcal C}_{x_0, T(x_0), \delta_0}\cap \{t\ge t_0\}
\end{equation*}
where 
\begin{equation*}%\label{defcone}
{\mathcal C}_{\bar x, \bar t, \bar \delta}=\{(x,t)\;|\; t< \bar t-\bar \delta|x-\bar x|\}.
\end{equation*}
 We denote by $\RR\subset {\m R}^N$ (resp. $\SS\subset {\m R}^N$) the set of non-characteristic (resp. characteristic) points.\\
A convenient tool to study blow-up for equation \eqref{equ} is to introduce the following similarity variables, for any $(x_0,T_0)$ such that $0< T_0\le T(x_0)$:
\begin{equation}\label{defw}
w_{x_0,T_0}(y,s)=(T_0-t)^{\frac 2{p-1}}u(x,t),\;\;y=\frac{x-x_0}{T_0-t},\;\;
s=-\log(T_0-t).
\end{equation}
If $T_0=T(x_0)$, we write $w_{x_0}$ for short. 
%This change of variables transforms the backward light cone with vertex $(x_0, T_0)$ into the infinite cylinder $(y,s)\in B(0,1)\times [-\log T_0, +\infty)$.
 The function $w_{x_0,T_0}$ (we write $w$ for simplicity) satisfies the 
following equation for all $|y|<1$ and $s\ge -\log T_0$:
\begin{equation}\label{eqw}
%\begin{array}{l}
\partial^2_sw-\q L w+\frac{2(p+1)}{(p-1)^2}w-|w|^{p-1}w=
-\frac{p+3}{p-1}\partial_sw-2y.\nabla \partial_s w
%\end{array}
\end{equation} 
where
\begin{equation}\label{defro}
\q L w =\frac 1\rho \dv \left(\rho \nabla w-\rho(y.\nabla w)y\right),\;\; \rho(y)=(1-|y|^2)^\alpha\mbox{ and }\alpha=\frac 2{p-1}-\frac{N-1}2>0.
\end{equation}
% This equation can be put in the following first order form:
% \begin{equation}\label{eqw1}
% \partial_s \begin{pmatrix} w_1 \\ w_2 \end{pmatrix} =
% \begin{pmatrix} w_2 \\ \displaystyle \q L w_1 - \frac{2(p+1)}{(p-1)^2} w_1 + |w_1|^{p-1} w_1 - \frac{p-3}{p-1} w_2 - 2y \cdot\nabla w_2 \end{pmatrix}.
% \end{equation}
Equation \eqref{eqw} will be studied in the energy space
% From \cite{MZajm03} and \cite{MZma05}, we know $E(w(s), \partial_sw(s))$ is a Lyapunov function for equation \eqref{eqw}, where 
% \begin{equation}\label{defenergy}
% E(r) = \iint \left(\frac 12 r_2^2+ \frac 12 |\nabla r_1|^2 -\frac 12 (y.\nabla r_1)^2+\frac{(p+1)}{(p-1)^2}r_1^2 - \frac 1{p+1} |r_1|^{p+1}\right)\rho dy,
% \end{equation}
% is defined for all $r=(r_1,r_2) \in \q H$ with
\begin{equation}\label{defnh}
\H = \left\{(r_1,r_2) 
\;\;|\;\;\|(r_1,r_2)\|_{\H}^2\equiv \iint \left(r_1^2+|\nabla r_1|^2-(y\cdot \nabla r_1)^2)+r_2^2\right)\rho dy<+\infty\right\}.
\end{equation}
% For simplicity in the notation, we will often write $E(w(s))$ instead of $E(w(s), \partial_sw(s))$.\\ 
% We also introduce the projection of the space $\H$ \eqref{defnh} on the first coordinate:
% \begin{equation}\label{defnh0}
% \H_0 = \left\{r\in H^1_{loc}\;\;|\;\;\|r\|_{\H_0}^2\equiv \iint \left(r^2+ |\nabla r|^2-(y\cdot \nabla r)^2)\right)\rho dy<+\infty\right\}.
% \end{equation}
For any $|d|<1$, we introduce the following stationary solutions of \eqref{eqw} (or solitons) defined by 
\begin{equation}\label{defkd}
\kappa(d,y)=\kappa_0 \frac{(1-|d|^2)^{\frac 1{p-1}}}{(1+d.y)^{\frac 2{p-1}}}\mbox{ where }\kappa_0 = \left(\frac{2(p+1)}{(p-1)^2}\right)^{\frac 1{p-1}} \mbox{ and }|y|<1.
\end{equation}
% Let also introduce $\kappa^*(d,\nu,y) = (\kappa_1^*, \kappa_2^*)(d,\nu,y)$ with
% \begin{equation}\label{defk*}
% \kappa_1^*(d,\nu, y) =
% \ds\kappa_0\frac{(1-|d|^2)^{\frac 1{p-1}}}{(1+d.y+\nu)^{\frac 2{p-1}}},\;
% \kappa_2^*(d,\nu, y) = \nu \pnu \kappa_1^*(d,\nu, y) =
% \ds-\frac{2\kappa_0\nu}{p-1}\frac{(1-|d|^2)^{\frac 1{p-1}}}{(1+d.y+\nu)^{\frac {p+1}{p-1}}}.
% \end{equation}
% Note that when $\nu=0$, we have $\kappa^*(d,0,y)=(\kappa(d,y),0)$ and that for all $\mu \in \m R$, 
% $\kappa^*_1(d,\mu e^s,y)$ is a particular solution to equation \eqref{eqw}.
% % $\kappa^*(d,\mu e^s,y)$ is a particular solution to equation \eqref{eqw1}.

\bigskip

A key step in 
the one-dimensional case was to determine the set of all stationary solutions of equation \eqref{eqw} in the energy space.
As a matter of fact, we proved in Proposition 1 page 46 in \cite{MZjfa07} that such a set is given by
\begin{equation*}%\label{defso}
S_0\equiv\{0;\pm \kappa(d,y)\:|\;|d|<1\}.
\end{equation*}
Unfortunately, in higher dimensions, we have been unable to determine that set, though
one can trivially see that $0$ and $\pm \kappa(d,y)$ for all $|d| < 1$ are still stationary solution of \eqref{eqw}. Note that when $N = 3$ and $p = 3$ or $p\ge 7$ is an odd integer, Bizon, Breitenlohner,
Maison and Wasserman proved in \cite{BMWnonl07} and \cite{BBMWnonl10} the existence of a countable family of radially-symmetric stationary solutions.
%Unfortunately, in higher dimensions, we have been unable to determine that set, though one can trivially see that $0$ and $\pm \kappa(d,\cdot)$ for all $|d|<1$ are still stationary solutions of \eqref{eqw}. 
In particular, from this result, it is clearly untrue to say that the blow-up profile near a non-characteristic point is always given by $\pm \kappa(d,\cdot)$ as in one space dimension.
%In particular,  we couldn't even say whether the blow-up profile near a non-characteristic point is given by $\pm \kappa(d,\cdot)$ as in one space dimension or not.
The goal of our two papers, the present one and \cite{MZods}, is to prove the following surprising result about the stability of the profile $\pm \kappa(d, \cdot)$, and thus, as a corollary, we will obtain the differentiability of the blow-up surface $x\mapsto T(x)$. More precisely, 
%In our paper \cite{MZods}, 
we will show in \cite{MZods} that
\begin{equation}\label{defr0}
\RR_0 = \{x_0\in \RR\;|\;\exists\; |d(x_0)|<1
\mbox{ s.t. }w_{x_0}(s)\to \pm \kappa(d(x_0))\mbox{ as }s\to \infty
\}
\end{equation}
is an open set. 
A key step is to understand the dynamics of equation \eqref{eqw}, particularly, to prove the following result for initial data near the set $\{\pm\kappa(d,y)\;|\;|d|<1\}$:
% %%%%%%%%%%%%%%%%%%%%%%%%%%%%%%%%%%%%%%%%%%%%%%
% %%%%%%%%%%%%%%%%%%%%%%%%%%%%%%%%%%%%%%%%%%%%%%
% \begin{thm}[Behavior of solutions of equation \eqref{eqw} near $\pm \kappa(d,y)$ and trapping in the non-characteristic case]\label{proptrap} 
% There exist $\eb>0$, $K_0>0$ and $\mu_0>0$ such that 
% for any $\bar x\in \m R^N$, $\bar s\ge -\log T(\bar x)$, $\bar\omega=\pm 1$ and $|\bar d|<1$, if
% \begin{equation*}%\label{trap}
% \ee\equiv\left \|\vc{w_{\bar x}(\bar s)}{\partial_s w_{\bar x}(\bar s)}-\bar\omega\vc{\kappa(\bar d)}{0}\right\|_{\H}\le \eb,
% \end{equation*}
% then:\\
% - either 
% \begin{equation}\label{impossible}
% \|(w_{\bar x}(s), \partial_s w_{\bar x}(s))\|_{\q H}\to 0\mbox{ as }s\to \infty\mbox{ exponentially fast};
% \end{equation}
% - or there exists $|d_\infty(\bar x)|<1$ such that
% \begin{equation}\label{conv0}
% \forall s\ge \bar s,\;\;\|(w_{\bar x}(s), \partial_s w_{\bar x}(s))- (\kappa(d_\infty),0)\|_{\q H}\le K_0\ee e^{-\mu_0(s-\bar s)}
% \end{equation}
% and 
% \begin{equation}\label{proxi0}
% |\arg\tanh |\bar d|-\arg\tanh |d_\infty||+\frac{|\bar d-d_\infty|}{\sqrt{1-|\bar d|}}\le K_0\ee.
% \end{equation}
% Moreover, if $\bar x\in \RR$, then only \eqref{conv0} holds.  
% \end{thm}
% %%%%%%%%%%%%%%%%%%%%%%%%%%%%%%%%%%%%%%%%%%%%%%
% %%%%%%%%%%%%%%%%%%%%%%%%%%%%%%%%%%%%%%%%%%%%%%
%%%%%%%%%%%%%%%%%%%%%%%%%%%%%%%%%%%%%%%%%%%%%%
%%%%%%%%%%%%%%%%%%%%%%%%%%%%%%%%%%%%%%%%%%%%%%
\begin{thm}[Behavior of solutions of equation \eqref{eqw} near $\pm \kappa(d,y)$ and trapping in the non-characteristic case]\label{proptrap} 
There exist $\eb>0$, $K_0>0$ and $\mu_0>0$ such that 
for any solution $w$ of equation \eqref{eqw} continuous in time with values in $\q H$,
%for any $\bar x\in \m R^N$, $\bar s\ge -\log T(\bar x)$, 
$\bar\omega=\pm 1$ and $|\bar d|<1$, if
\begin{equation*}%\label{trap}
\ee\equiv\left \|\vc{w(0)}{\partial_s w(0)}-\bar\omega\vc{\kappa(\bar d)}{0}\right\|_{\H}\le \eb,
\end{equation*}
then:\\
- either $w(s)$ is not defined for all $(y,s) \in B(0,1)\times [0,+\infty)$,\\
- or 
\begin{equation}\label{impossible}
\|(w(s), \partial_s w(s))\|_{\q H}\to 0\mbox{ as }s\to \infty\mbox{ exponentially fast};
\end{equation}
- or there exists $|d_\infty(\bar x)|<1$ such that
\begin{equation}\label{conv0}
\forall s\ge 0,\;\;\|(w(s), \partial_s w(s))- (\kappa(d_\infty),0)\|_{\q H}\le K_0\ee e^{-\mu_0 s}
\end{equation}
and 
\begin{equation}\label{proxi0}
|\arg\tanh |\bar d|-\arg\tanh |d_\infty||+\frac{|\bar d-d_\infty|}{\sqrt{1-|\bar d|}}\le K_0\ee.
\end{equation}
Moreover, if $w(y,s)=w_{\bar x}(y,+\sigma)$ for some $\bar x\in \m R$, $\sigma \ge- \log T(\bar x)$ where $w_{\bar x}$ is defined in \eqref{defw} from a blow-up solution $u$, then the first possibility does not occur. If in addition, $\bar x\in \RR$, then only \eqref{conv0} holds.  
\end{thm}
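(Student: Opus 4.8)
The plan is to linearize equation \eqref{eqw} around the stationary solution $\bar\omega\kappa(\bar d)$ and carry out a modulation-theory argument. First I would reduce to the case $\bar\omega=1$ (the equation is odd) and, using the Lorentz group acting on the set $\{\kappa(d)\}$, reduce to studying the dynamics near $\kappa(d)$ for $d$ in a fixed small neighborhood of the origin; the boost that trivializes $\bar d$ transforms the cone $B(0,1)\times[0,\infty)$ into another backward light-cone, which is why the hypothesis and conclusion are naturally phrased on such cones. Then I would write $w(y,s)=\kappa(d(s),y)+q(y,s)$, where the $N$-dimensional modulation parameter $d(s)$ is chosen so that $q$ is orthogonal, in the $\q H$ inner product weighted by $\rho$, to the $N$-dimensional null space of the linearized operator $L_d$ spanned by $\partial_{d_i}\kappa(d)$ (the generators of the soliton manifold). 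This is the standard decomposition: the soliton family is a normally hyperbolic (in fact, after accounting for the zero modes, a center–stable) manifold, and one expects the transverse dynamics to decompose into a finite-dimensional "bad" part and an exponentially contracting "good" part.

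The heart of the argument is the spectral analysis of the linearized operator. I would compute the quadratic form associated to $L_d+\frac{2(p+1)}{(p-1)^2}-p\kappa(d)^{p-1}$ on $\q H$, exploiting that $\q L$ is self-adjoint with respect to $\rho\,dy$ and that $\kappa(d)$ is, up to the boost, the explicit profile $\kappa_0(1-|y|^2)^{1/(p-1)}$ (for $d=0$), for which the eigenvalue problem can be treated via Gegenbauer/Jacobi-type special functions as in our one-dimensional work \cite{MZjfa07} and its companion analyses. The outcome I expect: besides the $N$-dimensional kernel (the $d$-directions), there is a one-dimensional unstable/neutral direction coming from the scaling-type symmetry (the analogue of the $e^{s}$ or $e^{s/2}$ mode seen in dimension one — related to the time-translation / $T_0$ parameter), and the rest of the spectrum lies strictly in the stable half-plane, giving exponential decay with rate $\mu_0$ on the orthogonal complement. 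Combined with the damping terms $-\frac{p+3}{p-1}\partial_s w - 2y\cdot\nabla\partial_s w$, one derives a Lyapunov-type energy functional controlling $\|(q,\partial_s q)\|_{\q H}$ modulo the finitely many exceptional modes, together with modulation ODEs for $d(s)$ and for the coordinate $a(s)$ along the exceptional direction.

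From here the trichotomy is a soft dynamical-systems argument: either the exceptional coordinate $a(s)$ eventually dominates, in which case the solution leaves every neighborhood of the soliton set and one shows (by an ODE comparison, as the nonlinearity then forces $\|w\|$ to grow like the space-independent blow-up solution) that $w$ ceases to exist on the full cone; or $a(s)$ and the energy both decay, and one shows $d(s)\to d_\infty$ with the quantitative bounds \eqref{conv0} and \eqref{proxi0} coming from integrating the modulation equations (the $\sqrt{1-|\bar d|}$ weight reflects the degeneration of the $\q H$-metric on the soliton manifold as $|d|\to 1$); or the contraction pushes $w$ all the way to $0$, which is itself a stationary solution, giving \eqref{impossible} — this uses the stability of $0$, established separately by the same energy method (the linearization at $0$ has purely stable spectrum once the damping is accounted for). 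For the last sentence, when $w=w_{\bar x}$ comes from an actual blow-up solution $u$, the first alternative is excluded because $w_{\bar x}$ is by construction global in $s$ on a cone (that is what the similarity-variables setup in \eqref{defw} provides at a point of $D$); and if moreover $\bar x\in\RR$, the second alternative \eqref{impossible} is excluded by the non-degeneracy of the blow-up limit advertised in the abstract — the $\q H$-norm of $w_{\bar x}(s)$ is bounded below away from $0$ at a non-characteristic point (otherwise $u$ would extend past $T(\bar x)$ near $\bar x$, contradicting maximality) — so only \eqref{conv0} survives.

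The main obstacle I anticipate is precisely the spectral gap: in dimension $\ge 2$ there is no classification of stationary solutions in the energy space (as the paper itself stresses, and as the Bizoń–Breitenlohner–Maison–Wasserman examples show the set is genuinely larger), so one cannot hope for a variational characterization of $\kappa(d)$; the coercivity of the linearized energy modulo the explicit finite-dimensional obstruction directions must be proved by hand, controlling the weighted Hardy-type terms $-(y\cdot\nabla r_1)^2$ near $|y|=1$ and identifying exactly the non-negative part of the spectrum. Getting the decay rate $\mu_0$ uniform in $\bar d$ (after the boost reduction) and tracking the degeneration of constants as $|\bar d|\to 1$ to obtain the clean inequality \eqref{proxi0} is the delicate quantitative part.
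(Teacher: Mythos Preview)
Your overall architecture is sound and matches the paper's in spirit: linearize around the soliton, identify one unstable eigenvalue ($\lambda=1$, from time-translation), an $N$-dimensional kernel ($\lambda=0$, from the $d$-directions), coercivity on the complement, then a trichotomy. The last paragraph (ruling out the first two alternatives when $w=w_{\bar x}$ with $\bar x\in\RR$) is exactly what the paper does. But two implementation choices differ materially from the paper.

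\textbf{First}, the paper does \emph{not} use a Lorentz boost to reduce to $d$ near $0$. It works with arbitrary $|\bar d|<1$ and tracks the $d$-dependence of every constant by hand (this is where most of the new technical content over the one-dimensional case lies; see the remarks after Lemma~\ref{l10} and the appendix proving the modulation Lemma~\ref{lemmod}). Your reduction is tempting but problematic: the similarity-variable Lorentz transform $\q T_d$ shifts the $s$-interval by $\frac12\log\frac{1+|d|}{1-|d|}$ (Lemma~\ref{proplorentzw}), and although it is bounded on $\q H_0$ uniformly in $d$, recovering the sharp weighted estimate \eqref{proxi0} after undoing the boost would require exactly the $d$-dependent bounds you hoped to avoid. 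The paper's uniform-in-$d$ approach is longer but yields \eqref{proxi0} directly.

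\textbf{Second}, and more importantly, the paper does not leave the unstable direction as a free coordinate $a(s)$. Instead it enlarges the modulation family to $\kappa^*(d,\nu)$ \eqref{defk*}, a genuine $(N{+}1)$-parameter family of \emph{exact} solutions (since $\kappa^*_1(d,\mu e^s,y)$ solves \eqref{eqw} for every $\mu$), and imposes $N{+}1$ orthogonality conditions \eqref{kill} killing \emph{both} the kernel and the unstable direction. The remainder $q$ then lives entirely in the stable subspace $\q H^{d^*}_-$, where the coercivity Proposition~\ref{lemdefpos} gives an honest exponentially decaying Lyapunov functional \eqref{third}. The trichotomy is then driven not by a scalar $a(s)$ but by the ratio $\nu(s)/(1-|d(s)|)$: either it stays small forever (Case~1, giving \eqref{conv0}), or it escapes to $A$ (giving \eqref{impossible} via Proposition~\ref{prop0}), or it escapes to $-1+1/A$ (forcing $E(w)<0$, hence finite-time breakdown by the blow-up criterion in Lemma~\ref{lemE}(ii)). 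Your approach of tracking $a(s)$ and running an ODE comparison should also work, but the paper's choice exploits that $\kappa^*$ is an exact solution family, so the equation for $q$ has no linear coupling to $\nu$ and the three regimes are read off cleanly from a single scalar parameter.
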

%%%%%%%%%%%%%%%%%%%%%%%%%%%%%%%%%%%%%%%%%%%%%%
%%%%%%%%%%%%%%%%%%%%%%%%%%%%%%%%%%%%%%%%%%%%%%
% \begin{nb}
% {\bf Il faut l'\'ecrire pour un $w$ et non pas un $w_x$, et rajouter une troisième possibilité : $w$ explose en temps fini}.\\
%  Mon probl\`eme : on n'a pas fait le probl\`eme de Cauchy dans $\q H$, donc il faudrait rajouter comme hypoth\`ese : la solution est continue \`a valeurs dans $\q H$ sur son domaine de d\'efinition.\\
%  Pour la 3eme possibilit\'e, il faudra v\'erifier que l'\'energie est n\'egative pour dire que la solution n'est pas globale.\\
%  Pour montrer qu'elle tend vers l'infini, il faut prendre $A$ de plus en plus large, et c'est tout ! Voil\`a.\\
% On prendra $\bar s=0$ bien s\^ur. On fera un corollaire pour le cas $w=w_{\bar x}$ puis le cas $\bar x\in \RR$.
% \end{nb}
\begin{nb} In the first possibility, we suspect that $w$ blows up in finite time.
 % We crucially use this result in a companion paper \cite{MZods} where we prove that 
% the set $\RR_0$ defined in \eqref{defr0} is open.
% %the set of non-characteristic points $\bar x$ such that $w_{\bar x}(s)$ converges to $\pm \kappa(d(\bar x))$ for some $|d(\bar x)|$ is open. 
% \end{nb}
% \begin{nb}
% %{\bf Tu proposes ici de mettre }: 
This result shows that $\pm \kappa(d,y)$ are threshold solutions. 
% They proved in dimension one in .for $w_x$, only 2 possibility 
%  zero convergence will be remove in the following paper. {\bf Je ne comprends pas trop. La zero convergence est enlev\'ee dans ce papier, pas le suivant}.\\
When $w=w_{\bar x}$, whether $\bar x$ is characteristic or not, we showed in \cite{MZajm11} that case \eqref{impossible} never occurs in one space dimension. We proved the same in \cite{MZbsm11} in higher dimensions under radial symmetry when $\bar x\neq 0$. 
% We would like to recall that when $\bar x$ is characteristic, the exclusion of case \eqref{impossible} was a difficult point in our argument. 
%  Furthermore, thanks to the trapping result and more elaborate machinery, we could prove in one space dimension that either $(w_{\bar x}, \partial_s w_{\bar x})$ converges to some $\kappa(d)$ (if $\bar x$ is non-characteristic), or it decomposes into a decoupled sum of (at least 2) moving solitons $\pm \kappa(d_i(s))$ (if $\bar x$ is characteristic; see \cite{MZajm11} for the statements).  
\end{nb}
% \begin{nb}
% If we consider in this theorem $w$ a solution to equation \eqref{eqw} which is not of the form $w_{\bar x}$, then, a third case arises from our techniques: $w$ may blow up in finite time. 
% \end{nb}
\begin{nb}
%From this result and classical elliptic estimates, we can prove that all finite-energy stationary solutions of equation \eqref{eqw} in a neighborhood of some $\pm\kappa(\bar d,y)$ are of the form $\pm \kappa(d,y)$, for some $d$ close to $\bar d$.
From the classical elliptic theory, we can show that the set $\{\pm \kappa(d,\cdot)\}$ is isolated in the set of finite-energy stationary solutions of equation \eqref{eqw}.
\end{nb}

The proof in higher dimensions is far from being a simple adaptation of the one-dimensional case. Indeed, several difficulties arise when $N\ge 2$, among them:\\
- we have $N-1$ new degenerate directions in the linearized operator of equation \eqref{eqw} around the stationary solution $\kappa(d)$ \eqref{defkd}; these directions come from the derivatives of $\kappa(d)$ with respect to the $N-1$ angular directions of $d$ (see Lemma \ref{l10} below); the projections of the solution on those directions have different features and are delicate to study as $|d|\to 1$. In particular, obtaining estimates with optimal bounds in $d$ require a new treatment;\\
% are totally new, and require a lot of care when $|d|\to 1$, since they have different sizes with respect to the projection on the radial direction, already present in one space dimension;\\
- the modulation argument around the family $\kappa^*(d,\nu)$ defined below in \eqref{defk*} becomes particularly intricate when $d\to 0$ or $|d|\to 1$
%indeed, if $d\to 0$, then the new degenerate directions of the linearized operator become singular, and when $|d|\to 1$, the columns of the jacobian matrix have different sizes, making the application of the implicit function theorem particularly delicate, since we crucially need to make explicit the dependence of our bounds in terms of $d$ 
(see Lemma \ref{lemmod} below and Section \ref{appmod} in the Appendix).

\bigskip

We proceed in two sections:\\
- In Section \ref{secdyn}, we study the linearized operator of equation \eqref{eqw} around a stationary solution;\\
%- In Section \ref{secrig}, we first give various extensions of Theorem \ref{proptrap}, then prove it.\\ 
- Section \ref{secrig} is devoted to the proof of Theorem \ref{proptrap}. 
\section{A dynamical system formulation for equation \eqref{eqw}}\label{secdyn}
%%%%%%%%%%%%%%%%%%%%%%%%%%%%%%%%%%%%%%%%%%%%%%%%%%%%%%%%%%%%%%%%%%%%%%%%%%
%%%%%%%%%%%%%%%%%%%%%%%%%%%%%%%%%%%%%%%%%%%%%%%%%%%%%%%%%%%%%%%%%%%%%%%%%%
This section is devoted to the study of the linearization of equation \eqref{eqw} around the stationary solution $\kappa(d,y)$ \eqref{defkd}. We also present a modulation technique adapted to the properties of the linear operator. We heavily use the dynamical system formulation we introduced in \cite{MZisol10} for the study of equation \eqref{eqw} around a sum of decoupled solitons. Here, the situation is easier, since we linearize around a single soliton. Therefore, we will summarize the key arguments and refer to \cite{MZisol10} for details.

 \bigskip

This section is divided into 
two
%several 
parts:\\
- we first study the linearized operator of equation \eqref{eqw} around $\kappa(d)$ where $d$ is fixed;\\
- then, we give a decomposition of solutions of \eqref{eqw} well-adapted to the spectral properties of its linearized operator around $\kappa(d)$;
% - in the third step, we give a modulation technique;\\
% - finally, using the modulation technique, we give nonlinear estimates for solutions of equation \eqref{eqw} near $\kappa(d)$.

%%%%%%%%%%%%%%%%%%%%%%%%%%%%%%%%%%%%%%%%%%%%%%%%%%%%%%%%%%%%%%%%%%%%%%%%%%
%%%%%%%%%%%%%%%%%%%%%%%%%%%%%%%%%%%%%%%%%%%%%%%%%%%%%%%%%%%%%%%%%%%%%%%%%%
\subsection{The linearized operator of equation \eqref{eqw} around $\kappa(d)$}
%%%%%%%%%%%%%%%%%%%%%%%%%%%%%%%%%%%%%%%%%%%%%%%%%%%%%%%%%%%%%%%%%%%%%%%%%%
%%%%%%%%%%%%%%%%%%%%%%%%%%%%%%%%%%%%%%%%%%%%%%%%%%%%%%%%%%%%%%%%%%%%%%%%%%
In this step, we aim at understanding the linearized operator of equation \eqref{eqw} around the stationary solution $\kappa(d)$. Consider then $w(y,s)$ a solution to equation \eqref{eqw} in the energy space $\H$. If $|d|<1$ and $q= (q_1,q_2)=\vc{q_1}{q_2}$ is defined for all $s$ in the domain of definition of $w$ by
\begin{equation*}%\label{defq}
\vc{w(y,s)}{\partial_s w(y,s)}=\vc{\kappa(d,y)}{0}+\vc{q_1(y,s)}{q_2(y,s)},
\end{equation*}
then we see from equation \eqref{eqw} that $q$
 satisfies the following equation:
%for all $s\ge s_0$ (for the proof in a more general case, see the proof of (ii) of Proposition \ref{exismod} below) :
\begin{equation}\label{eqq}
\ds\frac \partial {\partial s}
\left(
\begin{array}{l}
q_1\\
q_2
\end{array}
\right)
=L_{d}
\left(
\begin{array}{l}
q_1\\
q_2
\end{array}
\right)
+\left(
\begin{array}{l}
0\\
f_{d}(q_1)
\end{array}
\right)\\
\end{equation}
where 
\begin{equation}\label{defld}
\begin{array}{rcl}
L_d\vc{q_1}{q_2}&=&\vc{q_2}{\q L q_1+\psi(d,y)q_1-\frac{p+3}{p-1}q_2-2y.\nabla q_2},\\
\\
f_d(q_1)&=&|\kappa(d,\cdot)+q_1|^{p-1}(\kappa(d,\cdot)+q_1)-\kappa(d,\cdot)^p-p\kappa(d,\cdot)^{p-1}q_1,\\
\\
\psi(d,y)&=&p\kappa(d,y)^{p-1}-\frac{2(p+1)}{(p-1)^2}=\frac{2(p+1)}{(p-1)^2}\left(p \frac{(1-|d|^2)}{(1+d\cdot y)^2}-1\right)
\end{array}
\end{equation}
and the operator $\q L$ is defined in \eqref{defro}. 
%$\q L$, $\psi(d,\cdot)$ and $\kappa(d,\cdot)$ are defined respectively in \eqref{defro}, \eqref{defpsi} and \eqref{defkd}.
In this step, we study the linear operator $L_d$ in the energy space $\H$ defined in \eqref{defnh}. Note from \eqref{defnh} that we have
\begin{equation*}%\label{defnh1}
\|q\|_{\H}=\left[\phi\left(q,q\right)\right]^{1/2}<+\infty
\end{equation*}
where the inner product $\phi$ is defined by
\begin{equation}\label{defphi}
\phi(q,r)=
\phi\left(\vc{q_1}{q_2}, \vc{r_1}{r_2}\right)= \int_{-1}^1 \left(q_1r_1+\nabla q_1\cdot\nabla r_1- (y\cdot\nabla q_1)(y\cdot\nabla r_1)+q_2r_2\right)\rho dy.
\end{equation}
Using integration by parts and the definition of $\q L$ \eqref{defro}, we have the following identity
\begin{equation}\label{defphi2}
\phi(q,r)= \int_{-1}^1 \left(q_1\left(-\q L r_1+r_1\right) +q_2 r_2\right)\rho dy.
\end{equation}
One of the major difficulties in the proof comes from the fact that the linear operator $L_d$ is neither self-adjoint nor anti-self-adjoint. In particular, standard spectral theory does not apply. Since equation \eqref{equ} is invariant under rotation of coordinates, the same occurs for equation \eqref{eqw}. For that reason, we will often change coordinates to another orthonormal basis such that $e_1=\frac d{|d|}$ if $d\neq 0$. More precisely, we claim the following:
%%%%%%%%%%%%%%%%%%%%%%%%%%%%%%%%%%%%%%%%%%%%%%%%%%%%%%%%%%%%%%%%%%%%%%%%%%%%
%%%%%%%%%%%%%%%%%%%%%%%%%%%%%%%%%%%%%%%%%%%%%%%%%%%%%%%%%%%%%%%%%%%%%%%%%%%%
\begin{lem}[An orthonormal basis associated to $d$]\label{lembase} Introducing $B_{\pm}= \{d\;|\;0<|d|<1,\;\;\pm d_1 > -\frac{|d|}2\}$, there exist a $C^\infty$ orthonormal basis $(e_{1,\pm}(d),\dots, e_{N,\pm}(d))$
% with determinant $1$, 
defined on $B_\pm$ such that $e_{1,\pm}(d)=\frac d{|d|}$, $e_{i,\pm}(d) =e_{i,\pm}(\frac d{|d|})$ and $|\nabla e_{i,\pm}(d)|\le \frac C{|d|}$, for all $i=1,\dots,N$.
%\footnote{Attention, j'ai rajout\'e la condition $\det =1$, en esp\'erant que le Lemme reste vrai. \`A v\'erifier}. 
\end{lem}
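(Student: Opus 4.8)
The plan is to realize the frame as the image of the canonical basis $(e_1,\dots,e_N)$ of $\m R^N$ under an explicit family of orthogonal maps depending on $d$ only through the unit vector $\omega=d/|d|$, chosen so that the singular set of the family is exactly the antipodal cap that $B_\pm$ avoids.

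First I would introduce, for $\omega\in\m R^N$ with $1+\omega_1>0$, the orthogonal matrix
\[
R_\omega=\Id-\frac{(e_1+\omega)(e_1+\omega)^T}{1+\omega_1}+2\,\omega\,e_1^T
\]
(the product of two reflections that carries $e_1$ to $\omega$). One checks directly that $R_\omega e_1=\omega$, that $R_\omega c=c$ for every $c$ orthogonal to $e_1$ and $\omega$, and — using $|\omega|=1$ — that $R_\omega^TR_\omega=\Id$; moreover $\omega\mapsto R_\omega$ is $C^\infty$ on $\{\omega_1>-1\}$ since the denominator stays positive there. For $d\in B_+$ I would then set $e_{i,+}(d)=R_{d/|d|}\,e_i$ for $i=1,\dots,N$. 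Because $d\in B_+$ forces $(d/|d|)_1=d_1/|d|>-\tfrac12$, the map $R_{d/|d|}$ is well defined and smooth; hence $(e_{1,+}(d),\dots,e_{N,+}(d))$ is a $C^\infty$ orthonormal basis of $\m R^N$ on $B_+$, with $e_{1,+}(d)=R_{d/|d|}e_1=d/|d|$, and with $e_{i,+}(d)$ depending on $d$ only through $d/|d|$, i.e.\ $e_{i,+}(d)=e_{i,+}(d/|d|)$.

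For the derivative bound I would factor $e_{i,+}=F_i\circ(d\mapsto d/|d|)$ with $F_i(\omega)=R_\omega e_i$. On the compact set $\{\omega\in\m R^N:\ |\omega|\le1,\ \omega_1\ge-\tfrac12\}$, which contains $d/|d|$ for every $d\in B_+$, one has $1+\omega_1\ge\tfrac12$, so $F_i$ is smooth there with $\|\nabla F_i\|_{L^\infty}\le C$. Since $\partial_j(d_k/|d|)=\delta_{jk}/|d|-d_jd_k/|d|^3$ gives $|\nabla(d/|d|)|\le C/|d|$, the chain rule yields $|\nabla e_{i,+}(d)|\le C/|d|$ on $B_+$ for every $i$ (including $i=1$, where this is just the estimate on $\nabla(d/|d|)$ itself). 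Finally, to handle $B_-$ I would transport everything by the reflection $P(d)=(-d_1,d_2,\dots,d_N)$: if $d\in B_-$, i.e.\ $d_1<|d|/2$, then $(Pd)_1=-d_1>-|Pd|/2$, so $P$ maps $B_-$ onto $B_+$, and one may put $e_{i,-}(d)=P\,e_{i,+}(Pd)$. As $P$ is a fixed orthogonal involution, $(e_{i,-}(d))_i$ is a $C^\infty$ orthonormal basis on $B_-$, with $e_{1,-}(d)=P(Pd/|Pd|)=d/|d|$, with $e_{i,-}(d)=e_{i,-}(d/|d|)$ by linearity of $P$, and with $|\nabla e_{i,-}(d)|\le|\nabla e_{i,+}(Pd)|\le C/|Pd|=C/|d|$. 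Since $B_+\cup B_-\supseteq\{0<|d|<1\}$ (if $d_1>-|d|/2$ then $d\in B_+$, and otherwise $d_1\le-|d|/2<|d|/2$ so $d\in B_-$), this covers the whole punctured ball and completes the proof.

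There is no serious obstacle here; the only point requiring care is the matching between the singular set of the explicit family $R_\omega$ (the antipode $\omega=-e_1$) and the cap removed in $B_\pm$, together with the observation that the $1/|d|$ loss in the gradient is simply the Jacobian of the $0$-homogeneous map $d\mapsto d/|d|$, so that all the smoothness work takes place on a fixed compact set of directions.
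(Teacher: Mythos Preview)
Your proof is correct and follows essentially the same approach as the paper: construct a smooth orthonormal frame on the spherical caps $S^{N-1}_\pm$, then extend radially via $d\mapsto d/|d|$ to get the $C/|d|$ gradient bound. The paper simply asserts the existence of such a frame on $S^{N-1}_\pm$ as classical, whereas you provide an explicit realization through the rotation $R_\omega$ and the reflection $P$; this extra work is not needed for the argument but does no harm.
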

%%%%%%%%%%%%%%%%%%%%%%%%%%%%%%%%%%%%%%%%%%%%%%%%%%%%%%%%%%%%%%%%%%%%%%%%%%%%
%%%%%%%%%%%%%%%%%%%%%%%%%%%%%%%%%%%%%%%%%%%%%%%%%%%%%%%%%%%%%%%%%%%%%%%%%%%%
\begin{nb} Note that $B_+$ and $B_-$ are open sets such that $B_+\cup B_-=B(0,1)\backslash \{0\}$. Therefore:\\
- if $0<|d|<1$, we know that either $d\in B_+$ or $d\in B_-$, and for simplicity, the notation $(e_1(d),\dots,e_N(d))$ will stand for $(e_{1,+}(d),\dots, e_{N,+}(d))$ or $(e_{1,-}(d),\dots, e_{N,-}(d))$, accordingly;\\
- if $d=0$, the notation $(e_1(0),\dots,e_N(0))$ stands for the canonical basis of $\m R^N$.
\end{nb}
\begin{proof} If $S^{N-1}_\pm= B_\pm \cap S^{N-1}$, it is classical that there exists a $C^\infty$ orthonormal basis $(e_{1,\pm}(d),\dots, e_{N,\pm}(d))$ defined on $S^{N-1}_\pm$ such that $e_1(d)=d$. Extending the definition to $B_\pm$ by setting $e_{i,\pm}(d) = e_{i,\pm}(\frac d{|d|})$ yields the result.  
\end{proof}

\medskip

In the following, we 
% use the symmetries of equation \eqref{eqw} and 
show that $\lambda=0$ and $\lambda=1$ are eigenvalues of $L_d$ and find explicit formulas for the eigenfunctions of $L_d$:
%We claim the following:
%%%%%%%%%%%%%%%%%%%%%%%%%%%%%%%%%%%%%%%%%%%%%%%%%%%%%%%%%%%%%%%%%%%%%%%%%%%%
%%%%%%%%%%%%%%%%%%%%%%%%%%%%%%%%%%%%%%%%%%%%%%%%%%%%%%%%%%%%%%%%%%%%%%%%%%%%
\begin{lem}[Nonnegative eigenvalues and corresponding eigenfunctions for $L_d$]\label{l10}
$ $\\
(i) For all $|d|<1$, $\lambda=1$ and $\lambda=0$ are eigenvalues of the linear operator $L_d$ with multiplicity $1$ and $N$ respectively, and the corresponding eigenfunctions are:\\
- for $\lambda=1$,\;\;
\begin{equation}\label{deffld}
F_0(d,y)=(1-|d|^2)^{\frac p{p-1}}\vc{(1+d\cdot y)^{-\frac {p+1}{p-1}}}{(1+d\cdot y)^{-\frac {p+1}{p-1}}};
\end{equation}
- for $\lambda=0$, 
\begin{equation}\label{deffid}
\begin{cases}
F_1(d,y)= (1-|d|^2)^{\frac 1{p-1}}\vc{\ds\frac{y\cdot e_1(d)+|d|}{(1+d\cdot y)^{\frac {p+1}{p-1}}}}{0},\\
F_i(d,y)= (1-|d|^2)^{\frac {p+1}{2(p-1)}}\vc{\ds\frac{y\cdot e_i(d)}{(1+d\cdot y)^{\frac {p+1}{p-1}}}}{0}\mbox{ for }i=2,\dots,N,
\end{cases}
\end{equation}
where the orthonormal basis $(e_1(d),\dots, e_N(d))$ is given in the remark following Lemma \ref{lembase}.\\
(ii) For all $i=0,\dots,N$,
% and $j=1,\dots,N$, 
it holds that 
$\|F_i(d)\|_{\H}\le C$.
\end{lem}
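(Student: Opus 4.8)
The plan is to verify Lemma \ref{l10} by direct computation, splitting into two natural tasks: first checking that the claimed functions are indeed eigenfunctions with the asserted eigenvalues, then identifying the multiplicities, and finally bounding the $\H$-norms in part (ii).

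\textbf{Step 1: Reduction to an elliptic eigenvalue problem.} Looking at the form of $L_d$ in \eqref{defld}, a pair $\vc{q_1}{q_2}$ is an eigenfunction with eigenvalue $\lambda$ precisely when $q_2=\lambda q_1$ and
\[
\q L q_1+\psi(d,y)q_1-\frac{p+3}{p-1}\lambda q_1-2y\cdot\nabla(\lambda q_1)=\lambda^2 q_1,
\]
i.e. $q_1$ solves a second-order elliptic equation $\q L q_1+\psi(d,y)q_1=\bigl(\lambda^2+\frac{p+3}{p-1}\lambda\bigr)q_1+2\lambda\, y\cdot\nabla q_1$. For $\lambda=1$ the right-hand side factor is $1+\frac{p+3}{p-1}=\frac{2(p+1)}{p-1}$ and for $\lambda=0$ it is just $\q L q_1+\psi(d,y)q_1=0$. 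So the whole lemma reduces to (a) exhibiting solutions of these scalar equations and (b) counting the solution space.

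\textbf{Step 2: Guessing the eigenfunctions via symmetries.} Rather than solving the PDE blindly, I would derive the candidates from known invariances of \eqref{eqw}. The $\lambda=0$ directions are tangent to the manifold of stationary solutions $\{\kappa(d')\}$: differentiating $\kappa(d',y)$ with respect to $d'$ at $d'=d$ produces stationary solutions of the linearized equation, hence null eigenfunctions. Writing $\kappa(d,y)=\kappa_0(1-|d|^2)^{1/(p-1)}(1+d\cdot y)^{-2/(p-1)}$ and differentiating in the radial direction $e_1(d)$ and the angular directions $e_i(d)$, $i\ge 2$, yields (up to normalization constants) exactly the $F_i(d,y)$ in \eqref{deffid}; the extra $|d|$ in the first component of $F_1$ and the different powers of $(1-|d|^2)$ come from the chain rule acting on the prefactor $(1-|d|^2)^{1/(p-1)}$ versus on $(1+d\cdot y)^{-2/(p-1)}$. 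For $\lambda=1$, the direction comes from the scaling/time-translation invariance of \eqref{equ} (equivalently, the one-dimensional analysis of \cite{MZjfa07}): the function $F_0(d,y)$ with both components equal to $(1-|d|^2)^{p/(p-1)}(1+d\cdot y)^{-(p+1)/(p-1)}$ is the natural candidate. I would then simply plug each candidate into the scalar equation from Step 1 and verify it, which is a routine computation of $\q L$ applied to powers of $(1+d\cdot y)$ — this is easiest after rotating coordinates so $e_1(d)=d/|d|$ (Lemma \ref{lembase}) and writing $d\cdot y=|d|y_1$.

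\textbf{Step 3: Multiplicities.} For $\lambda=1$ I would argue multiplicity $1$: in the scalar equation the relevant operator $\q L+\psi(d,\cdot)-\frac{2(p+1)}{p-1}\Id$ (after removing the $y\cdot\nabla$ term, e.g. by the substitution absorbing the drift, or by noting $\lambda=1$ is the top of the spectrum) has a one-dimensional kernel in $\H$; the cleanest route is to transport to $d=0$ via the Lorentz transform — equation \eqref{eqw} is covariant under the Lorentz group and $\kappa(d)$ is the image of $\kappa_0$, so $L_d$ is conjugate to $L_0$ and it suffices to count eigenfunctions for $\kappa_0$, where one can use the explicit spherical-harmonic decomposition and the known one-dimensional result. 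For $\lambda=0$, the $N$ functions $F_1,\dots,F_N$ are linearly independent (their first components are $|d|$-shifted multiples of $y\cdot e_i(d)$ over a common positive weight, visibly independent), and the same conjugation-to-$d=0$ argument shows the null space has dimension exactly $N$, matching the $N$-parameter family $\{\kappa(d')\}$.

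\textbf{Step 4: The $\H$-norm bound (ii).} Here I would compute $\|F_i(d)\|_\H^2$ using \eqref{defphi} directly. Each $F_i$ has second component $0$, so $\|F_i(d)\|_\H^2=\iint\bigl(f_i^2+|\nabla f_i|^2-(y\cdot\nabla f_i)^2\bigr)\rho\,dy$ where $f_i$ is the first component. The key point is that the powers of $(1-|d|^2)$ in front of $F_i$ are chosen exactly to cancel the growth, as $|d|\to 1$, of the integrals $\iint (1+d\cdot y)^{-2(p+1)/(p-1)}(\cdots)\rho(y)\,dy$. One evaluates these via the change of variables $y\mapsto$ (the variable that trivializes $(1-|d|^2)^{1/(p-1)}(1+d\cdot y)^{-2/(p-1)}$, essentially the Lorentz change of variables), under which $\rho\,dy$ transforms with a Jacobian producing precisely the compensating factor; alternatively one computes in coordinates with $e_1(d)=d/|d|$ and integrates out the angular variables, reducing to a one-dimensional integral in $y_1\in(-1,1)$ with weight $(1-y_1^2)^\alpha(1-|d|^2 y_1^2)^{\cdots}$, which is then bounded uniformly in $|d|<1$ using $\alpha>0$.

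\textbf{Main obstacle.} The computations of Steps 2 and 4 are routine but genuinely lengthy — in particular getting all the exponents of $(1-|d|^2)$ and the constants $\kappa_0$ exactly right, and checking the delicate $F_1$ (with its $+|d|$ term) satisfies the equation. The conceptually sharpest point is Step 3: rigorously justifying that $\lambda=0$ and $\lambda=1$ have \emph{exactly} the stated multiplicities (no further hidden eigenfunctions) in the weighted space $\H$. I expect the cleanest treatment is to exploit the Lorentz conjugacy $L_d\sim L_0$ to reduce everything to the case $d=0$, where $\q L$ has a clean spectral decomposition along spherical harmonics and the one-dimensional results of \cite{MZjfa07} pin down the radial part; the angular modes then contribute exactly the $N-1$ extra null directions. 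Care is needed that the conjugation is a bounded isomorphism of $\H$ (it is, since it is induced by a smooth change of variables fixing $\{|y|<1\}$), so that eigenvalue multiplicities are preserved.
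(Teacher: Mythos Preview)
Your approach is essentially the paper's. For (i), the paper likewise derives the eigenfunctions from symmetries, but packages both eigenvalues via the single family $\kappa^*(d,\mu e^s,y)$ of \eqref{defk*}: since this is an exact solution of \eqref{eqw1} for every $\mu$ and $d$, differentiating at $\mu=0$ in $\mu$ yields the $\lambda=1$ eigenfunction (the factor $e^s$ produces the eigenvalue), while differentiating along $e_i(d)$ yields the $\lambda=0$ ones --- exactly your Step~2. For (ii), the paper's argument is slightly slicker than direct integration: for $i=0,1$ the first components depend only on $y\cdot e_1(d)$, so the $\H$-norm collapses to the one-dimensional expression already bounded in \cite{MZjfa07}; for $i\ge 2$ one recognizes $F_{i,1}(d)=\q T_d(y\cdot e_i(d))$ and applies the uniform boundedness of $\q T_d$ on $\H_0$ (Lemma~\ref{0lemeffect}(iv)), bypassing the explicit Jacobian computation you sketch.

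One point worth flagging: the paper's proof in Appendix~\ref{subeigenld*} only \emph{exhibits} the eigenfunctions and does not carry out the exact multiplicity count you worry about in Step~3 --- it defers to the one-dimensional case. Your Lorentz-conjugacy idea for this is natural, but be careful: $\q T_d$ in \eqref{defW} mixes $y$ and $s$, so conjugating $L_d$ to $L_0$ is not a pure spatial change of variables on $\H$; the correct transformation law is Lemma~\ref{0lemeffect}(ii), and one must track how the $s$-shift interacts with the eigenvalue. In practice, for the paper's purposes the exact multiplicity is not used downstream --- what matters is that $F_0,\dots,F_N$ span the nonnegative modes and the complement $\H^d_-$ enjoys the coercivity of Proposition~\ref{lemdefpos}.
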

%%%%%%%%%%%%%%%%%%%%%%%%%%%%%%%%%%%%%%%%%%%%%%%%%%%%%%%%%%%%%%%%%%%%%%%%%%%%
%%%%%%%%%%%%%%%%%%%%%%%%%%%%%%%%%%%%%%%%%%%%%%%%%%%%%%%%%%%%%%%%%%%%%%%%%%%%
\begin{proof} The proof is similar to the $1$-dimensional case (see Appendix \ref{subeigenld*}). 
%For that reason, we leave it to Appendix \ref{subeigenld*}.
\end{proof}
% \begin{nb} We recall from the remark following Lemma \ref{lembase} that when $d=0$, the notation $(e_1(0),\dots,e_N(0))$ stands for the canonical basis of $\m R^N$.Note also that the eigenfunctions $F_i$ for $i=1,\dots,N$ have one advantage and one disadvantage:\\
% - they are orthgonal with respect to the inner product \eqref{defphi}, which is crucial to get the orthogonality relations with the eigenfunctions of $L_d^*$ (see item (ii) in Lemma \ref{eigenld*});\\
% % however, 
% - because of the rotation in the basis $(e_1(d),\dots,e_N(d))$, the eigenfunctions $F_i$ are not continuous as $d\to 0$, worse, this discontinuity yields a singular term in the derivative of the eigenfunctions of $L_d^*$ (see item (iii) in Lemma \ref{eigenld*}).\\
%  For this reason, the use of the eigenfunctions $F_i$ in the proof is useful only when the parameter $\bar d$ appearing in Theorem \eqref{proptrap} is far from zero. When $\bar d$ is close to $0$, in order to avoid the rotation and the singularity that comes with it, we introduce the following family
% \begin{equation}\label{defbfi}
% \bar F_i(d,y)= \vc{\ds\frac{y_i+d_i}{(1+d\cdot y)^{\frac {p+1}{p-1}}}}{0}\mbox{ for }i=1,\dots,N,
% \end{equation}
% which happens to be a basis of the eigenspace for $\lambda=0$, for any $|d|<1$, from elementary algebraic computations! However, it is no longer exactly orthogonal, expect for $d=0$. Never mind, taking $|d|$ small, we see that it is ``asymptotically'' orthogonal, and that will be enough for our argument.
% \end{nb}
 In order to compute the projectors on the eigenfunctions of $L_d$ for $\lambda=0$ and $\lambda=1$, we need to compute the eigenfunctions of $L_d^*$ for the same eigenvalues, where $L_d^*$ is the conjugate operator of $L_d$ with respect to the inner product $\phi$. Let us first compute $L_d^*$.
%%%%%%%%%%%%%%%%%%%%%%%%%%%%%%%%%%%%%%%%%%%%%%%%%%%%%%%%%%%%%%%%%%%%%%%
%%%%%%%%%%%%%%%%%%%%%%%%%%%%%%%%%%%%%%%%%%%%%%%%%%%%%%%%%%%%%%%%%%%%%%%
\begin{lem}[The conjugate operator of $L_d$]%\label{lemconj} 
For any $|d|<1$, the operator $L_d^*$ conjugate of $L_d$ with respect to $\phi$ is given by
\begin{equation}\label{pesanti}
L_d^* \vc{r_1}{r_2} = \vc{R_d(r_2)}{- \q L r_1 + r_1 + \frac{p+3}{p-1} r_2 + 2y\cdot\nabla r_2-\frac{4\alpha r_2}{1-|y|^2}}
\end{equation}
for any $(r_1, r_2) \in \left(\D(\q L)\right)^2$, 
where $r=R_d(r_2)$ is the unique solution of
\begin{equation}\label{defR}
- \q L r+r = \q L r_2 + \psi(d,y) r_2.
\end{equation}
\end{lem}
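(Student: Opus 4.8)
The plan is to obtain the formula \eqref{pesanti} by directly transposing $L_d$ with respect to the bilinear form $\phi$, that is, by finding the operator $L_d^*$ such that $\phi(L_d q, r) = \phi(q, L_d^* r)$ for $q$, $r$ in a suitable dense subspace (say with $q_1, q_2, r_1, r_2 \in \D(\q L)$, smooth and compactly supported in $B(0,1)$). The computation splits along the two components of $L_d q = \vc{q_2}{\q L q_1 + \psi q_1 - \frac{p+3}{p-1} q_2 - 2y\cdot\nabla q_2}$, and I would use the compact form \eqref{defphi2} of $\phi$ rather than \eqref{defphi}, since the operator $-\q L + \Id$ is self-adjoint for the weight $\rho$ (this is exactly the content of the integration by parts leading to \eqref{defphi2}), which makes the first-component contribution immediate.

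First I would write
\[
\phi(L_d q, r) = \int_{-1}^1 q_2\left(-\q L r_1 + r_1\right)\rho\, dy + \int_{-1}^1\left(\q L q_1 + \psi q_1 - \tfrac{p+3}{p-1} q_2 - 2 y\cdot\nabla q_2\right) r_2\, \rho\, dy.
\]
The first integral is already of the form $\int q_2 (\cdots) \rho$, contributing $-\q L r_1 + r_1$ to the second component of $L_d^* r$. In the second integral I would move all derivatives off $q_1$ and $q_2$: the term $\int (\q L q_1 + \psi q_1) r_2\,\rho$ is handled by the self-adjointness of $\q L$ for the weight $\rho$ and gives $\int q_1(\q L r_2 + \psi r_2)\rho$, which is precisely $\int q_1(-\q L r + r)\rho = \int q_1(-\q L R_d(r_2) + R_d(r_2))\rho$ by the definition \eqref{defR} of $R_d$ — hence the first component of $L_d^* r$ is $R_d(r_2)$, and one must note that \eqref{defR} is uniquely solvable in $\D(\q L)$ because $-\q L + \Id$ is a positive self-adjoint operator on $\mathrm{L}^2_\rho$ with the right coercivity (this is where the condition $\alpha > 0$ and the structure of $\rho$ enter). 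The remaining terms $-\frac{p+3}{p-1}\int q_2 r_2 \rho$ and $-2\int (y\cdot\nabla q_2) r_2 \rho$ must be integrated by parts in $q_2$; the first is already symmetric, and for the second, integrating by parts against the weight $\rho = (1-|y|^2)^\alpha$ produces the divergence $-\dv(y\rho) = -(N\rho + y\cdot\nabla\rho) = -(N - \frac{2\alpha|y|^2}{1-|y|^2})\rho$, so that $-2\int(y\cdot\nabla q_2)r_2\rho = 2\int q_2\, \dv(y r_2 \rho)\,dy = \int q_2\left(2 y\cdot\nabla r_2 + (2N)r_2 - \frac{4\alpha|y|^2}{1-|y|^2}r_2\right)\rho\,dy$; combining the $+\frac{p+3}{p-1}r_2$ sign flip, the constant $r_2$-terms, and writing $\frac{|y|^2}{1-|y|^2} = \frac{1}{1-|y|^2} - 1$ to absorb part of it into the constants, one recovers exactly $\frac{p+3}{p-1}r_2 + 2y\cdot\nabla r_2 - \frac{4\alpha r_2}{1-|y|^2}$ as the second-component correction, matching \eqref{pesanti}. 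The explicit value $\alpha = \frac{2}{p-1} - \frac{N-1}{2}$ should be used to check that the bookkeeping of the constant multiples of $r_2$ closes correctly.

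The main obstacle I anticipate is not any single integration by parts but rather the careful handling of the boundary terms at $|y| = 1$, where the weight $\rho$ degenerates: one must justify that all the integrations by parts (for $\q L$, and for the transport term $y\cdot\nabla$) produce no boundary contribution, which requires either working first on the dense subspace of functions compactly supported in $B(0,1)$ and then arguing by density that the identity extends to $(r_1, r_2) \in (\D(\q L))^2$, or controlling the behavior near $|y|=1$ using the definition of $\D(\q L)$ and $\H$. A secondary technical point is the well-posedness of \eqref{defR}: one should invoke Lax–Milgram (or the spectral theorem) for the coercive symmetric form $r \mapsto \int(|\nabla r|^2 - (y\cdot\nabla r)^2 + r^2)\rho$ associated with $-\q L + \Id$ on the Hilbert space underlying $\H$, noting that $\q L r_2 + \psi(d,y) r_2 \in \mathrm{L}^2_\rho$ when $r_2 \in \D(\q L)$ since $\psi(d,\cdot)$ is bounded on $B(0,1)$ for fixed $|d|<1$ — this guarantees $R_d(r_2)$ is well-defined and lies in $\D(\q L)$, so that the formula \eqref{pesanti} makes sense. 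Since this linearized structure parallels the one-dimensional case treated in the authors' earlier work, I would keep the exposition brief and refer to \cite{MZisol10} for the analogous details.
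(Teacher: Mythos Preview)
Your proposal is correct and follows exactly the route the paper indicates: the paper simply omits the computation and refers to the one-dimensional analogue (Lemma 4.1 in \cite{MZjfa07}), which is precisely the direct transposition via \eqref{defphi2} and integration by parts that you outline, with the constants closing thanks to $\alpha = \frac{2}{p-1} - \frac{N-1}{2}$. The only slip is the reference at the end --- the relevant earlier paper is \cite{MZjfa07}, not \cite{MZisol10}.
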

%%%%%%%%%%%%%%%%%%%%%%%%%%%%%%%%%%%%%%%%%%%%%%%%%%%%%%%%%%%%%%%%%%%%%%%
%%%%%%%%%%%%%%%%%%%%%%%%%%%%%%%%%%%%%%%%%%%%%%%%%%%%%%%%%%%%%%%%%%%%%%%
% \begin{nb}
% The domain $\D(\q L)$ of $\q L$ defined in \eqref{defro} is the set of all $r \in L^2_\rho$ such that $\q L r \in L^2_\rho$.
% \end{nb}
\begin{proof}
The proof is omitted since it is the same as in one space dimension (see Lemma 4.1 page 81 in \cite{MZjfa07}). 
%The proof is omitted since it is straightforward and follows the same pattern as in one space dimension. See the proof of Lemma 4.1 page 81 in \cite{MZjfa07} for the one-dimensional case. 
\end{proof}

Let us now find the eigenfunctions of $L_d^*$ associated to the eigenvalues $\lambda = 1$ and $\lambda=0$.
%%%%%%%%%%%%%%%%%%%%%%%%%%%%%%%%%%%%%%%%%%%%%%%%%%%%%%%%%%%%%%%%%%%%%%%
%%%%%%%%%%%%%%%%%%%%%%%%%%%%%%%%%%%%%%%%%%%%%%%%%%%%%%%%%%%%%%%%%%%%%%%
\begin{lem}[Eigenfunctions of $L_d^*$ associated with the eigenvalues $\lambda=1$ and $\lambda=0$]\label{eigenld*} 
$ $\\
(i) For all $|d|<1$ and $0\le i\le N$, there exists $W_i(d)\in \q H$ such that $L_d^* W_i=\lambda_i W_i$, with $\lambda_0=1$, $\lambda_i=0$ for $i\ge 1$,
\begin{equation}\label{defWl2}
W_{i,2}(d,y)=c_{\lambda_i}\left(\frac{1-|y|^2}{1-|d|^2}\right)^{\lambda_i}F_{i,1}(d,y),\;\;\frac 1{c_{\lambda_i}} = 2(\lambda_i+\alpha)\int\left(\frac{y_1^2}{1-|y|^2}\right)^{1-\lambda_i}\rho(y)dy,
\end{equation}
and $W_{i,1}(d)$ is uniquely determined as the solution $v_1$ of
\begin{equation}\label{eqWl1}
-\q L v_1 + v_1 = \left(\lambda_i - \frac{p+3}{p-1}\right)v_2 - 2 y\cdot \nabla v_2+ 4\alpha \frac{v_2}{1-|y|^2}
\end{equation}
with $v_2= W_{i,2}(d)$.\\
(ii) {\bf (Orthogonality)} For all $|d|<1$, $i,j=0,\dots,N$, we have $\phi(F_i,W_j)=\delta_{i,j}$.\\
(iii) {\bf (Normalization)} For all $0<|d|<1$, $i=0,\dots,N$ and $j=2,\dots,N$, we have
\begin{align*}%\label{normwld}
\|W_i(d)\|_{\H}&\le C\|W_{i,2}(d)\|_{L^2_{\frac \rho{1-|y|^2}}}\le C,&\|\partial_{e_1} W_i(d)\|_{\H}&\le\frac C{1-|d|},\\
\|\partial_{e_j} W_i(d)\|_{\H'}&\le\frac C{|d|\sqrt{1-|d|}},&
\mbox{ where }\|W\|_{\H'}&= \ds\sup_{\|v\|_{\H}=1}|\phi(W, v)|.
\end{align*}
\end{lem}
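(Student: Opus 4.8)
\textbf{Proof proposal for Lemma~\ref{eigenld*}.}

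The plan is to mimic the one-dimensional construction from \cite{MZjfa07}, but to track carefully how each quantity depends on $d$, since the estimates in item~(iii) are what will feed the modulation argument later. First I would establish~(i). Given the explicit formula~\eqref{pesanti}--\eqref{defR} for $L_d^*$, the eigenvalue equation $L_d^* W_i = \lambda_i W_i$ splits into two scalar equations: the first component gives $R_d(W_{i,2}) = \lambda_i W_{i,1}$, and the second gives $-\q L W_{i,1} + W_{i,1} + \frac{p+3}{p-1}W_{i,2} + 2y\cdot\nabla W_{i,2} - \frac{4\alpha W_{i,2}}{1-|y|^2} = \lambda_i W_{i,2}$, which is precisely~\eqref{eqWl1}. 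So the content of~(i) is to exhibit $W_{i,2}$ solving the \emph{second} equation together with the constraint coming from the first, and then to define $W_{i,1}$ as the solution $v_1$ of~\eqref{eqWl1}; the solvability of~\eqref{eqWl1} for $v_1$, given $v_2$ in the right weighted space, is a standard variational (Lax--Milgram) argument for $-\q L + 1$ on the weighted space, exactly as in dimension one. To guess $W_{i,2}$ I would plug the ansatz $W_{i,2}(d,y) = c\,(1-|y|^2)^{\lambda_i}(1-|d|^2)^{-\lambda_i}F_{i,1}(d,y)$ into~\eqref{eqWl1}; using the explicit form of $F_{i,1}$ from~\eqref{deffld}--\eqref{deffid} and the identity $\q L(\rho^{-1}\cdot) $-type computations, one checks the equation holds. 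The constant $c_{\lambda_i}$ and the normalization $1/c_{\lambda_i}=2(\lambda_i+\alpha)\int (y_1^2/(1-|y|^2))^{1-\lambda_i}\rho\,dy$ will be forced by the biorthogonality requirement $\phi(F_i,W_i)=1$, which brings me to~(ii).

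For~(ii), the abstract reason $\phi(F_i,W_j)=0$ when $\lambda_i\ne\lambda_j$ is that $L_d$ and $L_d^*$ are conjugate with respect to $\phi$: $\lambda_i\phi(F_i,W_j)=\phi(L_dF_i,W_j)=\phi(F_i,L_d^*W_j)=\lambda_j\phi(F_i,W_j)$, so the pairing vanishes unless $\lambda_i=\lambda_j$. This handles the cross terms between the $\lambda=1$ eigenfunction $F_0$ and the $\lambda=0$ eigenfunctions $F_1,\dots,F_N$ for free. Within the $\lambda=0$ eigenspace one must show $\phi(F_i,W_j)=\delta_{ij}$ for $1\le i,j\le N$ by a direct computation: using~\eqref{defphi2}, $\phi(F_i,W_j)=\int(F_{i,1}(-\q L W_{j,1}+W_{j,1})+F_{i,2}W_{j,2})\rho\,dy = \int F_{i,1}\,\big((\lambda_j-\tfrac{p+3}{p-1})W_{j,2}-2y\cdot\nabla W_{j,2}+\tfrac{4\alpha W_{j,2}}{1-|y|^2}\big)\rho\,dy$ since $F_{i,2}=0$; substituting the explicit $F_{i,1}$ and $W_{j,2}$ reduces everything to Gaussian-type integrals in $y$ against $\rho$, and the off-diagonal ones vanish by parity in the angular variables (because the $e_i(d)$ are orthonormal), while the diagonal one fixes $c_{\lambda_i}$. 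I would do this after a rotation of coordinates so that $e_1(d)=d/|d|$, as set up in Lemma~\ref{lembase}, which makes the parity argument transparent.

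The real work, and the main obstacle, is item~(iii): the $d$-dependent bounds, in particular the blow-up rates $\|\partial_{e_1}W_i(d)\|_\H\le C/(1-|d|)$ and $\|\partial_{e_j}W_i(d)\|_{\H'}\le C/(|d|\sqrt{1-|d|})$ as $|d|\to 1$ or $d\to 0$. For the zeroth-order bound $\|W_i(d)\|_\H\le C\|W_{i,2}(d)\|_{L^2_{\rho/(1-|y|^2)}}\le C$, one uses that $W_{i,1}$ solves $-\q L W_{i,1}+W_{i,1}=(\text{first-order operator in }y)\,W_{i,2}$, so by the energy estimate for $-\q L+1$ the $\H$-norm of $W_{i,1}$ is controlled by a weighted norm of $W_{i,2}$ with the $1/(1-|y|^2)$ weight (this weight is exactly what the $2y\cdot\nabla$ and $4\alpha/(1-|y|^2)$ terms produce after integration by parts against $\rho$, since $\nabla\rho$ carries a factor $1/(1-|y|^2)$); then the explicit formula for $W_{i,2}$ and the normalization of $c_{\lambda_i}$ give a bound uniform in $d$. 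For the derivative bounds I would differentiate the defining relations in $d$: $\partial_{e_k}W_{i,2}$ is computed explicitly from~\eqref{defWl2}, and the dangerous factors are $\partial_{e_k}(1-|d|^2)^{-\lambda_i}$ (producing $1/(1-|d|^2)\sim 1/(1-|d|)$) and $\partial_{e_k}(1+d\cdot y)^{-\gamma}$ (which, integrated against the singular weight, produces the extra $1/\sqrt{1-|d|}$ in a half-power because of the geometry of $(1+d\cdot y)$ near $d\cdot y=-1$), together with $|\nabla e_k(d)|\le C/|d|$ from Lemma~\ref{lembase} for $k\ge 2$ (hence the $1/|d|$). The subtlety is that for $k\ge 2$ one cannot afford to estimate $\partial_{e_k}W_{i,1}$ in $\H$ directly — the naive bound loses too much — so one estimates it in the weaker dual norm $\|\cdot\|_{\H'}=\sup_{\|v\|_\H=1}|\phi(\cdot,v)|$; concretely, $\phi(\partial_{e_k}W_i,v)$ is expanded by differentiating $\phi(W_i,v)$ and $v$ being fixed, which trades a derivative on $W_{i,1}$ for a derivative on nothing and keeps only a weighted $L^2$ pairing of $\partial_{e_k}W_{i,2}$ against $v$, absorbing the singular weight into $\|v\|_\H$ via a Hardy-type inequality $\int \frac{v_1^2}{1-|y|^2}\rho\,dy\le C\|v\|_\H^2$ (valid since $\rho/(1-|y|^2)=(1-|y|^2)^{\alpha-1}$ is still integrable-against-the-energy-form for $\alpha>0$). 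Carrying out these differentiations and keeping optimal powers of $1-|d|$ and $|d|$ is the bulk of the argument; everything else is either the one-dimensional computation of \cite{MZjfa07} or a routine elliptic estimate. I expect the cleanest write-up records the explicit formulas for $W_{i,2}$ and $\partial_{e_k}W_{i,2}$, states the two weighted/Hardy inequalities once, and then reads off~(iii).
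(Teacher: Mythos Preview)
Your proposal is correct and aligns closely with the paper's proof, particularly for items~(ii) and~(iii): the orthogonality argument (different-eigenvalue pairing vanishes by adjointness, same-eigenvalue case by direct integration in the rotated basis with parity/separation of variables) and the scheme for~(iii) (elliptic estimate for $-\q L+1$ giving $\|W_{i,1}\|_{\H_0}\le C\|W_{i,2}\|_{L^2_{\rho/(1-|y|^2)}}$, then for $j\ge 2$ passing to the dual norm $\H'$ via~\eqref{defphi2}, the differentiated~\eqref{eqWl1}, and the Hardy inequality~\eqref{hsajm}, with the $1/|d|$ coming from $|\nabla e_j(d)|\le C/|d|$) are exactly what the paper does.

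The one genuine methodological difference is in item~(i). You propose to verify the ansatz for $W_{i,2}$ by direct substitution into the compatibility condition coming from the first component of $L_d^*W_i=\lambda_i W_i$ (namely $\q L W_{i,2}+\psi(d,y)W_{i,2}=0$ for $\lambda_i=0$, and a more involved identity for $\lambda_0=1$). The paper instead reduces to $d=0$ via the Lorentz transform in similarity variables (Lemma~\ref{proplorentzw} and item~(ii) of Lemma~\ref{0lemeffect}): one checks by hand that $r_2=1-|y|^2$ and $r_2=y_i$ give eigenfunctions of $L_0^*$, then Claim~\ref{clfrancois*}(ii) transports these to eigenfunctions of $L_d^*$ for general $d$, producing exactly the formula~\eqref{defWl2}. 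The Lorentz approach buys a cleaner verification at $d=0$ and, more importantly, gives uniform-in-$d$ bounds for free via the continuity of $\q T_d$ in $\H_0$ (item~(iv) of Lemma~\ref{0lemeffect}); for instance the bound $\|\kappa(d)\|_{L^2_{\rho/(1-|y|^2)}}\le C$ used in~\eqref{boundkd} follows immediately this way. Your direct route works but requires carrying the $d$-dependent singular factors $(1+d\cdot y)^{-\gamma}$ through every integral by hand (as in Claim~\ref{cltech0}), which is what the paper ultimately does anyway for the sharp derivative bounds in~(iii), so the difference is mostly organizational.
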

%%%%%%%%%%%%%%%%%%%%%%%%%%%%%%%%%%%%%%%%%%%%%%%%%%%%%%%%%%%%%%%%%%%%%%%
%%%%%%%%%%%%%%%%%%%%%%%%%%%%%%%%%%%%%%%%%%%%%%%%%%%%%%%%%%%%%%%%%%%%%%%
\begin{proof} See Appendix \ref{subeigenld*}.
\end{proof}

%%%%%%%%%%%%%%%%%%%%%%%%%%%%%%%%%%%%%%%%%%%%%%%%%%%%%%%%%%%%%%%%%%%%%%%
%%%%%%%%%%%%%%%%%%%%%%%%%%%%%%%%%%%%%%%%%%%%%%%%%%%%%%%%%%%%%%%%%%%%%%% 
\subsection{Expansion of $q$ with respect to the eigenspaces of $L_d$}
%%%%%%%%%%%%%%%%%%%%%%%%%%%%%%%%%%%%%%%%%%%%%%%%%%%%%%%%%%%%%%%%%%%%%%%%%%%
%%%%%%%%%%%%%%%%%%%%%%%%%%%%%%%%%%%%%%%%%%%%%%%%%%%%%%%%%%%%%%%%%%%%%%%%%%%
In the following, we expand any $q\in \H$ with respect to the eigenspaces of $L_d$ partially computed in Lemma \ref{l10}. We claim the following: 
%%%%%%%%%%%%%%%%%%%%%%%%%%%%%%%%%%%%%%%%%%%%%%%%%%%%%%%%%%%%%%%%%%%%%%%%%%%%
%%%%%%%%%%%%%%%%%%%%%%%%%%%%%%%%%%%%%%%%%%%%%%%%%%%%%%%%%%%%%%%%%%%%%%%%%%%
\begin{defi}[Expansion of $q$ with respect to the eigenspaces of $L_d$]\label{lemexpansion} Consider $q\in \H$ and introduce
\begin{equation}\label{defpdi}
\pi^d_i(q)=\phi\left(W_i(d), q\right)\mbox{ for all }i=0,\dots,N,
\end{equation}
where $W_i(d)$ is the eigenfunction of $L_d^*$ computed in Lemma \ref{eigenld*}, and $\pi^d_-(q)=q_-$ defined by
\begin{equation}\label{expansion}
q=\sum_{i=0}^N \pi^d_i(q)F_i(d,y)+\pi^d_-(q).
\end{equation}
\end{defi}
%%%%%%%%%%%%%%%%%%%%%%%%%%%%%%%%%%%%%%%%%%%%%%%%%%%%%%%%%%%%%%%%%%%%%%%%%%%%
%%%%%%%%%%%%%%%%%%%%%%%%%%%%%%%%%%%%%%%%%%%%%%%%%%%%%%%%%%%%%%%%%%%%%%%%%%%
\begin{nb} 
Applying the operator $\pi^d_i$ to \eqref{expansion}, we see from Lemma \ref{eigenld*} that 
\begin{equation}\label{defhd-}
\pi^d_-(q)\in\H^d_-=\left\{r\in \H\;\;|\;\;\pi^d_i(r)=0\mbox{ for all }i=0,\dots,N\right\}.
\end{equation}
Note that if $q\in \H^d_-$, then $\pi^d_-(q)=q$. Moreover, $\pi^d_-(F_i(d))=0$.
In fact, $\pi^d_i(q)$ is the projection of $q$ on $F_i(d)$, the eigenfunction of $L_d$ associated to $\lambda=0$ or $1$, and $\pi^d_-(q)$ is the negative part of $q$. 
\end{nb}
% \begin{nb} In accordance with the introduction of $\bar F_i(d,y)$ in \eqref{defbfi} and $\bar W_i(d,y)$ in the remark following Lemma \ref{eigenld*}, we introduce for all $i=1,\dots,N$ and $q\in \q H$,
% \[
% \bar \pi^d_i(q)=\phi\left(\bar W_i(d), q\right).
% \]
% \end{nb}

\medskip

For the proof of the main theorem, we will need to prove in some sense dispersive estimates on $q_-=\pi^d_-(q)$ when $q$ is a solution to \eqref{eqq}. In order to achieve this, we need to manipulate some functional of $q_-$ (equivalent to the norm $\|q_-\|_{\H}=\phi(q_-,q_-)^{1/2}$  in $\H^d_-$) which will capture the dispersive character of the equation \eqref{eqq}. Such a quantity will be 
\begin{eqnarray}
\varphi_d\left(q, r\right)&=& \int_{|y|<1} \left(-\psi(d,y)q_1r_1+\nabla q_1\cdot \nabla r_1-(y\cdot \nabla r_1)(y\cdot \nabla q_1)+q_2r_2\right)\rho dy\label{defphid}\\
&=& 
\int_{|y|<1} \left(-q_1\left(\q L r_1+\psi(d,y) r_1\right)+q_2 r_2 \right)\rho dy\nonumber%\label{defphid2}
\end{eqnarray}
where $\psi(d,y)$ is defined in \eqref{defld}. This bilinear form is in fact the second variation of $E(w(s))$ 
%defined in \eqref{defE} 
around $\kappa(d,y)$ \eqref{defkd}, the stationary solution of \eqref{eqw}, and can be seen as the energy norm in $\H^d_-$ (space where it will be definite positive).
More precisely, we have the following:
%%%%%%%%%%%%%%%%%%%%%%%%%%%%%%%%%%%%%%%%%%%%%%%%%%%%%%%%%%%%%%%%%%%%%%%
%%%%%%%%%%%%%%%%%%%%%%%%%%%%%%%%%%%%%%%%%%%%%%%%%%%%%%%%%%%%%%%%%%%%%%%
\begin{prop}[Equivalence in $\H^d_-$ of the $\H$ norm and the $\varphi_d$ norm]\label{lemdefpos}
%For all $d_0$ in $(0,1)$, 
There exists $C_0>0$ such that for all $|d|<1$, $q_-\in \H^d_-$ and $q \in \H$, 
\begin{align}
\frac 1{C_0}\left\|q_-\right\|_{\H}^2\le \varphi_d\left(q_-,q_-\right)\le C_0\left\|q_-\right\|_{\H}^2,\label{i}\\
\frac 1{C_0}\left\|q\right\|_\H \le \left(\sum_{i=0}^N\left|\pi^d_i(q)\right|+ 
\sqrt{\varphi_d\left(q_-, q_-\right)}\right)\le C_0\left\|q\right\|_{\H},\label{ii}
\end{align}
% the following holds:\\
% (i) {\bf (Equivalence of norms in $\H^d_-$)} For all $q_-\in \H^d_-$, 
% \[
% \frac 1{C_0}\left\|q_-\right\|_{\H}^2\le \varphi_d\left(q_-,q_-\right)\le C_0\left\|q_-\right\|_{\H}^2.
% \]
% (ii) {\bf (Equivalence of norms in $\H$)} For all $q \in \H$, 
% \[
% \frac 1{C_0}\left\|q\right\|_\H \le \left(\sum_{i=0}^N\left|\pi^d_i(q)\right|+ 
% \sqrt{\varphi_d\left(q_-, q_-\right)}\right)\le C_0\left\|q\right\|_{\H},
% \]
% where $\varphi_d$ is given in \eqref{defphid} and $q$ is expanded as in \eqref{expansion}.
\end{prop}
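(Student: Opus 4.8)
The plan is to reduce everything to a single positivity statement for the quadratic form $\varphi_d$ on $\H^d_-$, together with the boundedness of $\varphi_d$ on all of $\H$, and then bootstrap to the full equivalence \eqref{ii}. First I would check the upper bound in \eqref{i}: from the first expression in \eqref{defphid}, $\varphi_d(q_-,q_-)$ differs from $\|q_-\|_\H^2$ only by the term $\int (-\psi(d,y)-1)q_{-,1}^2\rho\,dy$, and since $|\psi(d,y)|\le C$ uniformly on $|y|<1$ (this is immediate from the explicit formula in \eqref{defld}, because $p(1-|d|^2)/(1+d\cdot y)^2$ is bounded on the unit ball for each fixed $d$ — but one must be careful that the bound is \emph{uniform} in $|d|<1$; here one uses $1+d\cdot y\ge 1-|d|$ on $|y|<1$, which degenerates, so the naive argument fails and one instead invokes a Hardy-type inequality, see below), we get $\varphi_d(q_-,q_-)\le C_0\|q_-\|_\H^2$. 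The same computation applied to a general $q\in\H$ gives $|\varphi_d(q,q)|\le C\|q\|_\H^2$, which will be needed for the right inequality in \eqref{ii}.

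The heart of the matter is the lower bound $\varphi_d(q_-,q_-)\ge \frac{1}{C_0}\|q_-\|_\H^2$, uniformly in $|d|<1$. Here I would follow the one-dimensional scheme of \cite{MZjfa07}: the form $\varphi_d$ is, by \eqref{defphid}, essentially $\langle q_{-,1}, (-\q L-\psi(d,\cdot))q_{-,1}\rangle_{L^2_\rho} + \|q_{-,2}\|_{L^2_\rho}^2$, so the claim amounts to showing that the Schrödinger-type operator $-\q L-\psi(d,y)$ is positive definite (with a gap, measured in the $\H$ norm on the first component) on the subspace of $L^2_\rho$ orthogonal to the first components $F_{i,1}(d)$, $i=0,\dots,N$. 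The operator $-\q L+1$ on $\H$-functions is coercive, so one needs that adding the potential $-\psi(d,y)-1$ only creates finitely many non-positive directions, and that these are exactly spanned by the $F_{i,1}(d)$. The spectral picture is cleanest after the change of variables to the orthonormal basis $(e_1(d),\dots,e_N(d))$ of Lemma \ref{lembase}, which turns $\kappa(d,\cdot)$ into a profile depending on $d$ only through $|d|$ — reducing the problem to a one-parameter family. For that family one shows, exactly as in dimension one via an explicit ODE/spectral analysis of $-\q L-\psi$, that $\lambda=0$ (with the eigenspace $\Span\{F_i(d)\}$ after accounting for the relation between eigenfunctions of $L_d$ and of the scalar operator) is the bottom of the non-positive spectrum and the rest of the spectrum is bounded below by a positive constant; the uniformity in $|d|\to 1$ is what requires the delicate $\rho$-weighted Hardy inequality controlling $\int \psi(d,y)q_1^2\rho\,dy$ by $\int(|\nabla q_1|^2-(y\cdot\nabla q_1)^2)\rho\,dy$ near $|y|=1$. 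Once positivity with a gap is known on the codimension-$(N+1)$ subspace $\H^d_-$, \eqref{i} follows.

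For \eqref{ii}, the right-hand inequality is routine: apply $\pi^d_i$ to \eqref{expansion}, use $\|W_i(d)\|_{\H'}\le C$ from Lemma \ref{eigenld*}(iii) to get $|\pi^d_i(q)|\le C\|q\|_\H$, then $\|q_-\|_\H\le\|q\|_\H+\sum_i|\pi^d_i(q)|\|F_i(d)\|_\H\le C\|q\|_\H$ using Lemma \ref{l10}(ii), and finally $\sqrt{\varphi_d(q_-,q_-)}\le C_0\|q_-\|_\H\le C\|q\|_\H$ by the already-proved upper bound in \eqref{i}. For the left-hand inequality one writes $q=\sum_i\pi^d_i(q)F_i(d)+q_-$, so $\|q\|_\H\le\sum_i|\pi^d_i(q)|\,\|F_i(d)\|_\H+\|q_-\|_\H\le C\big(\sum_i|\pi^d_i(q)|+\|q_-\|_\H\big)\le C\big(\sum_i|\pi^d_i(q)|+\sqrt{C_0}\sqrt{\varphi_d(q_-,q_-)}\big)$ using Lemma \ref{l10}(ii) and the lower bound in \eqref{i}. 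Combining gives \eqref{ii}.

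The main obstacle is the uniformity in $d$ of the positivity estimate \eqref{i}, more precisely controlling the potential term $\int\psi(d,y)q_1^2\rho\,dy$ near the boundary $|y|=1$ when $|d|\to 1$, where $\psi(d,y)$ can be as large as $\sim (1-|d|)^{-1}$ along the direction $y\approx -d/|d|$. The resolution, as in \cite{MZjfa07}, is to exploit the structure of the degenerate elliptic form $\int(|\nabla q_1|^2-(y\cdot\nabla q_1)^2)\rho\,dy$, which controls angular derivatives with a weight that vanishes like $(1-|y|^2)$ — precisely compensating the blow-up of $\psi$ — via an appropriate weighted Hardy/Poincaré inequality; this is the step I expect to occupy most of the technical work, and it is presumably deferred to the appendix (cf. the reference to Appendix \ref{subeigenld*}), the rest being a direct transcription of the one-dimensional argument to the rotated coordinates supplied by Lemma \ref{lembase}.
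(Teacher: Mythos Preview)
Your derivation of \eqref{ii} from \eqref{i} is correct and matches the paper, and your upper bound in \eqref{i} is essentially right once you invoke the Hardy inequality \eqref{hsajm} together with $|\psi(d,y)|\le C(1-|y|^2)^{-1}$ (which follows from \eqref{bounds}). But for the crucial lower bound, your plan is \emph{not} what the paper does, and your description of the one-dimensional scheme is inaccurate. The proof in \cite{MZjfa07} (and in Appendix~\ref{appcoer} here) is \emph{not} an explicit ODE/spectral analysis of $-\q L-\psi(d,\cdot)$; it is a signature/dimension-counting argument. Concretely, one introduces the perturbed form $\varphi_{d,\epsilon}=\varphi_d-\epsilon\cdot(\text{gradient part})$ and reduces \eqref{i} to showing $\varphi_{d,\epsilon_0}\ge 0$ on $\H^d_-$ for some small $\epsilon_0>0$. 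Then one exhibits (a) a codimension-$(N+1)$ subspace $E_2$ on which $\varphi_{d,\epsilon}\ge 0$, and (b) an $(N+1)$-dimensional subspace on which $\varphi_{d,\epsilon}<0$ and which is $\varphi_{d,\epsilon}$-orthogonal to $\H^d_-$; a contradiction by dimension counting forces $\varphi_{d,\epsilon}\ge 0$ on $\H^d_-$.

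The key tool you are missing for step (a) is the Lorentz transform $\q T_{-d}$ in similarity variables (Lemma~\ref{proplorentzw} and Lemma~\ref{0lemeffect}): by \eqref{translw} it conjugates $-\q L-\psi(d,\cdot)$ to a problem involving only $-\q L$ with constant coefficients, so $E_2$ is defined by orthogonality of $\q T_{-d}(q_1)$ to $1$ and $y_i$ in $L^2_\rho$, and positivity follows from the spectral gap of $\q L$ (Proposition~\ref{spectl}). This is how the uniformity in $|d|\to 1$ is obtained---structurally, not through a weighted Hardy inequality compensating the blow-up of $\psi$. Your proposed route via direct spectral analysis also stumbles on a more basic point: the orthogonality defining $\H^d_-$ is $\phi(W_i(d),q)=0$, which couples $q_1$ and $q_2$ through the conjugate eigenfunctions $W_i$, so the reduction to ``$q_1$ orthogonal to $F_{i,1}(d)$ in $L^2_\rho$'' is not correct. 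Step (b) is where most of the work lies (constructing $\bar W^{d,\epsilon}_i$ by solving an auxiliary elliptic PDE, again via $\q T_{-d}$, and computing the sign of the resulting Gram matrix as $\epsilon\to 0$).
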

%%%%%%%%%%%%%%%%%%%%%%%%%%%%%%%%%%%%%%%%%%%%%%%%%%%%%%%%%%%%%%%%%%%
%%%%%%%%%%%%%%%%%%%%%%%%%%%%%%%%%%%%%%%%%%%%%%%%%%%%%%%%%%%%%%%%%%%
\begin{proof} The proof follows the same pattern as the one-dimensional case (see Appendix \ref{appcoer}).
%The difficulties in the adaptation are only technical. For that reason, we give the proof in Appendix \ref{appcoer}, and advice the reader to start with the one-dimensional case, given in Lemma 4.7 page 90 in \cite{MZjfa07}.
\end{proof}

%%%%%%%%%%%%%%%%%%%%%%%%%%%%%%%%%%%%%%%
%%%%%%%%%%%%%%%%%%%%%%%%%%%%%%%%%%%%%%%
\section{Stability results for equation \eqref{eqw}}\label{secrig}
%%%%%%%%%%%%%%%%%%%%%%%%%%%%%%%%%%%%%%%%%%%%%%
%%%%%%%%%%%%%%%%%%%%%%%%%%%%%%%%%%%%%%%%%%%%%%
This section is devoted to the proof of Theorem \ref{proptrap}. We proceed in 3 subsections:\\
%- at first, we give various results which derive from the techniques we develop for Theorem \ref{proptrap};
- at first, we give a companion of our trapping result, namely the asymptotic stability of the zero solution for equation \eqref{eqw}, resulting in a non-degeneracy property at non-characteristic points (see Proposition \ref{prop0} and Corollary \ref{propnd} below);\\
- then, we give the proof of Theorem \ref{proptrap};\\
- finally, we give an exponential convergence property for solutions of equation \eqref{eqw} near $\{\pm\kappa(d,y)\}$ (see Proposition \ref{propexpo} below), then, we indicate how to adapt the proof of Theorem \ref{proptrap} to prove that result. 
\subsection{Stability of the zero solution for equation \eqref{eqw}}\label{subcorstat}
%%%%%%%%%%%%%%%%%%%%%%%%%%%%%%%%%%%%%%%%%%%%%%
%%%%%%%%%%%%%%%%%%%%%%%%%%%%%%%%%%%%%%%%%%%%%%

It happens that the techniques we developed for the proof of Theorem \ref{proptrap} yield also the asymptotic stability of the zero solution for equation \eqref{eqw}, as stated in the following:
%%%%%%%%%%%%%%%%%%%%%%%%%%%%%%%%%%%%%%%%%%%%%%
%%%%%%%%%%%%%%%%%%%%%%%%%%%%%%%%%%%%%%%%%%%%%%
\begin{prop}[Asymptotic stability of the zero solution for equation \eqref{eqw}]\label{prop0} There exist $\epsilon_1>0$ and $\delta_1>0$ such that if $\ee\equiv\left\|(w_{\bar x}(\bar s),\partial_s w_{\bar x}(\bar s))\right\|_{\H}\le \epsilon_1$, then:\\
(i) for all $s\ge \bar s$, we have $\left\|(w_{\bar x}(s),\partial_s w_{\bar x}(s))\right\|_{\H}\le \frac\ee{\delta_1} e^{-\delta_1(s-\bar s)}$;\\
(ii) if in addition, $\bar x \in \RR$, then, for all $s\ge s_0(\bar x)$, 
%we have 
$\left\|(w_{\bar x}(s),\partial_s w_{\bar x}(s))\right\|_{H^1\times L^2(|y|<1)}\le C(\bar x) \ee e^{-\delta_1(s-\bar s)}$, for some $s_0(\bar x)\ge \bar s$ and $C_0(\bar x)>0$.
\end{prop}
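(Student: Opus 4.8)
The plan is to establish Proposition~\ref{prop0} by a perturbative analysis of equation~\eqref{eqw} directly around the zero solution, running the same machinery as for Theorem~\ref{proptrap} but in a much simpler setting, since $0$ is a genuine equilibrium with \emph{no} non-negative spectral directions to modulate out. First I would linearize \eqref{eqw} at $w=0$, obtaining $\partial_s q = L_0 q + (0, f_0(q_1))$ where $L_0$ is the operator $L_d$ of \eqref{defld} with $d=0$ (so $\psi(0,y)\equiv -\frac{2(p+1)}{(p-1)^2}$), and $f_0(q_1)=|q_1|^{p-1}q_1$ is superlinear, hence $\|f_0(q_1)\|$ is controlled by $\|q_1\|_{\H}^{p}$ or by $\|q_1\|_{\H}^{\min(p,2)}$ via the Sobolev-type embeddings available in $\H$ (exactly as in \cite{MZisol10}). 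The key structural input is that the quadratic form $\varphi_0(q,q)$ of \eqref{defphid} with $d=0$, i.e.\ $E$'s second variation at $0$, is \emph{positive definite and coercive on all of $\H$} — not merely on a codimension-$(N{+}1)$ subspace — because the potential $-\psi(0,y)=\frac{2(p+1)}{(p-1)^2}>0$ contributes with a favorable sign; this is the $d=0$, ``no solitons'' analogue of Proposition~\ref{lemdefpos}\eqref{i}, and it makes $\sqrt{\varphi_0(q,q)}$ an energy norm equivalent to $\|q\|_{\H}$ on the whole space.

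Next I would compute $\frac{d}{ds}\varphi_0(q(s),q(s))$ along solutions. Using the skew/dissipative split of $L_0$ exactly as in the derivation leading to the $\varphi_d$-estimates in the appendix referenced after \eqref{defphid}, the linear part produces a manifestly nonpositive term of the form $-c\int_{|y|<1}\frac{q_2^2\,\rho}{1-|y|^2}\,dy$ plus a negative multiple of $\varphi_0(q,q)$ itself (the damping $-\frac{p+3}{p-1}\partial_s w$ and the transport term $-2y\cdot\nabla\partial_s w$ combine to give strict exponential dissipation when there is no zero mode), while the nonlinear term $f_0(q_1)$ contributes at worst $C\|q\|_{\H}^{2+\eta}$ for some $\eta>0$. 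Hence for $\ee$ small enough one gets a differential inequality $\frac{d}{ds}\varphi_0(q,q)\le -2\delta_1\,\varphi_0(q,q)$ as long as $\|q(s)\|_{\H}$ stays small; a standard bootstrap/continuity argument then shows it stays small for all $s\ge\bar s$ and yields $\varphi_0(q(s),q(s))\le e^{-2\delta_1(s-\bar s)}\varphi_0(q(\bar s),q(\bar s))$, which by the norm equivalence gives part~(i) with $\delta_1$ adjusted and the constant $1/\delta_1$ absorbing $C_0$.

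For part~(ii), once $\bar x\in\RR$ we have, by definition of non-characteristic point, a cone $\mathcal C_{\bar x,T(\bar x),\delta_0}$ on which $u$ is defined, which after the change of variables \eqref{defw} means $w_{\bar x}(y,s)$ is defined on $|y|<1$ for all $s$ and moreover, for $s$ large, on slightly larger balls $|y|<1+\epsilon(s)$; this gives the extra room needed to upgrade the weighted $\H$-norm control to an \emph{un-weighted} $H^1\times L^2$ bound on $\{|y|<1\}$. Concretely I would use interior parabolic-type/covering estimates on the cone together with the already-proven exponential decay in $\H$ (which controls the solution near the degenerate boundary $|y|=1$) to show $\|(w_{\bar x}(s),\partial_s w_{\bar x}(s))\|_{H^1\times L^2(|y|<1)}\le C(\bar x)\ee e^{-\delta_1(s-\bar s)}$ for $s\ge s_0(\bar x)$, where $s_0(\bar x)$ and $C(\bar x)$ depend on the aperture $\delta_0$ and the time $t_0$ from the non-characteristic condition.

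I expect the main obstacle to be two intertwined technical points: first, making the dissipation estimate $\frac{d}{ds}\varphi_0\le -2\delta_1\varphi_0$ fully rigorous despite the degeneracy of $\rho$ at $|y|=1$ — one must be careful that the boundary terms in the integration by parts vanish (they do, thanks to the weight, and this is inherited from \cite{MZisol10}), and that the coercivity constant in the $d=0$ case is genuinely uniform; second, the passage from the intrinsic weighted norm to the flat $H^1\times L^2$ norm in part~(ii), which is not just bookkeeping but uses the geometric non-characteristic information in an essential way and is really where ``another new result'', the non-degeneracy at non-characteristic points, enters. Everything else (local Cauchy theory in $\H$, the Sobolev embeddings giving the superlinear bound on $f_0$, the bootstrap) is routine given the tools already assembled in Section~\ref{secdyn} and in \cite{MZisol10}.
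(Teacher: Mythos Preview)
Your approach is on the right track and is close in spirit to the paper's, but there is one concrete gap and one notational confusion.

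First, the notation: in this paper $L_d$ is the linearization of \eqref{eqw} around the soliton $\kappa(d)$, so $L_0$ already denotes linearization around $\kappa_0\neq 0$, with $\psi(0,y)=\frac{2(p+1)}{p-1}>0$. The operator you want (linearization around the zero solution) is a different object; you should not call it $L_0$.

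Second, and more substantively: your claimed differential inequality for $\varphi_0$ is not what the computation actually gives. Multiplying the linearized equation by $q_2\rho$ and integrating (this is the energy identity \eqref{dE}) yields only
\[
\frac12\frac{d}{ds}\varphi_0(q,q)=-2\alpha\int_{|y|<1} q_2^2\,\frac{\rho}{1-|y|^2}\,dy,
\]
with \emph{no} term of the form $-c\,\varphi_0(q,q)$. The damping $-\frac{p+3}{p-1}\partial_s w$ and the transport $-2y\cdot\nabla\partial_s w$ do dissipate, but only the $q_2$ component; nothing in $\frac{d}{ds}\varphi_0$ alone forces $q_1$ to decay. The paper closes this gap exactly as in the $h_2$-functional of Appendix~\ref{appdyn} that you cite: one adds a Lagrangian correction and works instead with $h(s)=E(w)+\eta_4\int w\,\partial_s w\,\rho\,dy$ (see \eqref{defh0} and Lemma~\ref{exp0}). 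The derivative of the correction produces the missing $-cE(w)$ term, at the cost of a $+C\int q_2^2\frac{\rho}{1-|y|^2}$ which is absorbed by the energy identity when $\eta_4$ is small. So you are pointing at the right machinery, but your sentence ``the linear part produces\dots\ a negative multiple of $\varphi_0(q,q)$ itself'' is incorrect as written; that term comes only after the correction is added. The paper also separates a preliminary boundedness step (Lemma~\ref{lem00}, via monotonicity of $E$) from the exponential decay step; your bootstrap conflates these, which is fine once the correct Lyapunov functional is in place.

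For part~(ii), your outline (use the non-characteristic cone to gain room and upgrade from $\H$ to $H^1\times L^2$ via a covering argument) matches the paper's Step~3, which implements this with the covering technique of Proposition~3.3 in \cite{MZimrn05}.
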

%%%%%%%%%%%%%%%%%%%%%%%%%%%%%%%%%%%%%%%%%%%%%%
%%%%%%%%%%%%%%%%%%%%%%%%%%%%%%%%%%%%%%%%%%%%%%
From the local Cauchy theory for equation \eqref{equ}, this result yields the  non-degeneracy of blow-up at non-characteristic points stated in the following:
%%%%%%%%%%%%%%%%%%%%%%%%%%%%%%%%%%%%%%%%%%%%%%%%%%%%%%%%%%%%%%%%%%%%%%%%%%
%%%%%%%%%%%%%%%%%%%%%%%%%%%%%%%%%%%%%%%%%%%%%%%%%%%%%%%%%%%%%%%%%%%%%%%%%%
\begin{cor}[Non-degeneracy of the limit of $w_{\bar x}$ when $\bar x$ is non-characteristic]\label{propnd} There exists $\bar \epsilon_1>0$ such that for all $\bar x \in \RR$ and $s\ge -\log T(\bar x)$, we have $\|(w_{\bar x}(s), \partial_s w_{\bar x}(s))\|_{\q H}\ge \bar\epsilon_1$.
\end{cor}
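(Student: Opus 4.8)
\textbf{Proof proposal for Corollary \ref{propnd}.}

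The plan is to argue by contradiction, using Proposition \ref{prop0}(i) together with the local Cauchy theory for equation \eqref{equ} and the finite speed of propagation. Suppose the conclusion fails; then there exist a sequence $\bar x_n \in \RR$ and times $s_n \ge -\log T(\bar x_n)$ such that
\[
\|(w_{\bar x_n}(s_n), \partial_s w_{\bar x_n}(s_n))\|_{\q H}\to 0\mbox{ as }n\to\infty.
\]
In particular, for $n$ large enough, $\|(w_{\bar x_n}(s_n), \partial_s w_{\bar x_n}(s_n))\|_{\q H}\le \epsilon_1$, so Proposition \ref{prop0}(i) applies with $\bar s = s_n$ and gives, for all $s\ge s_n$,
\[
\|(w_{\bar x_n}(s), \partial_s w_{\bar x_n}(s))\|_{\q H}\le \frac{1}{\delta_1}\|(w_{\bar x_n}(s_n), \partial_s w_{\bar x_n}(s_n))\|_{\q H}\, e^{-\delta_1(s-s_n)}\to 0.
\]
Thus $w_{\bar x_n}(s)\to 0$ in $\q H$ as $s\to\infty$; in particular $w_{\bar x_n}$ is global in similarity variables.

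Next I would translate this back to the original variables. Fixing $n$ (dropping it from the notation), the fact that $w_{\bar x}(s)$ is defined and small for all $s\ge s_n$ means, via the change of variables \eqref{defw}, that $u$ is defined on the backward light cone with vertex $(\bar x, T(\bar x))$ up to a time arbitrarily close to $T(\bar x)$, and that $(T(\bar x)-t)^{2/(p-1)}u(\bar x + (T(\bar x)-t)y, t)$ together with its time derivative tends to $0$ in the energy norm on $\{|y|<1\}$ as $t\to T(\bar x)$. Combining the exponential decay in $s$ with parabolic-type (or rather Strichartz/energy) regularizing estimates for the wave equation in self-similar variables — exactly the mechanism behind Proposition \ref{prop0}(ii), which upgrades the $\q H$-decay to $H^1\times L^2(|y|<1)$-decay with an explicit rate — one sees that $u$ extends as a smooth (or at least $H^1_{\rm loc,u}\times L^2_{\rm loc,u}$) solution across a full neighborhood of $(\bar x, T(\bar x))$ in space-time. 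Indeed, the energy of $u$ on truncated cones tends to zero, so by the local Cauchy theory (\cite{GSVjfa92}, \cite{LSjfa95}) and finite speed of propagation the solution does not blow up at $(\bar x, T(\bar x))$: one can solve the Cauchy problem backward from a time slice $t$ close to $T(\bar x)$ with small data on a ball around $\bar x$, and the solution exists on a time interval of length bounded below independently of how close $t$ is to $T(\bar x)$ (since the data norm is going to zero). This contradicts $(\bar x, T(\bar x))\in\Gamma$, i.e. the definition of $T(\bar x)$ as the blow-up time at $\bar x$.

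The main obstacle is the last step: making rigorous the passage from smallness of $(w_{\bar x}(s),\partial_s w_{\bar x}(s))$ in the weighted energy space $\q H$ to a genuine non-blow-up statement for $u$ at $(\bar x,T(\bar x))$. The weight $\rho(y)=(1-|y|^2)^\alpha$ degenerates at $|y|=1$, so $\q H$-smallness only controls $u$ well-inside the light cone; one needs Proposition \ref{prop0}(ii) (valid precisely because $\bar x\in\RR$, which provides a uniform cone $\mathcal C_{\bar x, T(\bar x),\delta_0}$ on which $u$ is defined and hence local well-posedness estimates in the \emph{unweighted} $H^1\times L^2(|y|<1)$ norm) to get control up to the boundary, and then to invoke finite speed of propagation together with the non-characteristic property to deduce that $u$ is in fact bounded in a neighborhood of $(\bar x,T(\bar x))$, contradicting that $T(\bar x)$ is finite. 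An alternative, cleaner route avoiding the contradiction argument: directly, for $\bar x\in\RR$, if $\|(w_{\bar x}(s_0),\partial_s w_{\bar x}(s_0))\|_{\q H}<\epsilon_1$ for some $s_0$, run Proposition \ref{prop0}(ii) to get $H^1\times L^2$-decay, invoke the no-blow-up criterion to conclude $T(\bar x)=+\infty$ or that the point is regular — impossible — hence $\|(w_{\bar x}(s),\partial_s w_{\bar x}(s))\|_{\q H}\ge\epsilon_1$ for every $s$; set $\bar\epsilon_1=\epsilon_1$.
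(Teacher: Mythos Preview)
Your approach is correct and follows the same contradiction strategy as the paper, but the paper takes a shortcut at the final step. After applying Proposition~\ref{prop0}(ii) (valid since $\bar x\in\RR$) to obtain $\|(w_{\bar x}(s),\partial_s w_{\bar x}(s))\|_{H^1\times L^2(|y|<1)}\to 0$, the paper does not re-run the Cauchy theory plus finite-speed-of-propagation argument you sketch; instead it directly invokes the already-established lower bound \eqref{lb0} from \cite{MZimrn05} (Theorem~1.6 there, recalled in the remark following the corollary), which asserts that this unweighted norm is bounded below by some $\bar\epsilon_0>0$ for every $s$. That lower bound was itself proved via exactly the local well-posedness and finite-speed mechanism you describe, so you are essentially re-deriving it inline. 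Your sequence argument is also unnecessary: a single point $(\bar x,\bar s)$ with norm $\le\epsilon_1$ already yields the contradiction, and one takes $\bar\epsilon_1=\epsilon_1$.
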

%%%%%%%%%%%%%%%%%%%%%%%%%%%%%%%%%%%%%%%%%%%%%%%%%%%%%%%%%%%%%%%%%%%%%%%%%%
%%%%%%%%%%%%%%%%%%%%%%%%%%%%%%%%%%%%%%%%%%%%%%%%%%%%%%%%%%%%%%%%%%%%%%%%%%
\begin{nb} From Theorem 1.6 page 1131 of \cite{MZimrn05} and the similarity variables transformation \eqref{defw}, we have the following lower bound in the $H^1\times L^2(|y|<1)$ norm:

\medskip

{\it There exists $\bar \epsilon_0>0$ such that for any $\bar x \in \RR$ and $s\ge -\log T(\bar x)$, we have
}
\begin{equation}\label{lb0}
 \|(w_{\bar x}(s), \partial_s w_{\bar x}(s))\|_{H^1\times L^2(|y|<1)}\ge \bar\epsilon_0.
\end{equation}
That result follows from the combination of the solution of the Cauchy problem of equation \eqref{equ} and the finite-speed of propagation.\\ 
Since $H^1\times L^2(|y|<1)\subset \neq \q H$, our result in Corollary \ref{propnd} is stronger. We would like to mention that our result is analogous to what Giga and Kohn proved for the Sobolev-subcritical semilinear heat equation in \cite{GKcpam89}.
\end{nb}

\bigskip

%We first give the proof of Proposition \ref{prop0}, showing the stability of the $0$ solution for equation \eqref{eqw}, then, we prove Proposition \ref{propnd}, showing the non-degeneracy of the blow-up limit.

In the following, we first derive Corollary \ref{propnd} from Proposition \ref{prop0}, then we give the proof of Proposition \ref{prop0}.

%\bigskip

\begin{proof}[Proof of Corollary \ref{propnd} assuming that Proposition \ref{prop0} holds]
We will show that the conclusion of Corollary \ref{propnd} holds with $\bar \epsilon_1 = \epsilon_1$ already introduced in Proposition \ref{prop0}. Proceeding by contradiction, we assume that for some $\bar x\in \RR$ and $\bar s \ge - \log T(\bar x)$, we have 
\begin{equation*}%\label{petit0}
\|(w_{\bar x}(\bar s), \partial_s w_{\bar x}(\bar s))\|_{\q H}\le \epsilon_1.
\end{equation*}
Applying Proposition \ref{prop0} and recalling that $\bar x$ is non-characteristic, we see that
\[
\left\|(w_{\bar x}(s),\partial_s w_{\bar x}(s))\right\|_{H_1\times L^2(|y|<1)}\to 0 \mbox{ as }s\to \infty.
\]
From the lower bound proved in Theorem 1.6 page 1131 of \cite{MZimrn05} and stated in \eqref{lb0} above, we reach a contradiction. This concludes the proof of Corollary \ref{propnd}, assuming that Proposition \ref{prop0} holds. 
\end{proof}

\bigskip

Now, we give the proof of Proposition \ref{prop0}, showing the stability of the $0$ solution for equation \eqref{eqw}.
\begin{proof}[Proof of Proposition \ref{prop0}]
Let us first recall from  \cite{MZajm03} (see also \eqref{dE} below) that the functional
$E(w(s), \partial_sw(s))$ is a Lyapunov functional for equation \eqref{eqw}, where 
\begin{equation}\label{defenergy}
E(r) = \iint \left(\frac 12 r_2^2+ \frac 12 |\nabla r_1|^2 -\frac 12 (y.\nabla r_1)^2+\frac{(p+1)}{(p-1)^2}r_1^2 - \frac 1{p+1} |r_1|^{p+1}\right)\rho dy.
\end{equation}
For simplicity in the notation, we will often write $E(w(s))$ instead of $E(w(s), \partial_sw(s))$.\\ 
%Consider $u(x,t)$ a solution to equation \eqref{equ} which blows up on some blow-up graph $x\mapsto T(x)$. Consider then $\bar x \in \m R^N$ and $\bar s\ge - \log T(\bar x)$ such that 
Consider $\bar x \in \m R^N$, $\bar s\ge - \log T(\bar x)$ and
 \begin{equation}\label{init}
\ee\equiv\left\|(w(\bar s),\partial_s w(\bar s))\right\|_{\H},\mbox{ where }w=w_{\bar x}. 
\end{equation}
We proceed in 3 steps:\\
- First, using the Hardy-Sobolev inequality of Lemma \ref{lemhs} together with the monotonicity of $E$ \eqref{defenergy}, we prove the stability of $0$, with no exponential decay.\\
- Then, slightly perturbing $E$ \eqref{defenergy}, we find a new Lyapunov functional, equivalent to the square of the norm in $\q H$, and which decreases exponentially fast, finishing the proof of item (i).\\
- Finally, assuming that $\bar x \in \RR$, we use a covering argument from \cite{MZimrn05} to show that the norm in $H^1 \times L^2(|y|<1)$ exponentially decreases too, finishing the proof of item (ii).

\bigskip

{\bf Step 1: Stability of the zero solution for equation \eqref{eqw}}

We claim the following: 
%%%%%%%%%%%%%%%%%%%%%%%%%%%%%%%%%%%%%%%%%%%%%%
%%%%%%%%%%%%%%%%%%%%%%%%%%%%%%%%%%%%%%%%%%%%%%
\begin{lem}[Stability of $0$ for equation \eqref{eqw}]\label{lem00} There exists $\delta_3\in (0,1)$ such that if 
\begin{equation}\label{cond1}
\ee \le \frac{\delta_3^2}2,
% \epsilon_1 \le \frac{\delta_3^2}2,
\end{equation}
then, for all $s\ge \bar s$, 
\[
\|(w(s),\partial_s w(s))\|_{\q H}\le \frac {\ee}{\delta_3}\mbox{ and }
\delta_3 E(w(s)) \le \|(w(s),\partial_s w(s))\|_{\q H}^2 \le \frac{E(w(s))}{\delta_3}.
\]
\end{lem}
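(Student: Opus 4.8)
The plan is to exploit the fact that $E$ is a Lyapunov functional for \eqref{eqw}, together with the Hardy–Sobolev inequality controlling the $L^{p+1}$ term, in order to bootstrap smallness of the $\H$ norm from smallness at the initial time. First I would record the coercivity of the quadratic part of $E$: writing $E(r) = \frac12 Q(r) - \frac1{p+1}\|r_1\|_{L^{p+1}_\rho}^{p+1}$, where $Q(r) = \iint(r_2^2 + |\nabla r_1|^2 - (y\cdot\nabla r_1)^2 + \frac{2(p+1)}{(p-1)^2}r_1^2)\rho\,dy$, one checks that $Q$ is equivalent to $\|r\|_\H^2$ up to fixed constants (this is immediate from \eqref{defnh}, since the weight $\frac{2(p+1)}{(p-1)^2}>0$). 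So there is $c_0>0$ with $c_0\|r\|_\H^2 \le Q(r) \le c_0^{-1}\|r\|_\H^2$.

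Next I would invoke the Hardy–Sobolev inequality (Lemma \ref{lemhs}, referenced in the proof) to bound $\|r_1\|_{L^{p+1}_\rho}^{p+1} \le C\|r_1\|_\H^{p+1} \le C' Q(r)^{(p+1)/2}$; here the subconformality assumption \eqref{condp} is what makes $p+1$ an admissible exponent. Combining the two bounds gives, for $\|r\|_\H$ small,
\begin{equation*}
E(r) \ge \tfrac12 Q(r) - C'' Q(r)^{(p+1)/2} \ge \tfrac14 Q(r) \ge \tfrac{c_0}4\|r\|_\H^2,
\end{equation*}
say whenever $Q(r) \le \eta_0$ for a suitable fixed $\eta_0>0$; and trivially $E(r) \le \tfrac12 Q(r) \le \tfrac1{2c_0}\|r\|_\H^2$ always. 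Choosing $\delta_3$ small enough (of the order of $\min(c_0/4, \eta_0)$ up to constants) one gets the two-sided comparison $\delta_3 E(w(s)) \le \|(w(s),\partial_s w(s))\|_\H^2 \le \delta_3^{-1}E(w(s))$ \emph{as long as the solution stays in the region $Q(w(s))\le\eta_0$}.

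The remaining point — and the one requiring a little care rather than being purely routine — is the continuation/trapping argument that keeps the solution in that region for all $s\ge\bar s$. I would argue by continuity: the set of $s\ge\bar s$ for which $\|(w(s),\partial_s w(s))\|_\H \le \ee/\delta_3$ is relatively closed (by continuity of $s\mapsto(w(s),\partial_s w(s))$ in $\H$, which is assumed), nonempty (it contains $\bar s$ if $\ee/\delta_3 \ge \ee$, i.e. $\delta_3\le 1$), and I claim it is also open. Indeed, on that set $Q(w(s)) \le c_0^{-1}(\ee/\delta_3)^2$, which is $\le\eta_0$ provided $\ee$ is small enough — this is exactly where the hypothesis $\ee \le \delta_3^2/2$ is used (it forces $\ee/\delta_3 \le \delta_3/2$ small). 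Hence on that set the two-sided comparison holds, so using monotonicity $E(w(s))\le E(w(\bar s))$ we get
\begin{equation*}
\|(w(s),\partial_s w(s))\|_\H^2 \le \delta_3^{-1}E(w(s)) \le \delta_3^{-1}E(w(\bar s)) \le \delta_3^{-2}\|(w(\bar s),\partial_s w(\bar s))\|_\H^2 = (\ee/\delta_3)^2,
\end{equation*}
so the bound in fact improves to a non-strict inequality that is not saturated, which lets one extend past any finite $s$; by a standard connectedness argument the set is all of $[\bar s,\infty)$. The main obstacle, such as it is, lies in getting the constants consistent — one must fix $\eta_0$ from the Hardy–Sobolev step first, then choose $\delta_3$ so that both $\delta_3^2/2$-smallness of $\ee$ feeds into $Q\le\eta_0$ and the energy comparison constants work out — but there is no analytic difficulty beyond Lemma \ref{lemhs} and the Lyapunov property, both of which are available.
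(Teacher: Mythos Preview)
Your approach is essentially the paper's: use the Hardy--Sobolev inequality (Lemma~\ref{lemhs}) to get the two-sided equivalence $\delta_3 E \le \|\cdot\|_\H^2 \le \delta_3^{-1}E$ on a small ball, then combine monotonicity of $E$ with a continuation argument. One minor point worth tightening: your bootstrap runs with threshold $\ee/\delta_3$, and the chain of inequalities you write down returns exactly that same bound, so the claim that it is ``not saturated'' is not justified and the openness step does not quite close as stated; the paper instead runs the continuation with the larger threshold $\delta_3$ (i.e.\ defines $s^*=\sup\{s:\|(w,\partial_s w)\|_\H\le\delta_3\}$), derives from the energy the strictly smaller bound $\ee/\delta_3\le\delta_3/2$, and obtains a clean contradiction at $s^*$ --- a trivial adjustment to your version.
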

%%%%%%%%%%%%%%%%%%%%%%%%%%%%%%%%%%%%%%%%%%%%%%%%%%%%%%%%%%%%%%%%%%%%%%%%
%%%%%%%%%%%%%%%%%%%%%%%%%%%%%%%%%%%%%%%%%%%%%%%%%%%%%%%%%%%%%%%%%%%%%%%%
\begin{proof}
Note first from the definition \eqref{defenergy} of $E$ and the Hardy-Sobolev inequality of Lemma \ref{lemhs} that for some $\delta_3\in (0, 1)$, 
\begin{equation}\label{equiv3}
\mbox{if }\|v\|_{\q H}\le \delta_3,\mbox{ then }\delta_3 E(v) \le \|v\|_{\q H}^2 \le \frac{E(v)}{\delta_3}.
\end{equation}
Assuming \eqref{cond1}, we see from \eqref{init} that $\|(w(\bar s),\partial_s w(\bar s))\|_{\q H}\le \delta_3$, hence from \eqref{equiv3}, we have $E(w(\bar s))\le \frac{\ee^2}{\delta_3}$. Using the monotonicity of $E$ (see \cite{MZajm03} or \eqref{dE} below), we obtain
\begin{equation}\label{bde}
\forall s\ge \bar s,\;\;E(w(s))\le \frac{\ee^2}{\delta_3}.
\end{equation}
Therefore, using \eqref{equiv3}, we see that in order to conclude the proof of Lemma \ref{lem00}, it is enough to prove that
\[
\forall s\ge \bar s,\;\; \|(w(s),\partial_s w(s))\|_{\q H}\le \delta_3.
\]
Using \eqref{cond1} and \eqref{init}, we may define
\[
s^*\equiv \sup\{s>\bar s,\;|\; \|(w(s),\partial_s w(s))\|_{\q H}\le \delta_3\}\in (\bar s, +\infty]
\]
(the fact that $s^*\ge \bar s$ comes from the continuity of the wave flow).
Assuming by contradiction that $s^*<+\infty$, we see by continuity that
\begin{equation}\label{contra}
\forall s\in [\bar s, s^*],\;\;\|(w(s),\partial_s w(s))\|_{\q H}\le \delta_3\mbox{ and }\|(w(s^*),\partial_s w(s^*))\|_{\q H}= \delta_3.
\end{equation}
Therefore, \eqref{equiv3} holds for $s=s^*$, and using \eqref{bde}
%, \eqref{init} 
and \eqref{cond1}, we see that $\|w(s^*)\|_{\q H}^2 \le \frac{E(w(s^*))}{\delta_3} \le \frac{\ee^2}{\delta_3^2}
%\le  \frac{\epsilon_1^2}{\delta_3^2}
\le \frac{\delta_3^2}4$, which is a contradiction by \eqref{contra}. This concludes the proof of Lemma \ref{lem00}.
\end{proof} 

\bigskip

{\bf Step 2: An exponentially decreasing Lyapunov functional}

Introducing
\begin{equation}\label{defh0}
h(s) = E(w) + \eta_4 \iint w \partial_s w \rho dy
\end{equation}
where $\eta_4>0$ will be fixed small enough shortly, we claim that the following lemma allows us to conclude with the norm in $\q H$:
%%%%%%%%%%%%%%%%%%%%%%%%%%%%%%%%%%%%%%%%%%%%%%
%%%%%%%%%%%%%%%%%%%%%%%%%%%%%%%%%%%%%%%%%%%%%%
\begin{lem}[An exponentially decaying Lyapunov functional]\label{exp0} There exists $\delta_4>0$ and $\epsilon_4\in(0, \frac{\delta_3^2}2)$ such that if 
%$\eta= \eta_4$ and 
$\ee \le \epsilon_4$, then for all $s\ge \bar s$, 
\[
\frac{\delta_3}2 h(s)\le \|(w(s), \partial_s w(s))\|_{\q H}^2 \le \frac 2{\delta_3}h(s)\mbox{ and }h'(s) \le - \delta_4 h(s).
\]
\end{lem}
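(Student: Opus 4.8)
The plan is to establish Lemma~\ref{exp0} by showing that $h(s)$ is both equivalent to $\|(w(s),\partial_sw(s))\|_{\q H}^2$ and satisfies a differential inequality forcing exponential decay. For the equivalence, I would start from the definition \eqref{defh0}, note that by Cauchy--Schwarz and Hardy--Sobolev we have $\left|\eta_4\iint w\,\partial_sw\,\rho\,dy\right|\le C\eta_4\|(w,\partial_sw)\|_{\q H}^2$, so for $\eta_4$ small enough, $h(s)=E(w(s))+O(\eta_4)\|(w,\partial_sw)\|_{\q H}^2$; combining this with the already-established equivalence $\delta_3 E(v)\le\|v\|_{\q H}^2\le E(v)/\delta_3$ from \eqref{equiv3} (valid since $\|(w(s),\partial_sw(s))\|_{\q H}\le\delta_3$ by Lemma~\ref{lem00}, provided $\epsilon_4\le\delta_3^2/2$), I get the two-sided bound $\frac{\delta_3}2 h(s)\le\|(w(s),\partial_sw(s))\|_{\q H}^2\le\frac 2{\delta_3}h(s)$ after shrinking $\eta_4$.

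For the differential inequality, the key computation is $h'(s)$. The term $\frac{d}{ds}E(w(s))$ is the dissipation identity from \cite{MZajm03}, namely (referenced as \eqref{dE} in the paper) $\frac{d}{ds}E(w(s))=-c\int_{|y|=1}(\partial_sw)^2\rho\cdots$ — a nonpositive boundary term, call it $-D(s)\le 0$. The new term is $\eta_4\frac{d}{ds}\iint w\,\partial_sw\,\rho\,dy=\eta_4\iint\left((\partial_sw)^2+w\,\partial_s^2w\right)\rho\,dy$; substituting equation \eqref{eqw} for $\partial_s^2w$ and integrating by parts using the definition \eqref{defro} of $\q L$ and the identity \eqref{defphi2}, the quadratic-in-$w$ part produces $-\iint\left(|\nabla w|^2-(y\cdot\nabla w)^2+\frac{2(p+1)}{(p-1)^2}w^2\right)\rho\,dy$ plus the superlinear term $\iint|w|^{p+1}\rho\,dy$ plus cross terms with $\partial_sw$. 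The crucial point is that the negative definite quadratic form dominates: using $\|(w,\partial_sw)\|_{\q H}\le\delta_3$ small, the Hardy--Sobolev inequality (Lemma~\ref{lemhs}) controls $\iint|w|^{p+1}\rho\,dy\le C\|(w,\partial_sw)\|_{\q H}^{p+1}\le C\delta_3^{p-1}\|(w,\partial_sw)\|_{\q H}^2$, which is absorbable, and the first-order-in-$\partial_sw$ cross terms are handled by Cauchy--Schwarz, paying a factor controllable against both $D(s)$ and the quadratic part when $\eta_4$ is small. Collecting terms gives $h'(s)\le -\eta_4\left(\iint(\partial_sw)^2\rho\,dy + c\,\iint\left(|\nabla w|^2-(y\cdot\nabla w)^2+w^2\right)\rho\,dy\right)+(\text{small})\le -\delta_4\|(w,\partial_sw)\|_{\q H}^2\le-\delta_4 h(s)$, after rescaling constants and using the equivalence established above.

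The main obstacle I anticipate is the sign and size bookkeeping in the cross terms, i.e.\ controlling the contributions of $\partial_sw$ (both the damping terms $-\frac{p+3}{p-1}\partial_sw-2y\cdot\nabla\partial_sw$ from \eqref{eqw} and the boundary dissipation $D(s)$) against the coefficient $\eta_4$: one must choose $\eta_4$ small relative to the coercivity constant of the quadratic form but not so small that $\eta_4$ times the good negative term is swamped by the uncontrolled-sign piece $\iint(\partial_sw)^2\rho\,dy$ appearing with a \emph{positive} sign in $\eta_4\frac{d}{ds}\iint w\,\partial_sw\,\rho$. The resolution is that the $+\eta_4\iint(\partial_sw)^2\rho\,dy$ term is itself $\le\eta_4\|(w,\partial_sw)\|_{\q H}^2$, harmless compared to the $-\eta_4\cdot c\iint(|\nabla w|^2-(y\cdot\nabla w)^2+w^2)\rho\,dy$ contribution once one observes that coercivity near $0$ (no modulation needed here, unlike near $\kappa(d)$) gives a clean negative definite form; still, making all the constants line up — $\eta_4$ versus $\delta_3$ versus $\delta_4$ — is the delicate part. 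I expect this to closely parallel the one-dimensional argument, so I would carry out the integration by parts carefully once and then cite the analogous computation.
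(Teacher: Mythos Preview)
Your overall strategy is right and matches the paper's, but there is a concrete gap in the mechanism you propose for controlling the $(\partial_sw)^2$ contributions, and it stems from a misidentification of the dissipation term.

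The identity \eqref{dE} is not a boundary integral; it is the bulk weighted integral
\[
\frac{d}{ds}E(w(s)) = -2\alpha\iint (\partial_sw)^2\,\frac{\rho}{1-|y|^2}\,dy.
\]
This is essential. When you differentiate $\iint w\,\partial_sw\,\rho\,dy$ and substitute \eqref{eqw}, the paper writes the result as $-2E(w)+2\iint(\partial_sw)^2\rho\,dy+\frac{p-1}{p+1}\iint|w|^{p+1}\rho\,dy - G$, so $h'(s)$ contains the \emph{positive} term $+2\eta_4\iint(\partial_sw)^2\rho\,dy$ together with cross terms $G$ that, after integration by parts, are bounded by $C_4\iint(\partial_sw)^2\frac{\rho}{1-|y|^2}\,dy + E(w)$. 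Your proposed resolution---that $+\eta_4\iint(\partial_sw)^2\rho\,dy$ is ``harmless compared to'' the negative gradient-of-$w$ term---cannot work: $(\partial_sw)^2$ and $|\nabla w|^2-(y\cdot\nabla w)^2+w^2$ are independent components of the $\q H$ norm, and no coercivity argument trades one for the other. The actual mechanism is that the bulk dissipation $-2\alpha\iint(\partial_sw)^2\frac{\rho}{1-|y|^2}\,dy$ from $E'$ dominates both $+2\eta_4\iint(\partial_sw)^2\rho\,dy$ and the $(\partial_sw)^2$ part of $|G|$, since $\rho\le\frac{\rho}{1-|y|^2}$; this forces $\eta_4\le\frac{2\alpha}{C_4+2}$. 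With that choice, all $(\partial_sw)^2$ terms drop out with the correct sign, leaving $h'(s)\le(-\eta_4+C_4\ee^{p-1})E(w(s))$, and smallness of $\ee$ finishes the job via Lemma~\ref{lem00}.

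So the fix is simple but structural: replace your ``boundary dissipation $D(s)$'' by the weighted bulk integral above, and use it---not the $w$-coercivity---to absorb every $(\partial_sw)^2$ term. The equivalence part of your argument is fine.
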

%%%%%%%%%%%%%%%%%%%%%%%%%%%%%%%%%%%%%%%%%%%%%%%%%%%%%%%%%%%%%%%%%%%%%%%%%%%
%%%%%%%%%%%%%%%%%%%%%%%%%%%%%%%%%%%%%%%%%%%%%%%%%%%%%%%%%%%%%%%%%%%%%%%%%%%
Indeed, if this lemma holds
%, then, fixing $\eta = \eta_4$ and imposing that 
and
$\ee\le \epsilon_4$, we see that for all $s\ge \bar s$, we have $h(s)\le e^{-\delta_4(s-\bar s)}h(\bar s)$. Using the first estimate of Lemma \ref{exp0}, we see that
\[
\forall s\ge \bar s,\;\; \|(w(s), \partial_s w(s))\|_{\q H}^2 \le \frac 4{\delta_3^2}e^{-\delta_4(s-\bar s)}\|(w(\bar s), \partial_s w(\bar s))\|_{\q H}^2,
\]
which is the desired estimate of Proposition \ref{prop0}.
It remains then to prove Lemma \ref{exp0} in order to conclude the proof of Proposition \ref{prop0}.

\medskip

\begin{proof}[Proof of Lemma \ref{exp0}] Assume first that $\ee \le \frac{\delta_3^2}2$. 
Since $\left|\iint w \partial_s w \rho dy\right|\le \|(w,\partial_s w)\|_{\q H}^2$ by definition \eqref{defnh} of the norm in $\q H$, we see from Lemma \ref{lem00} and 
%the definition 
\eqref{defh0} that the first estimate in Lemma \ref{exp0} follows when $\eta_4\le \frac{\delta_3}2$. We then focus on the second estimate.\\
Multiplying equation \eqref{eqw} by $\partial_s w \rho$ and integrating, we get the well-known energy identity
\begin{equation}\label{dE}
\frac d{ds} E(w(s)) = - 2 \alpha \iint (\partial_s w)^2\frac \rho{1-|y|^2} dy, 
\end{equation}
where $\alpha>0$ is defined in \eqref{defro}.\\ 
Using equation \eqref{eqw}, integration by parts and the definition \eqref{defenergy} of $E$, we see that
\[
\frac {d}{ds} \iint w\partial_s w \rho dy = -2 E(w) + 2\iint (\partial_s w)^2 \rho dy +\frac{p-1}{p+1}\iint |w|^{p+1} \rho dy -G
\]
where
\[
G = -\frac{p+3}{p-1} \iint w \partial_s w \rho dy -2 \iint w y\cdot \nabla \partial_s w \rho dy.
%\le C_4\epsilon\iint (\partial_s w)^2\frac \rho{1-|y|^2} + \epsilon\|w\|_{\q H_0}^2, 
\]
Using again integration by parts, the Hardy-Sobolev inequality of Lemma \ref{lemhs} and Lemma \ref{lem00}, we see that for some $C_4>0$,
% and for any $\epsilon>0$,
we have for all $s\ge \bar s$, 
\begin{align*}
&|G|\le C_4\iint (\partial_s w)^2\frac \rho{1-|y|^2} + \delta_3\|w\|_{\q H_0}^2\le C_4\iint (\partial_s w)^2\frac \rho{1-|y|^2}+E(w),\\
&\frac{p-1}{p+1}\iint |w|^{p+1} \rho\le C_4\|w\|_{\q H_0}^{p+1}\le C_4 \ee^{p-1}\|w\|_{\q H_0}^2\le C_4 \ee^{p-1}E(w)
\end{align*}
(see page 112 of \cite{MZjfa07} for details on the bound for $G$).\\
Collecting the various estimates above, we write from 
%the definition 
\eqref{defh0} 
%of $h(s)$ 
that for all $s\ge \bar s$, 
\[
h'(s) \le (-2\alpha +\eta_4(C_4+2))\iint (\partial_s w)^2\frac \rho{1-|y|^2}
+(-\eta_4+C_4 \ee^{p-1})E(w(s)).
\]
Fixing 
\[
%\eta=
\eta_4 = \min\left(\frac{\delta_3}2,\frac{2\alpha}{C_4+2}\right) 
%\]
\mbox{ and taking }
%\[
\ee \le \epsilon_4 \equiv \min\left(\frac{\delta_3^2}2,\left(\frac{\eta_4}{2C_4}\right)^{\frac 1{p-1}}\right),
\]
we see from Lemma \ref{lem00} and the first part of Lemma \ref{exp0} that
\[
\forall s\ge \bar s,\;\;h'(s) \le -\frac {\eta_4} 2E(w) \le -\frac{\eta_4 \delta_3^2}4 h(s)
\]
which concludes the proof of Lemma \ref{exp0}. 
\end{proof}
%Since we have already seen that Proposition \ref{prop0} follows from Lemma \ref{exp0}, this concludes the proof of Proposition \ref{prop0} too.
Since we have already seen that the exponential decreasing of the norm in $\q H$ follows from Lemma \ref{exp0}, this concludes the proof of Proposition \ref{prop0} in the general case. It remains to handle the case where $\bar x\in \RR$.

\bigskip

{\bf Step 3: A covering technique for the $H^1\times L^2(|y|<1)$ norm}

Assume now that $\bar x\in \RR$ and that 
\begin{equation}\label{petit0}
\ee\equiv\left\|(w_{\bar x}(\bar s),\partial_s w_{\bar x}(\bar s))\right\|_{\H}\le \frac{\epsilon_1}2,
\end{equation}
where $\epsilon_1>0$ is the constant that makes item (i) of the proposition works. \\
 Note first that since $\bar x$ is non-characteristic, using the uniform bound on the solution at blow-up (Theorem 2' in \cite{MZimrn05}) and the covering technique in that paper (Proposition 3.3 in \cite{MZimrn05}), we get for all $s\ge -\log T(\bar x)$, 
\begin{equation}\label{petit1}
\left\|(w_{\bar x}(s),\partial_s w_{\bar x}(s))\right\|_{H^1\times L^2 \left(|y|<\frac 1{\eta_0}\right)} \le K
\end{equation}
for some $\eta_0(\bar x)\in(0,1)$ and $K(\bar x)>0$.\\
Introduce $\bar t = T(\bar x)-e^{-\bar s}$ and consider $x'\in \m R^N$ such that $|x'-\bar x|\le \frac{e^{-\bar s}}{\eta_0}$. Consider then $w_{x',\bar T(x')}(y',s')$, the similarity variables transformation of $u(x,t)$, centered at $(x', \bar T(x'))$, where
 \begin{equation*}%\label{defTe}
\bar T(x') = T(\bar x) - \eta_0|x'-\bar x|
\end{equation*}
(note that the point $(x',\bar T(x'))$ lays on the boundary of the backward cone of vertex $(\bar x, T(\bar x))$ and slope $\eta_0$). From \eqref{defw}, we see that for any $s'\ge \sigma(x',\bar s)$, the function $w_{x',\bar T(x')}(y',s')$ is defined for all $|y'|<1$, 
\begin{equation}\label{delta}
w_{x',\bar T(x')}(y',s')= (1+\delta)^{\frac 2{p-1}}w_{\bar x}(y,s)\mbox{ with }\delta = (\bar T(x')-T(\bar x))e^s=-\eta_0|x'-\bar x|e^s,
\end{equation}
and
\begin{equation}\label{s'y}
 s'=\sigma(x',s)=s-\log(1+\delta),\;\; y = y'(1+\delta) + e^s(x'-\bar x).
%\mbox{ and } \delta = (\bar T(x')-T(\bar x))e^s.
\end{equation}
Using the bound \eqref{petit1} with $s=\bar s$, we see from Lebesgue's theorem that
\[
\|(w_{x',\bar T(x')}(\bar s'), \partial_s w_{x',\bar T(x')}(\bar s'))\|_{\q H}\to \|(w_{\bar x}(\bar s), \partial_s w_{\bar x}(\bar s))\|_{\q H}\mbox{ as }x'\to \bar x
\]
with $\bar s'= \sigma(x', \bar s)$.
Therefore, using \eqref{petit0}, we see that for some small enough $\epsilon^*(\bar x)>0$, if $ |x'-\bar x|< \epsilon^*$, then
\[
\|(w_{x',\bar T(x')}(\bar s'), \partial_s w_{x',\bar T(x')}(\bar s'))\|_{\q H}\le \epsilon_1.
\] 
Since $\epsilon_1$ is the constant that makes item (i) of Proposition \ref{prop0} works (see the beginning of Step 3 above), applying that item, we see that for all $s'\ge \bar s'$, we have 
\[
 \|(w_{x',\bar T(x')}(s'), \partial_s w_{x',\bar T(x')}(s'))\|_{\q H}\le \frac{\epsilon_1}{\delta_1}e^{-\delta_1(s'-\bar s')},
\]
for some $\delta_1>0$. Integrating only on the ball $\{|y'|<\frac 12\}$, we get rid of the weights and see that if $ |x'-\bar x|< \epsilon^*$ and $s'\ge \bar s'$, then
\[
%\mbox{if }|x'-\bar x|< \bar \eta\mbox{ and }s'\ge \bar s',\mbox{ then }
 \|(w_{x',\bar T(x')}(s'), \partial_s w_{x',\bar T(x')}(s'))\|_{H^1\times L^2(|y'|<\frac 12)}\le C \frac{\epsilon_1}{\delta_1}e^{-\delta_1(s'-\bar s')}.
\]
%Consider then $t^*=T(\bar x) - \epsilon^*$ and $s^* = -\log (T(\bar x)-t^*) = - \log \epsilon^*$. 
Using the covering method of Proposition 3.3 in \cite{MZimrn05}, we see that for $s\ge \max(\bar s,-\log \epsilon^*)$, 
\begin{align}
&\|(w_{\bar x}(s), \partial_s w_{\bar x}(s))\|_{H^1\times L^2(|y|<1)}\nonumber\\
\le&C\sup_{|x'-\bar x|\le e^{-s}}\|(w_{x',\bar T(x')}(s'), \partial_s w_{x',\bar T(x')}(s'))\|_{H^1\times L^2(|y'|<\frac 12)}\le C \frac{\epsilon_1}{\delta_1}e^{-\delta_1(s'-\bar s')}.\label{gojo}
\end{align}
%  and we want to bound the $L^r(|y|<1)$ bound of $w_{\bar x}(s)$ by a finite sum of $L^r(|y'|<\frac 12)$ of $w_{x', \bar T(x')}(s')$, it is enough to consider $x'$ such that
% \[
% |x'-\bar x|\le e^{-s}.
% \]
Since we see from \eqref{delta} and \eqref{s'y} that for  $s\ge \max(\bar s,-\log \epsilon^*)$ and $|x'-\bar x|\le e^{-s}$, we have 
\[
e^{-\delta_1(s'-\bar s')} = e^{-\delta_1(s-\bar s)}\left(\frac{1-\eta_0|x'-\bar x|e^s}{1-\eta_0|x'-\bar x|e^{\bar s}}\right)^{\delta_1}\le  e^{-\delta_1(s-\bar s)}(1-\eta_0)^{-\delta_1}, 
\]
we see that item (ii) follows from \eqref{gojo}. 
This concludes the proof of Proposition \ref{prop0}. Since we have already derived Corollary \ref{propnd} from Proposition \ref{prop0}, this finishes the justification of Corollary \ref{propnd} too.
\end{proof}

%%%%%%%%%%%%%%%%%%%%%%%%%%%%%%%%%%%%%%%%%%%%%%
%%%%%%%%%%%%%%%%%%%%%%%%%%%%%%%%%%%%%%%%%%%%%%
\subsection{Behavior of solutions of equation \eqref{eqw} near $\pm \kappa(d,y)$}
%%%%%%%%%%%%%%%%%%%%%%%%%%%%%%%%%%%%%%%%%%%%%%
%%%%%%%%%%%%%%%%%%%%%%%%%%%%%%%%%%%%%%%%%%%%%%
We give the proof of Theorem \ref{proptrap} here, showing the dynamics of equation \eqref{eqw} near the set $\{ \pm \kappa(d,y)\}$. 

\medskip

\begin{proof}[Proof of Theorem \ref{proptrap}]
Note first that the last two sentences of Theorem \ref{proptrap} follow directly from the first part. Indeed, if $w(y,s)=w_{\bar x}(y,+\sigma)$ for some $\bar x\in \m R$, $\sigma \ge- \log T(\bar x)$ where $w_{\bar x}$ is defined in \eqref{defw} from a blow-up solution $u$, then we know from the Cauchy theory that $u$ is continuous in time with values in $H^1 \times L^2$ of slices of the truncated backward light cone $\q C_{\bar x, T(\bar x), 1} \cap \{t\ge 0\}$. Since $\q H\subset H^1\times L^2(|y|<1)$, it follows that $w_{\bar x}$ is continuous in time with values in $\q H$. Since by definition, $w_{\bar x}$ is defined for all $(y,s) \in B(0,1)\times [\- \log T(\bar x))$, the first possibility of Theorem \ref{proptrap} never occurs. If in addition, $\bar x \in \RR$, the the second possibility is ruled out thanks to the non-degeneracy of blow-up at non-characteristic points stated in Corollary \ref{propnd}. Thus, we only prove the first sentence of Theorem \ref{proptrap}.\\
% Note first that the last sentence of Theorem \ref{proptrap} devoted to the case $\bar x\in \RR$ follows from the first part of that theorem and the non-degeneracy of blow-up at non-characteristic points stated in Corollary \ref{propnd}. Thus, we only prove the first sentence of Theorem \ref{proptrap}.\\
From the invariance of equation \eqref{equ}, we reduce to the case where $\bar\omega=1$, and assume that for some
solution $w$ of equation \eqref{eqw} continuous in time with values in $\q H$,
% $\bar x\in \m R^N$, $\bar s\ge -\log T(\bar x)$,
 $|\bar d|<1$ and $\eb>0$, we have 
\begin{equation}\label{trap}
\ee\equiv\left \|\vc{w(0)}{\partial_s w(0)}-\vc{\kappa(\bar d)}{0}\right\|_{\H}\le \eb.
\end{equation}
%where for simplicity, we write $w$ instead of $w_{\bar x}$.
We proceed in two parts in order to prove Theorem \ref{proptrap}:\\
- In the first part, we give a modulation technique near the set $\{\pm \kappa(d,y)\}$, killing the $N+1$ nonnegative directions of $L_d$, the linearized operator around $\kappa(d,y)$ (see Lemma \ref{l10} above);\\
- In the second part, giving the dynamics of the linearized equation, we finish the proof of Theorem \ref{proptrap}.

\bigskip

%%%%%%%%%%%%%%%%%%%%%%%%%%%%%%%%%%%%%%%%%%%%%%
%%%%%%%%%%%%%%%%%%%%%%%%%%%%%%%%%%%%%%%%%%%%%%
{\bf Part 1: A modulation technique}
%%%%%%%%%%%%%%%%%%%%%%%%%%%%%%%%%%%%%%%%%%%%%%
%%%%%%%%%%%%%%%%%%%%%%%%%%%%%%%%%%%%%%%%%%%%%%

From the continuity of the flow associated with equation \eqref{equ} in $\q H$, we see that the solution $w$ will stay close to that set, at least for a short time after $0$. 
 % Assuming the continuity of the flow associated with equation \eqref{equ} in $\q H$ (as a matter of fact, our proof will yield that continuity, at least near the set $\{\kappa(d)\;|\;|d|<1\}$), we see that it is reasonable to think that the solution will stay close to that set, at least for a short time after $\bar s$. 
In fact, we can do better, and impose some orthogonality conditions, killing the expanding and zero directions of $L_d$, the linearized operator of equation \eqref{eqw} around $\kappa(d,y)$.\\ 
% {\bf Tu me demandes la suppression du paragraphe pr\'ec\'edent : je ne suis pas d'accord}.\\
This needs the introduction of $\kappa^*(d,\nu,y) = (\kappa_1^*, \kappa_2^*)(d,\nu,y)$ with
\begin{equation}\label{defk*}
\kappa_1^*(d,\nu, y) =
\ds\kappa_0\frac{(1-|d|^2)^{\frac 1{p-1}}}{(1+d.y+\nu)^{\frac 2{p-1}}},\;
\kappa_2^*(d,\nu, y) = \nu \pnu \kappa_1^*(d,\nu, y) =
\ds-\frac{2\kappa_0\nu}{p-1}\frac{(1-|d|^2)^{\frac 1{p-1}}}{(1+d.y+\nu)^{\frac {p+1}{p-1}}}.
\end{equation}
Note that for all $\mu \in \m R$, 
$\kappa^*_1(d,\mu e^s,y)$ is a particular solution to equation \eqref{eqw}.\\
Let us consider $A\ge 2$ to be fixed later large enough
and give the following:
%The following modulation technique adapted from Merle and Zaag in \cite{MZisol10} is crucial:
%%%%%%%%%%%%%%%%%%%%%%%%%%%%%%%%%%%%%%%%%%%%%%%%%%%%%%%%%%%%%%%%%%%%%%%%%%
%%%%%%%%%%%%%%%%%%%%%%%%%%%%%%%%%%%%%%%%%%%%%%%%%%%%%%%%%%%%%%%%%%%%%%%%%%
\begin{lem}[A modulation technique]\label{lemmod}For all $A\ge 2$, there exists $C_1(A)>0$ and $\eun(A)>0$ such that if $v\in \H$, $|\bar d|<1$ and $\bar \nu\in (-1,\infty)$ satisfy
\begin{equation}\label{condnu}
-1+\frac 1A \le \frac{\bar \nu}{1-|\bar d|}\le A\mbox{ and }
\|v- \kappa^*(\bar d,\bar \nu)\|_{\H}\le \eun,
\end{equation}
then, there exist $|d|<1$ and $\nu\in (-1,\infty)$ (both continuous as a function of $v$) such that
\begin{equation}\label{ortho}
\forall i=0,\dots,N,\;\;\pi^{d^*}_i(q)=0,
\end{equation}
where $d^*=\frac d{1+\nu}$ and the operator $\pi_i^{d^*}$ is defined in \eqref{defpdi}. Moreover, we have 
\begin{align}
&\|v-\kappa^*(d,\nu)\|_{\q H} 
+|\arg\tanh |d| - \arg\tanh |\bar d||+\left|\frac\nu{1-|d|} - \frac{\bar \nu}{1-|\bar d|}\right|
+\frac{|d-\bar d|}{\sqrt{1-|\bar d|}}\nonumber\\
\le& C_1\|v- \kappa^*(\bar d,\bar \nu)\|_{\q H}.\label{proximite}
\end{align}
\end{lem}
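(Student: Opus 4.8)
The plan is to prove Lemma \ref{lemmod} via the implicit function theorem, applied to the map that sends the modulation parameters $(d,\nu)$ to the $N+1$ projections $\pi^{d^*}_i(v-\kappa^*(d,\nu))$, viewed as functions of both the parameters and of $v\in\H$. The natural unknowns are $(d,\nu)$, but because the bounds in \eqref{proximite} degenerate as $|\bar d|\to 1$ and as $\nu/(1-|d|)$ ranges over $[-1+1/A,A]$, it is cleaner to work in renormalized variables: set $\zeta = \arg\tanh|d|$ (or equivalently a variable adapted to the hyperbolic geometry), keep the angular part of $d$ on the sphere $S^{N-1}$, and use $m = \nu/(1-|d|)$ in place of $\nu$. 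In these variables the constraint \eqref{condnu} reads $-1+1/A\le m\le A$, a fixed compact set, and the target estimate \eqref{proximite} becomes a statement about Lipschitz dependence with constants uniform in $\bar d$.

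The key steps, in order: (1) Observe that when $v=\kappa^*(\bar d,\bar\nu)$ the choice $(d,\nu)=(\bar d,\bar\nu)$ makes all projections vanish — this is because, writing $d^* = d/(1+\nu)$, one has $\kappa^*_1(d,\nu,y)=\kappa(d^*,y)\cdot(\text{scalar})$ up to the time-rescaling built into $\kappa^*$, so $v-\kappa^*(d,\nu)$ is exactly the $q$ of Definition \ref{lemexpansion} relative to the soliton $\kappa(d^*)$, and at the base point $q=0$. Hence the IFT is being applied at a genuine solution. (2) Compute the Jacobian of $(d,\nu)\mapsto(\pi^{d^*}_i(\kappa^*(\bar d,\bar\nu)-\kappa^*(d,\nu)))_{i=0,\dots,N}$ at $(d,\nu)=(\bar d,\bar\nu)$. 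The derivatives of $\kappa^*$ with respect to the $\nu$-direction and the radial direction of $d$ produce (up to nonzero scalars and lower-order terms) the eigenfunctions $F_0(d^*)$ and $F_1(d^*)$, while the derivatives with respect to the $N-1$ angular directions of $d$ produce $F_2(d^*),\dots,F_N(d^*)$; by the biorthogonality $\phi(F_i,W_j)=\delta_{ij}$ of Lemma \ref{eigenld*}(ii), the Jacobian matrix is, after the renormalization, upper triangular with nonzero diagonal and bounded entries, hence invertible with inverse bounded uniformly in $\bar d$. Here one must use the normalization bounds of Lemma \ref{eigenld*}(iii) to control $\partial_{e_j}W_i$ and get the $1/\sqrt{1-|\bar d|}$ and $1/(1-|d|)$ weights exactly as they appear in \eqref{proximite}. (3) Apply the quantitative IFT: there is $\eun(A)>0$ so that for $\|v-\kappa^*(\bar d,\bar\nu)\|_\H\le\eun$ a unique solution $(d,\nu)$ near $(\bar d,\bar\nu)$ exists, depends continuously on $v$, and satisfies the linear estimate \eqref{proximite} with a constant $C_1(A)$ coming from the norm of the inverse Jacobian; the triangle inequality then also controls $\|v-\kappa^*(d,\nu)\|_\H$ by $\|v-\kappa^*(\bar d,\bar\nu)\|_\H+\|\kappa^*(d,\nu)-\kappa^*(\bar d,\bar\nu)\|_\H$, the second term being absorbed using the parameter estimates just obtained. (4) Finally check that the resulting $\nu$ stays in $(-1,\infty)$ and that the new ratio $\nu/(1-|d|)$ stays in a slightly enlarged interval around $[-1+1/A,A]$, which is automatic from the closeness estimate once $\eun$ is small.

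The main obstacle is step (2): making the uniformity in $\bar d$ honest, in particular near the two degenerate regimes $d\to 0$ and $|d|\to 1$. Near $d=0$ the angular directions $e_i(d)$ are only defined on the patches $B_\pm$ of Lemma \ref{lembase} and the bound $|\nabla e_i(d)|\le C/|d|$ blows up, so one has to either restrict attention to $|d|$ bounded below by a fixed small constant and handle small $|d|$ separately by a direct argument (where $\kappa(d)$ is close to the space-independent soliton $\kappa_0$ and the angular degeneracy is invisible because $F_i$ are genuinely of size $|d|$ there), or parametrize the angular variable through the $C^\infty$ basis directly. Near $|d|\to 1$, the issue is that the $\H$-norms of $\kappa^*(d,\nu)$ and of its parameter-derivatives are not uniformly bounded, which is precisely why the weights $1/(1-|d|)$ and $1/\sqrt{1-|\bar d|}$ must be inserted and why \eqref{proximite} is phrased with the hyperbolic distance $\arg\tanh|d|$ rather than $d$ itself; getting these exponents right, consistently with Lemma \ref{eigenld*}(iii) and with the $\nu$-dependence through $d^*=d/(1+\nu)$, is the delicate bookkeeping. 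Since the analogous one-dimensional computation is done in \cite{MZjfa07} and the decoupled-soliton version in \cite{MZisol10}, I would carry out the linear-algebra computation of the Jacobian in the rotated basis $e_i(d)$ in full detail in the Appendix (Section \ref{appmod}) and only state the conclusion here.
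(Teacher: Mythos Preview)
Your proposal is correct and follows essentially the same route as the paper's proof in Appendix \ref{appmod}: implicit function theorem applied to $\Psi(v,\nu,d)=(\pi_i^{d^*}(v-\kappa^*(d,\nu)))_{i=0,\dots,N}$, with the Jacobian in $(\nu,d)$ shown to be invertible via Claim \ref{cltech} (which encodes exactly the biorthogonality argument you describe), the switch to $\zeta=\arg\tanh|d|$ to normalize the radial column, and a separate treatment of small $|\bar d|$ to avoid the $1/|d|$ singularity in $\partial_{e_j}W_i$. The paper's Case 2 handles $|\bar d|\le\delta_1$ by replacing $W_i(d)$ with an alternative basis $\bar W_i(d)$ \eqref{defwb} that is $C^1$ through $d=0$ rather than by a perturbative argument around $\kappa_0$, and the final bound on $\|v-\kappa^*(d,\nu)\|_{\q H}$ is closed using the Lipschitz estimate (v) of Claim \ref{propk*}; otherwise your outline matches the paper step for step.
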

%%%%%%%%%%%%%%%%%%%%%%%%%%%%%%%%%%%%%%%%%%%%%%%%%%%%%%%%%%%%%%%%%%%%%%%%%%
%%%%%%%%%%%%%%%%%%%%%%%%%%%%%%%%%%%%%%%%%%%%%%%%%%%%%%%%%%%%%%%%%%%%%%%%%%
\begin{nb} One may think that condition \eqref{ortho} is ambiguous, since it involves $\pi_i^d$ whose definition \eqref{defpdi} depends on the choice of the basis $(e_1(d),\dots, e_N(d))$ given in Lemma \ref{lembase}. This is not the case, since if \eqref{ortho} holds for some choice of that basis, then it holds for any other basis. 
\end{nb}
\begin{proof} As in the one-dimensional case (see Lemma 2.1 in \cite{MZisol10}), the result follows from the application of the implicit function theorem, no more. However, since there are new directions to kill, in comparison with the one-dimensional case (take $i=2,\dots,N$ in \eqref{ortho}), we end-up with some technical difficulties, involving in particular a Lipschitz property on $\kappa^*(d,\nu)$ in item (v) of Claim \ref{propk*} below. For details, we refer the interested reader to Appendix \ref{appmod}. 
\end{proof} 
Assuming that $\eb$ appearing in \eqref{trap} is smaller than $\eun$ defined in Lemma \ref{lemmod} (note that we will fix $\eb>0$ even smaller later), we see that condition \eqref{condnu} in Lemma \ref{lemmod} is satisfied with $v=w(y,0)$, $\bar d$ and $\bar \nu=0$. Applying Lemma \ref{lemmod} and using the continuity of the flow associated with equation \eqref{eqw} with values in $H^1\times L^2$, hence in $\q H$, 
%\footnote{Il faut justifier la continuité du flot dans $\q H$, ou clairement supposer que \eqref{trap} est vraie aussi dans $H^1\times L^2$... C'est une question que j'ai n\'eglig\'ee d\'ej\`a dans les travaux ant\'erieurs, il convient donc d'y r\'efl\'echir en dimension 1 avec un seul soliton.} 
we get the existence of a maximal $\hat s\in (0,\infty]$ such that $w$ can be modulated, in the sense that
\begin{equation}\label{defq}
\vc{w(y,s)}{\partial_s w(y,s)}=\kappa^*(d(s), \nu(s))+q(y,s)
\end{equation}
where the $C^1$ parameters $d(s)$ and $\nu(s)$ are such that for all $s\in [0, \hat s)$,
%\footnote{Pourquoi les param\`etres sont de classe $C^1$ ? Il faudrait que je rajoute dans le lemme de modulation le fait que $d$ et $\nu$ sont $C^1$ en fonction de $v$... C'est une question que j'ai n\'eglig\'ee d\'ej\`a dans les travaux ant\'erieurs, il convient donc d'y r\'efl\'echir en dimension 1 avec un seul soliton.}  
\begin{equation}\label{kill}
\pi^{d^*(s)}_i(q) =0\mbox{ for all }i=0,\dots,N, \mbox{ where }d^*(s) = \frac{d(s)}{1+\nu(s)},
\end{equation}
with
\begin{equation}\label{condmod}
-1+\frac 1A \le \frac {\nu(s)}{1-|d(s)|}\le A\mbox{ and }
\|q(s)\|_{\q H}\le \eb.
\end{equation}
When $s=0$, we obtain better estimates. Indeed, using \eqref{trap} and \eqref{proximite}, we see that
\begin{align}
\|q(0)\|_{\q H}
+|\arg\tanh |d(0)| - \arg\tanh |\bar d||+\frac{|\nu(0)|}{1-|d(0)|}
+\frac{|d(0)-\bar d|}{\sqrt{1-|\bar d|}}
&\le C_1(A) \ee.\label{difficile}
\end{align}
Considering an arbitrary $\eta \le \frac 12$ (that will be fixed later small enough in terms of $A$), and assuming that $\eb \le \frac \eta{C_1(A)}$, we introduce from \eqref{difficile}, \eqref{trap} and \eqref{condmod} $\tilde s = \tilde s(\eta) \in (0, \hat s]$ such that for all $s\in [0, \tilde s)$
\begin{equation}\label{cond0}
\frac{|\nu(s)|}{1-|d(s)|}\le \eta \mbox{ and }\|q(s)\|_{\q H}\le \eb.
\end{equation}
Following \eqref{cond0}, two cases then arise:\\
- either $\tilde s = \infty$;\\
- or $\tilde s <\infty$, and  \eqref{cond0} holds at $s=\tilde s$, with one of the two $\le$ symbols in \eqref{cond0} replaced by a $=$ symbol.\\
At this stage, we see that controlling the solution $w(s)$ in $\q H$ for $s\in [0,\hat s)$ is equivalent to controlling the three components defined in \eqref{defq}: $q(s)\in \q H$, $|d(s)|<1$ and $\nu(s) >-1$. 

\bigskip

%%%%%%%%%%%%%%%%%%%%%%%%%%%%%%%%%%%%%%%%%%%%%%%%%%%%%%%%%%%%%%%%%%%%%%%%%%%%%%
%%%%%%%%%%%%%%%%%%%%%%%%%%%%%%%%%%%%%%%%%%%%%%%%%%%%%%%%%%%%%%%%%%%%%%%%%%%%%%
{\bf Part 2: Behavior of equation \eqref{eqw} near $\kappa(d,y)$ and conclusion of the proof}
%%%%%%%%%%%%%%%%%%%%%%%%%%%%%%%%%%%%%%%%%%%%%%%%%%%%%%%%%%%%%%%%%%%%%%%%%%%%%%
%%%%%%%%%%%%%%%%%%%%%%%%%%%%%%%%%%%%%%%%%%%%%%%%%%%%%%%%%%%%%%%%%%%%%%%%%%%%%%

Adapting the techniques of \cite{MZisol10}, we obtain the dynamics of $q$, $d$ and $\nu$ in the following:
%%%%%%%%%%%%%%%%%%%%%%%%%%%%%%%%%%%%%%%%%%%%%%%%%%%%%%%%%%%%%%%%%%%%%%%%%%%%%%
%%%%%%%%%%%%%%%%%%%%%%%%%%%%%%%%%%%%%%%%%%%%%%%%%%%%%%%%%%%%%%%%%%%%%%%%%%%%%%
\begin{prop}[Dynamics of the parameters]\label{propdyn} 
There exist $\edeux(A)>0$, $\cdeux(A)>0$, $\eta_2(A)>0$  and $\mu_2(A)>0$ such that if $\eb\le \edeux$ and $\eta\le \eta_2$, then:\\
For all $s\in [0, \hat s)$,
\begin{align}
\frac {|\nu'-\nu|}{1-|d|}
+\frac{|d' \cdot e_1|}{1-|d|}
+\sum_{i=2}^N\frac{|d' \cdot e_i|}{\sqrt{1-|d|}} 
&\le \cdeux \|q(s)\|_{\q H}\left(\eb + \frac{|\nu|}{1-|d|}\right),\label{first}\\
\|q(s)\|_{\q H}^2 &\le \cdeux e^{C_2s}\|q(0)\|^2;\label{second}
\end{align}
For all $s\in [0, \tilde s)$,
% Moreover, there exists $\eta_2(A)>0$ such that if
% \begin{equation}\label{cond0}
% \forall s\in [\bar s, \hat s),\;\;|\nu(s)|\le \eta_2 (1-|d(s)|),
% \end{equation}
% then
\begin{equation}\label{third}
% \forall s\in [\bar s, \hat s),\;\;
\|q(s)\|_{\q H}^2 \le \cdeux e^{-2\mu_2s}\|q(0)\|^2.
\end{equation}
\end{prop}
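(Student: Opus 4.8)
The plan is to derive a closed system of differential inequalities for the three modulation components $q(s)\in\H^{d^*(s)}_-$, $\nu(s)$ and $d(s)$, and then run a bootstrap/energy argument. First I would differentiate the orthogonality conditions \eqref{kill} in $s$: plugging the decomposition \eqref{defq} into equation \eqref{eqw}, using that $\kappa^*(d,\nu)$ is close to being a stationary solution (with the $O(\nu)$ defect coming from \eqref{defk*}), and applying $\pi^{d^*}_i$ for $i=0,\dots,N$, one gets a linear algebraic system for $\nu'-\nu$ and the components $d'\cdot e_i$, whose matrix is a perturbation of a fixed invertible one (this uses Lemma \ref{eigenld*}(ii), the orthogonality $\phi(F_i,W_j)=\delta_{ij}$, and the derivative bounds on $W_i(d)$ and on the basis $e_i(d)$ from Lemmas \ref{lembase} and \ref{eigenld*}(iii)). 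Inverting it and carefully tracking the weights $1-|d|$ versus $\sqrt{1-|d|}$ in the angular directions $i\ge 2$ — exactly the new anisotropy flagged in the introduction — yields \eqref{first}, once one notes that the source terms are quadratic: they involve $\|q\|_\H$ times either $\|q\|_\H$ (from $f_d(q_1)$, controlled by the Hardy–Sobolev inequality of Lemma \ref{lemhs}) or $|\nu|/(1-|d|)$ (from the non-stationarity of $\kappa^*$). This is the step I expect to be the main obstacle: getting the $d$-dependence of the constants uniform as $|d|\to 1$ and as $|d|\to 0$ requires the sharp bounds of Lemma \ref{eigenld*}(iii) and a delicate bookkeeping of which projection sees which power of $1-|d|$; the $\H'$ norm appearing there (rather than $\H$) for the angular derivatives is precisely what forces the asymmetric weights in \eqref{first}.

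Next I would establish the energy estimates for $q$. Write the equation for $q$ in the moving frame: from \eqref{eqq} and \eqref{defq}, $q$ satisfies $\partial_s q = L_{d^*} q + (\text{modulation terms}) + (0,f_{d^*}(q_1))$, where the modulation terms are $-\dot d\cdot\partial_d\kappa^* - \dot\nu\,\partial_\nu\kappa^*$ plus the $O(\nu)$ defect, all controlled by \eqref{first} and \eqref{cond0}. Since the orthogonality \eqref{kill} places $q(s)$ in $\H^{d^*(s)}_-$, I would use the functional $\varphi_{d^*}(q,q)$, which by Proposition \ref{lemdefpos} is equivalent to $\|q\|_\H^2$ on that subspace and is the second variation of $E$ around $\kappa(d^*)$. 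Computing $\frac{d}{ds}\varphi_{d^*(s)}(q(s),q(s))$ produces: (a) a good dissipative term from the $-2\alpha\int(\partial_s w)^2\rho/(1-|y|^2)$ structure inherited from \eqref{dE}; (b) a term $\partial_s d^*$ hitting the $d$-dependence of $\varphi_{d^*}$, bounded using \eqref{first}; (c) the nonlinear term, quadratic-plus-higher and absorbed via Lemma \ref{lemhs} and smallness $\eb\le\edeux$; (d) the modulation-source contributions, again quadratic. For the crude bound \eqref{second} one simply throws everything onto the right with a constant $C_2$, giving $\frac{d}{ds}\varphi_{d^*}\le C_2\varphi_{d^*}$ and hence $\|q(s)\|_\H^2\le\cdeux e^{C_2 s}\|q(0)\|^2$ by Gronwall.

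Finally, for the decay \eqref{third}, valid only on $[0,\tilde s)$ where additionally $|\nu|/(1-|d|)\le\eta$, I would refine estimate (b)–(d): since on this interval all bad terms carry an extra factor $\eta$ or $\eb$ or $\|q\|_\H$, choosing $\eta\le\eta_2$ and $\eb\le\edeux$ small enough makes the negative dissipative term dominate, yielding $\frac{d}{ds}\varphi_{d^*}(q,q)\le -4\mu_2\,\varphi_{d^*}(q,q)$ for some $\mu_2(A)>0$ — here one must show the dissipation controls the full $\H$ norm of $q$ on $\H^{d^*}_-$, which is exactly where the coercivity \eqref{i} of Proposition \ref{lemdefpos} is used, combined with the fact that modulation has removed the nonnegative modes $F_0,\dots,F_N$ of $L_{d^*}$. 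Gronwall then gives \eqref{third}. Throughout, I would follow the one-dimensional scheme of \cite{MZisol10}, the only genuinely new input being the uniform-in-$d$ control of the new angular directions, handled by Lemmas \ref{lembase}, \ref{l10}, \ref{eigenld*} and the modulation Lemma \ref{lemmod}.
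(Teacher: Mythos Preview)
Your plan for \eqref{first} and \eqref{second} is essentially the paper's: differentiate the orthogonality conditions \eqref{kill}, project \eqref{eqq0} with $\pi_i^{d^*}$, invert the resulting algebraic system using Claim \ref{cltech} and the derivative bounds of Lemma \ref{eigenld*}(iii), and track the anisotropic weights. One technical point you only flag implicitly: the bound $\|\partial_{e_j}W_i(d)\|_{\H'}\le C/(|d|\sqrt{1-|d|})$ in Lemma \ref{eigenld*}(iii) blows up at $d=0$, so the paper splits into two cases, replacing $\pi_i^{d^*}$ by the non-singular projector $\bar\pi_i^{d^*}$ of \eqref{defbpi} when $|d|$ is small. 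This is a detail, not a gap.

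There is, however, a genuine gap in your argument for \eqref{third}. The dissipative term produced by differentiating $\varphi_{d^*}(q,q)$ (or its perturbation $\bar\varphi$) is only $-2\alpha\int q_2^2\,\rho/(1-|y|^2)$; it sees $q_2$ alone. The coercivity \eqref{i} of Proposition \ref{lemdefpos} tells you $\varphi_{d^*}(q,q)\sim\|q\|_\H^2$ on $\H^{d^*}_-$, but it does \emph{not} give $\int q_2^2\,\rho/(1-|y|^2)\gtrsim\varphi_{d^*}(q,q)$ --- take $q_2=0$, $q_1\neq 0$. So ``the negative dissipative term dominates'' is false as stated, and you cannot get $\frac{d}{ds}\varphi_{d^*}\le -4\mu_2\varphi_{d^*}$ from the energy identity plus coercivity alone.

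The fix, which the paper carries out (see \eqref{defh}, \eqref{eq3} and Lemma \ref{lemproj*}), is the standard cross-term trick for damped wave equations: work with the modified functional
\[
h_2(s)=\tfrac12\bar\varphi(q,q)-\int\q F(q_1)\rho\,dy+\eta_0\int q_1q_2\,\rho\,dy,
\]
where $\eta_0>0$ is small and universal. Differentiating the cross term $\int q_1q_2\rho$ produces, via the equation for $q_2$, a term $-\frac{7}{10}\bar\varphi(q,q)$ (this is where the full $q_1$-part of the norm finally appears) together with $+C\int q_2^2\rho/(1-|y|^2)$. Choosing $\eta_0$ small makes the total $q_2$-coefficient $-2\alpha+C\eta_0$ negative; then choosing $\eta,\eb$ small makes the remaining error terms smaller than $\frac{7}{10}\eta_0\bar\varphi$, and one gets $h_2'\le -\delta h_2$. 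The coercivity \eqref{i} is used, but only to show $h_2\sim\|q\|_\H^2$ (via \eqref{eq2}), not to manufacture dissipation in $q_1$.
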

%%%%%%%%%%%%%%%%%%%%%%%%%%%%%%%%%%%%%%%%%%%%%%%%%%%%%%%%%%%%%%%%%%%%%%%%%%%%%%%%%%%%%%%%%%
\begin{proof} The proof follows from the projection of equation \eqref{eqw} on the different components of $w(y,s)$ introduced in \eqref{defq}. We have already performed that projection in several of our former papers: \cite{MZjfa07}, \cite{MZajm11}, \cite{MZisol10} and \cite{CZmulti11}. For that reason, we leave the proof of this proposition to Appendix \ref{appdyn}.
\end{proof}
Now, we are in a position to finish the proof of Theorem \ref{proptrap}. Introducing
\begin{equation}\label{defzeta}
 \zeta(s) = -\arg\tanh |d(s)|,
 \end{equation}
  we see that $\zeta'(s) = -\frac{d'\cdot e_1}{1-|d|^2}$. Assuming that
\begin{equation}\label{smallbe}
A\ge 2,\;\;
\eta \le \eta_2(A),\;\;\eb\le \min\left(\epsilon_1(A),\frac \eta{C_1(A)}, \epsilon_2(A)\right)\mbox{ and }\ee \le \frac{\eb}{2C_1(A)\sqrt{C_2(A)}},
\end{equation}
we see from Proposition \ref{propdyn} and \eqref{difficile} that
\begin{equation}\label{fruit}
\begin{array}{lrl}
\forall s\in [0, \hat s),\;\;&
\frac {|\nu'(s)-\nu(s)|}{1-|d(s)|}+\frac{|d'(s)|}{\sqrt{1-|d(s)|}}+|\zeta'(s)|&\le C_3(A)\|q(s)\|_{\q H},\\
\forall s\in [0, \tilde s),\;\;&\|q(s)\|_{\q H}^2 \le \cdeux(A) C_1(A)^2\ee^2e^{-2\mu_2(A)s}&\le \frac{\epsilon_0^2}4,
\end{array}
\end{equation}
for some $C_3(A)>0$. In the following, we handle the two cases mentioned in the alternative following \eqref{cond0}. More precisely, we will show that if the various constants are suitably chosen, the first case leads to \eqref{conv0} and the second to \eqref{impossible}, which is the desired conclusion of Theorem \ref{proptrap}. 
%Following the alternative right before Proposition \ref{propdyn}, we handle two cases in the following.

\medskip

{\bf Case 1: $\tilde s=+\infty$}. {\it We will show in this case that \eqref{conv0} holds with \eqref{proxi0} satisfied, whatever is the value of $A\ge 2$, provided that $\eta$ and $\eb$ are small enough in terms of $A$, and $\ee$ small enough in terms of $A$ and $\epsilon_0$}.\\
Note that in this case, $\hat s = +\infty$ too. From \eqref{fruit} and \eqref{cond0}, we see that $\zeta(s)\to \zeta_\infty\in \m R$, $d(s) \to d_\infty$ and $\nu(s) \to 0$, as $s\to \infty$. Moreover, we have for all $s\ge 0$,
\begin{align}
&|\zeta(s) - \zeta_\infty|\le C_4(A)\ee e^{-\mu_2s},\label{kifkif}
\mbox{ hence }
\frac 1{C_4(A)}\le  \frac{1-|d_\infty|}{1-|d(s)|}\le C_4(A),\\
&|d(s)-d_\infty|\le C_4(A) \ee \sqrt{1-|d_\infty|}e^{-\mu_2s}\label{rough},\\
&|\nu(s)|\le C_4(A) \ee(1-|d_\infty|)e^{-\mu_2s},\mbox{ hence }
\frac {|\nu(s)|}{1-|d(s)|}\le C_4(A) \ee e^{-\mu_2s}\label{estnu}
\end{align}
for some $C_4(A)>0$. Assuming that in addition to \eqref{smallbe}, we have
\begin{equation}\label{simply}
\ee \le \frac 1{4C_4(A)},\mbox{ hence }\forall s\ge 0,\;\;|d(s) - d_\infty|\le \frac 14,
\end{equation}
we claim that
\begin{equation}\label{alech}
\frac{|l_{d_\infty}(d(s)-d_\infty)|}{1-|d_\infty|}
%+\frac{|\bot_{d_\infty}(d(s)-d_\infty)|}{\sqrt{1-|d_\infty|}} 
\le C_5(A)\ee e^{-\mu_2s},
\end{equation}
for some $C_5(A)>0$. 
Indeed, if $|d_\infty|\le \frac 12$, this follows from \eqref{rough}. If not, then
% we consider two possibilities in the following:\\
% {\it - If $|d_\infty|\ge \frac 12$}, 
we see from \eqref{simply} that the norm of all points in the segment $[d(s), d_\infty]$ is bounded from below by $\frac 14$. Using Lemma \ref{lembase}, we see that
\begin{equation}\label{proche}
|e_1(d(s))-e_1(d_\infty)|\le C|d(s)-d_\infty| \le CC_4(A) \ee \sqrt{1-|d_\infty|}e^{-\mu_2s}.
\end{equation}
Therefore, using \eqref{first}, \eqref{fruit}, \eqref{kifkif} and \eqref{proche}, we see that
\[
%\frac {|\nu'(s)-\nu(s)|}{1-|d_\infty|}+
\frac{|d'(s) \cdot e_1(d_\infty)|}{1-|d_\infty|}
%+\sum_{i=2}^N\frac{|d'(s) \cdot e_i(d_\infty)|}{\sqrt{1-|d_\infty|}} 
\le CC_4(A) \ee e^{-\mu_2s}.
\]
%for some $C_5(A)>0$. 
Integrating this on the interval $[s,+\infty)$, we obtain \eqref{alech}. 
% \begin{equation}%\label{alech}
% \frac{|l_{d_\infty}(d(s)-d_\infty)|}{1-|d_\infty|}
% \le \frac{CC_4(A)}{\mu_2(A)} \ee e^{-\mu_2(s-\bar s)}.
% \end{equation}
If we further impose in addition to \eqref{smallbe} and \eqref{simply} that
\[
C_5(A) \ee +C_4(A)^2\ee^2 \le \equatre(A), 
\]
where $\equatre(A)$ is introduced in Claim \ref{propk*}, then we see from \eqref{condmod} that we can apply that claim and get from \eqref{estnu}, \eqref{rough} and \eqref{alech}
\[
\|\kappa^*(d(s),\nu(s))- (\kappa(d_\infty),0)\|_{\q H}\le C_6(A)\ee e^{-\mu_2s}
\]
for some $C_6(A)>0$. 
Using \eqref{fruit} and a triangular inequality, we get \eqref{conv0}.
Taking $s=0$ in \eqref{kifkif} and \eqref{alech}, we see that
\begin{equation}\label{manar}
|\arg\tanh |d(0)|-\arg\tanh |d_\infty||+\frac{|d(0)-d_\infty|}{\sqrt{1-|d(0)|}}\le C_7(A)\ee
\end{equation}
for some $C_7(A)>0$. Since we see from \eqref{difficile} that 
\begin{equation}\label{c8}
\frac 1{C_8(A)} \le \frac{1-|\bar d|}{1-|d(0)|}\le C_8(A)
\end{equation}
 for some $C_8(A)>0$, \eqref{proxi0} follows from \eqref{manar} and \eqref{difficile}. 

\bigskip

{\bf Case 2: $\tilde s<+\infty$}.\label{case2} {\it In this case, we will show that \eqref{impossible} holds, provided that $A$ is fixed large enough, and as before, $\eta$ and $\eb$ are small enough in terms of $A$, and $\ee$ small enough in terms of $A$ and $\epsilon_0$}.\\
 From \eqref{cond0}, \eqref{fruit} and \eqref{condmod}, we see that 
\begin{equation}\label{alter0}
\nu(\tilde s)=\theta \eta(1-|d(\tilde s)|)\mbox{ for some }\theta=\pm 1
\end{equation}
 and that $\hat s >\tilde s$. The following lemma allows us to conclude:
%%%%%%%%%%%%%%%%%%%%%%%%%%%%%%%%%%%%%%%%%%%%%%%%%%%%%%%%%%%%%%%%%%%%%%%%%
%%%%%%%%%%%%%%%%%%%%%%%%%%%%%%%%%%%%%%%%%%%%%%%%%%%%%%%%%%%%%%%%%%%%%%%%%
\begin{lem}\label{lem2} For any $A\ge 2$, there exists $C_9(A)>0$ such that for any $\eta \in (0, \eta_2(A))$, there exists $\epsilon_9(A,\eta)>0$ such that if $\epsilon_0\le \epsilon_9(A,\eta)$, then:\\
%There exists $C_9(A)>0$ such that\\
(i) $\hat s -\tilde s \le C_9(A)$;\\
(ii) For all $s\in [\tilde s, \hat s]$, $\|q(s)\|_{\q H} \le C_9(A) \ee$;\\
(iii) For all $s\in  [\tilde s, \hat s]$, $\frac{1-|d(s)|}{1-|\bar d|}\le C_9(A)$.
\end{lem}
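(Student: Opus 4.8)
\textbf{Plan for the proof of Lemma \ref{lem2}.}
The lemma bounds the excursion of the flow between the exit time $\tilde s$ (where the modulation parameter $\nu$ first reaches the boundary value $\theta\eta(1-|d|)$) and the maximal modulation time $\hat s$. The three estimates must be proved together, by a bootstrap/continuity argument on $[\tilde s,\hat s]$, since each one feeds the others: controlling $q$ in $\q H$ requires controlling $1-|d|$ (to stay in the regime \eqref{condmod} where Proposition \ref{propdyn} applies), and controlling the variation of $d$ and $\nu$ via \eqref{first} requires controlling $\|q\|_{\q H}$. The plan is as follows. First I would fix $A$ large (its size will be pinned down at the end of Case 2, so here it is just a large parameter), fix $\eta\in(0,\eta_2(A))$, and introduce the continuity time $s^{**}=\sup\{s\in[\tilde s,\hat s]\;|\;\frac{1-|d(\sigma)|}{1-|\bar d|}\le 2C_8(A)e^{C_9'(A)(\sigma-\tilde s)}\text{ and }\|q(\sigma)\|_{\q H}\le 2C_9'(A)\ee\text{ for all }\sigma\in[\tilde s,s]\}$ for suitable constants. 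On $[\tilde s,s^{**}]$ the hypotheses of Proposition \ref{propdyn} hold, so \eqref{second} gives $\|q(s)\|_{\q H}^2\le C_2 e^{C_2(s-\tilde s)}\|q(\tilde s)\|_{\q H}^2$; combined with the bound $\|q(\tilde s)\|_{\q H}\le C_1(A)\sqrt{C_2(A)}\ee$ coming from \eqref{third} at $s=\tilde s$ (valid because $\tilde s$ is the right endpoint of $[0,\tilde s)$, reached by continuity) and \eqref{difficile}, this yields $\|q(s)\|_{\q H}\le C(A)\ee\,e^{C(A)(s-\tilde s)}$.

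Next, I would feed this into \eqref{first}: since $|\zeta'(s)|=\frac{|d'\cdot e_1|}{1-|d|^2}\le C\|q(s)\|_{\q H}$ and $\frac{|\nu'-\nu|}{1-|d|}\le C\|q(s)\|_{\q H}(\eb+\eta)$, integrating from $\tilde s$ shows that $|\log(1-|d(s)|)-\log(1-|\bar d|)|$ and $\big|\frac{\nu(s)}{1-|d(s)|}-\theta\eta\big|$ grow at most like $\int_{\tilde s}^s\|q\|_{\q H}\le C(A)\ee(e^{C(A)(s-\tilde s)}-1)$. Here is the crucial point: as long as $s-\tilde s$ stays bounded by a constant $C_9(A)$ — to be chosen now — the exponential $e^{C(A)(s-\tilde s)}$ is bounded by a constant depending only on $A$, so by taking $\epsilon_0=\epsilon_9(A,\eta)$ small enough we can make $\int_{\tilde s}^s\|q\|_{\q H}$ as small as we like; in particular smaller than what is needed to keep $\frac{1-|d(s)|}{1-|\bar d|}\le\frac32 C_8(A)$ (strictly below the continuity threshold $2C_8(A)$) and $\|q(s)\|_{\q H}\le\frac32 C(A)\ee$ (strictly below $2C(A)\ee$) on $[\tilde s,\min(s^{**},\tilde s+C_9(A))]$. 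A standard continuity argument then forces $s^{**}\ge\min(\hat s,\tilde s+C_9(A))$, giving (ii) and (iii) on that range. It remains to show $\hat s\le\tilde s+C_9(A)$, i.e. (i): one argues that the modulation cannot persist beyond $\tilde s+C_9(A)$, because at the maximal modulation time $\hat s$ one of the constraints in \eqref{condmod} must saturate — either $\frac{\nu(\hat s)}{1-|d(\hat s)|}$ hits $-1+\frac1A$ or $A$, or $\|q(\hat s)\|_{\q H}$ hits $\eb$ — and on $[\tilde s,\tilde s+C_9(A)]\cap[\tilde s,\hat s]$ we have just shown $\|q\|_{\q H}\le\frac32 C(A)\ee\le\eb$ (choosing $\ee$ small vs.\ $\eb$), while $\frac{\nu}{1-|d|}$ has moved from $\theta\eta$ by at most a small amount, hence stays strictly inside $(-1+\frac1A,A)$ since $\eta\le\frac12$; therefore if $\hat s$ were to exceed $\tilde s+C_9(A)$ no constraint could saturate at a time $\le\tilde s+C_9(A)$, and on the other hand the quantity $\frac{\nu}{1-|d|}$ must actually drift away from $\theta\eta$ at a definite rate once $q$ is no longer exponentially small — more precisely, one shows using the refined sign information in the $\nu$-equation (the $+\nu$ term in $\nu'-\nu$, cf.\ \eqref{first}, makes $\frac{\nu}{1-|d|}$ drift monotonically away from $0$ at rate comparable to its own size once $\|q\|_{\q H}$ is negligible) that $\big|\frac{\nu(s)}{1-|d(s)|}\big|$ reaches the value $A$ (or $-1+\frac1A$) within a time depending only on $A$ and $\eta$, which we absorb into $C_9(A)$ by possibly enlarging it.

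The main obstacle I anticipate is exactly this last point: proving the \emph{upper} bound $\hat s-\tilde s\le C_9(A)$, i.e.\ that the solution genuinely exits the modulation tube in bounded time rather than lingering. The exponential-in-time bound \eqref{second} is useless on its own for long times; what saves the argument is that during the relevant window $q$ is of size $O(\ee)$ with $\ee\ll\eb$, so the dynamics of $\nu/(1-|d|)$ is dominated by its linear instability $\frac{d}{ds}\big(\frac{\nu}{1-|d|}\big)\approx\frac{\nu}{1-|d|}$, which pushes it monotonically to the boundary of $(-1+\frac1A,A)$ starting from $\theta\eta$; making this "dominant linear growth" rigorous — controlling the error terms in the ODE for $\frac{\nu}{1-|d|}$ uniformly in $d$ near $|d|\to1$, which is where the new degenerate directions and the $1/|d|$, $1/(1-|d|)$ losses in Lemmas \ref{lembase} and \ref{eigenld*} bite — is the technical heart of the lemma. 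This is precisely the kind of estimate that Proposition \ref{propdyn} is designed to supply (see \eqref{first} with its sign-respecting $\nu'-\nu$ combination), so in the write-up one reduces everything to \eqref{first}–\eqref{second} and an elementary but careful Gronwall/continuity bookkeeping; the detailed constants are deferred, as elsewhere in the paper, to the appendix.
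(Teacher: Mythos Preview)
Your outline is essentially correct and would work, but it is more elaborate than necessary, and the paper's proof takes a cleaner route. The key simplification you miss is this: on the whole modulation interval $[0,\hat s)$, the bound $\|q(s)\|_{\q H}\le \epsilon_0$ is already guaranteed \emph{by the definition of $\hat s$} (see \eqref{condmod}). So there is no need for a bootstrap on $\|q\|$ in order to prove (i). The paper proceeds as follows: fix a length $L>0$, and on $[\tilde s,\min(\tilde s+L,\hat s)]$ use \eqref{fruit} and $\|q\|\le\epsilon_0$ to get $|\zeta'(s)|\le C(A)\epsilon_0$, hence $|\zeta(s)-\zeta(\tilde s)|\le C(A)\epsilon_0 L\le 1$ for $\epsilon_0$ small enough depending on $L$; this keeps $(1-|d(s)|)/(1-|d(\tilde s)|)$ between two constants $C_{10}(A)^{\pm 1}$. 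Then \eqref{first} gives $|\nu'-\nu|\le \frac{\eta}{2}(1-|d(\tilde s)|)$, so the normalized variable $\bar\nu(s)=\nu(s)/(1-|d(\tilde s)|)$ satisfies $|\bar\nu'-\bar\nu|\le\eta/2$ with $\bar\nu(\tilde s)=\theta\eta$. Integrating this linear differential inequality forces $|\bar\nu(\tilde s+L)|\ge \frac{\eta}{2}e^L$, which for $L=\log(4C_{10}(A)^2A/\eta)$ exceeds the threshold in \eqref{condmod}; hence $\hat s\le\tilde s+L$ by contradiction. Once (i) is known, (ii) follows in one line from \eqref{second} restarted at $\tilde s$ combined with the bound $\|q(\tilde s)\|_{\q H}\le C(A)\bar\epsilon$ from \eqref{fruit}, and (iii) is just the control on $\zeta$ already established.

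Two smaller remarks. First, the exponential factor you put in the bootstrap hypothesis for $(1-|d|)/(1-|\bar d|)$ is unnecessary and slightly muddles the argument; a flat threshold suffices. Second, your closing paragraph about degenerate directions near $|d|\to 1$ being ``the technical heart of the lemma'' is misplaced: those difficulties have already been absorbed into Proposition~\ref{propdyn} (and Lemma~\ref{eigenld*}), and at this stage you may use \eqref{first}--\eqref{second} as black boxes. The genuine content of Lemma~\ref{lem2} is only the elementary ODE argument for $\bar\nu$.
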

%%%%%%%%%%%%%%%%%%%%%%%%%%%%%%%%%%%%%%%%%%%%%%%%%%%%%%%%%%%%%%%%%%%%%%%%%
%%%%%%%%%%%%%%%%%%%%%%%%%%%%%%%%%%%%%%%%%%%%%%%%%%%%%%%%%%%%%%%%%%%%%%%%%
Indeed, let us assume this Lemma and finish the proof of Theorem \ref{proptrap}, then we will give the proof of Lemma \ref{lem2}.\\
 Take $A\ge 2$ to be fixed large enough soon, then fix $\eta = \eta_2(A)$ defined in Proposition \ref{propdyn}. Then, impose that $\eb \le \epsilon_9(A,\eta_2(A))$. From item (i), we see that $\hat s <+\infty$. Imposing further that $C_9(A) \ee \le \frac \eb 2$, we see from item (ii) and \eqref{condmod} that 
\begin{equation}\label{alter}
\mbox{either }\frac{\nu(\hat s)}{1-|d(\hat s)|} = -1+\frac 1A\mbox{ or }\frac{\nu(\hat s)}{1-|d(\hat s)|} = A.
\end{equation}
Remember that up to this point in the proof, the value of $A$ is arbitrary in the interval $[2,+\infty)$. As a matter of fact, we will fix now $A$ large enough to show that the first case in \eqref{alter} 
implies that $(w(s), \partial_s w(s))$ is not defined for all $(y,s) \in B(0,1) \times [0, \infty)$,
%doesn't occur, 
and that in the second case, $(w(s), \partial_s w(s))\to 0$ as $s\to \infty$, which gives \eqref{impossible} and concludes the proof of Theorem \ref{proptrap}.\\
From items (iii) and (iv) in Claim \ref{propk*}, let us fix $A$ large enough such that
\begin{equation}\label{eneg}
\sup_{|d|<1} E\left(\kappa^*\left(d,(-1+\frac 1A)(1-|d|)\right)\right)\le -2
\mbox{ and }
\sup_{|d|<1}\|\kappa^*(d,A(1-|d|))\|_{\q H}\le \frac{\delta_1}2,
\end{equation}
where $\delta_1>0$ is defined in Proposition \ref{prop0}.
%Assume by contradiction that $\frac{\nu(\hat s)}{1-|d(\hat s)|} = -1+\frac 1A$.
From item (i) in Claim \ref{propk*}, item (i) in Lemma \ref{lemE}, \eqref{defq} and \eqref{condmod}, we see that
\begin{equation}\label{pro}
|E(w(\hat s)) - E(\kappa^*(d(\hat s), \nu(\hat s))|\le C(A)\|q(\hat s)\|_{\q H} \le C(A) \eb\le 1,
\end{equation}
provided that $\eb \le \epsilon_{12}(A)$, for some $\epsilon_{12}(A)>0$ small enough. \\
Assume now that the first case in \eqref{alter} occurs. From \eqref{eneg} and \eqref{pro}, we see that $E(w(\hat s))\le -1$. Using item (ii) of Lemma \ref{lemE}, we see that $w(s)$ cannot be defined for all $(y,s) \in B(0,1) \times [\hat s, +\infty)$, and the first possibility in Theorem \ref{proptrap} holds.\\
% Therefore, from \eqref{eneg} and the positivity of $E(w(\hat s))$ stated in 
% item (ii) of Lemma \ref{lemE}, we see that the first case in \eqref{alter} cannot occur. Thus, it holds that
Now, if the second case in \eqref{alter} occurs, 
% that is if
% \[
% \frac{\nu(\hat s)}{1-|d(\hat s)|} = A,
% \]
then, we see from \eqref{eneg} and \eqref{condmod} that
\[
\|(w(\hat s), \partial_s w(\hat s))\|_{\q H}\le \frac{\delta_1}2+\epsilon_0\le \delta_1,
\]
provided that $\epsilon_0\le \frac{\delta_1}2$. Therefore, Proposition \ref{prop0} applies and we see that
\[
\forall s\ge \hat s,\;\;\|(w(s), \partial_s w(s))\|_{\q H}\le \frac 1{\delta_1} e^{-\delta_1(s-\hat s)}. 
\]
Thus, case \eqref{impossible} holds. It remains then to prove Lemma \ref{lem2} in order to conclude the proof of Theorem 1.

\medskip

\begin{proof}[Proof of Lemma \ref{lem2}]$ $\\
(i) The idea is simple: on the interval $[\tilde s, \hat s)$, from \eqref{fruit} and \eqref{condmod}, we will see that $\zeta(s)$ does not change much and so does $1-|d(s)|$. Therefore, $\nu$ satisfies the differential inequality $|\nu'-\nu|\le 1-|d(\tilde s)|$, provided that $\epsilon_0$ is small enough. Since this equation needs a finite time 
%(say $L(A)$) 
to take $\eta(1-|d(\tilde s)|)$ to $A(1-|d(\tilde s)|)$ and  $-\eta(1-|d(\tilde s)|)$ to $(-1+\frac 1A)(1-|d(\tilde s)|)$, we get the conclusion. We first claim the following :
%%%%%%%%%%%%%%%%%%%%%%%%%%%%%%%%%%%%%%
%%%%%%%%%%%%%%%%%%%%%%%%%%%%%%%%%%%%%%%
\begin{cl} \label{clhadaf} For any $A\ge 2$, there exists $C_{10}(A)>0$ such that for any $\eta \in (0, \eta_2(A))$ and $L>0$, there exists $\epsilon_{10}(A,L, \eta)>0$ such that if $\epsilon_0\le \epsilon_{10}(A,L, \eta)$, then for all $s\in [0, \min(\tilde s +L, \hat s)]$,
\begin{equation}\label{hadaf}
|\zeta(s)-\zeta(0)|\le C_{10}(A),\;\;
\frac 1{C_{10}(A)}\le \frac{1-|d(s)|}{1-|d(0)|} \le C_{10}(A)
\mbox{ and }
|\nu'(s) - \nu(s)|\le \frac \eta 2 (1-|d(\tilde s)|),
\end{equation}
where $\zeta(s)=-\arg\tanh |d(s)|$ is defined in \eqref{defzeta}.
\end{cl}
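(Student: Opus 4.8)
The plan is to prove Claim \ref{clhadaf} by a continuity/bootstrap argument on the interval $[\tilde s, \min(\tilde s+L, \hat s)]$, using the differential inequalities of Proposition \ref{propdyn} and the exponential bound \eqref{second} (\emph{not} the decaying bound \eqref{third}, which is only available on $[0,\tilde s)$). First I would record that at $s=\tilde s$ we already control everything: by \eqref{fruit}, $\|q(\tilde s)\|_{\q H}\le C_1(A)\ee e^{-\mu_2\tilde s}\le C_1(A)\ee$, and $\zeta(\tilde s)-\zeta(0)$ is controlled by integrating $|\zeta'|\le C_3(A)\|q\|_{\q H}$ over $[0,\tilde s)$ against the decaying bound on $\|q\|$ coming from \eqref{third}, which gives $|\zeta(\tilde s)-\zeta(0)|\le C_3(A)C_1(A)\sqrt{\cdeux}\ee/\mu_2\le C(A)$ (indeed even small if $\ee$ is small). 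Similarly $\frac{1-|d(\tilde s)|}{1-|d(0)|}$ is pinched near $1$, since $\zeta = -\arg\tanh|d|$ and $1-|d| \asymp e^{-2\zeta}$ for $\zeta$ large, while for $|d|$ bounded away from $1$ the ratio is trivially controlled; in all cases a bounded change in $\zeta$ gives a bounded (two-sided) change in $1-|d|$.

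Next, on $[\tilde s, \min(\tilde s+L,\hat s)]$ I would use \eqref{second}: $\|q(s)\|_{\q H}^2\le \cdeux e^{C_2 s}\|q(0)\|^2$. This is not uniformly small over all $s$, but over the \emph{bounded} interval of length at most $L$ it is: $\|q(s)\|_{\q H}\le \sqrt{\cdeux}e^{C_2(\tilde s+L)/2}\|q(0)\|_{\q H}$. Combining with \eqref{difficile} ($\|q(0)\|_{\q H}\le C_1(A)\ee$) and \eqref{fruit}, for $s\in[\tilde s, \min(\tilde s+L,\hat s)]$ we get $\|q(s)\|_{\q H}\le C(A)\,e^{C_2 L/2}\,e^{C_2\tilde s/2}\ee$. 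The only remaining subtlety is the factor $e^{C_2\tilde s/2}$; I would absorb it by choosing $\ee$ small enough \emph{after} $L$ is fixed — but $\tilde s$ itself depends on $\eta$ and on the solution, so one must instead argue along the interval and note that $\tilde s \le -\log\ee + C$ type bounds are \emph{not} automatic. The cleaner route, which I would take, is: rescale time by working on $[\tilde s, \tilde s+L]$ and use that \eqref{third} already gives $\|q(\tilde s)\|_{\q H}\le C_1(A)\ee e^{-\mu_2\tilde s}$, so that $\|q(s)\|_{\q H}^2\le \cdeux e^{C_2(s-\tilde s)}\|q(\tilde s)\|^2$ — i.e. restart the dynamical system at time $\tilde s$, giving $\|q(s)\|_{\q H}\le \sqrt{\cdeux}e^{C_2 L/2}C_1(A)\ee e^{-\mu_2\tilde s}\le \sqrt{\cdeux}e^{C_2 L/2}C_1(A)\ee$ for all $s\in[\tilde s,\min(\tilde s+L,\hat s)]$, with no bad $e^{+C_2\tilde s}$ factor. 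Here I am using that \eqref{second} applies on all of $[0,\hat s)$ and hence, by the semigroup/flow property of the estimate or by reapplying Proposition \ref{propdyn} from initial time $\tilde s$, on $[\tilde s,\hat s)$ with $\|q(\tilde s)\|$ as the new datum.

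Then I would close the loop: feed $\|q(s)\|_{\q H}\le C(A)\ee$ into $|\zeta'(s)|\le C_3(A)\|q(s)\|_{\q H}\le C_3(A)C(A)\ee$ (from \eqref{fruit}), integrate over an interval of length $\le L$ to get $|\zeta(s)-\zeta(\tilde s)|\le C_3(A)C(A)L\,\ee$, hence $|\zeta(s)-\zeta(0)|\le |\zeta(\tilde s)-\zeta(0)|+C_3(A)C(A)L\ee\le C_{10}(A)$, choosing $\epsilon_{10}(A,L,\eta)$ small enough that the $L$-dependent term is $\le 1$; this also gives the two-sided bound on $\frac{1-|d(s)|}{1-|d(0)|}$ exactly as for $s=\tilde s$. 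Finally, for the third inequality in \eqref{hadaf}, I would use \eqref{first} from Proposition \ref{propdyn}: $\frac{|\nu'-\nu|}{1-|d|}\le \cdeux\|q(s)\|_{\q H}(\eb+\frac{|\nu|}{1-|d|})\le \cdeux\,C(A)\ee\,(\eb+A)\le \frac\eta4$ for $\ee$ small (depending on $A,\eta$), and combine with the comparison $1-|d(s)|\le C_{10}(A)(1-|d(\tilde s)|)$ — wait, one needs $1-|d(s)|\lesssim 1-|d(\tilde s)|$, which follows from the already-established $\frac{1-|d(s)|}{1-|d(0)|}\asymp 1$ and $\frac{1-|d(\tilde s)|}{1-|d(0)|}\asymp 1$ — to conclude $|\nu'(s)-\nu(s)|\le \cdeux C(A)\ee(\eb+A)\cdot C_{10}(A)(1-|d(\tilde s)|)\le \frac\eta2(1-|d(\tilde s)|)$ after shrinking $\ee$ once more. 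The main obstacle, as flagged above, is making sure the exponential-growth estimate \eqref{second} is used only on a bounded interval and that one restarts the modulation system at $s=\tilde s$ with the small datum $\|q(\tilde s)\|_{\q H}\le C_1(A)\ee e^{-\mu_2\tilde s}$ supplied by \eqref{third}, rather than naively propagating from $s=0$ and picking up an uncontrolled $e^{C_2\tilde s}$; everything else is a routine continuity argument with constants chosen in the order $A$, then $\eta$, then $L$, then $\epsilon_{10}(A,L,\eta)$.
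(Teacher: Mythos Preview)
Your argument is correct, but you have made it harder than necessary. The paper's proof is shorter because it avoids the whole issue you flag as ``the main obstacle.''

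The point you missed is that on the entire interval $[0,\hat s)$ the modulation condition \eqref{condmod} already gives $\|q(s)\|_{\q H}\le \eb$ uniformly. This bound is part of the \emph{definition} of $\hat s$, not something you have to earn from the dynamical estimates. So on $[\tilde s,\min(\tilde s+L,\hat s)]$ one has directly $|\zeta'(s)|\le C_3(A)\|q(s)\|_{\q H}\le C_3(A)\eb$ from \eqref{fruit}, and integrating over a length-$L$ interval gives $|\zeta(s)-\zeta(\tilde s)|\le C_3(A)\eb L\le 1$ once $\eb\le \epsilon_{10}(A,L)$. There is no bad factor $e^{C_2\tilde s}$ to worry about, no need to invoke \eqref{second} at all, and no need to ``restart'' anything. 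The paper then observes, as you do, that the second estimate follows from the first (since $1-|d|$ is comparable to $e^{2\zeta}$ up to constants), and the third follows from \eqref{first} and \eqref{condmod} together with the second estimate.

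Your detour through \eqref{second} with a restart at $\tilde s$ is legitimate --- the Lyapunov inequality $h_1'\le \delta^{-1}h_1$ underlying \eqref{second} does allow Gronwall from any starting time in $[0,\hat s)$ --- and it yields the slightly sharper $\|q(s)\|_{\q H}\le C(A)e^{C_2 L/2}\ee$ rather than just $\le\eb$. But that extra sharpness buys nothing here, and the price is the whole discussion of restarting and the worry about $e^{C_2\tilde s}$, which the paper simply sidesteps.
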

%%%%%%%%%%%%%%%%%%%%%%%%%%%%%%%%%%%%%%%
%%%%%%%%%%%%%%%%%%%%%%%%%%%%%%%%%%%%%%%
\begin{proof} Clearly, the second estimate follows from the first. Furthermore, using \eqref{first} and \eqref{condmod}, we see that the third follows from the second, so, we only prove the first.\\
If $s\in [0, \tilde s]$, we write from \eqref{fruit}, \eqref{third},  \eqref{condmod} (with $s=0$) and \eqref{smallbe} that $|\zeta'(s)|\le C_{11}(A) e^{-\mu_2s}$ for some $C_{11}(A)>0$. Integrating this on the interval $[0, s]$, we get the result.\\
If $s\in [\tilde s, \min(\tilde s+L, \hat s)]$, using \eqref{fruit} and \eqref{cond0}, we see that $|\zeta'(s)|\le C_{12}(A)\eb$. Integrating this on the interval $[\tilde s, s]$ we see that $|\zeta(s)-\zeta(\tilde s)|\le  C_{12}(A)\eb(s-\tilde s)\le C_{12}(A)\eb L\le 1$ if $\eb$ is smaller than some $\epsilon_{10}(A,L)$ small enough. Since \eqref{hadaf} already holds with $s=\tilde s$, we conclude the proof of Claim \ref{clhadaf}.
\end{proof}
As we said right before Claim \ref{clhadaf}, $L$ should be thought as the minimal time needed to take the variable $\frac{\nu(s)}{1-|d(s)|}$ from $\eta$ to $A$ and from $-\eta$ to $-1+\frac 1A$. From \eqref{hadaf}, we see that $L$ is larger than $\bar L(A,\eta)$, which is the time needed to take the variable 
\begin{equation}\label{defnb}
\bar \nu(s) \equiv \frac{\nu(s)}{1-|d(\tilde s)|}
\end{equation}
 from $\eta$ to $C_{10}(A)^2A$ and from $-\eta$ to $-C_{10}(A)^2$, given that $\bar \nu$ satisfies the differential equation 
\begin{equation}\label{eqnb}
|\bar \nu '-\bar\nu|\le \frac \eta 2.
\end{equation}
 Since we easily see that $\bar L=\log \left(\frac{2C_{10}(A)^2A}\eta \right)$, fixing
\begin{equation}\label{defL}
L(A, \eta)= \bar L(A, \eta)+\log 2=\log \left(\frac{4C_{10}(A)^2A}\eta \right),
\end{equation}
than imposing that $\eb \le \epsilon_{10}(A,L,\eta)$, we claim that $\hat s \le \tilde s +L$. Indeed, assuming by contradiction that $\hat s > \tilde s +L$, we see from Claim \ref{clhadaf} that for all $s\in [\tilde s, \tilde s +L]$, \eqref{eqnb} is satisfied, where $\bar \nu$ is defined in \eqref{defnb}. Integrating that equation on the interval $[\tilde s, s]$, we see that
\[
|\bar \nu(\tilde s+L)-e^L\bar \nu(\tilde s)|\le \frac \eta 2.
\]
Using the alternative in \eqref{alter0} and the definition \eqref{defL} of $L$, we see that:\\
- either $\bar \nu(\tilde s)=\eta$, hence $\bar \nu(\tilde s+L)\ge \eta e^L -\frac \eta 2\ge \frac \eta 2e^L=2C_{10}(A)^2A$;\\
- or $\bar \nu(\tilde s)=-\eta$, hence $\bar \nu(\tilde s+L)\le -\eta e^L +\frac \eta 2\le -\frac \eta 2e^L=-2C_{10}(A)^2$.\\
Using \eqref{hadaf}, we see that for $s=\tilde s +L<\hat s$,
\[
\mbox{ either }\frac{\nu(s)}{1-|d(s)|}\ge 2A,\mbox{ or }\frac{\nu(s)}{1-|d(s)|}\le -2A. 
\]
From \eqref{cond0}, this is a contradiction. This concludes the proof of item (i) in Claim \ref{lem2}.\\
(ii) When $s\in [0, \tilde s]$, this comes from \eqref{fruit}. Since the estimate holds for $s=\tilde s$, using \eqref{second} together with item (i), we get the result when $s\in [\tilde s, \hat s]$.\\
(iii) This is a direct consequence of Claim \ref{clhadaf} and \eqref{c8}.\\
This concludes the proof of Lemma \ref{lem2}.
\end{proof}
Since we have already finished the proof of Theorem \ref{proptrap}, assuming that Lemma \ref{lem2} holds, this is also the conclusion of the proof of Theorem \ref{proptrap}. 
%Since we have already derived Corollary \ref{corstat} from Theorem \ref{proptrap}, this is also the end of the proof of Corollary \ref{corstat}.
\end{proof}

%%%%%%%%%%%%%%%%%%%%%%%%%%%%%%%%%%%%%%%%%%%%%%%%%%%%%%%%%%%%%%%%%%%%%%%
%%%%%%%%%%%%%%%%%%%%%%%%%%%%%%%%%%%%%%%%%%%%%%%%%%%%%%%%%%%%%%%%%%%%%%%
\subsection{Exponential convergence for solutions of equation \eqref{eqw} near the set $\{\pm \kappa(d,y)\}$}
%%%%%%%%%%%%%%%%%%%%%%%%%%%%%%%%%%%%%%%%%%%%%%%%%%%%%%%%%%%%%%%%%%%%%%%
%%%%%%%%%%%%%%%%%%%%%%%%%%%%%%%%%%%%%%%%%%%%%%%%%%%%%%%%%%%%%%%%%%%%%%%

We would like to mention another consequence of our techniques: an exponential convergence property for solutions of equation \eqref{eqw} 
near $\{\pm\kappa(d,y)\}$:
%%%%%%%%%%%%%%%%%%%%%%%%%%%%%%%%%%%%%%%%%%%%%%%%%%%%%%%%%%%%%%%%%%%%%%%%%%
%%%%%%%%%%%%%%%%%%%%%%%%%%%%%%%%%%%%%%%%%%%%%%%%%%%%%%%%%%%%%%%%%%%%%%%%%%
\begin{prop}[Exponential convergence for solutions of \eqref{eqw} near $\{\pm\kappa(d,y)\}$] \label{propexpo} There exists $\delta_2>0$ such that for any $s_1\ge s_0$, if $w\in C([s_0,s_1],\q H)$ is a solution of equation \eqref{eqw} satisfying
\begin{equation}\label{proche0}
\forall s\in [s_1,s_2],\;\;\left\|\vc{w(s)}{\partial_s w(s)}-\bar\omega\vc{\kappa(\bar d(s))}{0}\right\|_{\q H}\le \delta_2,
\mbox{ with }|\bar d(s)|<1\mbox{ and }\bar \omega=\pm 1, 
\end{equation}
then, there exist $C^1$ parameters $d(s)$ and $\nu(s)$ such that
\[
\forall s\in [s_0,s_1],\;\;
\|q(s)\|_{\q H}\le \frac{e^{-\delta_2(s-s_0)}}{\delta_2}\|q(s_0)\|_{\q H}
\]
where $q(y,s) =(w(y,s),\partial_s w(y,s)) - \bar \omega \kappa^*(d(s), \nu(s))$.  
\end{prop}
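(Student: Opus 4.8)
The plan is to follow the same modulation-plus-energy scheme used for Theorem~\ref{proptrap}, but now working on the \emph{bounded} time interval $[s_0,s_1]$ on which the solution is assumed to stay in the $\delta_2$-neighborhood of the soliton set, so that there is no need to handle the trichotomy and no ``first possibility'' can arise. First I would reduce to $\bar\omega=1$ by the symmetry of equation~\eqref{eqw}, and take $\delta_2\le \epsilon_1(A)$ so that for each fixed $s\in[s_0,s_1]$ the hypothesis \eqref{proche0} lets me apply the modulation Lemma~\ref{lemmod} with $v=(w(s),\partial_s w(s))$, $\bar d=\bar d(s)$ and $\bar\nu=0$ (which trivially satisfies \eqref{condnu} since $\bar\nu/(1-|\bar d|)=0\in[-1+1/A,A]$). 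This produces $C^1$ parameters $d(s)$, $\nu(s)$ and the decomposition $q(y,s)=(w(y,s),\partial_s w(y,s))-\kappa^*(d(s),\nu(s))$ satisfying the orthogonality conditions \eqref{kill}, together with the smallness \eqref{condmod}: namely $\|q(s)\|_{\q H}\le C_1(A)\delta_2$ and $|\nu(s)|/(1-|d(s)|)\le C_1(A)\delta_2\le\eta_2(A)$, provided $\delta_2$ is chosen small enough in terms of $A$. Here the modulation is global on $[s_0,s_1]$ because the a priori bound \eqref{proche0} holds on the whole interval, so there is no maximal $\hat s$ to worry about; one just checks that $d(s),\nu(s)$ extend continuously (hence $C^1$, by the implicit function theorem argument inside Lemma~\ref{lemmod}) across the interval.

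Next I would invoke Proposition~\ref{propdyn} directly: with $\eb$ replaced by (a constant times) $\delta_2$ and $\eta$ by $\eta_2(A)$, and having arranged $\delta_2\le\edeux(A)$, estimate \eqref{third} gives exactly
\[
\|q(s)\|_{\q H}^2\le \cdeux(A)\, e^{-2\mu_2(A)\,(s-s_0)}\,\|q(s_0)\|_{\q H}^2
\]
for all $s\in[s_0,s_1]$ (after translating the time origin from $0$ to $s_0$, which is legitimate since equation~\eqref{eqw} is autonomous and the hypotheses of Proposition~\ref{propdyn} only use the running smallness \eqref{condmod}, valid here on all of $[s_0,s_1]$). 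Taking square roots and setting $\delta_2$ smaller than $\min(\mu_2(A),1/\sqrt{\cdeux(A)})$ yields the claimed bound $\|q(s)\|_{\q H}\le \delta_2^{-1}e^{-\delta_2(s-s_0)}\|q(s_0)\|_{\q H}$. One should double check that the constant $\delta_2$ of the statement is taken as the minimum of all the smallness thresholds encountered ($\epsilon_1(A)$, $\eta_2(A)$, $\edeux(A)$, and the constraints needed to absorb $\cdeux$ and $\mu_2$), and that $A\ge 2$ is an arbitrary fixed value, e.g.\ $A=2$, since unlike in the proof of Theorem~\ref{proptrap} we never need $A$ large (no need to push $E$ below $-2$ or the norm below $\delta_1/2$).

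The one genuine subtlety — and the step I expect to be the main obstacle — is the \emph{global-in-time validity of the modulation} and the verification that \eqref{condmod} holds on all of $[s_0,s_1]$ rather than only up to some first exit time. In the proof of Theorem~\ref{proptrap} the analogous smallness was only bootstrapped on a maximal subinterval, and it could fail (that is precisely Case~2); here, by contrast, the a priori hypothesis \eqref{proche0} pins $w$ near the soliton manifold for the entire interval, so \eqref{proximite} in Lemma~\ref{lemmod} furnishes $\|q(s)\|_{\q H}\le C_1(A)\delta_2$ uniformly, and no exit can occur as long as $\delta_2$ was chosen small in terms of $A$. One must, however, be careful that the modulated parameters $d(s),\nu(s)$ are the \emph{same} continuous branch throughout — this follows from the uniqueness clause of the implicit function theorem used in Lemma~\ref{lemmod} together with the continuity of the flow in $\q H$ asserted in the hypothesis $w\in C([s_0,s_1],\q H)$ — and that the nonnegative-mode projections $\pi^{d^*(s)}_i(q)$ indeed vanish identically, so that the differential inequalities behind \eqref{third} (the dissipativity of $\varphi_d$ on $\H^d_-$ from Proposition~\ref{lemdefpos} combined with the modulation equations \eqref{first}) can be run. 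Once these bookkeeping points are in place, the exponential decay is an immediate consequence of Proposition~\ref{propdyn}, and the proof is complete.
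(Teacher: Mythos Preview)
Your proposal is correct and follows essentially the same approach as the paper: fix $A=2$, apply the modulation Lemma~\ref{lemmod} using the a priori closeness \eqref{proche0} to obtain the parameters $d(s),\nu(s)$ and the uniform bound $\|q(s)\|_{\q H}+|\nu(s)|/(1-|d(s)|)\le C_1(2)\delta_2$ on all of $[s_0,s_1]$, then choose $\delta_2$ small enough that condition \eqref{cond0} holds throughout and invoke the exponential decay \eqref{third} of Proposition~\ref{propdyn}. Your discussion of the ``subtlety'' (global validity of the modulation via the implicit function theorem and the a priori hypothesis) is exactly the point the paper handles implicitly, and your observation that $A$ need not be large here matches the paper's choice $A=2$.
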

%%%%%%%%%%%%%%%%%%%%%%%%%%%%%%%%%%%%%%%%%%%%%%%%%%%%%%%%%%%%%%%%%%%%%%%%%%
%%%%%%%%%%%%%%%%%%%%%%%%%%%%%%%%%%%%%%%%%%%%%%%%%%%%%%%%%%%%%%%%%%%%%%%%%%
\begin{nb}As one may see from the proof, the parameters $d(s)$ and $\nu(s)$ are chosen so that we kill the two nonnegative modes of the solutions. This way, the solution lays in the negative part of the spectrum, where for $\frac{\nu}{1-|d|}$ small, we have an exponentially decreasing Lyapunov functional, which is equivalent to the square of the norm. 
%Since the choice of the parameters $d(s)$ and $\nu(s)$ follows from the application of the implicit function theorem to the function $\Psi$ (defined in \eqref{defPsi} below), near the point $v=(w(s), \partial_s w(s))$, it follows that the parameters are unique, and that they depend only on $s$, and not the bounds of the interval, $s_0$ and $s_1$. 
\end{nb}
\begin{proof}
The proof is a by-product of our proof of Theorem \ref{proptrap}, as we explain in the following.\\
Consider $s_1\ge s_0$ and $w\in C([s_0, s_1], \q H)$ a solution of equation \eqref{eqw} satisfying \eqref{proche0}. If we take $A=2$ and assume that $\delta_2 \le \epsilon_1(2)$ defined in Lemma \ref{lemmod}, then that lemma applies, and we have the existence of $C^1$ parameters $d(s)$ and $\nu(s)$ such that for all $s\in [s_0,s-1]$,
\[
%\forall s\in [s_0, s_1],\;\;
\forall i=1,\dots,N,\;\;\pi^{d^*(s)}_i(q(s))=0
\]
and 
\[
\|q(s)\|_{\q H}+\frac{|\nu(s)|}{1-|d(s)|}\le C_1(2)\delta_2,
\] 
where $q(y,s) =(w(y,s),\partial_s w(y,s)) - \bar \omega \kappa^*(d(s), \nu(s))$.
Consider then the constants $\epsilon_0$ and $\eta$ appearing in the proof of Theorem \ref{proptrap} (in particular in \eqref{cond0}) and fixed at the the end of the proof (see Case 2 page \pageref{case2}). Assuming then that $\delta_2$ is small enough so that
\[
C_1(2)\delta_2\le \min(\eta, \epsilon_0),
\]
we see that \eqref{cond0} holds for all $s\in [s_0, s_1]$. Since it is easy to see from the proof of Theorem \ref{proptrap} that estimate \eqref{third} holds on the same interval where \eqref{cond0} holds, we see that \eqref{third} holds on the interval $[s_0, s_1]$, which closes the proof of Proposition \ref{propexpo}.
\end{proof}

\appendix

%%%%%%%%%%%%%%%%%%%%%%%%%%%%%%%%%%%%%%%%%%%%%%%%%%%%%%%%%%%%%%%%%%%%%%%
%%%%%%%%%%%%%%%%%%%%%%%%%%%%%%%%%%%%%%%%%%%%%%%%%%%%%%%%%%%%%%%%%%%%%%%
\section{The Lorentz transform in similarity variables}\label{applor}
%%%%%%%%%%%%%%%%%%%%%%%%%%%%%%%%%%%%%%%%%%%%%%%%%%%%%%%%%%%%%%%%%%%%%%%
%%%%%%%%%%%%%%%%%%%%%%%%%%%%%%%%%%%%%%%%%%%%%%%%%%%%%%%%%%%%%%%%%%%%%%%
In this section, we introduce the similarity version of the Lorentz transform and prove some related estimates.\\
Let us introduce the following transformation derived from the Lorentz transform,
% for the wave equation, 
and which keeps solutions of equation \eqref{eqw} invariant:
%%%%%%%%%%%%%%%%%%%%%%%%%%%%%%%%%%%%%%%%%%%%%%%%%%%%%%%%%%%%%%%%%%%%%%
%%%%%%%%%%%%%%%%%%%%%%%%%%%%%%%%%%%%%%%%%%%%%%%%%%%%%%%%%%%%%%%%%%%%%%
\begin{lem}{\bf (The Lorentz transform in similarity variables)}\label{proplorentzw} Consider $w(y,s)$ a solution of equation \eqref{eqw} defined for all $|y|<1$ and $s\in (s_0, s_1)$ for some $s_0$ and $s_1$ in $\m R$, and introduce for any $|d|<1$:\\
(i) The coordinates $Y$ defined by the fact that 
\begin{equation}\label{deftd}
y=\theta_d(Y)\equiv \frac 1{1+d\cdot Y}\left[d+\frac{(1-\sqrt{1-|d|^2})}{|d|^2}(d\cdot Y) d+ \sqrt{1-|d|^2}Y\right].
\end{equation}
(ii) The function $W\equiv \q T_d(w)$ defined by
\begin{equation}\label{defW}
W(Y,S)=\frac{(1-|d|^2)^{\frac 1{p-1}}}{(1+d\cdot Y)^{\frac 2{p-1}}}w(y,s) \mbox{ where }y= \theta_d(Y) \mbox{ and }s=S-\log \frac{1+d\cdot Y}{\sqrt{1-|d|^2}}.
\end{equation}
Then, $W(Y,S)=\q T_d(w)$ is also a solution of \eqref{eqw} defined for all $|Y|<1$ and 
$S\in \left(s_0+\frac 12 \log\frac{1+|d|}{1-|d|},
s_1-\frac 12 \log\frac{1+|d|}{1-|d|}\right)$.
\end{lem}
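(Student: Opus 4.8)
The plan is to verify the claim by unwinding the similarity-variable change of coordinates \eqref{defw} back to the original space-time variables $(x,t)$, where the Lorentz transform is the usual linear boost and is therefore well understood, and then re-entering similarity variables centered at the appropriate point. First I would recall that if $w=w_{x_0,T_0}$ is built from a solution $u$ of \eqref{equ} via \eqref{defw}, then $u(x,t)=(T_0-t)^{-\frac2{p-1}}w(y,s)$ with $y=(x-x_0)/(T_0-t)$, $s=-\log(T_0-t)$. The key algebraic fact is that the map $\theta_d$ in \eqref{deftd}, together with the time rescaling in \eqref{defW}, is exactly the expression in the $(y,s)$ chart of a Lorentz boost of speed $|d|$ in the direction $d/|d|$ acting on $(x,t)$, composed with the self-similar scaling. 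Concretely, I would introduce the boosted space-time point $(x',t')=\Lambda_d(x-x_0,t-T_0)+(x_0,T_0)$ where $\Lambda_d$ is the standard Lorentz matrix, check that $u'(x',t')\equiv (1-|d|^2)^{-\frac1{p-1}}\,(\text{Jacobian factor})\,u(x,t)$ solves \eqref{equ} — this is the classical conformal/Lorentz invariance of the power nonlinear wave equation in the subconformal range, which holds because the weight $|u|^{p-1}u$ transforms homogeneously and $\partial_t^2-\Delta$ is Lorentz invariant — and then compute that passing $u'$ through \eqref{defw} (with the same $x_0$ but a boost-adjusted focusing time) reproduces precisely formula \eqref{defW}. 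The factor $(1-|d|^2)^{\frac1{p-1}}/(1+d\cdot Y)^{\frac2{p-1}}$ is the product of the self-similar scaling exponents coming from the ratio $(T_0-t)/(T_0-t')$, which one computes to equal $(1+d\cdot Y)/\sqrt{1-|d|^2}$ along the boost; likewise $s=S-\log\frac{1+d\cdot Y}{\sqrt{1-|d|^2}}$ is just the logarithm of that ratio.

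The steps, in order, would be: (1) write the Lorentz matrix $\Lambda_d$ and verify $\Lambda_d$ maps the light cone to itself and preserves $\partial_t^2-\Delta$; (2) state and check the transformation law for $u$ under $\Lambda_d$ so that $u'$ again solves \eqref{equ}; (3) substitute $x=x_0+(T_0-t)y$, $t=T_0-e^{-s}$ into the boost and read off $Y$, $S$ as functions of $(y,s)$, thereby recovering \eqref{deftd}; (4) track the conformal weight through the two self-similar changes of variables to obtain the prefactor in \eqref{defW}; (5) determine the image of the strip $|y|<1$, $s\in(s_0,s_1)$ under this map — the spatial constraint $|Y|<1$ is automatic since the boost preserves the unit ball in $y$ (it preserves the cone), and the time constraint becomes $S\in\big(s_0+\tfrac12\log\frac{1+|d|}{1-|d|},\ s_1-\tfrac12\log\frac{1+|d|}{1-|d|}\big)$ because the extremal values of $\log\frac{1+d\cdot Y}{\sqrt{1-|d|^2}}$ over $|Y|<1$ are $\pm\tfrac12\log\frac{1+|d|}{1-|d|}$. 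Alternatively — and this may be cleaner to write — one can bypass the original variables entirely and simply plug the ansatz \eqref{defW}–\eqref{deftd} directly into \eqref{eqw}: using the chain rule to express $\partial_S^2W$, $\q L W$, $Y\cdot\nabla W$, $\partial_S W$ in terms of the corresponding $(y,s)$ quantities, and checking after simplification that the equation for $W$ reduces to the equation \eqref{eqw} for $w$. This is a bookkeeping computation: it suffices to know that the identity holds for the explicit stationary family, namely $\q T_d$ should send $\kappa(d',\cdot)$ to another $\kappa(d'',\cdot)$ (the composition law of Lorentz boosts), which pins down every constant.

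The main obstacle is purely computational rather than conceptual: carrying out step (4), the precise tracking of the conformal weight, and step (3), inverting the relation between $(Y,S)$ and $(y,s)$, without sign or exponent errors, especially around the slightly awkward term $\frac{1-\sqrt{1-|d|^2}}{|d|^2}(d\cdot Y)d$ in \eqref{deftd} (which is just the off-diagonal part of $\Lambda_d$ written so as to be smooth at $d=0$). I would organize the computation by first doing the one-dimensional case $d=|d|e_1$, where $\Lambda_d$ is the familiar $2\times2$ boost and the formulas collapse to the one already used in \cite{MZisol10}, and then noting that the general case is obtained by rotating so that $e_1=d/|d|$, the rotation commuting with everything by the rotational invariance of \eqref{equ} and \eqref{eqw}. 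The determination of the $S$-interval in step (5) is then immediate from monotonicity of $\log\frac{1+d\cdot Y}{\sqrt{1-|d|^2}}$ in the scalar variable $d\cdot Y\in(-|d|,|d|)$.
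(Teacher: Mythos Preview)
Your plan is correct and matches the paper's approach: the paper omits the proof entirely, saying only that it is ``similar to the one-dimensional case given in Lemma 2.6 page 54 in \cite{MZjfa07}'', which is exactly the reduction you arrive at in your final paragraph (do the $1$D boost, then rotate so that $e_1=d/|d|$). One small correction: in step~(2) you should not introduce any weight at the level of the original equation \eqref{equ}~--- the Lorentz boost is an exact symmetry of $\partial_t^2 u=\Delta u+|u|^{p-1}u$ for every $p$, not just subconformal, and $u'=u\circ\Lambda_d^{-1}$ solves \eqref{equ} with no prefactor; the factor $(1-|d|^2)^{\frac1{p-1}}/(1+d\cdot Y)^{\frac2{p-1}}$ arises entirely, as you correctly say two lines later, from the ratio $(T_0-t)/(T_0-t')$ in the two applications of \eqref{defw}.
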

%%%%%%%%%%%%%%%%%%%%%%%%%%%%%%%%%%%%%%%%%%%%%%%%%%%%%%%%%%%%%%%%%%%%%%
%%%%%%%%%%%%%%%%%%%%%%%%%%%%%%%%%%%%%%%%%%%%%%%%%%%%%%%%%%%%%%%%%%%%%%
\begin{nb}
From easy calculations, we see that
\begin{equation}\label{invtd}
(\q T_d)^{-1} = \q T_{-d}.
\end{equation}
If $w(y)$ is a stationary solution of \eqref{eqw}, then the function $W(Y)= \q T_d(w)$ depends only on $Y$ and is also a stationary solution of \eqref{eqw}. Introducing $l_d(x) = x\cdot \frac d{|d|}$ if $d\neq 0$ and $l_0(x)=0$ if $d=0$, then $\bot_d(x)$ such that $x=l_d(x)+\bot_d(x)$, we see that the coordinate transformation $y=\theta_d(Y)$ becomes 
\begin{equation}\label{yortho}
 l_d(y)=\frac{l_d(Y)+|d|}{1+d\cdot Y},\;\;
\bot_d(y)=\frac{\sqrt{1-|d|^2}}{1+d\cdot Y}\bot_d(Y).
\end{equation}
\end{nb}
\begin{proof} The proof is omitted since it is similar to the one-dimensional case given in Lemma 2.6 page 54 in \cite{MZjfa07}.
\end{proof}

\medskip

Introducing the projection of the space $\H$ \eqref{defnh} on the first coordinate:
\begin{equation}\label{defnh0}
\H_0 = \left\{r\in H^1_{loc}\;\;|\;\;\|r\|_{\H_0}^2\equiv \iint \left(r^2+ |\nabla r|^2-(y\cdot \nabla r)^2)\right)\rho dy<+\infty\right\},
\end{equation}
we give the following properties for the transformation $\q T_d$, which are simple adaptations from the one-dimensional case (see in particular Lemma 2.7 page 56 and Lemma 2.8 page 57 in \cite{MZjfa07}):
%%%%%%%%%%%%%%%%%%%%%%%%%%%%%%%%%%%%%%%%%%%%%%%%%%%%%%%%%%%%%%%%%%%
%%%%%%%%%%%%%%%%%%%%%%%%%%%%%%%%%%%%%%%%%%%%%%%%%%%%%%%%%%%%%%%%%%%
\begin{lem}\label{0lemeffect} $ $\\
(i) {\bf (Transformation of $\kappa(\bar d)$)} For all $|d|<1$ and $|\bar d|<1$, we have $\q T_d(\kappa(\bar d))=\kappa(\theta_d(\bar d))$, where $\theta_d(\bar d)$ is defined in \eqref{deftd}.\\ 
(ii) {\bf (Transformation of the linearized operator of \eqref{eqw} around $\kappa_0$)}  If we consider $w(y,s)$ and $W= \q T_d w$, then it holds that
\begin{eqnarray*}
&&\partial^2_sw-\left(\q L w+\frac{2(p+1)}{p-1}w
-\frac{p+3}{p-1}\partial_sw-2y\cdot \nabla\partial_s w\right)\\
=\frac{(1+d\cdot Y)^{\frac{2p}{p-1}}}{(1-|d|^2)^{\frac p{p-1}}}&&
\left(\partial^2_SW-\left(\q L W + \psi(d,Y)W
-\frac{p+3}{p-1}\partial_SW-2Y\cdot \nabla\partial_S W\right)\right)
\end{eqnarray*}
where $\psi(d,Y)$ is defined in \eqref{defld}.\\
% \begin{equation*}%\label{defpsi}
% \mbox{where }\psi(d,Y)=p\kappa(d,Y)^{p-1}-\frac{2(p+1)}{(p-1)^2}=\frac{2(p+1)}{(p-1)^2}\left(p \frac{(1-|d|^2)}{(1+d\cdot Y)^2}-1\right).
% \end{equation*}
% (i-bis) {\bf (Transformation of the linearized operator of \eqref{eqw} around $\kappa_0$)} If we consider $w(y,s)= w(y)$, then it holds for $W= \q T_d w$ that
% \begin{equation}\label{translw}
% \q L w(y) + \frac{2(p+1)}{p-1}w(y)
% =\frac{(1+d\cdot Y)^{\frac{2p}{p-1}}}{(1-|d|^2)^{\frac p{p-1}}}
% \left(\q L W(Y) + \psi(d,Y) W(Y)\right).
% \end{equation}
(iii) {\bf (Transformation of the $L^2_\rho$ inner product)}
For all $V_1$ and $V_2$ in $L^2_\rho$,
\begin{align}
\int_{|Y|<1} V_1(Y) V_2(Y) \rho(Y) dY&= \iint \frac{1-|d|^2}{(1-d\cdot y)^2}v_1(y) v_2(y)\rho(y) dy\nonumber\\%\label{estps}\\
\int_{|Y|<1} V_1(Y) V_2(Y) \frac{\rho(Y)}{1-|Y|^2} dY&= \iint v_1(y) v_2(y)\frac{\rho(y)}{1-|y|^2} dy\nonumber
\end{align}
where $v_i = \q T_{-d}V_i$.\\ 
(iv) {\bf (Continuity of $\q T_d$ in $\H_0$)}%\label{lemtdcont} 
There exists $C_0>0$ such that for all $|d|<1$ and $w\in \H_0$, we have
\begin{equation*}%\label{lemtdcont}%\label{target}
\frac 1{C_0} \|w\|_{\H_0}\le \|\q T_{d}(w)\|_{\H_0} \le C_0 \|w\|_{\H_0}.
\end{equation*}
(v) {\bf (Norm of $\nabla_d \q T_d$)} 
There exists $C_0>0$ such that for all $|d|<1$ and $i=1,\dots,N$, if $w\in \H_0$ and $\partial_{y_i}w\in \H_0$, then
\begin{equation*}
\|\q \partial_{d_i}(\q T_{d}(w))\|_{\H_0} \le \frac{C_0}{1-|d|^2}\left(\|w\|_{\H_0}+ \sum_{i=1}^N \|\partial_{y_i}w\|_{\H_0}\right).
\end{equation*}
\end{lem}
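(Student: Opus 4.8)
The plan is to read these five statements as the similarity-variables incarnations of the classical Lorentz-transform identities, following the one-dimensional computations of \cite{MZjfa07}. Two structural facts drive everything. First, $\q T_d$ is \emph{linear} as a map on functions of $(Y,S)$: by \eqref{defW} it is multiplication by the weight $\beta_d(Y)\equiv(1-|d|^2)^{1/(p-1)}(1+d\cdot Y)^{-2/(p-1)}$ composed with the change of variables $y=\theta_d(Y)$, $s=S-\log\frac{1+d\cdot Y}{\sqrt{1-|d|^2}}$. Second, by Lemma \ref{proplorentzw}, $\q T_d$ intertwines equation \eqref{eqw} with itself, the Jacobian factor being $(1+d\cdot Y)^{2p/(p-1)}(1-|d|^2)^{-p/(p-1)}$. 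The preliminary observation, obtained by inserting the constant $\kappa_0$ of \eqref{defkd} into \eqref{defW}, is that $\kappa(d)=\q T_d(\kappa_0)$. Item (i) then follows by inserting the explicit formula \eqref{deftd} for $\theta_d$ into \eqref{defkd} and simplifying, the relevant algebraic identity being that $(1+\bar d\cdot\theta_d(Y))(1+d\cdot Y)$ is proportional to $1+\theta_d(\bar d)\cdot Y$ (this is exactly the relativistic composition law encoded in \eqref{deftd}); equivalently, one writes $\q T_d(\kappa(\bar d))=\q T_d(\q T_{\bar d}(\kappa_0))$ and uses that composition of such transforms applied to the rotation-invariant constant $\kappa_0$ is again of the form $\q T_{\theta_d(\bar d)}(\kappa_0)=\kappa(\theta_d(\bar d))$.

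For item (ii), I would linearize the covariance identity of Lemma \ref{proplorentzw} around $\kappa_0$. Writing $w=\kappa_0+v$, linearity of $\q T_d$ together with item (i) gives $W=\q T_d w=\kappa(d)+\q T_d v$. Substituting into \eqref{eqw} and retaining only the terms linear in $v$ on one side and in $\q T_d v$ on the other, the left-hand side of \eqref{eqw} becomes the operator linearized around $\kappa_0$ applied to $v$, resp. the operator linearized around $\kappa(d)$ applied to $\q T_d v$, because $|u|^{p-1}u$ has linearization $p\kappa_0^{p-1}v$ about $\kappa_0$ and $p\kappa(d)^{p-1}(\q T_d v)$ about $\kappa(d)$; since $\psi(0,\cdot)=p\kappa_0^{p-1}-\frac{2(p+1)}{(p-1)^2}=\frac{2(p+1)}{p-1}$ and $\psi(d,\cdot)=p\kappa(d)^{p-1}-\frac{2(p+1)}{(p-1)^2}$ by \eqref{defld}, the potentials match, and the stated identity is precisely the linear part of the covariance relation, carrying the same Jacobian factor.

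For item (iii), I would simply change variables $y=\theta_d(Y)$ in the right-hand integrals. Three elementary identities are needed: $1-|\theta_d(Y)|^2=\frac{(1-|d|^2)(1-|Y|^2)}{(1+d\cdot Y)^2}$ and $1-d\cdot\theta_d(Y)=\frac{1-|d|^2}{1+d\cdot Y}$, both read off from \eqref{yortho}, together with $|\det D\theta_d(Y)|=\frac{(1-|d|^2)^{(N+1)/2}}{(1+d\cdot Y)^{N+1}}$. Expressing $v_i(\theta_d(Y))=\beta_d(Y)^{-1}V_i(Y)$ (from $v_i=\q T_{-d}V_i$ and \eqref{invtd}), one checks that all powers of $(1+d\cdot Y)$ and of $(1-|d|^2)$ cancel, using $2\alpha+(N+1)=\frac{2(p+1)}{p-1}$ with $\alpha$ as in \eqref{defro}; the second identity is even more direct, since the weight $\frac{\rho}{1-|y|^2}$ exactly compensates the conformal factor in the first displayed identity above.

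For item (iv), write $\q T_d w(Y)=\beta_d(Y)(w\circ\theta_d)(Y)$, so that $\nabla_Y(\q T_d w)=(\nabla\beta_d)(w\circ\theta_d)+\beta_d\,(D\theta_d(Y))^{T}(\nabla w)\circ\theta_d$. The heart of the matter is a pointwise identity for the degenerate quadratic form: for $y=\theta_d(Y)$ and any vector $\eta$, one has $|(D\theta_d(Y))^{T}\eta|^2-\big(Y\cdot(D\theta_d(Y))^{T}\eta\big)^2=c_d(Y)\big(|\eta|^2-(y\cdot\eta)^2\big)$ for an explicit positive scalar $c_d(Y)$ which, combined with the measure identity of item (iii), makes the principal contribution to $\|\q T_d w\|_{\H_0}^2$ equal to $\|w\|_{\H_0}^2$ up to multiplicative constants; the cross term and the $|\nabla\beta_d|^2(w\circ\theta_d)^2$ term are lower order and absorbed by the Hardy–Sobolev inequality of Lemma \ref{lemhs}, after checking that $\nabla\beta_d$ produces against $\rho\,dY$ contributions bounded independently of $d$. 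For item (v), differentiate $\q T_d$ in $d_i$: both $\partial_{d_i}\beta_d$ and $\partial_{d_i}\theta_d$ carry a factor $(1-|d|^2)^{-1}$, so $\partial_{d_i}(\q T_d w)$ is a finite sum of terms, each of the form (a $\q T_d$-type operator) applied to $w$ or to $\partial_{y_j}w$ and multiplied by $(1-|d|^2)^{-1}$; the bound then follows from the $\H_0$-boundedness obtained in item (iv). The main obstacle I expect is exactly the uniformity in $d$ in item (iv): handling the degenerate form $|\nabla r|^2-(y\cdot\nabla r)^2$ together with the zeroth-order $\nabla\beta_d$ contributions as $|d|\to1$, which forces one to use the precise structure of $\theta_d$ and not merely its smoothness; the remaining verifications are algebraic bookkeeping.
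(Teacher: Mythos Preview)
Your treatment of items (i)--(iii) and (v) matches the paper's in spirit and in substance: (i) is read off from \eqref{defW} and the composition law encoded in $\theta_d$, (ii) is the linearization of the covariance statement of Lemma~\ref{proplorentzw}, (iii) is the change-of-variables computation with the Jacobian and the identity $1-|\theta_d(Y)|^2=(1-|d|^2)(1-|Y|^2)/(1+d\cdot Y)^2$, and (v) is obtained by differentiating \eqref{defW} in $d_i$, expressing the result as a finite linear combination (with coefficients $O((1-|d|^2)^{-1})$) of $\q T_d$ applied to $w$, $y_kw$, $\partial_{y_l}w$, $y_k\partial_{y_l}w$, $y_ky_m\partial_{y_l}w$, and invoking (iv).

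The one place where your route diverges from the paper is item (iv), and you correctly flag it as the crux. You propose a direct computation: expand $\nabla_Y(\q T_d w)$, claim a pointwise identity for the degenerate form $|\eta|^2-(Y\cdot\eta)^2$ under $D\theta_d$, and absorb the $\nabla\beta_d$ contributions. The paper explicitly calls this approach ``calculatory and difficult (though not impossible) to carry out in higher dimensions'' and replaces it by a short trick that uses (ii) rather than avoiding it. Namely, apply the stationary version \eqref{translw} of (ii) to $W=\q T_d w$: multiplying by $W\rho$ and integrating, the change of variables from (iii) gives
\[
\int_{|Y|<1} W\big(\q L W+\psi(d,Y)W\big)\rho\,dY=\int_{|y|<1} w\big(\q L w+\tfrac{2(p+1)}{p-1}w\big)\rho\,dy,
\]
while the second identity of (iii) gives $\int W^2\tfrac{\rho}{1-|Y|^2}\,dY=\int w^2\tfrac{\rho}{1-|y|^2}\,dy$. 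Since $\psi(d,Y)\le C/(1-|Y|^2)$, an integration by parts yields
\[
\int_{|Y|<1}\big(|\nabla W|^2-(Y\cdot\nabla W)^2\big)\rho\,dY\le \int_{|y|<1}\big(|\nabla w|^2-(y\cdot\nabla w)^2\big)\rho\,dy+C\int_{|Y|<1}\frac{W^2\rho}{1-|Y|^2}\,dY,
\]
and the Hardy inequality \eqref{hsajm} (not the Hardy--Sobolev estimate of Lemma~\ref{lemhs}, which is an $L^{p+1}$ bound) controls the last term by $C\|w\|_{\H_0}^2$. This gives the upper bound in (iv) directly; the lower bound follows from \eqref{invtd}. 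The gain of the paper's argument is that it bypasses entirely the pointwise algebra of $D\theta_d$ on the degenerate quadratic form and the delicate $|d|\to1$ uniformity you anticipate; the gain of your approach, were the computation carried out, would be a self-contained proof of (iv) not relying on (ii).
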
 
%%%%%%%%%%%%%%%%%%%%%%%%%%%%%%%%%%%%%%%%%%%%%%%%%%%%%%%%%%%%%%%%%%
%%%%%%%%%%%%%%%%%%%%%%%%%%%%%%%%%%%%%%%%%%%%%%%%%%%%%%%%%%%%%%%%%%
\begin{nb} If we consider $w(y,s)= w(y)$ in item (i) and $W= \q T_d w$, then it holds that
\begin{equation}\label{translw}
\q L w(y) + \frac{2(p+1)}{p-1}w(y)
=\frac{(1+d\cdot Y)^{\frac{2p}{p-1}}}{(1-|d|^2)^{\frac p{p-1}}}
\left(\q L W(Y) + \psi(d,Y) W(Y)\right).
\end{equation}
\end{nb}
\begin{proof}$ $\\
(i) This is straightforward by definition of $\q T_d$ given in item (ii) of Lemma \ref{proplorentzw}.\\
(ii) The one-dimensional proof given in 
Lemma 2.7 page 56 in \cite{MZjfa07} is valid in $N$ dimensions.\\
(iii) This is a direct consequence of the change of variables $Y\mapsto y$ suggested by the definition \eqref{defW} of the transformation $\q T_d$ and the weight $\rho(y)$ \eqref{defro}.\\
(iv) As in one space dimension (see Lemma 2.8 page 57 in \cite{MZjfa07}), we see from \eqref{invtd} that it is enough to prove the right-hand side identity.
 That proof is calculatory and difficult (though not impossible) to carry out in higher dimensions. For that reason, we present a different proof based on a 
small idea: 
% ``trick'': 
the use of \eqref{translw}, which follows from the just proved item (ii).\\
Consider now $d\neq 0$, $w\in \H_0$ and $W=\q T_d(w)$. Making the transformation $Y\mapsto y$ expressed in \eqref{yortho}, we see from \eqref{defW} and \eqref{translw} that
\begin{align*}
\int_{|Y|<1}W(Y)^2 \frac{\rho(Y)}{1-|Y|^2} dY &= \iint \frac{w(y)^2 \rho(y)}{1-|y|^2}  dy;\\
\int_{|Y|<1} W(Y)(\q L W(Y) + \psi(d,Y) W(Y))\rho dY
&= \iint w(y)(\q L w(y) + \frac{2(p+1)}{p-1}w(y)) \rho dy.
\end{align*}
Since $\psi(d,Y) \le \frac C{1-|Y|^2}$, making an integration by parts in the second line,
%in the terms involving $\q L$ \eqref{defro},
% then using \eqref{w-1},
% and the definition \eqref{defnh0} of the norm in $\q H_0$, 
we see that 
\[
\int_{|Y|<1}\left(|\nabla W|^2-(Y\cdot \nabla W)^2\right) \rho dY
\le \iint\left(|\nabla w|^2-(y\cdot \nabla w)^2\right) \rho dy
+C\int_{|Y|<1}\frac{ W^2 \rho dY}{1-|Y|^2}.
\]
% \begin{align*}
% &\int_{|Y|<1}\left(|\nabla W(Y)|^2-(Y\cdot \nabla W(Y))^2\right) \rho(Y) dY\\ 
% %\le C\|w\|_{\q H_0}^2.
% \le& \iint\left(|\nabla w(y)|^2-(y\cdot \nabla w(y))^2\right) \rho(y) dy
% +C\int_{|Y|<1} (W(Y))^2 \frac{\rho(Y)}{1-|Y|^2} dY.
% \end{align*}
Since $\int_{|Y|<1}W(Y)^2 \rho(Y) dY\le \int_{|Y|<1}W(Y)^2 \frac{\rho(Y)}{1-|Y|^2} dY$, we see that the result follows from the definition \eqref{defnh0} of the norm in $\q H_0$ and the following identity proved in Appendix B in \cite{MZajm03}:
 \begin{equation}\label{hsajm}
\iint |h(y)|^2\frac{\rho(y)}{1-|y|^2}dy
\le C\iint \left(|\nabla h|^2(1-|y|^2) + h^2\right) \rho(y) dy\le C\|h\|_{\H_0}^2.
\end{equation} 
This concludes the proof of item (iv).\\
(v) Consider $|d|<1$ and $w\in \q H_0$ such that $\partial_{y_j} w\in \q H_0$ for all $j=1,\dots,N$. If $i=1,\dots,N$, we differentiate the expression \eqref{defW} with respect to $d_i$ and see that $\partial_{d_i} W(Y,S)$ is a linear combination with bounded coefficients depending only on $d$, of the following terms: 
$\q T_d \left(\frac{w(y)}{1-|d|^2}\right)$,    
$\q T_d \left(\frac{Y_i w(y)}{1+d\cdot Y}\right)$,    
$\q T_d \left(\frac{\partial_{y_j} w(y)}{1+d\cdot Y}\right)$,    
$\q T_d \left(Y_i\frac{\partial_{y_j} w(y)}{1+d\cdot Y}\right)$ and
$\q T_d \left(Y_iy_j\frac{\partial_{y_j} w(y)}{1+d\cdot Y}\right)$. Replacing $Y$ by its value in terms of $y$ and $d$ (see \eqref{deftd}), we see that $\partial_{d_i} W(Y,S)$ is a linear combination with coefficients depending only on $d$ and bounded by $\frac C{1-|d|^2}$, of the following terms:
$\q T_d(w)$, $\q T_d(y_k w)$, $\q T_d(\partial_{y_l}w)$, $\q T_d(y_k\partial_{y_l}w)$ and  $\q T_d(y_ky_m\partial_{y_l}w)$,  where $k$, $l$ and $m$ are between $1$ and $N$. Since we have
\[
\|\q T_d(y_k w)\|_{\q H_0} \le C \|\q T_d(w)\|_{\q H_0},\;
\|\q T_d(y_k\partial_{y_l}w)\|_{\q H_0} + \|\q T_d(y_ky_m\partial_{y_l}w)\|_{\q H_0}  \le C\|\q T_d(\partial_{y_l}w)\|_{\q H_0},
\]
this concludes the proof of Lemma \ref{0lemeffect}. 
\end{proof}

%%%%%%%%%%%%%%%%%%%%%%%%%%%%%%%%%%%%%%%%%%%%%%%%%%%%%%%%%%%%%%%%%%
%%%%%%%%%%%%%%%%%%%%%%%%%%%%%%%%%%%%%%%%%%%%%%%%%%%%%%%%%%%%%%%%%%
\section{Spectral properties in similarity variables}\label{subeigenld*}
%%%%%%%%%%%%%%%%%%%%%%%%%%%%%%%%%%%%%%%%%%%%%%%%%%%%%%%%%%%%%%%%%%
%%%%%%%%%%%%%%%%%%%%%%%%%%%%%%%%%%%%%%%%%%%%%%%%%%%%%%%%%%%%%%%%%%
In this section, we give several estimates related to the operators $L_d$, $L_d^*$ and $\q L$ which are involved in the similarity variables' version \eqref{eqw}. More precisely, we give here the proofs of Lemmas \ref{l10} and \ref{eigenld*}, then, we give in Proposition \ref{spectl} below some spectral properties of the operator $\q L$ \eqref{defro}.

%\bigskip

\begin{proof}[Proof of Lemma \ref{l10}]\label{proofl10}$ $\\
(i) Note that for any $\mu \in \m R$ and $|d|<1$, $\kappa^*(d,\mu e^s, y)$ \eqref{defk*} is a particular solution of the following first order form of equation \eqref{eqw}:
\begin{equation}\label{eqw1}
\partial_s \begin{pmatrix} w_1 \\ w_2 \end{pmatrix} =
\begin{pmatrix} w_2 \\ \displaystyle \q L w_1 - \frac{2(p+1)}{(p-1)^2} w_1 + |w_1|^{p-1} w_1 - \frac{p-3}{p-1} w_2 - 2y \cdot\nabla w_2 \end{pmatrix}.
\end{equation}
Therefore, 
%(i) Since we already know from \eqref{defk*} that $\kappa^*(d,\mu e^s, y)$ is a family of solutions of equation \eqref{eqw1} for all $\mu \in \m R$ and $|d|<1$, 
%it follows that $\partial_\mu \kappa^*(d,0,y)$, $\partial_{d_i} \kappa^*(d,0,y)$ for all $i=1,\dots,N$, hence $\partial_{e_i}\kappa^*(d,0,y)=e_i\cdot \nabla_d \kappa^*(d,0,y)$ for all $i=1,\dots,N$ are particular solutions to the linearized equation around $\kappa^*(d,0,y) = (\kappa(d,y),0)$, which writes precisely $\partial_s (w_1, w_2) = L_d(w_1,w_2)$ by definition \eqref{defld} of $L_d$.\\
it follows that $\partial_\mu \kappa^*(d,0,y)$, $\partial_{d_i} \kappa^*(d,0,y)$ for all $i=1,\dots,N$, 
%hence $\partial_{e_i}\kappa^*(d,0,y)=e_i\cdot \nabla_d \kappa^*(d,0,y)$ for all $i=1,\dots,N$ 
are particular solutions to the linearized equation around $\kappa^*(d,0,y) = (\kappa(d,y),0)$, which writes precisely $\partial_s (w_1, w_2) = L_d(w_1,w_2)$ by definition \eqref{defld} of $L_d$. Therefore, the same holds for $\partial_{e_i}\kappa^*(d,0,y)=e_i\cdot \nabla_d \kappa^*(d,0,y)$ for all $i=1,\dots,N$, where the orthonormal basis $(e_1,\dots,e_N)$ has been introduced in Lemma \ref{lembase}.\\
Since we have from \eqref{defk*}
\begin{align*}
\partial_\mu \kappa^*(d,0,y) &=-\frac{2\kappa_0e^s}{p-1} (1-|d|^2)^{\frac 1{p-1}}\vc{(1+d\cdot y)^{-\frac {p+1}{p-1}}}{(1+d\cdot y)^{-\frac {p+1}{p-1}}},\\
\nabla_d \kappa^*(d,0,y)&
=-\ds\frac{2\kappa_0(1-|d|^2)^{\frac 1{p-1}-1}}{p-1}\vc{\frac{d(1+d\cdot y) + y(1-|d|^2)}{(1+d\cdot y)^{\frac {p+1}{p-1}}}}{0}\label{linkfd0},
\end{align*}
hence 
\begin{align*}
\mbox{if }i=1,\;\;\partial_{e_1}\kappa^*(d,0,y)&=-\ds\frac{2\kappa_0(1-|d|^2)^{\frac 1{p-1}-1}}{p-1}\vc{\frac{|d|+y\cdot e_1}{(1+d\cdot y)^{\frac {p+1}{p-1}}}}{0}\\
\mbox{for }i\ge 2,\;\;\partial_{e_i}\kappa^*(d,0,y)&=-\ds\frac{2\kappa_0(1-|d|^2)^{\frac 1{p-1}}}{p-1}\vc{\frac{y\cdot e_i}{(1+d\cdot y)^{\frac {p+1}{p-1}}}}{0},
\end{align*}
 this concludes the proof of (i).\\
% (ii) Let us note that it is better to make the calculations after an orthogonal change of coordinates $y\mapsto z(y)$ such that $z_i=y\cdot e_i(d)$. Indeed, by definition \eqref{defnh} of the norm in $\H$, if $V\in \H$ and $V(y)=\bar V(z)$, then
% \begin{equation}\label{defnhz}
% \|V\|_{\H}^2 = \int_{|z|<1}\left((\bar V_1(z))^2 +(\partial_{z_1}\bar V_1(z))^2(1-z_1^2)+\sum_{i=2}^N (\partial_{z_i}\bar V_1(z))^2+(\bar V_2(z))^2\right)\rho(z) dz.
% \end{equation}
% If in addition $\bar V(z)=\bar V(z_1)$, then
(ii) Let us note that if $V\in \H$ and $\bar V(z)=\bar V(z_1)$ with $z_1=y\cdot e_1(d)$, then  
\begin{equation}\label{defnhz1}
\|V\|_{\H}^2 =C(N) \int_{-1}^1\left((\bar V_1(z_1))^2 +(\partial_{z_1}\bar V(z_1))^2(1-z_1^2)+(\bar V_1(z_1))^2\right)(1-z_1^2)^{\frac 2{p-1}} dz_1,
\end{equation}
for some $C(N)>0$, 
which is precisely the one-dimensional expression for the norm, already computed in Lemma 4.2 page 83 in \cite{MZjfa07}. This is the case for $F_i(d)$ with $i=0$ or $i=1$.
As for $F_i(d)$ with $i\ge 2$, we note that its second component $F_{i,2}(d)=0$ and note also from \eqref{defW} that 
%remember from the proof of item (i) that its first component 
$F_{i,1}(d)=\q T_d(y\cdot e_i(d))$. Using item (iv) of Lemma \ref{0lemeffect}, we see that
\[
\|F_i(d)\|_{\H} = \|F_{i,1}(d)\|_{\H_0} \le C \|y \cdot e_i(d)\|_{\H_0} \le C. 
\]
% which is the desired estimate. 
% It remains only to estimate the norms of $\partial_{d_j}F_K(d)$ with $k=0,\dots,N$ and $j=1,\dots,N$.\\ 
% Since we see from \eqref{defW} that 
% \begin{equation}\label{defgdi}
% F_{0,1}(d,y) =\q T_d(1-d\cdot y)\mbox{ and }F_{i,1}(d,y) = \q T_d(e_i(d) \cdot y)=\sum_{j=1}^N e_{i,j}(d) \q T_d(y_j)\mbox{ if }i\ge 1,
% \end{equation}
%  where $(e_{i,1}(d),\dots,e_{i,N}(d))$ are the coordinates of $e_i(d)$ in the canonical basis, the result follows from the definition and the regularity of the orthonormal basis $(e_1(d),\dots, e_N(d))$ given in Lemma \ref{lembase}, together with the estimate of $\nabla_d \q T_d$ given in item (iv) of Lemma \ref{0lemeffect}.
This concludes the proof of Lemma \ref{l10}.
\end{proof}

\bigskip

Now, we give the proof of Lemma \ref{eigenld*}.

\begin{proof}[Proof of Lemma \ref{eigenld*}]$ $

\medskip

(i) The following claim allows us to conclude:
%%%%%%%%%%%%%%%%%%%%%%%%%%%%%%%%%%%%%%%%%%%%%%%%%%%%%%%%%%%%%%%%%%%%%%%%%%%%
%%%%%%%%%%%%%%%%%%%%%%%%%%%%%%%%%%%%%%%%%%%%%%%%%%%%%%%%%%%%%%%%%%%%%%%%%%%%
\begin{cl}\label{clfrancois*}$ $\\ 
(i) For any $v_2\in L^2_{\frac \rho{1-|y|^2}}$, 
equation \eqref{eqWl1}
has a unique solution $v_1\in \H_0$ such that 
\begin{equation*}%\label{wahbi}
\|v_1\|_{\H_0}\le C \|v_2\|_{L^2_{\frac \rho{1-|y|^2}}}.
%\le C \|r_2\|_{\H_0}.
\end{equation*}
(ii)  If $L_0^*r=\lambda r$ and $R=(R_1,R_2)$ is such that $R_2(Y)=\frac{(1+d\cdot Y)^\lambda}{(1-|d|^2)^{\lambda/2}}\q T_d(r_2)$ and $R_1$ is the solution of equation \eqref{eqWl1} with $v_2=R_2$, then $L_d^*(R)=\lambda R$. 
% (ii)  For any $|d|<1$, $\lambda \in \m R$ and $r\in \H_0$,
% we have the following equivalence:\\
%  $L_d^*(r)=\lambda r$ if and only if the function $e^{-\lambda s} r_2(y)$ is a solution to the equation 
% \begin{equation}\label{eqw*}
% \partial^2_{ss}w=\q L w + \psi(d,y)w
% -\frac{p+3}{p-1}\partial_sw-2y\cdot \nabla\partial_s w+4\alpha \frac{\partial_s w}{1-|y|^2}
% \end{equation}
% and $r_1$ is a solution to \eqref{eqWl1}.
\end{cl}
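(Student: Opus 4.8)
The plan is to prove Claim \ref{clfrancois*} in its two parts, since once it is established, the proof of Lemma \ref{eigenld*}(i) follows by a transport-of-structure argument: one writes down the explicit eigenfunctions $r=W_i(0)$ of $L_0^*$ (known from the one-dimensional analysis in \cite{MZjfa07}), transports their second component via $\q T_d$ with the prefactor $(1+d\cdot Y)^{\lambda}/(1-|d|^2)^{\lambda/2}$ as in part (ii) of the claim, and solves \eqref{eqWl1} for the first component via part (i). The normalization formula \eqref{defWl2} for $W_{i,2}(d)$ then comes from computing $\q T_d(r_2)$ explicitly using the definition \eqref{defW} of $\q T_d$ together with the fact that the eigenfunctions $F_{i,1}(0)$ of $L_0$ are powers of $(1+0\cdot y)$, and $c_{\lambda_i}$ is fixed by imposing the biorthogonality $\phi(F_i,W_j)=\delta_{ij}$ of part (ii) of Lemma \ref{eigenld*} (which itself reduces, after a change of variables $Y\mapsto y$ using item (iii) of Lemma \ref{0lemeffect}, to the one-dimensional orthogonality already proved in \cite{MZjfa07}).

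For part (i) of the claim, I would set up \eqref{eqWl1} as a variational problem: the operator $v_1\mapsto -\q L v_1 + v_1$ is, by \eqref{defphi2}, associated to the coercive bilinear form $\int_{|y|<1}(|\nabla v_1|^2 - (y\cdot\nabla v_1)^2 + v_1^2)\rho\,dy = \|v_1\|_{\H_0}^2$, so Lax--Milgram gives a unique $v_1\in \H_0$ provided the right-hand side of \eqref{eqWl1} defines a bounded linear functional on $\H_0$. The right-hand side is $(\lambda-\tfrac{p+3}{p-1})v_2 - 2y\cdot\nabla v_2 + \tfrac{4\alpha v_2}{1-|y|^2}$; pairing it against a test function $h\in\H_0$ and integrating the $y\cdot\nabla v_2$ term by parts moves the derivative onto $h$, after which every term is controlled by $\|v_2\|_{L^2_{\rho/(1-|y|^2)}}$ times a norm of $h$ bounded by $\|h\|_{\H_0}$ — here the Hardy--Sobolev inequality \eqref{hsajm} (equivalently Lemma \ref{lemhs}) is exactly what is needed to absorb the singular weight $1/(1-|y|^2)$ and the boundary terms from the integration by parts. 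This yields both existence/uniqueness and the estimate $\|v_1\|_{\H_0}\le C\|v_2\|_{L^2_{\rho/(1-|y|^2)}}$.

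For part (ii) of the claim, the key input is item (ii) of Lemma \ref{0lemeffect} (and its corollary \eqref{translw}), which says precisely that the linearized operator of \eqref{eqw} around $\kappa_0$ conjugates under $\q T_d$ into the linearized operator around $\kappa(d)$, with the conformal-type prefactor $(1+d\cdot Y)^{2p/(p-1)}(1-|d|^2)^{-p/(p-1)}$. Dualizing this conjugation relation with respect to the inner product $\phi$ — using the transformation rules for the $L^2_\rho$ and $L^2_{\rho/(1-|y|^2)}$ inner products in item (iii) of Lemma \ref{0lemeffect} to track how $\phi$ transforms — shows that $L_d^*$ is conjugate to $L_0^*$ up to the stated reweighting of the second component; this is what forces the factor $(1+d\cdot Y)^\lambda/(1-|d|^2)^{\lambda/2}$ on $R_2$. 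Then if $L_0^*r=\lambda r$, the transported pair $R$ with $R_1$ solving \eqref{eqWl1} satisfies $L_d^*R=\lambda R$ by construction, because solving \eqref{eqWl1} is exactly the recipe (from \eqref{pesanti}) that recovers the first component of an eigenfunction of $L_d^*$ from its second component.

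The main obstacle is part (ii): carefully carrying out the duality computation that turns the conjugation of $L_d$ (item (ii) of Lemma \ref{0lemeffect}, an identity about the \emph{direct} operators) into a conjugation of the adjoints $L_d^*$, keeping track of all the weights — the fact that $\phi$ involves the gradient term $\nabla q_1\cdot\nabla r_1 - (y\cdot\nabla q_1)(y\cdot\nabla r_1)$ rather than just an $L^2_\rho$ pairing means the transformation of $\phi$ under $\q T_d$ is not a pure change of weight, and one must use \eqref{translw} and integration by parts to see that the ``extra'' terms cancel or get absorbed. The part (i) estimate, by contrast, is a routine application of Lax--Milgram plus Hardy--Sobolev, and the passage from the claim to Lemma \ref{eigenld*}(i)--(iii) is bookkeeping: explicit substitution for the normalization \eqref{defWl2}, reduction of the orthogonality (ii) to the one-dimensional case via \eqref{defnhz1}, and the weighted estimates (iii) for $W_i(d)$ and its $d$-derivatives via item (v) of Lemma \ref{0lemeffect}.
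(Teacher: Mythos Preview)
Your proposal is essentially correct and aligned with the paper's approach, which simply refers to the one-dimensional case in \cite{MZjfa07} for both parts. Your treatment of part (i) via Lax--Milgram with the coercive form $\|\cdot\|_{\H_0}^2$, integrating the $y\cdot\nabla v_2$ term by parts, and invoking the Hardy inequality \eqref{hsajm} to control the $\frac{v_2}{1-|y|^2}$ pairing is exactly the ``simple integration by parts'' the paper alludes to.

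For part (ii), your strategy is correct but slightly more indirect than necessary. You frame it as dualizing the conjugation of $L_d$ with respect to $\phi$, which forces you to worry about how the gradient part of $\phi$ transforms under $\q T_d$. The more economical route (and the one implicit in \cite{MZjfa07}) bypasses $\phi$ entirely: from $L_d^*R=\lambda R$ and the explicit form \eqref{pesanti}, the second component is exactly \eqref{eqWl1} (so $R_1$ is determined by $R_2$), and the first component reduces to the scalar identity
\[
\q L R_2 + \psi(d,Y) R_2 = \lambda\Bigl[(\lambda-\tfrac{p+3}{p-1})R_2 - 2Y\cdot\nabla R_2 + \tfrac{4\alpha R_2}{1-|Y|^2}\Bigr]
\]
on $R_2$ alone. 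The same scalar identity holds for $r_2$ at $d=0$ (with $\psi(0,\cdot)=\tfrac{2(p+1)}{p-1}$). One then checks directly that the map $r_2\mapsto R_2 = \frac{(1+d\cdot Y)^\lambda}{(1-|d|^2)^{\lambda/2}}\q T_d(r_2)$ carries the $d=0$ identity to the general-$d$ identity: the term $\q L+\psi$ transforms via \eqref{translw}, and a short computation shows that the prefactor $(1+d\cdot Y)^\lambda$ is precisely what makes the first-order piece $-2Y\cdot\nabla$ and the weight $\frac{4\alpha}{1-|Y|^2}$ transform correctly. This avoids any discussion of how $\phi$ behaves under $\q T_d$.
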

%%%%%%%%%%%%%%%%%%%%%%%%%%%%%%%%%%%%%%%%%%%%%%%%%%%%%%%%%%%%%%%%%%%%%%%%%%%%
%%%%%%%%%%%%%%%%%%%%%%%%%%%%%%%%%%%%%%%%%%%%%%%%%%%%%%%%%%%%%%%%%%%%%%%%%%%%
% \begin{nb}
% If $r_2 \in \q H_0$ in item (i), then the result follows as in one space-dimension, with the same proof and with \eqref{wahbi} replaced by $\|r\|_{\H_0}\le C \|r_2\|_{\H_0}$. Here, we extend the one-dimensional version to the case where $r_2\in L^2_{\frac \rho{1-|y|^2}}$, which is a larger space, thanks to \eqref{hsajm}.
% \end{nb}
\begin{proof} $ $\\
(i) In the one-dimensional case, we proved the same statement with the space $L^2_{\frac \rho{1-|y|^2}}$ replaced by a smaller one, $\q H_0$ (see the embedding in \eqref{hsajm}, and see the proof of Lemma 4.5 page 88 in \cite{MZjfa07}). In fact, the adaptation to higher dimensions with the larger space $L^2_{\frac \rho{1-|y|^2}}$ costs only a simple integration by parts, therefore, the proof is omitted here.\\
(ii) The proof follows exactly as in one space dimension. See item (ii) of Claim 4.5 page 86 and also page 87 in \cite{MZjfa07}.
%(ii) The proof is omitted too, since it is similar to the proof of Claim \ref{ClReducLd}. See page 87 in \cite{MZjfa07} for the one-dimensional case.
\end{proof}
Indeed, if $d=0$, one can check by hand that the solution is given by $r=(r_1, r_2)$ where $r_2(y)=1-|y|^2$ if $\lambda=1$ and $r_2(y)=y_i$ if $\lambda=0$, and $r_1$ is the solution of equation \eqref{eqWl1} with $v_2=r_2$.\\
If $d\neq 0$, the result follows by applying item (ii) in Claim \ref{clfrancois*} with $r_2(y) = 1-|y|^2$, then $r_2(y) = e_i(d) \cdot y$, where $(e_1(d), \dots, e_N(d))$ is the orthonormal basis introduced in Lemma \ref{lembase} (the fact that the eigenfunctions are in $\H$ follows from the explicit formulas given in \eqref{defWl2} and item (i) of Claim \ref{clfrancois*}.

\medskip

(ii) If $\lambda_i\neq \lambda_j$ (i.e. if $i=0$ and $j\ge 1$, or $j=0$ and $i\ge 1$), the result
%(ii) The fact that $\phi(W_1(d), G_i(d))=\phi(W_i(d), F_1(d))=0$ 
follows from the orthogonality between eigenfunctions of $L_d$ and $L_d^*$ for different eigenvalues.\\
If $\lambda_i=\lambda_j$ (i.e. if $i=j=0$ or $i,j=1,\dots,N$), recalling from Lemma \ref{l10} that $F_{i,2}=\lambda_i F_{i,1}$,
% and $W_{i,2}=c_{\lambda_i}\frac{(1-|y|^2)^{\lambda_i}}{(1-|d|^2)^{\lambda_i}} F_{i,1}$,
% ...
% In order to prove the other identities, we take:\\
% - either $\lambda=1$, $F=F(d)$ and $W=V(d)$;\\
% - or $\lambda=0$, $F=G_i(d)$ and $W=W_j(d)$ with $j=1,\dots,N$.\\
% Since $F_2=\lambda F_1$, 
% %We note that $F_2=\lambda F_1$ and $W_2=c_\lambda\frac{(1-|y|^2)^\lambda}{(1-|d|^2)^\lambda} F_1$. 
then, using the expression \eqref{defphi2} of the inner product $\phi$ and \eqref{eqWl1}, we integrate by parts and obtain:
\begin{align}
&\phi(F_i, W_j) = \iint F_{i,1}(-\q LW_{j,1}+W_{j,1})\rho dy +\lambda_i \iint F_{i,1}W_{j,2}\rho dy\nonumber\\
&=\iint F_{i,1}((\lambda_i - \frac{p+3}{p-1})W_{j,2}-2y \cdot\nabla W_{j,2}+4 \alpha \frac {W_{j,2}}{1-|y|^2}+\lambda_i W_{j,2}) \rho dy.\label{phiwf}
\end{align}
{\bf Case 1}: 
If $i=j$, 
% If $\lambda=1$ or $\lambda =0$ with $j=i$, 
we see from item (i) in the lemma we are proving that $W_{i,2}=c_{\lambda_i}\frac{(1-|y|^2)^{\lambda_i}}{(1-|d|^2)^{\lambda_i}} F_{i,1}$. Therefore, 
\begin{align*}
&\frac{(1-|d|^2)^{\lambda_i}}{c_{\lambda_i}} \phi(F_i,W_i)=\left(2\lambda_i - \frac{p+3}{p-1}\right)\iint F_{i,1}^2 (1-|y|^2)^{\lambda_i}\rho dy\\
&  - \iint y \cdot \nabla [F_{i,1}(1-|y|^2)^{\lambda_i}]^2 \frac \rho {(1-|y|^2)^{\lambda_i}} dy
 +4 \alpha \iint F_{i,1}^2 (1-|y|^2)^{\lambda_i-1}\rho dy\\
&= \left(2\lambda_i - \frac{p+3}{p-1}\right)\iint F_{i,1}^2 (1-|y|^2)^{\lambda_i}\rho dy\\
&+\iint F_{i,1}^2 (1-|y|^2)^{2\lambda_i} \left(N \frac \rho{(1-|y|^2)^{\lambda_i}}-2|y|^2 (\alpha-\lambda_i) \frac \rho{(1-|y|^2)^{1+\lambda_i}}\right)\\
&=2(\lambda_i+\alpha)\iint F_{i,1}^2 (1-|y|^2)^{\lambda_i-1}\rho dy. 
\end{align*}
 Since we see from \eqref{defW} that 
 \begin{equation}\label{defgdi}
 F_{0,1}(d,y) =\q T_d(1-d\cdot y)\mbox{ and }F_{i,1}(d,y) = \q T_d(e_i(d) \cdot y),
%=\sum_{j=1}^N e_{i,j}(d) \q T_d(y_j)\mbox{ if }i\ge 1,
 \end{equation}
%  where $(e_{i,1}(d),\dots,e_{i,N}(d))$ are the coordinates of $e_i(d)$ in the canonical basis, 
using 
%\eqref{defgdi} and 
item (iii) of Lemma \ref{0lemeffect}, we see that $\phi(F_i(d), W_i(d))=1$.\\
{\bf Case 2}: 
If $j\neq i$ with $i\ge 1$ and $j\ge 1$ (in this case, $\lambda_i=\lambda_j=0$),  
% If $\lambda=0$ and $j\neq i$, 
%since $W_{i,2}^d=c_0 G_{i,1}^d$, we may interchange $i$ and $j$ and assume that $j\ge 2$. 
% Now it remains to prove that $\phi(W_i^d, G_j^d)=0$ whenever $i=2,\dots,N$ and $j=1,\dots,N$ with $i\neq j$ (note that $i\neq 1$). Since $W_{i,2}^d=c_0 G_{i,1}^d$, using \eqref{defphi2} and \eqref{eqWl1}, we get by integration by parts
% \begin{align*}
% &\frac 1{c_0} \phi(W_i^d,G_j^d) = \iint G_{j,1}^d(-\q LW_{i,1}^d+W_{i,1}^d)\rho dy\\
% &=\iint G_{i,1}^d(- \frac{p+3}{p-1})W_{j,2}^d-2y \cdot\nabla W_{j,2}+4 \alpha \frac {W_{i,2}}{1-|y|^2}) \rho dy.
% \end{align*}
%Since $i\ge 2$, we see from the definition \eqref{defWl2} and \eqref{deffid} of $W_i^d$ and $G_i^d$ that
using the definition \eqref{defWl2} of $W_j(d)$, we see that 
\begin{equation}\label{y.nabla}
y\cdot \nabla W_{j,2}(d,y) =c_0(1-|d|^2)^{\frac{p+1}{2(p-1)}}\frac{y\cdot e_j(d)}{(1+d\cdot y)^{\frac{p+1}{p-1}}}-\left(\frac{p+1}{p-1}\right)\frac{d\cdot y}{1+d\cdot y}W_{j,2}(d).
%W_{j,2}(d)\left(1-  \left(\frac{p+1}{p-1}\right)\frac{d\cdot y}{1+d\cdot y}\right).
\end{equation}
Recalling the definition \eqref{deffid} of $F_{i,1}(d)$ and making the change of variables $y\mapsto z=(y\cdot e_1(d), \dots,y\cdot e_N(d))$ where the orthonormal basis is defined in Lemma \ref{lembase} (recall also that $d\cdot y = |d| y\cdot e_1(d)=|d|z_1$), we see from separation of variables that the integral in \eqref{phiwf} is zero, hence $\phi(F_i(d), W_j(d)) = 0$. This concludes the proof of item (ii) in Lemma \ref{eigenld*}.

\medskip

(iii) Take $0<|d|<1$ and $i=0,\dots,N$. We start with the proof of the first line, then, we prove the second, which is more subtle.\\
{\it - Proof of the first line in item (iii)}: Since $W_{i,1}(d)$ and $\partial_{e_1} W_{i,1}(d)$ are solutions of equation \eqref{eqWl1} with $v_2=W_{i,2}(d)$ or $\partial_{e_1} W_{i,2}(d)$ (respectively), we see from item (i) in Claim \ref{clfrancois*} that it is enough to bound the norms of the latter functions in $L^2_{\frac \rho{1-|y|^2}}$ in order to conclude.\\
%When $i=1,\dots,N$, the result follows from \eqref{defWl2}, the embedding \eqref{hsajm} and item (ii) in Lemma \ref{l10}.\\
%When $i=0$, 
Noting that for all $|y|<1$ and $|d|<1$,
%\;i=2,\dots,N,\;\;
%Since one easily sees that
\begin{equation}\label{bounds}
(1-|y|^2)+(1-|d|^2)
+\sup_{j=2,\dots,N}|y\cdot e_j|\sqrt{1-|d|^2}
\le C (1+d\cdot y),
\end{equation}
we see from the definitions of $W_{i,2}(d,y)$ and $\kappa(d,y)$ given in Lemma \ref{eigenld*} and \eqref{defkd} that
\begin{equation}\label{wkd}
|W_{i,2}(d,y)|+(1-|d|^2)|\partial_{e_1} W_{i,2}(d,y)|\le C \kappa(d,y).
\end{equation}
Since $\kappa(d,y)=\q T_d(\kappa_0)$ by definition \eqref{defW} of the transformation $\q T_d$, using the embedding \eqref{hsajm} together with the continuity of $\q T_d$ stated in item (iv) of Lemma \ref{0lemeffect}, we see that 
%for all 
%$|y|<1$, 
%$|d|<1$.
% and $i=0,\dots,N$,
\begin{equation}\label{boundkd}
\|W_{0,2}(d)\|_{L^2_{\frac \rho{1-|y|^2}}}+(1-|d|^2)\|\partial_{e_1}W_{0,2}(d)\|_{L^2_{\frac \rho{1-|y|^2}}}\le C\|\kappa(d)\|_{L^2_{\frac \rho{1-|y|^2}}}
\le C \|\kappa(d)\|_{\q H_0}\le C,
%C\|\kappa_0\|_{\q H_0},
\end{equation}
which closes the proof of the first line in item (iii).\\
%
%
%
%
%
%
% When $i=0$, using \eqref{defWl2}, we see from the integral calculation rule of Claim \ref{cltech0} that
% \begin{align}
% \iint (W_{0,2}(d,y))^2 \frac \rho{1-|y|^2} dy 
% &\le C (1-|d|^2)^{\frac 2{p-1}}\iint \frac{(1-|y|^2)^{\alpha+1}}{(1+d\cdot y)^{\frac{2(p+1)}{p-1}}} dy\label{madeleine} \\
% &\le C (1-|d|^2)^{\frac 2{p-1}}\int_{-1}^1 \frac{(1-\xi^2)^{\frac{p+1}{p-1}}}{(1+|d|\xi)^{\frac{2(p+1)}{p-1}}} d\xi \le C.\nonumber
% \end{align}
% Using \eqref{defWl2}, we see that
% \begin{equation}\label{dj0}
% \partial_{d_j} W_{0,2}(d,y) = -\frac{2c_1 d_j}{p-1}\frac{W_{0,2}(d,y)}{1-|d|^2} - c_1 \frac{p+1}{p-1}y_j(1-|d|^2)^{\frac 1{p-1}}\frac{(1-|y|^2)}{(1+d\cdot y)^{\frac{2p}{p-1}}}.
% \end{equation}
% Arguing as for \eqref{madeleine} and using Claim \ref{cltech0}, we see that $\|\partial_{d_j}W_{0,2}(d)\|_{L^2_{\frac \rho{1-|y|^2}}}\le \frac C{1-|d|^2}$. Since  $\partial_{e_1}W_{0,2}(d,y)=e_1\cdot \nabla_d W_{0,2}(d,y)$, this concludes the proof of the first line in item (iii).\\
{\it - The second line in item (iii)}: Take $j=2,\dots,N$. Using the expression \eqref{defphi2} of the inner product $\phi$, the elliptic equation \eqref{eqWl1} and the Hardy-Sobolev identity \eqref{hsajm}, we see that 
\begin{equation}\label{boundh'}
\|\partial_{e_j} W_i\|_{\H'} \le C\left\|\frac{\partial_{e_j}W_{i,2}}{1-|y|^2}\right\|_{L^2_{\rho(1-|y|^2)}}+C \|y \cdot \nabla \partial_{e_j}W_{i,2}\|_{L^2_{\rho(1-|y|^2)}}.
\end{equation}
Making a rotation of coordinates, we reduce to the case where the basis $(e_1(d),\dots, e_N(d))$ is the canonical basis of $\m R^N$. In other words, we reduce to the case where 
\begin{equation}\label{dreduc}
d=(|d|,0,\dots,0)
\end{equation}
 and we need to compute the norms of $\partial_{d_j}W_{i,2}$.\\
When $i=0$ or $1$, 
%noting that $1-|y|^2 \le C(1+d\cdot y)$ and 
using \eqref{bounds}, \eqref{dreduc} and the definition \eqref{defWl2} of $W_{i,2}$,
% (you may use \eqref{dj0} when $i=0$), 
we see after straightforward calculations that
\begin{equation}\label{bound0}
|y\cdot\nabla \partial_{d_j} W_{i,2}(d,y)|+
\frac{|\partial_{d_j} W_{i,2}(d,y)|}{1-|y|^2}
\le \frac{C|y_j|(1-|d|^2)^{\frac 1{p-1}}}{(1-|y|^2)(1+|d|y_1)^{\frac{p+1}{p-1}}}.
\end{equation}
Using the integral computation table of Claim \ref{cltech0}, 
%Arguing as for \eqref{madeleine} and using Claim \ref{cltech0}, 
we see that the $L^2_{\rho(1-|y|^2)}$ of the right-hand side is bounded by $C(1-|d|)^{-1}$, and the result for $i=0$ or $1$ follows from \eqref{bound0} and \eqref{boundh'}.\\
When $i=2,\dots,N$, we see from the definition \eqref{defWl2} of $W_{i,2}$ and \eqref{dreduc} that
\[
\partial_{d_j} W_{i,2}(d,y)=c_0(1-|d|^2)^{\frac{p+1}{2(p-1)}}
\left(\frac{y \cdot \partial_{e_j}e_i}{(1+|d|y_1)^{\frac{p+1}{p-1}}}-\frac{p+1}{p-1}\frac{y_iy_j}{(1+|d|y_1)^{\frac{2p}{p-1}}}\right).
\]
Since $|\partial_{e_j}e_i|\le C|d|^{-1}$ from Lemma \ref{lembase}, we see that
\[
|y\cdot\nabla \partial_{d_j} W_{i,2}(d,y)|+
\frac{|\partial_{d_j} W_{i,2}(d,y)|}{1-|y|^2}
\le \frac{C(1-|d|^2)^{\frac {p+1}{2(p-1)}}}{(1-|y|^2)(1+|d|y_1)^{\frac{p+1}{p-1}}}
\left(\frac 1{|d|^2}+\frac{|y_i||y_j|}{1+|d| y_1}\right).
\]
Arguing as in the case where $i=1$ or $1$, we get the result when $i=2,\dots,N$.This concludes the proof of Lemma \ref{eigenld*}.
\end{proof}

\bigskip

In the following, we give spectral properties of the operator $\q L$ defined in \eqref{defro}: 
%%%%%%%%%%%%%%%%%%%%%%%%%%%%%%%%%%%%%%%%%%%%%%%%%%%%%%%%%%%%%%%%%%%%%%%%%%%%
%%%%%%%%%%%%%%%%%%%%%%%%%%%%%%%%%%%%%%%%%%%%%%%%%%%%%%%%%%%%%%%%%%%%%%%%%%%%
\begin{prop}[Properties of the operator $\q L$ \eqref{defro}]\label{spectl}$ $\\
(i) The operator $\q L$ is self-adjoint and compact in $L^2_\rho(|y|<1)$. It has a discrete spectrum $(\gamma_n)_{n\in \m N}$ (an eigenvalue may be repeated in case of multiplicity) with $\gamma_0=0$, $\gamma_1=-\frac{2(p+1)}{p-1}$ and $\gamma_2=-\frac{2(3p+1)}{p-1}$ and the corresponding normalized eigenfunctions 
\begin{align}
 h_0(y)&=\hz,\; h_{1,i}(y)=\hu y_i\mbox{ for }1\le i\le N,\label{eigen}\\
h_2(y) &= \hd \left(a^2-|y|^2\right)\mbox{ with }a=\sqrt{\frac{N(p-1)}{3p+1}}<1.\nonumber
\end{align}
The normalized eigenfunction corresponding to $\gamma_n$ for $n\ge 3$ is denoted by $h_n$. They are such that the set
\begin{equation}\label{basis}
 \{h_0, h_{1,i}, h_2, h_n\;|\;i=1,\dots,N,\;n\ge 3\}
\end{equation}
 makes an orthonormal basis in $L^2_\rho(|y|<1)$.\\
% (ii) We have 
% \begin{equation}\label{firsta}
% -\bar\lambda^{(1)}_{|y|<a}=-\bar\lambda^{(1)}_{a<|y|<1}=-\gamma_2=\frac{2(3p+1)}{p-1}\mbox{ and }-\bar\lambda^{(2)}_{|y|<1}\ge \frac{2(3p+1)}{p-1}
% \end{equation}
% where $\bar \lambda^{(i)}_{\Omega}$ is the $i$-th eigenvalue of $\q L$ restricted to radial functions of $L^2_\rho(\Omega)$ with homogeneous Dirichlet boundary conditions on $\partial\Omega \backslash \{|y|=1\}$.\\
(ii) If $u\in L^2_\rho$ with $\q L u \in L^2_\rho$ and
\begin{equation}\label{sania}
\iint u(y) \rho(y) dy = \iint u(y) y_i \rho(y) dy = 0\mbox{ for all }i=1,\dots,N,
\end{equation}
then, 
\begin{equation}\label{coercivity}
-\iint u \q L u \rho dy \ge |\gamma_2| \int u^2 \rho dy.
%\mbox{ with }\gamma=\frac{2(3p+1)}{p-1}.
\end{equation}
%(iii) There exists a sequence of eigenfunctions $(h_n)_{n\ge 3}$ for the operator $\q L$ with associated eigenvalues $(\gamma_n)_{n\ge 3}$ such that the set $\{h_0, h_{1,i}, h_2, (h_n)_{n\ge 3}\;|\;i=1,\dots,N\}$ makes an orthonormal basis in $L^2_\rho(|y|<1)$. Moreover, we have
In particular, 
\begin{equation}\label{gng2}
\forall n\ge 3,\;\; -\gamma_n \ge -\gamma_2 = \frac{2(3p+1)}{p-1}.
\end{equation}
 \end{prop}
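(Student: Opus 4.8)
The plan is to reduce everything to the theory of a classical singular Sturm--Liouville / Jacobi-type operator, using the fact that $\q L$ is the natural Laplace--Beltrami-like operator associated to the weight $\rho(y)=(1-|y|^2)^\alpha$. For part (i), I would first observe that $\q L$ is symmetric on $C^\infty_c(|y|<1)$ with respect to the $L^2_\rho$ inner product, by the very definition \eqref{defro} as $\frac1\rho\dv(\rho(\nabla w-(y\cdot\nabla w)y))$: an integration by parts gives $\iint (\q L u)v\,\rho\,dy=-\iint\bigl(\nabla u\cdot\nabla v-(y\cdot\nabla u)(y\cdot\nabla v)\bigr)\rho\,dy$, which is manifestly symmetric, and the boundary terms vanish thanks to the factor $\rho$ degenerating at $|y|=1$ (here the hypothesis $\alpha>0$ from \eqref{defro} is essential). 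The associated quadratic form $Q(u)=\iint\bigl(|\nabla u|^2-(y\cdot\nabla u)^2\bigr)\rho\,dy$ is nonnegative, so all eigenvalues are $\le 0$; self-adjointness (on the natural form domain) and discreteness of the spectrum follow from a compact embedding argument — the form domain $\{u\in L^2_\rho: Q(u)<\infty\}$ embeds compactly into $L^2_\rho$, which is exactly the content of a weighted Rellich--Kondrachov / Hardy--Poincaré estimate of the type already quoted in \eqref{hsajm} from \cite{MZajm03}. Equivalently, one can invoke that this operator, after separation into radial and spherical-harmonic parts, becomes a Jacobi operator whose spectral theory is classical.

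Next I would exhibit the low-lying eigenfunctions by direct computation: plugging $h_0\equiv$ const gives $\q L h_0=0$, so $\gamma_0=0$; plugging $h_{1,i}=y_i$ and using $\dv(\rho y_i)=\rho\bigl(N-2\alpha|y|^2/(1-|y|^2)\bigr)y_i$ type identities (more simply, $\q L y_i=\frac1\rho\dv\bigl(\rho(e_i-y_i y)\bigr)$) one computes $\q L y_i=-\frac{2(p+1)}{p-1}y_i$ after substituting $\alpha=\frac{2}{p-1}-\frac{N-1}{2}$, giving $\gamma_1=-\frac{2(p+1)}{p-1}$ with multiplicity $N$; and plugging the quadratic $h_2=a^2-|y|^2$ and requiring $\q L h_2=\gamma_2 h_2$ forces both the value $a^2=\frac{N(p-1)}{3p+1}$ (so that the inhomogeneous constant term matches) and $\gamma_2=-\frac{2(3p+1)}{p-1}$. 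One then checks $a<1$, which follows from $p<1+\frac{4}{N-1}$, i.e.\ exactly hypothesis \eqref{condp}: indeed $a^2<1\iff N(p-1)<3p+1\iff (N-3)p<N+1$, which holds in the relevant range (and trivially when $N\le 3$). The orthonormality constants $\hz,\hu,\hd$ are fixed by normalization in $L^2_\rho$. That these are the \emph{first} three eigenvalues (no eigenvalue in the gaps $(\gamma_1,0)$ or $(\gamma_2,\gamma_1)$) and that \eqref{basis} is a complete orthonormal system follows from the Sturm--Liouville oscillation theory for the radial/spherical decomposition: $h_0$ has no nodal set, $h_{1,i}$ one nodal hyperplane, $h_2$ one nodal sphere, and completeness is the spectral theorem for the self-adjoint compact resolvent.

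For part (ii), the coercivity estimate \eqref{coercivity} is then a soft consequence of (i): if $u\in L^2_\rho$ with $\q L u\in L^2_\rho$ satisfies the orthogonality conditions \eqref{sania}, these say precisely $\langle u,h_0\rangle_{L^2_\rho}=\langle u,h_{1,i}\rangle_{L^2_\rho}=0$ for all $i$, i.e.\ $u$ is orthogonal to the eigenspaces of $\gamma_0$ and $\gamma_1$; expanding $u=\sum_{n\ge 2}c_n h_n$ (with the convention that $h_2$ and the $h_n$, $n\ge3$, list the remaining eigenfunctions) gives $-\iint u\q L u\,\rho\,dy=\sum_{n\ge 2}|\gamma_n|\,c_n^2\ge |\gamma_2|\sum_{n\ge2}c_n^2=|\gamma_2|\iint u^2\rho\,dy$, since $|\gamma_n|\ge|\gamma_2|$ for all $n\ge 2$. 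The last inequality $|\gamma_n|\ge|\gamma_2|$, which also yields \eqref{gng2}, is just the statement that $\gamma_2$ is the largest eigenvalue below $\gamma_1$, established in part (i).

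The main obstacle I anticipate is not the eigenfunction computations (routine) nor the abstract spectral theory (standard once the right function space is pinned down), but rather \emph{justifying self-adjointness and, crucially, the ordering claim "$\gamma_2$ is the third eigenvalue"} in a clean way — one must rule out any eigenvalue strictly between $\gamma_1$ and $0$, and between $\gamma_2$ and $\gamma_1$. The cleanest route is to diagonalize $\q L$ completely via separation of variables: on each spherical-harmonic sector of degree $\ell$, $\q L$ restricts to a one-dimensional Jacobi-type operator in the radial variable whose eigenvalues are explicitly $-\bigl(\text{something like } 2k(2\alpha+N+2k+2\ell-2)/(?) + \ell(\ell+N-2)\cdot(\dots)\bigr)$; reading off the list for $\ell=0,1,2$ and small $k$ recovers $\gamma_0,\gamma_1,\gamma_2$ and shows all other eigenvalues are more negative. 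Given that the paper explicitly says the one-dimensional analysis is in \cite{MZjfa07} and that higher-dimensional adaptations are "simple integrations by parts", I expect the authors to simply cite the $N=1$ computation plus the spherical-harmonic decomposition rather than redo it; my proof would follow the same strategy, flagging the completeness/ordering as the one point genuinely needing the structure of the Jacobi operator (and flagging that $a<1$ is where \eqref{condp} enters).
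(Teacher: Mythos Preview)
Your treatment of part (i) matches the paper's: self-adjointness from the divergence form, compact resolvent from the Hardy-type embedding \eqref{hsajm}, and the three low eigenvalues by direct substitution (including the check $a<1\iff$ \eqref{condp}). Fine.

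For part (ii), however, your route and the paper's diverge in an interesting way. You propose to deduce \eqref{coercivity} \emph{after} establishing the full eigenvalue ordering $|\gamma_n|\ge|\gamma_2|$ for $n\ge3$, which you would obtain by explicitly diagonalizing the radial Jacobi operator on each spherical-harmonic sector. That is certainly workable, but it requires carrying out (or citing) the complete explicit spectrum computation --- the ``something like $2k(2\alpha+N+2k+2\ell-2)/(?)$'' that you left unresolved. The paper instead \emph{reverses the logic}: it proves the coercivity inequality \eqref{coercivity} directly, and then reads off \eqref{gng2} as a corollary (by testing on $u=h_n$). The direct proof of \eqref{coercivity} goes by spherical-harmonic decomposition as you anticipated, but reduces to only two scalar inequalities (the sectors $\ell=0$ and $\ell=2$, the others following by monotonicity in $\ell$). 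The $\ell=0$ inequality is handled not by computing the radial spectrum but by a \emph{nodal-domain / domain-monotonicity argument} (Lemma \ref{lemfirsta}): since $h_2$ vanishes on the sphere $|y|=a$, it is the first Dirichlet eigenfunction of the radial operator on both $\{|y|<a\}$ and $\{a<|y|<1\}$, and any candidate second radial eigenfunction must vanish on some sphere $|y|=r_0$, hence is a Dirichlet eigenfunction on a domain containing one of these two; monotonicity of the first Dirichlet eigenvalue then forces $|\bar\lambda^{(2)}|\ge|\gamma_2|$. The $\ell=2$ inequality is then derived from the $\ell=0$ one by subtracting the mean and checking a single integral identity ($\beta=2N$ in the paper's notation), obtained by one integration by parts.

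So: your approach would give more (the entire spectrum) at the cost of a nontrivial explicit computation you have not yet done; the paper's approach is more qualitative and self-contained, trading the Jacobi machinery for a short nodal/monotonicity argument plus one clean identity. Your expectation that the authors ``simply cite the $N=1$ computation'' is not what happens --- they do not appeal to an explicit one-dimensional spectrum at all.
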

%%%%%%%%%%%%%%%%%%%%%%%%%%%%%%%%%%%%%%%%%%%%%%%%%%%%%%%%%%%%%%%%%%%%%%%%%%%%
%%%%%%%%%%%%%%%%%%%%%%%%%%%%%%%%%%%%%%%%%%%%%%%%%%%%%%%%%%%%%%%%%%%%%%%%%%%%
\begin{proof} $ $\\
 (i) The fact that $\q L$ is self-adjoint in $L^2_\rho$ follows from the divergence form in \eqref{defro}. The fact that it is compact (hence with a discrete spectrum) follows from \eqref{hsajm}.
In order to check the 3 explicit eigenvalues, we expand the expression \eqref{defro} as follows:
\[
\q L w = \Delta w - \sum_{i,j}y_iy_j \partial^2_{y_i,y_j}w - \frac{2(p+1)}{p-1} y\cdot \nabla w,
\]
and directly derive that $\q L 1=0$, $\q L y_k=- \frac{2(p+1)}{p-1} y_k$ and $\q L (a^2-|y|^2) = -\frac{2(3p+1)}{p-1}(a^2-|y|^2)$ where $a$ defined in \eqref{eigen} satisfies $a<1$ from the condition \eqref{condp} on $p$.\\
% If $w(y)=y_k$ with $k=1,\dots,N$, then $\partial^2_{y_i,y_j}w =\Delta w=0$, $\nabla w=e_k$ and $\q L y_k=- \frac{2(p+1)}{p-1} y_k$.\\
% If $w(y)=a^2-|y|^2$ where $a$ is defined in \eqref{eigen}, then $\partial_{y_i}w=-2y_i$, $\partial^2_{y_i,y_j}w =-2\delta_{i,j}$, $\Delta w=-2N$, $\nabla w=-2y$, hence $\q L w= -2N+2|y|^2 +\frac{4(p+1)}{p-1}|y|^2 = -\frac{2(3p+1)}{p-1} w$. The fact that $a<1$ follows directly from the condition \eqref{condp} on $p$.\\
We then fix the constants $c_m$ for $0\le m \le 2$ by normalization.\\
Since the set $\{h_0, h_{1,i}, h_2\;|\;i=1,\dots,N\}$ makes an orthonormal family in $L^2_\rho(|y|<1)$ and $\q L$ is compact, we can complete this set by a sequence of eigenfunctions $(h_n)_{n\ge 3}$ such that the set \eqref{basis} makes an orthonormal family in $L^2_\rho(|y|<1)$. We will denote by $\gamma_n$ the eigenvalue corresponding to $h_n$.\\
(ii)  Note that \eqref{gng2} follows from \eqref{coercivity} since $h_n$ satisfies \eqref{sania} for all $n\ge 3$, from the orthogonality of the basis \eqref{basis}. It remains then to prove \eqref{coercivity}.\\
 Consider $u\in L^2_\rho$ such that $\q L u \in L^2_\rho$ and \eqref{sania} holds. Using polar coordinates for $u(y)$:
\[
u(y)=U(r,\omega),\mbox{ where }y=r\omega,\;\;
r=|y|\in(0,1)\mbox{ and }\omega \in \m S^{N-1},
\]
we write
\begin{align}
-\iint u \q L u \rho dy &=\int_0^1 \int_{\m S^{N-1}}\left((\partial_r U(r,\omega))^2(1-r^2)+\frac 1{r^2} |\nabla_\omega U(r,\omega)|^2\right)\rho(r) r^{N-1} dr d\omega,\label{ulu}\\
\iint u^2 \rho dy&=
\int_0^1 \int_{\m S^{N-1}}(U(r,\omega))^2 \rho(r) r^{N-1} dr d\omega.\nonumber
\end{align}
Considering $\Delta_\omega$, the Laplace-Beltrami operator on the sphere $\m S^{N-1}$, we know (see for example \cite{BGMlnm71}) that its eigenvalues are given by
\[
\lambda_k = -k(k+N-2)\mbox{ for all }k\in \m N,
\]
with associated eigenspace
\[
H_k = \mathop{span}\{\phi_{k,i}\;|\;i=1,\dots,m_k\}
\]
made of the trace of harmonic homogeneous polynomials of degree $k$. Note that the family $(\phi_{k,i})$ is orthogonal and spans the space $L^2(\m S^{N-1})$. Note that the family $(\nabla_\omega\phi_{k,i})$ is orthogonal too.
We also have
\[
\begin{array}{llll}
&\lambda_0=0,&m_0=1,&\phi_{0,1}(\omega)=1,\\
&\lambda_1=N-1,&m_1=N,&\phi_{1,i}(\omega)=\omega_i \mbox{ (trace of }y_i\mbox{)}.
\end{array}
\]
%{\bf (Est-ce qu'on est s\^ur que les $\omega_i$ sont orthogonaux entre eux ?)}\\
% we see that
% \[
% \int_{\m S^{N-1}}(U(r,\omega))^2 d\omega = \sum_{k\ge 0} \sum_{i=1}^{m_k}\int_{\m S^{N-1}}(\phi_{k,i}(\omega))^2d\omega.
% \]
% Since we have for $(k,i)\neq (l,j)$,
% \[
% \int_{\m S^{N-1}} \nabla_\omega \phi_{k,i}(\omega)\cdot \nabla_\omega \phi_{l,j}(\omega)d\omega=-\int_{\m S^{N-1}} \phi_{k,i}(\omega)\Delta_\omega \phi_{l,j}(\omega)d\omega=-\lambda_l \int_{\m S^{N-1}} \phi_{k,i}(\omega)\phi_{l,j}(\omega)d\omega=0
% \]
% from orthogonality, it follows that we similarly have
% \[
% \int_{\m S^{N-1}}|\nabla_\omega U(r,\omega)|^2 d\omega = \sum_{k\ge 0} \sum_{i=1}^{m_k}\int_{\m S^{N-1}}|\nabla_\omega\phi_{k,i}(\omega)|^2d\omega.
% \]
Introducing the decomposition
% (est-ce qu'on a pour tout $r\in (0,1)$, $U(r,\cdot) \in L^2(\m S^{N-1})$ ?)
\begin{equation}\label{decomposition}
U(r,\omega) = \sum_{k\ge 0} \sum_{i=1}^{m_k}U_{k,i}(r)\phi_{k,i}(\omega)
\end{equation}
and using orthogonality, we see that our goal reduces to proving that if
%to finding $\gamma>-\gamma_1$ such that:\\
%xsIf 
\begin{equation}\label{condorth}
\int_0^1 U_{k,i}(r) \rho(r)r^{N-1}dr=0\mbox{ for }k=0\mbox{ and }i=1,\dots,m_k,
\end{equation}
then, for all $k\in \m N$ and $i=1,\dots,m_k$,
\begin{align*}
\int_0^1 \left((1-r^2)\left(U_{k,i}'(r)\right)^2- \frac {\lambda_k}{r^2}\left(U_{k,i}(r)\right)^2 \right)\rho(r) r^{N-1} dr
 \ge 
|\gamma_2|\int_0^1 \left(U_{k,i}(r)\right)^2\rho(r) r^{N-1} dr
\end{align*}
where $\gamma_2$ is given in (i).\\
%\eqref{coercivity}.\\
From the monotonicity of $\lambda_k$, the question reduces to the cases $k=0$ and $k=2$ only. In other words, we need to prove that:\\
%to find
%\[
%\gamma>\frac{2(p+1)}{p-1}
%\]
%such that:\\
- ($k=0$) if the left-hand side in \eqref{0deux} is finite and
\begin{equation}\label{0un}
\int_0^1 v(r) \rho(r)r^{N-1}dr=0,
\end{equation}
then
\begin{equation}\label{0deux}
\int_0^1 (1-r^2)\left(v'(r)\right)^2\rho(r) r^{N-1} dr
 \ge |\gamma_2|\int_0^1 \left(v(r)\right)^2\rho(r) r^{N-1} dr; 
\end{equation}
- ($k=2$)  if the left-hand side in \eqref{2deux} is finite, then
\begin{equation}\label{2deux}
\int_0^1 \left((1-r^2)\left(v'(r)\right)^2+\frac {2N}{r^2}\left(v(r)\right)^2 \right)\rho(r) r^{N-1} dr
 \ge |\gamma_2|\int_0^1 \left(v(r)\right)^2\rho(r) r^{N-1} dr.
\end{equation}
The proof follows from the following:
%%%%%%%%%%%%%%%%%%%%%%%%%%%%%%%%%%%%%%%%%%%%%%%%%%%%%%%%%%%%%%%%%%%%%%%%%
%%%%%%%%%%%%%%%%%%%%%%%%%%%%%%%%%%%%%%%%%%%%%%%%%%%%%%%%%%%%%%%%%%%%%%%%%
\begin{lem}\label{lemfirsta} We have 
\begin{equation}\label{firsta}
-\bar\lambda^{(1)}_{|y|<a}=-\bar\lambda^{(1)}_{a<|y|<1}=-\gamma_2=\frac{2(3p+1)}{p-1}\mbox{ and }-\bar\lambda^{(2)}_{|y|<1}\ge \frac{2(3p+1)}{p-1}
\end{equation}
where $\bar \lambda^{(i)}_{\Omega}$ is the $i$-th eigenvalue of $\q L$ restricted to radial functions of $L^2_\rho(\Omega)$ with homogeneous Dirichlet boundary conditions on $\partial\Omega \backslash \{|y|=1\}$.
\end{lem}
%%%%%%%%%%%%%%%%%%%%%%%%%%%%%%%%%%%%%%%%%%%%%%%%%%%%%%%%%%%%%%%%%%%%%%%%%
%%%%%%%%%%%%%%%%%%%%%%%%%%%%%%%%%%%%%%%%%%%%%%%%%%%%%%%%%%%%%%%%%%%%%%%%%
Let us use this Lemma to finish the proof of (ii). Then, we will prove it.
% after the proof of (iii).

\medskip

{\bf Case $k=0$}: Consider $v$ such that the left-hand side in \eqref{0deux} is finite and \eqref{0un} holds. From \eqref{0un}, we see that $v$ is orthogonal to the first eigenfunction $1$, hence, since $V$ is radial, introducing $V(y) = v(|y|)$ for all $|y|<1$, we see from \eqref{firsta} that
\[
-\int_{|y|<1} V \q L V \rho(y) dy \ge |\bar\lambda^{(2)}_{|y|<1}|\int_{|y|<1} V^2 \rho(y) dy\ge \frac{2(3p+1)}{p-1}\int_{|y|<1} V^2 \rho(y) dy.
\]
%where $\bar \lambda^{(2)}_{|y|<1}$ is the second eigenvalue of $\q L$ in $L^2_\rho(|y|<1)$ under radial symmetry. 
Returning back to radial coordinates, we get \eqref{0deux}.
% follows provided that
%\[
%\gamma \le \frac{2(3p+1)}{p-1}.
%\]

\medskip

{\bf Case $k=2$}: It is a direct consequence of the case $k=0$. Indeed, consider $v$ such that the left-hand side in \eqref{2deux} is finite. Introducing
\[
\bar v(r) = v(r) - 
%\sigma\mbox{ where }\sigma = 
\frac{\int_0^1 v(r)\rho(r) r^{N-1}dr}{\int_0^1\rho(r) r^{N-1}dr},
\]
we see that $\bar v$ satisfies \eqref{0un} and that the first member of \eqref{0deux} is finite for $\bar v$, hence, \eqref{0deux} holds for $\bar v$. 
%and we have
%\[
%\int_0^1 (1-r^2)\left(\bar v'(r)\right)^2\rho(r) r^{N-1} dr
%\ge \gamma\int_0^1 \left(\bar v(r)\right)^2\rho(r) r^{N-1} dr.
%\]
Since we have
\begin{align*}
%\int_0^1 (\bar v'(r))^2(1-r^2)\rho(r)r^{N-1} dr
%&= \int_0^1 (v'(r))^2(1-r^2)\rho(r) r^{N-1} dr\\
\int_0^1 (\bar v(r))^2\rho(r) r^{N-1} dr&=
\int_0^1 (v(r))^2\rho(r) r^{N-1} dr-\frac{\left(\int_0^1 v(r)\rho(r) r^{N-1}dr\right)^2}{\int_0^1\rho(r) r^{N-1}dr},
\end{align*}
using the Cauchy-Schwartz inequality, we write
\begin{align*}
|\gamma_2| \int_0^1 (v(r))^2 \rho(r) r^{N-1}dr &\le \int_0^1 (v'(r))^2 (1-r^2)\rho(r) r^{N-1}dr+ |\gamma_2|\frac{(\int_0^1 v(r)\rho(r) r^{N-1}dr)^2}{\int_0^1\rho(r) r^{N-1}dr}\\
&\le \int_0^1 (v'(r))^2 (1-r^2)\rho(r) r^{N-1}+\beta\int_0^1 \left(\frac{v(r)}r\right)^2 \rho(r) r^{N-1}dr
\end{align*}
where
\begin{equation}\label{defbeta}
\beta=\frac{|\gamma_2| I_{N+1}}{I_{N-1}}=\frac{2(3p+1)I_{N+1}}{(p-1)I_{N-1}}
%=\frac{\gamma\int_0^1\rho(r) r^{N+1}dr}{\int_0^1\rho(r) r^{N-1}dr}.
\mbox{ and }I_j= \int_0^1\rho(r) r^jdr.
\end{equation}
It is enough to prove that 
\begin{equation}\label{beta=}
\beta=2N
\end{equation}
in order to conclude. Let us prove \eqref{beta=} in the following.\\
Using integration by parts, we write
\begin{align*}
I_{N+1}=&-\frac 1{2(\alpha+1)}\int_0^1 (-2)(\alpha+1) r(1-r^2)^\alpha r^N dr
=\frac N{2(\alpha+1)}\int_0^1(1-r^2)^{\alpha+1}r^{N-1} dr \\
=&\frac N{2(\alpha+1)}\left(I_{N-1}-I_{N+1}\right)
\end{align*}
Therefore, using the definition \eqref{defro} of $\alpha$, we see that
\[
\frac{I_{N+1}}{I_{N-1}}=\frac 1{1+\frac{2(\alpha+1)}N}=\frac {N(p-1)}{3p+1}
\]
and \eqref{beta=} follows from \eqref{defbeta}. Thus, \eqref{2deux} holds.
It remains to prove Lemma \ref{lemfirsta} in order to conclude the proof of Proposition \eqref{spectl}.

\medskip

\begin{proof}[Proof of Lemma \ref{lemfirsta}]
Since $\q L$ has a discrete spectrum on the one hand, and, on the other hand, $h_2$ defined in (i) of Proposition \ref{spectl} is positive on the ball $\{|y|<a\}$, negative on the annulus $\{a<|y|<1\}$  and zero on the sphere $\{|y|=a\}$, the first part in \eqref{firsta} follows from the standard elliptic theory.\\ 
For the second part, let us consider $V_2(y)=v_2(|y|)$ a radial eigenfunction of $\q L$ in $L^2_\rho(|y|<1)$ associated to the second eigenvalue $\bar \lambda^{(2)}_{|y|<1}$. Since $1$ is the first eigenvalue, we see from orthogonality that $\int_{|y|<1} V_2(y) \rho(y) dy=0$. Since $V_2$ is regular and radially symmetric, there exists $r_0\in (0,1)$ such that $V_2(y)=0$ whenever $|y|=r_0$. In particular, $V_2$ is an eigenfunction for $\q L$ in $L^2_\rho(|y|<r_0)$ and $L^2_\rho(r_0<|y|<1)$, with Dirichlet boundary conditions and associated eigenvalue $\bar\lambda^{(2)}_{|y|<1}$. Therefore, $\lambda^{(2)}_{|y|<1}$ is larger (in absolute value) than the first eigenvalue on $\{|y|<r_0\}$ and $\{r_0<|y|<1\}$, in the sense that
\[
|\bar\lambda^{(2)}_{|y|<1}|\ge |\bar\lambda^{(1)}_{|y|<r_0}|\mbox{ and }|\bar\lambda^{(2)}_{|y|<1}|\ge |\bar\lambda^{(1)}_{r_0<|y|<1}|.
\]
If $r_0\le a$ defined in \eqref{eigen}, using the monotonicity of the eigenvalues with respect to the domain and the first part of \eqref{firsta}, 
we write
\[
|\bar\lambda^{(1)}_{|y|<r_0}|\ge |\bar\lambda^{(1)}_{|y|<a}|=\frac{2(3p+1)}{p-1}
\]
and the second part in \eqref{firsta} follows.\\
If $r_0>a$, we similarly write
\[
|\bar\lambda^{(1)}_{r_0<|y|<1}|\ge |\bar\lambda^{(1)}_{a<|y|<1}|=\frac{2(3p+1)}{p-1}.
\]
and the second part in \eqref{firsta} follows. This concludes the proof of Lemma \ref{lemfirsta}.
\end{proof}
This concludes the proof of Proposition \ref{spectl} too.
\end{proof}

%%%%%%%%%%%%%%%%%%%%%%%%%%%%%%%%%%%%%%%%%%%%%%%%%%%%%%%%%%%%%%%%
%%%%%%%%%%%%%%%%%%%%%%%%%%%%%%%%%%%%%%%%%%%%%%%%%%%%%%%%%%%%%%%%
\section{Coercivity of the second derivative of the energy near $\kappa(d,y)$}\label{appcoer}
%%%%%%%%%%%%%%%%%%%%%%%%%%%%%%%%%%%%%%%%%%%%%%%%%%%%%%%%%%%%%%%%
%%%%%%%%%%%%%%%%%%%%%%%%%%%%%%%%%%%%%%%%%%%%%%%%%%%%%%%%%%%%%%%%
% {\bf Frank dit : ``doit etre reduit  vu que c'est la meme chose que la dim 1 en grance partie. il y a du mou dans la partie 2''}.\\
We give the proof of Proposition \ref{lemdefpos} here. The proof follows the same pattern as the one-dimensional case. The adaptation is only technical. For the sake of completeness, we give the details below. The reader interested only in the ideas may only read the one-dimensional case given in Proposition 4.7 page 90 in \cite{MZjfa07}.

\begin{proof}[Proof of Proposition \ref{lemdefpos}]

In the following, we reduce the proof of Proposition \ref{lemdefpos} to the proof of the nonnegativity of the following approximation of $\varphi_d$ defined for $\epsilon>0$ by:
\begin{align}
&\varphi_{d,\epsilon}\left(q, r\right)\nonumber\\
&=\varphi_d(q,r)-\epsilon\iint(\nabla q_1\cdot\nabla r_1 -(y\cdot \nabla q_1)(y\cdot \nabla r_1)+\frac{2(p+1)}{(p-1)^2}q_1r_1 +q_2 r_2)\rho dy\label{defvarphide}\\
&=\int_{|y|<1} q_1\left(-(1-\epsilon)\q L r_1+\left(-(1-\epsilon)\psi(d,y)-\epsilon\frac{2p(p+1)}{(p-1)^2}\frac{(1-|d|^2)}{(1+d\cdot y)^2}\right)r_1\right)\rho dy\label{goal}\\
&+(1-\epsilon)\int_{-1}^1q_2 r_2\rho dy.\nonumber
\end{align}
With the same proof as in one space dimension, we see that the following lemma directly implies Proposition \ref{lemdefpos}:
%%%%%%%%%%%%%%%%%%%%%%%%%%%%%%%%%%%%%%%%%%%%%%%%%%%%%%%%%%%%%%%%%%%
%%%%%%%%%%%%%%%%%%%%%%%%%%%%%%%%%%%%%%%%%%%%%%%%%%%%%%%%%%%%%%%%%%%
\begin{lem}[Reduction of the proof of Proposition \ref{lemdefpos}]\label{reduc}
% For all $d_0\in (0,1)$, 
There exists $\epsilon_0 \in (0,1)$ such that for all $|d|<1$ and $q_-\in \H^d_-$, $\varphi_{d,\epsilon_0}\left(q_-, q_-\right)\ge 0$ where $\varphi_{d, \epsilon_0}$ is defined in \eqref{defvarphide}.
\end{lem}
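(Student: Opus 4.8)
The strategy is to reduce the nonnegativity of $\varphi_{d,\epsilon_0}$ on $\H^d_-$ to a one-dimensional spectral statement, using the Lorentz transform $\q T_d$ of Appendix \ref{applor} to ``straighten out'' the potential $\psi(d,y)$. First I would use the transformation of the linearized operator in item (ii) of Lemma \ref{0lemeffect} (and the transformation of the $L^2_\rho$ inner products in item (iii), together with continuity in $\H_0$ in item (iv)) to show that, setting $Q = \q T_{-d}(q_-)$, the quadratic form $\varphi_{d,\epsilon}(q_-,q_-)$ is comparable to (and has the same sign as) the corresponding quadratic form built from the operator $\q L + \frac{2(p+1)}{p-1}$ acting on $Q$. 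In other words, up to the Lorentz change of variables, the $d$-dependent weight $\psi(d,y)$ is replaced by the constant $\frac{2(p+1)}{p-1}$, reducing the problem to the single model case $d=0$; this is exactly the mechanism already used in the proof of Lemma \ref{l10}(ii) and in item (iv) of Lemma \ref{0lemeffect}. The orthogonality conditions $\pi^d_i(q_-)=0$ for $i=0,\dots,N$ transform, via Lemma \ref{eigenld*} and the explicit eigenfunctions, into the orthogonality of $Q_1$ (in $L^2_\rho$, suitably weighted) to the functions $1$ and $y_1,\dots,y_N$, i.e. to the first $N+1$ eigenfunctions of the operator $\q L$ listed in \eqref{eigen} of Proposition \ref{spectl}.

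Once the problem is in this form, I would invoke the spectral gap furnished by Proposition \ref{spectl}: under the orthogonality \eqref{sania}, one has the coercivity $-\iint u \q L u\,\rho\,dy \ge |\gamma_2|\iint u^2 \rho\,dy$ with $|\gamma_2| = \frac{2(3p+1)}{p-1}$. Since $\frac{2(3p+1)}{p-1} > \frac{2(p+1)}{p-1}$, the operator $-\q L - \frac{2(p+1)}{p-1}$ is strictly positive definite on the relevant subspace (modulo the $y_i$ directions, which sit exactly on the $\lambda=0$ mode and have been killed by the projections). Concretely, this gives a quantitative lower bound $\varphi_{d,0}(q_-,q_-) \ge c_0 \|q_-\|_{\H}^2$ for some absolute $c_0>0$, uniformly in $|d|<1$ (the uniformity coming from the uniform comparability constants in items (iv)--(v) of Lemma \ref{0lemeffect} and the normalization estimates of Lemma \ref{eigenld*}(iii)). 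Comparing \eqref{defphid} and \eqref{defvarphide}, the perturbation term subtracted in $\varphi_{d,\epsilon}$ is controlled by $\epsilon$ times a quantity bounded by $C\|q_-\|_{\H}^2$ (using that $\psi(d,y)+\frac{2(p+1)}{(p-1)^2} = \frac{2p(p+1)}{(p-1)^2}\frac{1-|d|^2}{(1+d\cdot y)^2} \le \frac{C}{1-|y|^2}$ and the Hardy--Sobolev embedding \eqref{hsajm}); hence choosing $\epsilon_0 = \epsilon_0(p,N)$ small enough that $C\epsilon_0 < c_0$ yields $\varphi_{d,\epsilon_0}(q_-,q_-) \ge 0$ for all $|d|<1$ and all $q_- \in \H^d_-$, which is the claim.

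The main obstacle I expect is not the spectral gap itself — that is supplied cleanly by Proposition \ref{spectl} — but rather making the reduction via $\q T_d$ fully rigorous and, above all, \emph{uniform in $d$ as $|d|\to 1$}. The subtle point is that the Lorentz transform $\q T_d$ degenerates as $|d|\to 1$ (see the norm of $\nabla_d\q T_d$ in item (v) of Lemma \ref{0lemeffect}, which blows up like $(1-|d|^2)^{-1}$), and the angular eigenfunctions $F_i(d)$ for $i\ge 2$ and the dual functions $\partial_{e_j}W_i(d)$ carry $d$-dependent norms that also degenerate (cf. Lemma \ref{eigenld*}(iii)). One must check that the \emph{combination} entering $\varphi_{d,\epsilon}$ nonetheless remains bounded above and below by absolute constants — i.e. that the degeneracies cancel. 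This is where the estimates of item (iv) of Lemma \ref{0lemeffect} (continuity of $\q T_d$ in $\H_0$ with $d$-independent constants) and the sharp bounds of Lemma \ref{eigenld*}(iii) are essential, and it is precisely the ``only technical'' adaptation alluded to before the statement. Since the one-dimensional analogue is carried out in full in Proposition 4.7 of \cite{MZjfa07}, I would structure the argument to parallel it step by step, inserting the extra verifications needed for the $N-1$ angular directions and invoking the uniform-in-$d$ orthonormal basis of Lemma \ref{lembase} to handle the rotation.
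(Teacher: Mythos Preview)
Your proposal contains a genuine gap at the step where you claim that the orthogonality conditions $\pi^d_i(q_-)=0$ transform, via $\q T_{-d}$, into orthogonality of $Q_1=\q T_{-d}(q_{-,1})$ to $1,y_1,\dots,y_N$. This is not true. By \eqref{defpdi} and \eqref{defphi2}, $\pi^d_i(q)=\phi(W_i(d),q)=\int q_1(-\q L W_{i,1}+W_{i,1})\rho+\int q_2 W_{i,2}\rho$, and since $W_{i,2}\not\equiv 0$ (see \eqref{defWl2}), the condition $\pi^d_i(q)=0$ couples $q_1$ and $q_2$ and does \emph{not} reduce to a constraint on $q_1$ alone. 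Concretely, any $q=(0,q_2)$ satisfies all the Lorentz-transformed orthogonalities you write down, yet generically $\pi^d_i(q)=\int W_{i,2}q_2\rho\neq 0$; so $\H^d_-$ and the subspace $\{Q_1\perp 1,y_1,\dots,y_N\}$ are genuinely different codimension-$(N+1)$ subspaces of $\H$. Consequently the spectral gap of Proposition \ref{spectl} does not apply directly to $q_-\in\H^d_-$, and your inequality $\varphi_{d,0}(q_-,q_-)\ge c_0\|q_-\|_{\H}^2$ is unjustified.

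The paper's proof handles exactly this mismatch. It \emph{does} introduce the subspace $E_2$ in \eqref{defE} defined by the ``clean'' conditions $\int \q T_{-d}(q_1)\rho=\int \q T_{-d}(q_1)y_i\rho=0$ and proves $\varphi_{d,\epsilon}\ge 0$ there via Proposition \ref{spectl} (this is your spectral step, correctly placed). But since $E_2\neq\H^d_-$, an additional linear-algebra argument is required: one constructs explicit vectors $\bar W^{d,\epsilon}_i$ with $\varphi_{d,\epsilon}(\bar W^{d,\epsilon}_i,\cdot)=\phi(W_i(d),\cdot)$, so that $\H^d_-$ is the $\varphi_{d,\epsilon}$-orthogonal of $\mathrm{span}\{\bar W^{d,\epsilon}_i\}$, and shows $\varphi_{d,\epsilon}$ is \emph{negative definite} on this $(N{+}1)$-dimensional span (Lemma \ref{lemexpress}). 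Then if $\varphi_{d,\epsilon}(r,r)<0$ for some $r\in\H^d_-$, the $(N{+}2)$-dimensional space $\mathrm{span}(r,\bar W^{d,\epsilon}_0,\dots,\bar W^{d,\epsilon}_N)$ is a negative subspace which, by dimension count, meets the codimension-$(N{+}1)$ space $E_2$ nontrivially---contradicting nonnegativity on $E_2$. The delicate, $d$-uniform work you anticipated is real, but it sits inside Lemma \ref{lemexpress} (the construction and asymptotics of $\bar W^{d,\epsilon}_i$ as $\epsilon\to 0$, uniformly in $|d|<1$), not in a direct transformation of the constraints defining $\H^d_-$.
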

%%%%%%%%%%%%%%%%%%%%%%%%%%%%%%%%%%%%%%%%%%%%%%%%%%%%%%%%%%%%%%%%%%%
%%%%%%%%%%%%%%%%%%%%%%%%%%%%%%%%%%%%%%%%%%%%%%%%%%%%%%%%%%%%%%%%%%%
\medskip

 Let us now prove Lemma \ref{reduc}.

\medskip

\begin{proof}[Proof of Lemma \ref{reduc}] We proceed in 3 parts:

- In Part 1, we find a subspace of $\H$ of codimension $N+1$ where $\fe$ is nonnegative.

- In Part 2, we find a subspace of $\H$ of dimension $N+1$, where $\fe$ is negative and which is orthogonal to $\H^d_-$ with respect to $\fe$.

- In Part 3, we proceed by contradiction and prove that $\fe$ is nonnegative on $\H^d_-$.

\bigskip

{\bf Part 1: $\fe$ is nonnegative on a subspace of codimension $N+1$}

We claim the following:
%%%%%%%%%%%%%%%%%%%%%%%%%%%%%%%%%%%%%%%%%%%%%%%%%%%%%%%%%%%%%%%%%%%%
%%%%%%%%%%%%%%%%%%%%%%%%%%%%%%%%%%%%%%%%%%%%%%%%%%%%%%%%%%%%%%%%%%%
\begin{lem}[$\fe$ is nonnegative on a subspace of codimension $N+1$] \label{lemfdepos}There exists $\epsilon_1>0$ such that for all $|d|<1$ and $\epsilon \in (0, \epsilon_1]$, $\fe$ is nonnegative on the subspace
\begin{equation}\label{defE}
E_2= \left\{q\in \H\;\mid\;\iint \q T_{-d}(q_1) \rho(y) dy = \iint \q T_{-d}(q_1) y_i\rho(y) dy = 0,\;\;\forall i=1\dots,N\right\}
\end{equation}
where $\q T_{-d}$ is defined in \eqref{defW}.
\end{lem}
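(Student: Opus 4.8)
The statement to prove is Lemma \ref{lemfdepos}: the quadratic form $\fe$ is nonnegative on the codimension-$(N+1)$ subspace $E_2$ defined by the $N+1$ orthogonality conditions on $\q T_{-d}(q_1)$. My strategy is to transport the problem to the case $d=0$ via the Lorentz transform $\q T_d$, and then reduce to the spectral coercivity of the operator $\q L$ established in Proposition \ref{spectl}. Let me sketch this.

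\textbf{Step 1: Transport to $d=0$.} Given $q=(q_1,q_2)\in E_2$, set $v_1 = \q T_{-d}(q_1)$ and $v_2 = \q T_{-d}(q_2)$ (more precisely, the appropriate transform of the second component so that $(v_1,v_2)=\q T_{-d}(q)$ as a pair). Using item (ii) of Lemma \ref{0lemeffect} — specifically the identity \eqref{translw} transforming $\q L w + \frac{2(p+1)}{p-1}w$ into $\frac{(1+d\cdot Y)^{2p/(p-1)}}{(1-|d|^2)^{p/(p-1)}}(\q L W + \psi(d,Y)W)$ — together with the change-of-variables identities for the $L^2_\rho$ inner product in item (iii) of Lemma \ref{0lemeffect}, I would express $\fe(q,q)$ in the $Y$ variables. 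The point is that the combination $-(1-\epsilon)\q L r_1 + (-(1-\epsilon)\psi(d,y) - \epsilon\,\text{(positive weight)})r_1$ appearing in \eqref{goal}, when pulled back through $\q T_{-d}$, becomes the corresponding expression with $d=0$, i.e. involving $\q L v_1 + \frac{2(p+1)}{p-1}v_1$ and $\frac{2p(p+1)}{(p-1)^2}v_1$, up to the conformal factor which is absorbed by the weight transformation. Hence, up to a positive constant,
\[
\fe(q,q) = \varphi_{0,\epsilon}\big((v_1,v_2),(v_1,v_2)\big),
\]
and the orthogonality conditions defining $E_2$ become exactly $\iint v_1 \rho\, dy = \iint v_1 y_i \rho\, dy = 0$ for all $i$.

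\textbf{Step 2: Coercivity at $d=0$.} Now I must show $\varphi_{0,\epsilon}(v,v)\ge 0$ when $v_1$ is $L^2_\rho$-orthogonal to $1$ and to $y_1,\dots,y_N$. From \eqref{goal} with $d=0$ (so $\psi(0,y)=\frac{2(p+1)}{(p-1)^2}(p-1) = \frac{2p(p+1)}{(p-1)^2} - \frac{2(p+1)}{(p-1)^2}$; more cleanly $\psi(0,y) = p\kappa_0^{p-1} - \frac{2(p+1)}{(p-1)^2}$), the form is
\[
\varphi_{0,\epsilon}(v,v) = -(1-\epsilon)\iint v_1 \q L v_1\,\rho\,dy - \big(\text{const}(\epsilon)\big)\iint v_1^2 \rho\, dy + (1-\epsilon)\iint v_2^2 \rho\, dy,
\]
where the constant multiplying $\iint v_1^2\rho$ is $(1-\epsilon)\psi(0) + \epsilon\frac{2p(p+1)}{(p-1)^2}$, which tends to $\psi(0) = \frac{2(p+1)}{(p-1)^2}(p-1) = \frac{2(p^2-1)}{(p-1)^2} = \frac{2(p+1)}{p-1}$ as $\epsilon\to 0$. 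The $v_2$ term is harmless (nonnegative). For the $v_1$ part, the orthogonality conditions let me apply the coercivity estimate \eqref{coercivity} of Proposition \ref{spectl}: $-\iint v_1 \q L v_1 \rho \ge |\gamma_2|\iint v_1^2 \rho = \frac{2(3p+1)}{p-1}\iint v_1^2\rho$. Since $\frac{2(3p+1)}{p-1} > \frac{2(p+1)}{p-1}$ strictly, there is room: for $\epsilon$ small enough (uniformly in $d$, since after transport $d$ has disappeared), $(1-\epsilon)\frac{2(3p+1)}{p-1} \ge (1-\epsilon)\psi(0) + \epsilon\frac{2p(p+1)}{(p-1)^2}$, giving $\varphi_{0,\epsilon}(v,v)\ge 0$. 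This fixes $\epsilon_1$.

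\textbf{Main obstacle.} The delicate point is Step 1: verifying carefully that $\fe(q,q)$ really equals (a positive multiple of) $\varphi_{0,\epsilon}$ of the transported function, with all conformal weight factors $(1+d\cdot Y)^{\bullet}$ and $(1-|d|^2)^{\bullet}$ cancelling correctly between the transformation of $\q L+\psi$ (via \eqref{translw}), the transformation of the measure $\rho\,dy$, and the definition \eqref{defW} of $\q T_d$ acting on $q_1$. One must also check that $\q T_{-d}$ indeed sends $E_2$ onto the space of $(v_1,v_2)$ with $v_1\perp\{1,y_1,\dots,y_N\}$ in $L^2_\rho$ — but this is immediate from the very definition \eqref{defE} of $E_2$, which was rigged precisely so. Since the excerpt states that the one-dimensional argument (Lemma in \cite{MZjfa07}) carries over and that the weight bookkeeping is "only technical", I would organize the write-up so that the algebraic identity is stated as a short computation invoking \eqref{translw} and item (iii) of Lemma \ref{0lemeffect}, then immediately quote \eqref{coercivity}. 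The extra $\epsilon$-terms in \eqref{defvarphide} are there exactly to make the strict inequality $|\gamma_2| > \psi(0)$ usable with a safety margin, so no further trickery is needed beyond choosing $\epsilon_1$ in terms of $p$ and $N$ only.
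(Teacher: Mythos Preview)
Your proposal is correct and follows the same route as the paper's (one-dimensional) proof: transport the form via the Lorentz map $\q T_{-d}$, then invoke the spectral gap $|\gamma_2|>|\gamma_1|$ from Proposition~\ref{spectl}. One small sharpening: using the identity in the proof of item (iv) of Lemma~\ref{0lemeffect} together with item (iii) applied with $d\to -d$, the $q_1$-part of $\fe(q,q)$ equals the $v_1$-part of $\varphi_{0,\epsilon}(v,v)$ \emph{exactly} (the constant is $1$, not merely positive), whereas the $q_2$-part does \emph{not} transport to $(1-\epsilon)\int v_2^2\rho$ under $\q T_{-d}$ --- but this is irrelevant since $(1-\epsilon)\int q_2^2\rho\,dy\ge 0$ already without any transport, as you note.
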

%%%%%%%%%%%%%%%%%%%%%%%%%%%%%%%%%%%%%%%%%%%%%%%%%%%%%%%%%%%%%%%%%%
%%%%%%%%%%%%%%%%%%%%%%%%%%%%%%%%%%%%%%%%%%%%%%%%%%%%%%%%%%%%%%%%%%%
\begin{proof} From the spectral properties of the operator $\q L$ given in Proposition \ref{spectl}, the proof is the same as in dimension 1. See Lemma 4.9 page 92 in \cite{MZjfa07} for the one-dimensional statement.
\end{proof}

\bigskip

{\bf Part 2: $\fe$ is negative on a $(N+1)$-dimensional subspace orthogonal to $\H^d_-$}

We need to find a set of linearly independent vectors $\{\bar W^{d,\epsilon}_i\in \q H\;|\; i=0\dots,N\}$ such that $\fe(\bar W^{d,\epsilon}_i, r)=0$ for any $r\in \H^d_-$. 
%When there is no ambiguity, we will write $V^{d,\epsilon}_{0}$ instead of $V^{d,\epsilon}_{0,i}$. 
Since we know from the definition of $\H^d_-$ \eqref{defhd-} that 
\[
\forall r\in \H^d_-,\;\; 
\phi(W_i(d),r)=\pi^d_i(r)=0\mbox{ for all }i=0,\dots,N,
%\mbox{ and }\phi(W^d_0,r)=\pi^d_0(r)=0,
\]
 a convenient way to conclude is to find $\bar W^{d,\epsilon}_i$ such that
\begin{equation}\label{lina}
\forall q\in \H, \;\;
\phi(W_i(d),q) = \fe(\bar W^{d,\epsilon}_i, q)\mbox{ for all }i=0,\dots,N. 
\end{equation}
Then, we will show that $\fe$ is negative on the subspace spanned by $\bar W^{d,\epsilon}_i$ for all $i=0,\dots,N$. Consider $\epsilon>0$ to be fixed small enough and take $|d|<1$. We claim the following:
%%%%%%%%%%%%%%%%%%%%%%%%%%%%%%%%%%%%%%%%%%%%%%%%%%%%%%%%%%%%%%%%%%%
%%%%%%%%%%%%%%%%%%%%%%%%%%%%%%%%%%%%%%%%%%%%%%%%%%%%%%%%%%%%%%%%%%%
\begin{lem}\label{lemexpress}
There exists $\epsilon_2>0$ such that for all $\epsilon\in(0, \epsilon_2]$ and $|d|<1$:\\
(i) There exist continuous functions $\bar W^{d,\epsilon}_i$ for all $i=0,\dots,N$
%(i) For all $\lambda=0,1$ and $i=1\dots,N$, there exist continuous functions $V^{d,\epsilon}_\lambda$ 
such that \eqref{lina} holds. \\
(ii) Moreover, it holds that for $\epsilon$ small enough and for all $|d|<1$ and $i=1,\dots,N$,
%{\bf RESTE À FAIRE. ICI C'EST LA VERSION $N=1$}
\begin{equation*}%\label{vega}
%\sup_{
%\begin{array}{c}
%|d|<1\\
%1\le j\le N
%\end{array}}
\left\|\bar W_0^{d,\epsilon}(y) - \vc{-W_{0,2}(d,y)}{W_{0,2}(d,y)}
%-\sum_{j=1}^N\alpha_j(d) F_j(d,y)
-\alpha_1(d) F_1(d,y)
\right\|_{\H}+\left\|\epsilon \bar W^{d,\epsilon}_i(y)+\bar \alpha_2 F_i(d,y)\right\|_{\H}\le C\epsilon
\end{equation*}
%as $\epsilon \to 0^+$ 
where $\alpha_1(d)$ is continuous, $\bar \alpha_2=\frac{\hu (p-1)^2}{2p(p+1)}$ and $\bar c_1$ is the normalizing constant introduced in \eqref{eigen}.\\
%, $W^d_{1,2}$ and $F^d_0$ are defined in \eqref{defWl2} and \eqref{deffld}.\\
(iii) The bilinear form $\varphi_{d, \epsilon}$ is negative on the subspace of $\H$ spanned by $\bar W^{d,\epsilon}_i$ for $i=0,\dots,N$. 
\end{lem}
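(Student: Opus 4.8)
The plan is to build the vectors $\bar W^{d,\epsilon}_i$ by solving, for each $i=0,\dots,N$, the linear "Riesz-representation" problem \eqref{lina}. Writing $\bar W^{d,\epsilon}_i=(\bar W^{d,\epsilon}_{i,1},\bar W^{d,\epsilon}_{i,2})$ and expanding both sides of \eqref{lina} against an arbitrary $q=(q_1,q_2)\in\H$ using the explicit formulas \eqref{defphi2} for $\phi$ and \eqref{goal} for $\varphi_{d,\epsilon}$, the $q_2$-component immediately gives $\bar W^{d,\epsilon}_{i,2}=\frac{1}{1-\epsilon}W_{i,2}(d)$, which is well defined for $\epsilon<1$ and lies in the right space by Lemma \ref{eigenld*}(iii). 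The $q_1$-component then forces $\bar W^{d,\epsilon}_{i,1}$ to be the solution of an elliptic problem of the form $-(1-\epsilon)\q L v + (\text{zeroth-order coefficient})\,v = -\q L W_{i,1}(d)+W_{i,1}(d)$; for $\epsilon$ small the zeroth-order coefficient is a bounded perturbation of the $\epsilon=0$ problem, whose solution is $W_{i,1}(d)$ by construction of $W_i(d)$. Solvability and the continuity of $\bar W^{d,\epsilon}_i$ in $d$ follow from the coercivity/Lax–Milgram-type argument already used for \eqref{defR} and \eqref{eqWl1} (the same integration-by-parts trick invoked in Appendix \ref{subeigenld*}), exactly as in Lemma 4.10 of \cite{MZjfa07}; this proves (i), with $\epsilon_2$ chosen so small that the perturbed bilinear form stays coercive uniformly in $|d|<1$.

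For (ii), I would expand $\bar W^{d,\epsilon}_i$ in powers of $\epsilon$ around the $\epsilon=0$ solution. At $\epsilon=0$ the equation \eqref{lina} reads $\phi(W_i(d),q)=\varphi_d(\bar W^{d,0}_i,q)$; comparing \eqref{defphi2} with \eqref{defphid} shows that $\varphi_d$ and $\phi$ differ precisely by the replacement of the $+r_1$ term by the $-\psi(d,y)r_1$ term, so $\bar W^{d,0}_i$ is $W_i(d)$ corrected by the solution of an elliptic equation whose right-hand side is $(1+\psi(d,y))W_{i,1}(d)$; using $\psi(d,y)=p\kappa(d,y)^{p-1}-\frac{2(p+1)}{(p-1)^2}$ and the explicit eigenfunction formulas \eqref{deffld}–\eqref{deffid}, one identifies this correction, for $i=0$, as producing exactly the stated combination $\vc{-W_{0,2}(d,y)}{W_{0,2}(d,y)}+\alpha_1(d)F_1(d,y)$ modulo an $O(\epsilon)$ error, with $\alpha_1(d)$ continuous. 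For $i\ge 1$ the $\lambda=0$ eigenfunctions satisfy $F_{i,2}=0$, and the $\epsilon$-dependence of the zeroth-order coefficient in \eqref{goal} (the term $-\epsilon\frac{2p(p+1)}{(p-1)^2}\frac{(1-|d|^2)}{(1+d\cdot y)^2}$) shows that $\bar W^{d,\epsilon}_i$ blows up like $1/\epsilon$, with leading term proportional to $F_i(d)$; matching the constant $\frac{2p(p+1)}{(p-1)^2}$ against the normalization of $h_{1,i}$ in \eqref{eigen} gives the coefficient $\bar\alpha_2=\frac{\hu(p-1)^2}{2p(p+1)}$. All these are routine (if lengthy) computations with the explicit kernels, aided by the $\q T_d$-transformation identities of Lemma \ref{0lemeffect} to reduce to the one-dimensional integrals already recorded in \cite{MZjfa07}.

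Finally, for (iii) I would compute the Gram matrix $M_{ij}=\varphi_{d,\epsilon}(\bar W^{d,\epsilon}_i,\bar W^{d,\epsilon}_j)$. By \eqref{lina} this equals $\phi(W_i(d),\bar W^{d,\epsilon}_j)=\pi^d_i(\bar W^{d,\epsilon}_j)$, so it suffices to evaluate the projections $\pi^d_i$ of the $\bar W^{d,\epsilon}_j$ using the orthogonality relations $\phi(F_i,W_j)=\delta_{ij}$ from Lemma \ref{eigenld*}(ii). The expansions in (ii) then show that, after rescaling the rows/columns for $i\ge 1$ by $\epsilon$, the matrix is, up to $O(\epsilon)$, block-diagonal with the $(0,0)$-block negative (coming from the $-W_{0,2}+\alpha_1F_1$ structure, whose $\varphi_d$-norm one checks is negative — this is where the single expanding direction lives) and the lower-right $N\times N$ block equal to $-\bar\alpha_2\,\Id$ up to $O(\epsilon)$, hence negative definite since $\bar\alpha_2>0$. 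Thus $M$ is negative definite for $\epsilon$ small, and since $M$ is the Gram matrix of $\varphi_{d,\epsilon}$ on $\Span\{\bar W^{d,\epsilon}_i\}$, the form is negative there, and the $\bar W^{d,\epsilon}_i$ are automatically linearly independent. The main obstacle I anticipate is part (ii): getting the $\epsilon\to 0$ asymptotics with the correct explicit constants and with error bounds uniform in $|d|<1$ as $|d|\to 1$ — this is exactly the place where the new angular directions $F_i(d)$, $i\ge 2$, and the degenerating weights make the higher-dimensional argument genuinely more delicate than in \cite{MZjfa07}, and where the norm estimates of Lemma \ref{eigenld*}(iii) and the Lipschitz bounds on the basis $(e_i(d))$ from Lemma \ref{lembase} have to be used carefully.
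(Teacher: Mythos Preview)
Your overall architecture matches the paper's: set $\bar W^{d,\epsilon}_{i,2}=W_{i,2}(d)/(1-\epsilon)$, solve an elliptic problem for $\bar W^{d,\epsilon}_{i,1}$, extract $\epsilon\to 0$ asymptotics, then show the Gram matrix $M_{ij}=\phi(W_i(d),\bar W^{d,\epsilon}_j)$ is negative definite. Part~(iii) of your plan is essentially the paper's Step~3.

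However, there is a genuine gap in your treatment of (i) and (ii). You claim solvability of the elliptic equation for $\bar W^{d,\epsilon}_{i,1}$ follows from ``the coercivity/Lax--Milgram-type argument already used for \eqref{defR} and \eqref{eqWl1}''. But those equations involve the \emph{coercive} operator $-\q L+1$, whereas \eqref{eqVi1} involves $-(1-\epsilon)\q L -(1-\epsilon)\psi(d,y)-\epsilon\frac{2p(p+1)}{(p-1)^2}\frac{1-|d|^2}{(1+d\cdot y)^2}$, whose zeroth-order part is \emph{not} sign-definite ($\psi(d,y)$ is large positive near $y=-d/|d|$). So Lax--Milgram does not apply, and your assertion that ``the perturbed bilinear form stays coercive uniformly in $|d|<1$'' is false as stated. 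Relatedly, your claim that the $\epsilon=0$ problem ``has solution $W_{i,1}(d)$'' is incorrect: plugging $v=W_{i,1}(d)$ into \eqref{eqVi1} at $\epsilon=0$ would require $\psi(d,y)\equiv -1$. In fact the $\epsilon=0$ problem is \emph{not} solvable for $i\ge 1$ (the operator $\q L+\psi(d,\cdot)$ has $F_{i,1}(d)$ in its kernel), which is exactly why $\bar W^{d,\epsilon}_i$ blows up like $1/\epsilon$; ``expanding around $\epsilon=0$'' is therefore not a well-posed perturbation scheme.

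The paper's resolution is to make the Lorentz transform $\q T_{-d}$ the \emph{central} mechanism rather than an afterthought: applying $\q T_{-d}$ to \eqref{eqVi1} converts it (Claim~\ref{step1}) into equation \eqref{eqvl1t}, namely $(1-\epsilon)\q L\bar v_1+(-\gamma_1+\frac{2(p+1)}{(p-1)^2}\epsilon)\bar v_1=\bar f$, which involves only the bare operator $\q L$ with a \emph{strictly positive} constant zeroth-order term. This is then solved exactly by spectral decomposition (Claim~\ref{zerai}, formula \eqref{defv}), and the $1/\epsilon$ behavior arises transparently from the eigenvalue $\gamma_1$ of $\q L$ (the denominator $\gamma_1-\gamma_1+O(\epsilon)$). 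The constants come from computing the coefficients $\bar f_{1,j}$ via the duality $\phi(W_i(d),F_j(d))=\delta_{ij}$, and uniformity in $|d|<1$ is automatic from the uniform boundedness of $\q T_d$ in $\H_0$ (Lemma~\ref{0lemeffect}(iv)). You gesture at $\q T_d$ at the end of your discussion of (ii), but it needs to be invoked at the outset to make existence, asymptotics, and uniformity all work.
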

%%%%%%%%%%%%%%%%%%%%%%%%%%%%%%%%%%%%%%%%%%%%%%%%%%%%%%%%%%%%%%%%%%%
%%%%%%%%%%%%%%%%%%%%%%%%%%%%%%%%%%%%%%%%%%%%%%%%%%%%%%%%%%%%%%%%%%%
\begin{proof}[Proof of Lemma \ref{lemexpress}]
We proceed in 3 steps:\\
- In Step 1, we find a PDE satisfied by $\bar W^{d, \epsilon}_i$, 
then we transform it with the Lorentz transform in similarity variables defined in \eqref{defW}.\\
- In Step 2, we solve the transformed PDE and find the asymptotic behavior of $\bar W^{d, \epsilon}_i$ 
as $\epsilon \to 0$, uniformly in $|d|<1$, which gives (i) and (ii).\\
- In Step 3, we use that asymptotic behavior to show that $\fe$ is negative on the subspace spanned by $\bar W^{d, \epsilon}_i$ for all $i=0,\dots,N$, which gives (iii).

\bigskip

%%%%%%%%%%%%%%%%%%%%%%%%%%%%%%%%%%%%%%%%%%%%%%%%%%%%%%%%%%%%%%%%%%%%%%%%
%%%%%%%%%%%%%%%%%%%%%%%%%%%%%%%%%%%%%%%%%%%%%%%%%%%%%%%%%%%%%%%%%%%%%%%%
{\bf Step 1: Reduction to the solution of some PDE}

%%%%%%%%%%%%%%%%%%%%%%%%%%%%%%%%%%%%%%%%%%%%%%%%%%%%%%%%%%%%%%%%%%%%%%%%
%%%%%%%%%%%%%%%%%%%%%%%%%%%%%%%%%%%%%%%%%%%%%%%%%%%%%%%%%%%%%%%%%%%%%%%%

From the definitions of $\fe$ \eqref{goal} and $\phi$ \eqref{defphi2},
we see that in order to satisfy \eqref{lina}, it is enough to take 
\begin{equation}\label{defVl2}
%\bar V_2^{d,\epsilon}= V_2(d)/(1-\epsilon)
%\mbox{ and }
\bar W^{d,\epsilon}_{i,2}= W_{i,2}(d)/(1-\epsilon)
\end{equation}
then to prove the existence of $\bar V_1=\bar W^{d,\epsilon}_{i,1}$ solution to 
\begin{equation}\label{eqVi1}
%-(1-\epsilon)\q L V(d,\epsilon)_{\lambda,1}+\left(-(1-\epsilon)\psi(d,y)-\epsilon\frac{2p(p+1)}{(p-1)^2}\frac{(1-d^2)}{(1+dy)^2}\right)V(d,\epsilon)_{\lambda,1}
%=-\q L W^d_{\lambda,1}+W^d_{\lambda,1}.
-(1-\epsilon)\q L \bar V_1+\left(-(1-\epsilon)\psi(d,y)-\epsilon\frac{2p(p+1)}{(p-1)^2}\frac{(1-|d|^2)}{(1+d\cdot y)^2}\right)\bar V_1
=-\q L V_1+V_1
\end{equation}
where $V_1=W_{i,1}(d)$.\\
In the following, we use the Lorentz transform \eqref{defW} and transform this equation to make it ready to solve using the spectral properties of $\q L$ stated in Proposition \ref{spectl}. More precisely, we have the following:
%%%%%%%%%%%%%%%%%%%%%%%%%%%%%%%%%%%%%%%%%%%%%%%%%%%%%%%%%%%%%%%%%%%%%%%%%%
%%%%%%%%%%%%%%%%%%%%%%%%%%%%%%%%%%%%%%%%%%%%%%%%%%%%%%%%%%%%%%%%%%%%%%%%%%
\begin{cl}[Reduction to an explicitly solvable PDE]\label{step1}$ $\\
% Introducing 
% \begin{equation}\label{vV}
% \bar v^{d,\epsilon}_1= \q T_{-d}\bar V(d,\epsilon)_1\mbox{ and }
% \bar w^{d,\epsilon}_{i,1}=\q T_{-d}\bar W^{d,\epsilon}_i
% \end{equation}
% where $\q T_{-d}$ is defined in \eqref{defW},
% we see that:\\
%Then,\\
(i) $\bar V_1$ is a solution to \eqref{eqVi1} if and only if $\bar v_1\equiv \q T_{-d}(\bar V_1)$ defined in \eqref{defW} is a solution to the equation
\begin{equation}\label{eqvl1t}
 (1-\epsilon) \q L \bar v_1 + \left( -\gamma_1 + \frac{2(p+1)}{(p-1)^2}\epsilon\right) \bar v_1 =\bar f\equiv \frac{1-|d|^2}{(1-d\cdot z)^2}\q T_{-d}\left(\q L V_1- V_1\right)
\end{equation}
and $\gamma_1 = - \frac{2(p+1)}{p-1}$.\\
% (i) $\bar V(d,\epsilon)_1$ (resp. $\bar W^{d,\epsilon}_{i,1}$) is a solution to \eqref{eqVi1} if and only if $\bar v^{d,\epsilon}_1$ (resp. $\bar w^{d,\epsilon}_{i,1}$) is a solution to the equation
% \begin{equation}\label{eqvl1t}
%  (1-\epsilon) \q L \tilde v^{d,\epsilon}_{\lambda,1}(z) + \left( -\gamma_1 + \frac{2(p+1)}{(p-1)^2}\epsilon\right) \tilde v^{d,\epsilon}_{\lambda,1}(z) =f^{d}_\lambda
% \equiv
% \frac{1-|d|^2}{(1-d\cdot z)^2}\q T_{-d}\left(\q L W^d_{\lambda,1}- W^d_{\lambda,1}\right)
% \end{equation}
% and $\gamma_1 = - \frac{2(p+1)}{p-1}$.\\
(ii) It holds that $\bar f\in \q H_0'$ and for some $C_0>0$, we have
\[
\forall |d|<1,\;\;\|\bar f\|_{\q H_0'}\le C_0 \|V_1\|_{\q H_0}.
%\le C_0^2.
\]
% (ii) If
% \[
% \bar f^d(z)=\frac{1-|d|^2}{(1-d\cdot z)^2}\q T_{-d}\left(\q L V_1(d)- V_1(d)\right)\mbox{ and }\bar g^d_i(z)=\frac{1-|d|^2}{(1-d\cdot z)^2}\q T_{-d}\left(\q L W^d_{i,1}- W^d_{i,1}\right), 
% \]
% then $\bar f^d$ and $\bar g^d_i$ are in $\H_0'$
% and for some $C_0>0$ and for all $|d|<1$, we have
% \[
% \|\bar f^d\|_{\H_0'}\le C_0 \|V^d\|_{\H}\le C_0^2
% \mbox{ and }\|\bar g^d_i\|_{\H_0'}\le C_0 \|W_i(d)\|_{\H}\le C_0^2.
% \]
\end{cl}
%%%%%%%%%%%%%%%%%%%%%%%%%%%%%%%%%%%%%%%%%%%%%%%%%%%%%%%%%%%%%%%%%%%%%%%%%%
%%%%%%%%%%%%%%%%%%%%%%%%%%%%%%%%%%%%%%%%%%%%%%%%%%%%%%%%%%%%%%%%%%%%%%%%%%
\begin{nb} We define $\H_0'$, the dual space of $\H_0$ by means of the $L^2_\rho$ inner product, as the set of all $f\in L^2_\rho$ such that the linear form $h\to \iint f h \rho$ is continuous on $\H_0$. In particular, $\|f\|_{\H_0'} = \sup_{\|h\|_{\H_0}<1}|\int f(y)h(y)\rho(y) dy|$. 
 Using the orthonormal basis of $\q L$ in $L^2_\rho$ given in Proposition \ref{spectl} below, we see that
 \begin{equation}\label{fdecom}
 \|f\|_{\H_0'}^2 = f_0^2+\sum_{i=1}^N\frac {f_{1,i}^2}{1+|\gamma_1|}+\sum_{n \ge 2} \frac {f_n^2}{1+|\gamma_n|}\mbox{ if }f=\hz f_0+\hu \sum_{i=1}^N f_{1,i}y\cdot e_i+\sum_{n=2}^\infty f_n h_n(y)\in \H_0'.
 \end{equation}
Note that when $n=1$, we make a rotation of coordinates in order to work in the basis $(e_1(d), \dots, e_N(d))$, which is more suitable, since it already appears in the eigenfunctions of $L_d$ and $L_d^*$ given in Lemmas \ref{l10} and \ref{eigenld*}.
\end{nb}
\begin{proof}[Proof of Claim \ref{step1}] The proof is omitted since it is straightforward from the properties of the transformation $\q T_d$ given in Lemma \ref{0lemeffect}.  For the one-dimensional case, see Claim 4.11 page 94 in \cite{MZjfa07}.
\end{proof}

\bigskip

%%%%%%%%%%%%%%%%%%%%%%%%%%%%%%%%%%%%%%%%%%%%%%%%%%%%%%%%%%%%%%%%%%%%%%%%%%%%
%%%%%%%%%%%%%%%%%%%%%%%%%%%%%%%%%%%%%%%%%%%%%%%%%%%%%%%%%%%%%%%%%%%%%%%%%%%%
{\bf Step 2: Solution of equation \eqref{eqvl1t} and asymptotic behavior as $\epsilon \to 0$}
%%%%%%%%%%%%%%%%%%%%%%%%%%%%%%%%%%%%%%%%%%%%%%%%%%%%%%%%%%%%%%%%%%%%%%%%%%%%
%%%%%%%%%%%%%%%%%%%%%%%%%%%%%%%%%%%%%%%%%%%%%%%%%%%%%%%%%%%%%%%%%%%%%%%%%%%%

We prove items (i) and (ii) of Lemma \ref{lemexpress} in this step.

\begin{proof}[Proof of items (i) and (ii) of Lemma \ref{lemexpress}]$ $\\
%
%{\it Proof of (i)}: 
%Note first that since 
%\begin{equation}\label{t-dz}
%\T_d(z)=F^d_{0,1}
%\end{equation}
% by definition of $\T_{d}$ \eqref{defW} and $F^d_0$ \eqref{deffld}, we have from the definition of $f^{d}_\lambda$ \eqref{eqvl1t}, \eqref{estps}, the expression of $\phi$ \eqref{defphi2} and Lemma \ref{eigenld*}:
%for all $|d|<1$, 
%\begin{equation}\label{id2}
%\iint f^{d}_\lambda(z)z \rho(z) dz =\int_{-1}^1 \left( \L W^d_{\lambda,1}(y) -W^d_{\lambda,1}(y)\right) F^d_{0,1}(y) \rho(y) dy=-\delta_{\lambda,0}.
%\end{equation}
(i) We have  the following claim which follows directly from Proposition \ref{spectl}:
%%%%%%%%%%%%%%%%%%%%%%%%%%%%%%%%%%%%%%%%%%%%%%%%%%%%%%%%%%%%%%%%%%%%%%%%%%%%
%%%%%%%%%%%%%%%%%%%%%%%%%%%%%%%%%%%%%%%%%%%%%%%%%%%%%%%%%%%%%%%%%%%%%%%%%%%%
\begin{cl}[Solution of equation \eqref{eqvl1t}]\label{zerai}
Consider $\bar f\in \H_0'$ decomposed as in \eqref{fdecom}.
%If 
%\begin{equation}\label{decomp}
%\bar f=c_0\bar f_0+c_1\sum_{i=1}^N \bar f_{1,i}y_i+\ds\sum_{n=2}^\infty \bar f_n h_n(y)\in \H_0',
%\end{equation}
%where $h_n$ are the eigenfunctions of $\q L$ defined in Proposition \ref{spectl},
% {\bf Attention, on n'a pas d\'efini les valeurs propres de $\q L$ dans la Proposition \ref{spectl}. Il faudrait faire autrement...}.
Then, for any $\epsilon \in (0,\frac 12)$, equation \eqref{eqvl1t} 
%  the following equation
% \begin{equation}\label{eqv}
% (1-\epsilon)\q L v+(-\gamma_1+\frac{2(p+1)}{(p-1)^2}\epsilon) v = f
% \end{equation}
has a unique solution $\bar v_1 \in \H_0$ given by
\begin{equation}\label{defv}
\bar v_1 = \frac{\hz \bar f_0}{- \gamma_1 + \epsilon(\frac{2(p+1)}{(p-1)^2})}
+\frac {\hu (p-1)^2}{2p(p+1)\epsilon}\sum_{i=1}^N\bar f_{1,i}y\cdot e_i(d)
+\sum_{n=2}^\infty \frac{\bar f_n}{\gamma_n - \gamma_1 + \epsilon(\frac{2(p+1)}{(p-1)^2}-\gamma_n)}h_n
\end{equation}
where $\gamma_n\le 0$ are the eigenvalues of $\q L$ introduced in Proposition \ref{spectl}. 
\end{cl}
%%%%%%%%%%%%%%%%%%%%%%%%%%%%%%%%%%%%%%%%%%%%%%%%%%%%%%%%%%%%%%%%%%%%%%%%%%%%
%%%%%%%%%%%%%%%%%%%%%%%%%%%%%%%%%%%%%%%%%%%%%%%%%%%%%%%%%%%%%%%%%%%%%%%%%%%%
% \begin{nb} {\bf Les constantes $c_0$ et $c_1$ sont utilis\'ees aussi dans le Lemme \ref{eigenld*}.}
% \end{nb}
\begin{proof} This claim directly follows from the spectral decomposition of $\q L$ in Proposition \ref{spectl} above. 
\end{proof}
Since $W_{i,1}(d)\in \q H_0$ from Lemma \ref{eigenld*}, using this claim together with the beginning of Step 1 and Claim \ref{step1},
% and item (iii) of Lemma \ref{eigenld*}, 
we get the conclusion of item (i) of Lemma \ref{lemexpress} (note that the continuity follows from standard elliptic theory).

% Taking $R_1=V^d_1$, then $R_1=W^d_{i,1}$ in \eqref{eqvl1t}, we see from item (iii) in Lemma \ref{eigenld*} that $\bar f\in \q H_0'$. Applying Claim \ref{zerai}, we see that equation \eqref{eqvl1t} has a unique solution $\bar r_1\in \q H_0$. Using the equivalence between equations \eqref{eqVi1} and \eqref{eqvl1t} stated in Claim \ref{step1}, we find a solution for equation \eqref{eqvl1t}. Using the beginning of Step 1, we see that this closes the proof of item (i) of Lemma \ref{lemexpress}.

% From this claim and (ii) in Claim \ref{step1},
%  we see that for all $\epsilon \in (0, \frac 12)$, $|d|<1$ and $\lambda=1$ or $\lambda=0$, equation \eqref{eqvl1t} has a solution $\tilde v^{d, \epsilon}_{\lambda,1}$. Using (i) in Claim \ref{step1},
% we see that equation \eqref{eqVi1} has a solution $V^{d, \epsilon}_{\lambda,1}$ given by \eqref{vV}, which closes the proof of (i) of Lemma \ref{lemexpress}.

\medskip

\noindent (ii) 
%{\bf When $\lambda=1$}, 
{\bf Case 1: $i=0$}. From Step 1, Claim \ref{step1} and Claim \ref{zerai}, we have $\q T_{-d}(\bar W_0^{d,\epsilon})=\bar v_1$ given by \eqref{defv}, provided that $V_1=W_{0,1}(d)$ in \eqref{eqVi1} and $\bar f = \frac{1-|d|^2}{(1-d\cdot z)^2}\q T_{-d}\left(\q L W_{0,1}(d)- W_{0,1}(d)\right)$ in \eqref{eqvl1t}. Using item (ii) in Claim \ref{step1} and item (iii) in Lemma \ref{eigenld*}, we see that
\begin{equation}\label{bound1}
\forall |d|<1,\;\;\|\bar f\|_{\q H_0'} \le C_0.
\end{equation}
Using the decomposition along the orthonormal basis \eqref{fdecom} and item (iii) in Lemma \ref{0lemeffect}, we compute for $i=1,\dots,N$ (note that $\hu$ is a normalizing constant introduced in \eqref{eigen}):
%
%we have from the definition of $f^{d}_1$ \eqref{eqvl1t}, \eqref{estps} and the expression of $\phi$ \eqref{defphi2}, for all $i=1,\dots,N$,
\begin{align*}
\frac 1{\hu }\bar f_{1,i}&=\int_{|z|<1} \bar f(z)(z\cdot e_i(d)) \rho(z) dz\\
&=\int_{|y|<1} \left( \q L W_{0,1}(d,y) -W_{0,1}(d,y)\right) \q T_d(z\cdot e_i(d)) \rho(y) dy\\
&=\phi(W_0(d), (\q T_d(z\cdot e_i(d)),0))=\phi(W_0(d), F_i(d))=0
%=\sum_{j=1}^N\beta_{i,j}\phi(V^d,(\q T_d(e_j(d) \cdot z),0))= \sum_{j=1}^N\beta_{i,j}\phi(V^d, G^d_j)=0,
\end{align*}
where we used 
%where we used the relation between the coordinates in the canonical basis and the coordinates in the orthonormal basis $(e_1(d),\dots, e_N(d))$ defined in Lemma \ref{lembase} ($z_i = \sum_{j=1}^N \beta_{i,j} z\cdot e_j(d)$), together with 
\eqref{defgdi} and the duality relation between eigenfunctions of the operator $L_d$ and its conjugate, stated in item (ii) of Lemma \ref{eigenld*}.
%
%{\bf When $\lambda=1$}, we see from \eqref{id2}, \eqref{defunt} and \eqref{l1y} 
%that $\tilde{\left(f^{d}_1\right)}_1=\iint f^{d}_1(z)z \rho(z) dz=0$. 
Therefore, we see from Claim \ref{zerai} and 
\eqref{bound1}
%the definition of $f^{d}_1$ \eqref{eqvl1t} 
that for $\epsilon$ small enough,
\begin{equation*}%\label{besma2}
\sup_{|d|<1}\left\|\q T_{-d}(\bar W_0^{d,\epsilon})- v^*\right\|_{\H_0}\le C\epsilon\|\bar f\|_{\H_0'} \le C_0\epsilon
\mbox{ where }
v^*(z) = \sum_{n\neq 1} \frac{\bar f_n}{\gamma_n - \gamma_1}h_n(z)
\end{equation*}
is the unique solution of 
\begin{equation*}%\label{eqv*}
\q L v(z)-\gamma_1 v(z) = \bar f(z)\mbox{ with } \iint v(z) (z\cdot e_i(d)) \rho(z) dz =0
\mbox{ for all }i=1,\dots,N.
\end{equation*}
Therefore, we see from item (iv) in Lemma \ref{0lemeffect} that for $\epsilon$ small enough,
\begin{equation}\label{sonia}
\sup_{|d|<1}\left\|\bar W^{d,\epsilon}_{0,1}- V^*\right\|_{\H_0}\le C_0\epsilon,
\end{equation}
where $V^*=\q T_d v^*$ is the unique solution of 
\begin{align}
&\q L V(y) +\psi(d,y) V(y)= \q L V_1(d,y)- V_1(d,y)\nonumber\\
\mbox{with}&
\iint V(y) F_{i,1}(d,y) \frac{\rho(y)}{(1+d\cdot y)^2} dy =0
\mbox{ for all }i=1,\dots,N\label{orth}
%\iint V(y) F^d_{0,1}(y) \frac{\rho(y)}{(1+d\cdot y)^2} dy =0
\end{align}
(note that this equation is the version of \eqref{eqVi1} with $\epsilon=0$ and use item (iii) in Lemma \ref{0lemeffect}
%together with \eqref{t-dz} 
and \eqref{defgdi}
to get the orthogonality condition).
Since 
\[
-\q L W_{0,1}(d)+ W_{0,1}(d)= \q L W_{0,2}(d)+\psi(d,y) W_{0,2}(d)\mbox{ and }
\q L F_{i,1}(d)+\psi(d,y) F_{i,1}(d)=0
\]
(use the fact that $L_d^*\left(W_0(d)\right) = W_0(d)$  and $L_d(F_{i,1}(d))=0$ from Lemmas \ref{eigenld*} and \ref{l10}),
% the remark following Lemma \ref{l10}), 
we see from uniqueness that
 $V^*(y) = -W_{0,2}(d,y)+ \sum_{j=1}^N \alpha_j(d)F_{j,1}(d,y)$ where the $\alpha_j(d)$ are determined by \eqref{orth}, which gives for all $i=1,\dots,N$,
\[
-\iint W_{0,2}(d,y)\frac{F_{i,1}(d,y)\rho(y)}{(1+d\cdot y)^2} dy +\sum_{j=1}^N \alpha_j(d)\iint F_{j,1}(d,y)\frac{F_{i,1}(d,y)\rho(y)}{(1+d\cdot y)^2} dy=0.
\]
Since we have $W_{0,2}(d)=c_1\q T_d(\frac{1-|z|^2}{1-d\cdot z})$ from \eqref{defWl2} and \eqref{defW} (where $c_1$ is a normalizing constant introduced in \eqref{defWl2}),
and $F_{i,1}(d) = \q T_d(z\cdot e_i(d))$ from \eqref{defgdi}, using item (iii) in Lemma \ref{0lemeffect} and separation of variables to compute the integrals (remember that $e_1=\frac d{|d|}$), we write
\begin{align*}
\iint W_{0,2}(d,y)\frac{F_{i,1}(d,y)\rho(y)}{(1+d\cdot y)^2} dy&=\frac{c_1}{1-|d|^2}\int_{|z|<1}
\frac{1-|z|^2}{1-d\cdot z}(z\cdot e_i) \rho(z) dz=0\mbox{ if }i\ge 2,\\
\iint F_{j,1}(d,y)\frac{F_{i,1}(d,y)\rho(y)}{(1+d\cdot y)^2} dy&=\frac{\displaystyle\int_{|z|<1}(z\cdot e_i)(z\cdot e_j)\rho(z) dz}{1-|d|^2} = \delta_{i,j}\frac{\displaystyle\int_{|z|<1}z_1^2\rho(z) dz}{1-|d|^2}.
\end{align*}
Therefore, $\alpha_j(d)=0$ if $j\ge 2$ and $\alpha_1(d)$ is continuous in terms of $d$. Thus, the first identity in item (ii) of Lemma \ref{lemexpress} follows from \eqref{sonia} and \eqref{defVl2}.\\ 
%
% ...
%
% Since we have from \eqref{defgdi} and item (ii) in Lemma \ref{0lemeffect},
% \[
% \iint F_{j,1}(d,y)\frac{F_{i,1}(d,y)\rho(y)}{(1+d\cdot y)^2} dy=\frac{\displaystyle\int_{|z|<1}(z\cdot e_i)(z\cdot e_j)\rho(z) dz}{1-|d|^2} = \delta_{i,j}\frac{\displaystyle\int_{|z|<1}z_1^2\rho(z) dz}{1-|d|^2},
% %\iint  \q T_d(z_j)  \q T_d(z_i)\frac{\rho(y)}{(1+d\cdot y)^2} dy=\frac{\displaystyle\int_{|z|<1}z_iz_j\rho(z) dz}{1-|d|^2} = \delta_{i,j}\frac{\displaystyle\int_{|z|<1}z_1^2\rho(z) dz}{1-|d|^2},
% \]
% it follows that
% \[
% \alpha_i(d) = \ds\iint W_{0,2}(d,y) \q T_d(z_i) \frac{\rho(y)}{(1+d\cdot y)^2} dy\slash\ds\iint (\q T_d(z_i))^2 \frac{\rho(y)}{(1+d\cdot y)^2} dy
% \]
% is continuous. Using \eqref{defgdi}, we see that the first identity in \eqref{vega} follows from \eqref{sonia} and \eqref{defVl2}.\\
% % and \eqref{defWl2}.\\
{\bf Case 2: $i\ge 1$}. From Step 1 and Claims \ref{step1} and \ref{zerai}, we have $\q T_{-d}(\bar W^{d,\epsilon}_{i,1})=\bar v_1$ given by \eqref{defv}, provided that $V_1=W_{i,1}(d)$ in \eqref{eqVi1} and $\bar f = \frac{1-|d|^2}{(1-d\cdot z)^2}\q T_{-d}\left(\q L W_{i,1}(d)- W_{i,1}(d)\right)$ in \eqref{eqvl1t}. Using item (ii) in Claim \ref{step1} and item (iii) in Lemma \ref{eigenld*}, we see that \eqref{bound1} holds in this case too.
% \begin{equation}\label{bound2}
% \forall |d|<1,\;\;\|\bar f\|_{\q H_0'} \le C_0.
% \end{equation}
Since $F_j(d)=(\q T_d(z\cdot e_j(d),0))$ from \eqref{defgdi}, we apply orthogonality to the decomposition \eqref{fdecom}, and use item (iii) in Lemma \ref{0lemeffect} together with the orthogonality result in item (ii) of Lemma \ref{eigenld*} to write: 
\begin{align*}
\bar f_{1,j}
&=\int_{|z|<1} \bar f(z)(z\cdot e_j(d)) \rho(z) dz
%=\int_{|z|<1} \q T_{-d}\left(\q L W_{0,i,1}^d - W_{0,i,1}^d\right)z_j \rho(z) dz\\
=\iint \left(\q L W_{i,1}(d,y) - W_{i,1}(d,y)\right)F_j(d,y) \rho(y) dy\\
%=-\phi(W_{0,i},(\q T_d(z_j),0)) 
&=-\phi(W_i(d),F_j(d))=-\delta_{i,j}.
\end{align*}
Therefore, 
%since $h_1(y)=c_1y$ by \eqref{l1y}, 
we see from Claim \ref{zerai} and 
\eqref{bound1}
% (ii) in Claim \ref{step1} 
that for $\epsilon$ small enough,
\begin{equation}\label{besma3a}
\left\|\q T_{-d}(\bar W^{d,\epsilon}_{i,1})+\frac{\hu (p-1)^2}{2p(p+1)\epsilon} z\cdot e_i(d)
\right\|_{\H_0}\le C\|\bar f\|_{\H_0'}\le C_0.
%\mbox{ where }\bar \alpha_2 =  \frac{1}{\frac{2p(p+1)}{(p-1)^2}\iint y_1^2 \rho(y) dy}>0
\end{equation}
%des not depend on $i$.
%(note from \eqref{eigenl} that $\gamma_1 = -\frac{2(p+1)}{p-1}<0$).
Since the estimate for $\bar W_{i,2}^{d,\epsilon}$ follows from \eqref{defVl2} and item (iii) of Lemma \ref{eigenld*}, we see that item (ii) of Lemma \ref{lemexpress} follows from \eqref{besma3a}, item (iv) of Lemma \ref{0lemeffect} and \eqref{defgdi}.
%\eqref{vV}.
%% and \eqref{t-dz}.
This closes the proof of items (i) and (ii) in Lemma \ref{lemexpress}.
\end{proof}

\bigskip

%%%%%%%%%%%%%%%%%%%%%%%%%%%%%%%%%%%%%%%%%%%%%%%%%%%%%%%%%%%%%%%%%%%%%%%%%%%%
%%%%%%%%%%%%%%%%%%%%%%%%%%%%%%%%%%%%%%%%%%%%%%%%%%%%%%%%%%%%%%%%%%%%%%%%%%%%
{\bf Step 3: Sign of $\varphi_{d,\epsilon}$ on $\spa\{\bar W^{d, \epsilon}_i,\;\;i=0\dots,N\}$}
%%%%%%%%%%%%%%%%%%%%%%%%%%%%%%%%%%%%%%%%%%%%%%%%%%%%%%%%%%%%%%%%%%%%%%%%%%%%
%%%%%%%%%%%%%%%%%%%%%%%%%%%%%%%%%%%%%%%%%%%%%%%%%%%%%%%%%%%%%%%%%%%%%%%%%%%%

We give the proof of item (iii) in Lemma \ref{lemexpress}.

\begin{proof}[Proof of item (iii) in Lemma \ref{lemexpress}] 
We finish the proof of Lemma \ref{lemexpress} here, by proving that $\fe$ is negative on the subspace of $\H$ spanned by $\bar W^{d, \epsilon}_i$ for $i=0\dots,N$. 
Introducing the $(N+1)\times (N+1)$ symmetric matrix $M=(M_{i,j})_{0\le i,j\le N}$ defined for $i,j=0,\dots,N$ by
\[
% M_{0,0}=\varphi_{d,\epsilon}(\bar V^{d,\epsilon}, \bar V^{d,\epsilon}),\;\;
% M_{0,j}=M_{j,0}=\varphi_{d,\epsilon}(\bar V^{d,\epsilon}, \bar W^{d,\epsilon}_j),\;\;
% % &\mbox{ if }j\in \{1,\dots,N\}
M_{i,j}=\varphi_{d,\epsilon}(\bar W^{d,\epsilon}_i, \bar W^{d,\epsilon}_j),
%&\mbox{ if }i,j\in \{1,\dots,N\}
\]
 we see that it is enough to find $\epsilon_4$ such that for all $0<\epsilon\le\epsilon_4$, $|d|<1$ and $k=0,\dots,N$,
\begin{equation}\label{fdeV0V0}
(-1)^{k+1} \det(M_{i,j})_{0\le i,j\le k}>0.
\end{equation}
We claim that it is enough to prove that for some constant $\bar \alpha_3>0$, for $\epsilon$ small enough and for all $|d|<1$:
\begin{equation}\label{mij}
|M_{0,0}+\bar \alpha_3|\le C\epsilon\mbox{ and }\sup_{1\le i\le N}|M_{i,i}+\frac{\bar \alpha_2}\epsilon|+\sup_{i\neq j}|M_{i,j}|\le C.
\end{equation}
Indeed, if \eqref{mij} holds, then we have $M_{0,0}<0$ and for all $k=1,\dots,N$,
\begin{align*}
\sup_{|d|<1}|\det(M_{i,j})_{0\le i,j\le k}-(-1)^{k+1}\bar \alpha_3\left(
\frac{\bar \alpha_2}\epsilon
%\frac{c_1(p-1)^2}{2(p+1)\epsilon}
\right)^k|\le \frac C{\epsilon^{k-1}}
\end{align*}
and \eqref{fdeV0V0} follows. It remains to prove \eqref{mij} in order to conclude.
%\begin{equation}\label{fdeV0V0}
%\varphi_{d,\epsilon}(V^{d,\epsilon}_1, V^{d,\epsilon}_1)<0\mbox{ and }
%(-1)^j\left|
%\begin{array}{ll}
%\varphi_{d,\epsilon}(V^{d,\epsilon}_1, V^{d,\epsilon}_1)&\varphi_{d,\epsilon}(V^{d,\epsilon}_1, V^{d,\epsilon}_{0,1})&...&\varphi_{d,\epsilon}(V^{d,\epsilon}_1, V^{d,\epsilon}_{0,j})\\
%\varphi_{d,\epsilon}(V^{d,\epsilon}_{0,1}, V^{d,\epsilon}_1)&\varphi_{d,\epsilon}(V^{d,\epsilon}_{0,1}, V^{d,\epsilon}_{0,1})&...&\varphi_{d,\epsilon}(V^{d,\epsilon}_{0,1}, V^{d,\epsilon}_{0,j})\\
%
%\varphi_{d,\epsilon}(V^{d,\epsilon}_1, V^{d,\epsilon}_0)&\varphi_{d,\epsilon}(V^{d,\epsilon}_0, V^{d,\epsilon}_0)
%\end{array}
%\right|>0.
%\end{equation}
%
%
%In one space dimension, it was enough to find $\epsilon_4$ such that for all $0<\epsilon\le\epsilon_4$ and $|d|<1$,
%\begin{equation}\label{fdeV0V0}
%\varphi_{d,\epsilon}(V^{d,\epsilon}_0, V^{d,\epsilon}_0)<0\mbox{ and }
%\left|
%\begin{array}{ll}
%\varphi_{d,\epsilon}(V^{d,\epsilon}_1, V^{d,\epsilon}_1)&\varphi_{d,\epsilon}(V^{d,\epsilon}_1, V^{d,\epsilon}_0)\\
%\varphi_{d,\epsilon}(V^{d,\epsilon}_1, V^{d,\epsilon}_0)&\varphi_{d,\epsilon}(V^{d,\epsilon}_0, V^{d,\epsilon}_0)
%\end{array}
%\right|>0.
%\end{equation}
In the following, we will estimate the coefficients $M_{i,j}$ of the matrix $M$
%$\varphi_{d,\epsilon}(V,V')$ where $V, V'\in \{V^{d, \epsilon}_1,\;\;V^{d, \epsilon}_{0,i},\;\; \forall i=1\dots,N\}$
%%$\varphi_{d,\epsilon}(V^{d,\epsilon}_\lambda, V^{d,\epsilon}_\mu)$
 as $\epsilon\to 0^+$, uniformly for $|d|<1$, using the asymptotic behavior of $\bar W^{d,\epsilon}_i$ given in item (ii) of Lemma \ref{lemexpress}.
\\
- First, using \eqref{lina}, we see that
\begin{equation}\label{asensi}
\forall i,j=0,\dots,N,\;\;\varphi_{d,\epsilon}(\bar W_i^{d,\epsilon}, \bar W_j^{d,\epsilon})= \phi (W_i(d), \bar W_j^{d,\epsilon}).
\end{equation}
- Then, taking $i=0,\dots,N$ and $j=1,\dots,N$ with $j\neq i$ in \eqref{asensi}, we see from item (ii) in Lemma \ref{lemexpress} and the orthogonality condition in Lemma \ref{eigenld*} that
\begin{equation*}%\label{fdeV01}
\sup_{|d|<1}
\left|\varphi_{d, \epsilon}(\bar W^{d,\epsilon}_i, \bar W^{d,\epsilon}_j) \right|\le C.
\end{equation*}
- Now, taking $i=j\ge 1$ in \eqref{asensi} and using Lemmas \ref{lemexpress} and \ref{eigenld*}, we see that
\begin{equation*}%\label{fdeV02}
\sup_{|d|\le d_0}
\left|\varphi_{d, \epsilon}(\bar W^{d,\epsilon}_i, \bar W^{d,\epsilon}_i) +\frac{\bar \alpha_2}{\epsilon}\right|\le C.
\end{equation*}
- Finally, proceeding similarly when $i=j=0$ in \eqref{asensi}, we see that 
\begin{equation*}%\label{v2}
\sup_{|d|<1}
\left|\varphi_{d, \epsilon}(\bar W_0^{d,\epsilon}, \bar W_0^{d,\epsilon})-\phi\left(W_0(d), \vc{-W_{0,2}(d)}{W_{0,2}(d)}\right)\right|\le C \epsilon.
%\to 0 \mbox{ as }\epsilon \to 0.
\end{equation*}
Using \eqref{defphi2} together with \eqref{eqWl1}, we write
\begin{eqnarray}
&&\phi\left(W_0(d), \vc{-W_{0,2}(d)}{W_{0,2}(d)}\right)\nonumber\\
&=&\iint W_{0,2}(d,y) \left(\q L W_{0,1}(d,y)-W_{0,1}(d,y)+W_{0,2}(d,y)\right)\rho(y) dy\nonumber\\
&=& \iint W_{0,2}(d,y) \left( \frac{p+3}{p-1}W_2(d,y) +2 y \cdot\nabla {W_{0,2}(d)}(y)- 4\alpha\frac{W_{0,2}(d,y)}{1-|y|^2}\right)\rho(y) dy\nonumber\\
&=& \iint W_{0,2}(d,y)^2\left(\frac{p+3}{p-1}-\frac {2\alpha}{1-|y|^2}\right)\rho(y) dy -\iint W_{0,2}(d,y)^2 \dv (y\rho(y)) dy\nonumber\\
&=&-2\alpha \iint W_{0,2}(d,y)^2 \frac{\rho(y)}{1-|y|^2} dy\equiv \bar \alpha_3>0.\nonumber%\label{v3}
\end{eqnarray}
Thus, \eqref{mij} holds and the bilinear form $\fe$ is negative on $\spa\{\bar W^{d, \epsilon}_i,\;\;i=0\dots,N\}$. This gives the conclusion of item (iii) in Lemma \ref{lemexpress}.
\end{proof}
This concludes the proof of Lemma \ref{lemexpress}.
\end{proof}

\bigskip

{\bf Part 3: End of the proof of Lemma \ref{reduc}}: 

From Lemmas \ref{lemfdepos} and \ref{lemexpress}, we define $\epsilon_0=\min(\epsilon_1, \epsilon_2)\in (0,1)$.
We will now prove by contradiction that $\varphi_{d, \epsilon_0}$ is negative on $\H^d_-$ for all $|d|<1$.

\medskip

From Lemma \ref{lemexpress} and \eqref{lina}, for all $|d|<1$ and $\epsilon \in (0, \epsilon_0]$, we write the definition of $\H^d_-$ \eqref{defhd-} as follows:
\begin{equation}\label{newdefhd-}
\H^d_- = \left\{r\in \H\;\;|\;\;
\varphi_{d, \epsilon}\left(\bar W^{d,\epsilon}_i,r\right)=0 \mbox{ for all }i=0\dots,N\right\}.
\end{equation}
We proceed by contradiction and assume that 
\begin{equation}\label{fder1}
\mbox{there is }r\in \H^d_-\mbox{ such that }\varphi_{d,\epsilon}\left(r,r\right)<0.
\end{equation}
Using \eqref{newdefhd-}, 
we see that $r\not \in \spa\left(\bar W^{d,\epsilon}_i,\;i=0,\dots,N\right)$. Since $\det M\neq 0$ from \eqref{fdeV0V0}, the dimension of the vector subspace
% Since $\det M\neq 0$ from \eqref{fdeV0V0}, we see from \eqref{newdefhd-} that $r\not \in \spa\left(\bar V^{d,\epsilon}, \bar W^{d,\epsilon}_i,\;i=1,\dots,N\right)$. Therefore, the vector subspace
\[
E_1= \spa \left(r, \bar W^{d,\epsilon}_i\mbox{ for }i=0,\dots,N \right)
\]
is $N+2$. Hence, since the subspace $E_2$ \eqref{defE} is of codimension $N+1$, there exists a non zero $u \in E_1\cap E_2$.

\medskip

\noindent On the one hand, since $u \in E_2$, we have from Lemma \ref{lemfdepos} that
\begin{equation}\label{fdepos}
\fe\left(u, u\right)\ge 0.
\end{equation}
On the other hand, since $\varphi_{d, \epsilon}$ is negative on $E_1$ by (iii) of Lemma \ref{lemexpress}, we must have from \eqref{newdefhd-} 
and \eqref{fder1},  
\[
\fe\left(u, u\right)<0.
\]
This contradicts \eqref{fdepos}. Thus, \eqref{fder1} does not hold, and $\fe$ is nonnegative on $\H^d_-$. This concludes the proof of Lemma \ref{reduc}.
\end{proof}
Since Lemma \ref{reduc} implies Proposition \ref{lemdefpos} by the same argument as in one space dimension, this is also the conclusion of Proposition \ref{lemdefpos}.
\end{proof}

%%%%%%%%%%%%%%%%%%%%%%%%%%%%%%%%%%%%%%%%%%%%%%%%%%%%%%%%%%%%%%%%%%%%%%%%%%
%%%%%%%%%%%%%%%%%%%%%%%%%%%%%%%%%%%%%%%%%%%%%%%%%%%%%%%%%%%%%%%%%%%%%%%%%%
\section{A modulation technique}\label{appmod}
%%%%%%%%%%%%%%%%%%%%%%%%%%%%%%%%%%%%%%%%%%%%%%%%%%%%%%%%%%%%%%%%%%%%%%%%%%
%%%%%%%%%%%%%%%%%%%%%%%%%%%%%%%%%%%%%%%%%%%%%%%%%%%%%%%%%%%%%%%%%%%%%%%%%%
We give the proof of Lemma \ref{lemmod} here. As in one space dimension, the proof relies on the application of the implicit functions theorem around the family $\kappa^*(d,\nu)$ defined in \eqref{defk*}. Nevertheless, the proof in our case is much more delicate 
%than the one-dimensional case, 
and becomes particularly intricate when $d\to 0$ or $|d|\to 1$. Indeed, if $d\to 0$, then the new degenerate directions of the linearized operator become singular, and when $|d|\to 1$, the columns of the jacobian matrix have different sizes, making the application of the implicit function theorem particularly delicate, since we crucially need to make explicit the dependence of our bounds in terms of $d$. 

\medskip

\begin{proof}[Proof of Lemma \ref{lemmod}]
The modulation works because the derivatives of $\kappa^*(d,\nu)$ with respect to $d$ and $\nu$ (when  $(d,\nu)=(d,0)$) are precisely the directions we aim at ``killing'' in \eqref{ortho} (see the proof of Lemma \ref{l10} given in page \pageref{proofl10}). Thus, by slightly changing the parameters $(d,\nu)$, we can eliminate the projections of $v$ along those directions. As a matter of fact, the proof of Lemma \ref{lemmod} starts with the computation of the projections of the derivatives of $\kappa^*(d,\nu)$: 
%%%%%%%%%%%%%%%%%%%%%%%%%%%%%%%%%%%%%%%%%%%%%%%%%%%%%%%%%%%%%%%%%%%%%%%%%%
%%%%%%%%%%%%%%%%%%%%%%%%%%%%%%%%%%%%%%%%%%%%%%%%%%%%%%%%%%%%%%%%%%%%%%%%%%
\begin{cl}[Projections of the derivatives of $\kappa^*(d,\nu)$] \label{cltech} 
For all $B\ge 2$, there exists $C_1(B)>0$ such that if 
\begin{equation}\label{condnuB}
|d|<1\mbox{ and }-1+\frac 1B\le \frac \nu{1-|d|}\le B,
\end{equation}
 then
\begin{equation}\label{old}
\begin{array}{rclrcl}
|\pi_0^{d^*}(\partial_{e_1} \kappa^*(d,\nu))|&\le &\frac{C_1}{1-{|d|}},&
-\frac {C_1}{1-{|d|}}\le \pi_0^{d^*}(\pnu \kappa^*(d,\nu))&\le& -\frac 1{C_1(1-{|d|})},\\
 \pi_1^{d^*}(\pnu \kappa^*(d,\nu))&=& 0,&-\frac {C_1}{1-{|d|}}\le \pi_1^{d^*}(\partial_{e_1} \kappa^*(d,\nu))&\le& -\frac 1{C_1(1-{|d|})},\\
\end{array}
\end{equation}
and for $i=2,\dots,N$, $j=0\dots,N$ with $j\neq i$, 
\begin{equation}\label{new}
\begin{array}{rclrcl}
\pi_i^{d^*}(\pnu \kappa^*(d,\nu))&=&0,&\pi_i^{d^*}(\partial_{e_1} \kappa^*(d,\nu))&=& 0,\\
\pi_j^{d^*}(\partial_{e_i}\kappa^*(d,\nu))&=& 0,&
-\frac {C_1}{\sqrt{1-{|d|}}}\le \pi_i^{d^*}(\partial_{e_i} \kappa^*(d,\nu))&\le& -\frac 1{C_1\sqrt{1-{|d|}}},
\end{array}
\end{equation}
where $d^*=\frac{d}{1+\nu}$.
\end{cl}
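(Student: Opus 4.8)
The strategy is to compute each projection $\pi_j^{d^*}(\partial_{e_i}\kappa^*(d,\nu))$ and $\pi_j^{d^*}(\partial_\nu\kappa^*(d,\nu))$ explicitly by exploiting the structure established in Lemma \ref{l10} and Lemma \ref{eigenld*}, together with the Lorentz transform of Appendix \ref{applor}. The key observation is that $\kappa^*(d,\nu,y)=(\kappa_1^*,\kappa_2^*)(d,\nu,y)$, as given in \eqref{defk*}, can be rewritten purely in terms of the shifted variable: if one sets $d^*=\frac{d}{1+\nu}$, one checks directly from \eqref{defkd} and \eqref{defk*} that $\kappa_1^*(d,\nu,y)=(1+\nu)^{-\frac 2{p-1}}(1-|d|^2)^{\frac 1{p-1}}(1-|d^*|^2)^{-\frac 1{p-1}}\kappa(d^*,y)$ up to the explicit scalar factor, and more importantly that the $y$-dependence of $\partial_\nu\kappa^*$ and $\partial_{e_i}\kappa^*$ at a general $\nu$ matches (up to explicit scalar factors depending only on $d$ and $\nu$) the $y$-dependence of the eigenfunctions $F_0(d^*),\dots,F_N(d^*)$ of $L_{d^*}$. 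This is exactly the content of the computations in the proof of Lemma \ref{l10} (page \pageref{proofl10}): $\partial_\mu\kappa^*(d,0,\cdot)$ is proportional to $F_0(d,\cdot)$, $\partial_{e_1}\kappa^*(d,0,\cdot)$ to $F_1(d,\cdot)$ (plus possibly an $F_0$-component — this is the source of the off-diagonal entry $\pi_0^{d^*}(\partial_{e_1}\kappa^*)$), and $\partial_{e_i}\kappa^*(d,0,\cdot)$ to $F_i(d,\cdot)$ for $i\ge 2$.

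\textbf{Key steps.} First, I would express $\partial_\nu\kappa^*(d,\nu,\cdot)$ and $\partial_{e_i}\kappa^*(d,\nu,\cdot)$ for $i=1,\dots,N$ as explicit linear combinations of $F_0(d^*,\cdot),\dots,F_N(d^*,\cdot)$, reading off the scalar coefficients as functions of $d$ and $\nu$. Under the constraint \eqref{condnuB}, namely $-1+\frac 1B\le\frac\nu{1-|d|}\le B$, one has $1+\nu\sim 1$ and $1-|d^*|\sim 1-|d|$ up to constants depending on $B$, so all these coefficients are controlled from above and below in terms of powers of $1-|d|$. Second, I would apply $\pi_j^{d^*}$, i.e. $\phi(W_j(d^*),\cdot)$, and use the duality relation $\phi(F_i(d^*),W_j(d^*))=\delta_{i,j}$ from part (ii) of Lemma \ref{eigenld*} to collapse each projection to a single scalar coefficient. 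The vanishing statements in \eqref{new} (namely $\pi_j^{d^*}(\partial_{e_i}\kappa^*)=0$ for $j\ne i$, $i\ge 2$, and $\pi_i^{d^*}(\partial_\nu\kappa^*)=\pi_i^{d^*}(\partial_{e_1}\kappa^*)=0$ for $i\ge 2$) follow immediately because the relevant $F$-coefficient is zero; these are also consistent with the orthogonality/separation-of-variables arguments already used in Case 2 of the proof of part (ii) of Lemma \ref{eigenld*}. The surviving entries — $\pi_0^{d^*}(\partial_\nu\kappa^*)$, $\pi_1^{d^*}(\partial_{e_1}\kappa^*)$, $\pi_i^{d^*}(\partial_{e_i}\kappa^*)$ for $i\ge 2$, and the off-diagonal $\pi_0^{d^*}(\partial_{e_1}\kappa^*)$ — are then equal to explicit scalar coefficients, whose size I read off: the first three are of order $\frac 1{1-|d|}$, $\frac 1{1-|d|}$, $\frac 1{\sqrt{1-|d|}}$ respectively (matching the powers of $(1-|d|^2)$ appearing in $F_0,F_1$ versus $F_i$, $i\ge 2$, in \eqref{deffld}--\eqref{deffid}), and the sign is negative because of the overall factor $-\frac{2\kappa_0}{p-1}$ in the derivatives computed in Lemma \ref{l10}. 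The bound $|\pi_0^{d^*}(\partial_{e_1}\kappa^*)|\le\frac{C_1}{1-|d|}$ comes from the $F_0$-component present in $\partial_{e_1}\kappa^*$.

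\textbf{Main obstacle.} The routine-but-delicate part is keeping track of the $d$- and $\nu$-dependence of the scalar coefficients uniformly, so that the upper and lower bounds are genuinely of the stated orders $(1-|d|)^{-1}$ and $(1-|d|)^{-1/2}$ with constants depending only on $B$. This requires (i) using Lemma \ref{lembase} to control the rotation of the basis $(e_1(d^*),\dots,e_N(d^*))$ relative to $(e_1(d),\dots,e_N(d))$ — relevant since $d^*$ is parallel to $d$, so in fact $e_1(d^*)=e_1(d)$ and the higher $e_i$ can be taken equal, which simplifies matters — and (ii) invoking the normalization of $W_i(d^*)$ from part (iii) of Lemma \ref{eigenld*} to be sure the scalar products do not degenerate. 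The lower bounds (the $-\frac 1{C_1\sqrt{1-|d|}}$ type estimates) are where one must be careful: they follow from the explicit nonvanishing of the integrals $\int (y\cdot e_i)^2(1+d^*\cdot y)^{-\cdots}\rho\,dy$, which are strictly positive and bounded away from $0$ (after rescaling by the appropriate power of $1-|d|$) uniformly under \eqref{condnuB}; this is precisely the kind of integral computation recorded in the "integral computation table" of Claim \ref{cltech0}. As in the one-dimensional case (Lemma 2.1 of \cite{MZisol10}), once Claim \ref{cltech} is in hand, Lemma \ref{lemmod} itself follows from the implicit function theorem; the present claim is the computational heart, and no conceptual difficulty beyond careful bookkeeping arises. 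I would refer the reader to Appendix \ref{appmod} for the full details of the bounds.
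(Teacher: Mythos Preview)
Your overall strategy is right in spirit, but there is a genuine gap in the step where you claim that $\partial_\nu\kappa^*(d,\nu,\cdot)$ and $\partial_{e_i}\kappa^*(d,\nu,\cdot)$ can be written as linear combinations of $F_0(d^*,\cdot),\dots,F_N(d^*,\cdot)$ and then collapsed via the duality $\phi(F_i(d^*),W_j(d^*))=\delta_{ij}$. This is only true at $\nu=0$. For general $\nu$ in the range \eqref{condnuB}, a direct computation (see \eqref{dejk} in the paper) gives, for $i\ge 2$,
\[
\frac{1}{L_2}\,\partial_{e_i}\kappa^*(d,\nu,y)
= -F_i(d^*,y) + \frac{(p+1)\nu}{(p-1)(1+\nu)}
\begin{pmatrix}0\\ \dfrac{(1-|d^*|^2)^{\frac{p+1}{2(p-1)}}}{(1+d^*\cdot y)^{\frac{2p}{p-1}}}\,y\cdot e_i\end{pmatrix},
\]
where $L_2\sim (1-|d|)^{-1/2}$. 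The extra term on the right is \emph{not} in $\spa\{F_0(d^*),\dots,F_N(d^*)\}$: its second component carries a different power of $(1+d^*\cdot y)$ than any $F_j$. An analogous extra term appears in $\partial_\nu\kappa^*$ and $\partial_{e_1}\kappa^*$ (this is why the paper defers \eqref{old} to the one-dimensional computation in \cite{MZisol10}). For the off-diagonal vanishings in \eqref{new} this is harmless, since the extra term still involves the factor $y\cdot e_i$ and separation of variables kills its projections onto $W_j(d^*)$ for $j\ne i$. But for the diagonal entry $\pi_i^{d^*}(\partial_{e_i}\kappa^*)$ the extra term contributes a \emph{positive} quantity (when $\nu>0$) that competes against the $-1$ coming from $-F_i(d^*)$, and showing that it does not overwhelm the $-1$ is the actual content of the upper bound $\pi_i^{d^*}(\partial_{e_i}\kappa^*)\le -\frac{1}{C_1\sqrt{1-|d|}}$.

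The paper handles this by computing that extra projection explicitly via the Lorentz change of variables (item (iii) of Lemma \ref{0lemeffect}), reducing it to an integral $J$ over the unit ball, and then bounding $J$ by a careful integration-by-parts argument (formulas \eqref{start}--\eqref{ipp}). The final inequality obtained is $-1+\frac{p+3}{8}<0$, which uses the subconformal hypothesis \eqref{condp} in an essential way. Your proposal frames the lower bound as a mere ``nonvanishing of integrals'' issue, but it is really a sign/competition issue, and the subconformal condition is what makes it work. Without this computation, the argument is incomplete: for $\nu$ near the top of the allowed range $B(1-|d|)$, you have not ruled out that the correction term cancels the main term.
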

%%%%%%%%%%%%%%%%%%%%%%%%%%%%%%%%%%%%%%%%%%%%%%%%%%%%%%%%%%%%%%%%%%%%%%%%%%
%%%%%%%%%%%%%%%%%%%%%%%%%%%%%%%%%%%%%%%%%%%%%%%%%%%%%%%%%%%%%%%%%%%%%%%%%%
\begin{proof} The proof is left to Appendix \ref{secal}, a section dedicated to all the properties of $\kappa^*(d,\nu)$.
\end{proof}
As we said earlier, the proof follows from small adaptations in the proof of Lemma 2.1 in \cite{MZisol10} where we worked in one space dimension with a modulation around a decoupled sum of solitons. In one way, our case is easier, since we have only one soliton here. Of course, we are left with the difficulty of adapting the proof to higher dimensions. A major difficulty, is the singularity appearing in the derivatives of the eigenfunctions of $L_d^*$, as one can see from item (iii) in Lemma \ref{eigenld*}. For that reason, we consider $\dun\in (0, \frac 14)$ to be fixed some enough later, and consider the cases where $|\bar d|\ge \dun$, then, $|\bar d|\le \dun$.

\bigskip

{\bf Case 1: $|\bar d|\ge \dun$}.

In this section, we insist on the fact that $\dun$ is arbitrary in the interval $(0, \frac 14)$. Its value will be chosen small enough when handling Case 2, that is when $|\bar d|\le \dun$.
Define
\begin{equation}\label{defPsi}
\begin{array}{llll}
\Psi:&\H\times(-1,\infty)\times B(0,1)&\rightarrow&\m R^{N+1}\\
&(v,\nu,d)&\mapsto& (\pi_0^{d^*}(q),\dots,\pi_N^{d^*}(q))
\end{array}
\end{equation}
where $q=v-\kappa^*(d,\nu)$ and ${d^*}= \frac{d}{1+\nu}$. We recall from \eqref{defpdi} that
\[
\pi_i^{d^*}(q) =\phi(W_i(d^*),q)
\]
where $W_i(d^*)$ is defined in \eqref{defWl2}.\\
Given 
$\eun>0$ to be fixed later small enough in terms of $A$ and $\dun$, together with $v\in\H$ and $(\bar \nu,\bar d)\in(-1,\infty)\times B(0,1)$ satisfying \eqref{condnu}, the conclusion follows from the application of the implicit function theorem to $\Psi$ near the point
\begin{equation}\label{condnu1}
(\bar v,\bar \nu,\bar d)=\left(\kappa^*(\bar d,\bar \nu),\bar \nu, \bar d\right).
\end{equation}
Three facts need to be checked:\\
1- Note first that
\begin{equation*}%\label{im1}
\Psi\left(\bar v,\bar \nu,\bar d\right)=0.
\end{equation*}
2- From \eqref{condnu}, we may consider $(v,d,\nu)$ in a neighborhood of $(\kappa^*(\bar d, \bar \nu),\bar d,\bar\nu)$ such that 
\begin{equation}\label{condnu2}
|d-\bar d|\le \frac\dun 2,\;\;
|\bar d|\ge \frac\dun 2,\;\;
\;\;-1+\frac 1{A+1}\le \frac\nu{1-|d|}\le A+1
\mbox{ and }
\|q\|_{\H}\le 2\eun,
\end{equation}
where $q=v-\kappa^*(d,\nu)$. Computing
\[
D_v\Psi(v,\nu,d)(u) =(\pi_0^{{d^*}}(u),\dots,\pi_N^{{d^*}}(u)),
\]
we see from the Cauchy-Schwartz inequality and the boundedness of $W_i(d)$ (stated in Lemma \ref{eigenld*}) that 
\[
\|D_v\Psi(v,\nu,d)\|\le C.
\]
3- Consider now $M$, the Jacobian matrix of $\Psi$ 
with respect to the variables $(\nu, d)$, computed in the basis $(1, e_1(d), \dots,e_N(d))$. We need to prove that $M$ is invertible, provided that $\eun>0$ is small enough. 
Since $e_i(d) = e_i(d^*)$ from Lemma \ref{lembase}, we will simplify the notation and write $e_i$ instead. Let us compute for all $i=0,\dots,N$ and
 $j=1,\dots,N$,
\begin{align}
\partial_\nu\pi_i^{d^*}(q) =&-\pi_i^{d^*}(\partial_\nu \kappa^*(d,\nu))+\phi(\partial_\nu d^*\cdot\nabla_d W_i(d^*),q),\label{Dv}\\
\partial_{e_j}\pi_i^{d^*}(q) =&-\pi_i^{d^*}(\partial_{e_j} \kappa^*(d,\nu))+\phi(\partial_{e_j}d^*\cdot\nabla_d W_i(d^*),q),\nonumber
\end{align}
in other words,
\[
M=M_0+M_R
\]
where $M_0$ is the value of $M$ when $q=0$, and $M_R$ is the perturbation involving $q$.\\
Using Claim \ref{cltech}, we see that $M_0$ is an upper triangular matrix with a non-zero diagonal, and possibly only one non-zero value outside the diagonal. In particular, $M_0$ is invertible. 
% with 
% \[
% \frac 1{C(A)} \le (-1)^{N+1}(1-|d^*|)^{\frac{N+3}2}\det M_0\le C(A).
% \]
Since we see from \eqref{condnu2} that $M_0$ has its first and second column of size larger than $\frac 1{C_1(A+1)(1-|d^*|)}$ and the others of size $\frac 1{C_1(A+1)\sqrt{1-|d^*|}}$ where $C_1(A+1)$ appears in Claim \ref{cltech}, it is enough to prove that the columns of the perturbation $M_R$ are bounded by $\frac 1{10C_1(A+1)(1-|d^*|)}$
%$\frac {C\epsilon_0}{1-|d^*|}$ 
for the first and the second column, and the others columns are bounded by 
$\frac 1{10C_1(A+1)\sqrt{1-|d^*|}}$
%$\frac {C\epsilon_0}{\sqrt{1-|d^*|}}$ 
in order to show that $M$ is invertible too.
% with
% \begin{equation}\label{detM}
% \frac 1{2C(A)} \le (-1)^{\frac{N+3}2}(1-|d^*|)^{N+1}\det M\le 2C,
% \end{equation}
% provided that $\epsilon_0$ is small enough, which gives the invertibility of $M$. 
Let us then handle the columns of $M_R$ in order to conclude.\\  
Since $\partial_\nu d^*=-\frac d{(1+\nu)^2}=-\frac {|d|}{(1+\nu)^2}e_1$, $\partial_{e_j} d^*=\frac{\partial_{e_j} d}{1+\nu}=\frac{e_j}{1+\nu}$ and 
%$|\nu|\le \epsilon_0$ 
$-1+\frac 1{2A}\le \nu$ 
(use \eqref{condnu2}), we see from \eqref{condnu1} and item (iii) of Lemma \ref{eigenld*} that 
\begin{align}
&|\phi(\partial_\nu d^*\cdot\nabla_d W_i(d^*),q)|=\frac {|d|}{(1+\nu)^2}|\phi(\partial_{e_1}W_i(d^*),q)|\le \frac {C|d|\|q\|_{\q H}}{(1+\nu)^2(1-|d^*|)}\le 
\frac{C(A)\eun}{1-|d^*|},\nonumber\\
& |\phi(\partial_{e_1} d^*\cdot\nabla_d W_i(d^*),q)|=\frac {1}{1+\nu}|\phi(\partial_{e_1}W_i(d^*),q)|\le \frac{C(A)\eun}{1-|d^*|},\label{antigua}\\
&|\phi(\partial_{e_j} d^*\cdot\nabla_d W_i(d^*),q)|=\frac {1}{1+\nu}|\phi(\partial_{e_j}W_i(d^*),q)|\le \frac {C(1+\nu)^{-1}\|q\|_{\q H}}{|d^*|\sqrt{1-|d^*|}}\le\frac{C(A)\eun}{\dun\sqrt{1-|d^*|}}.\nonumber
\end{align}
This is precisely the desired estimates for the columns of $M_R$,
provided that 
\begin{equation}\label{esmall}
\eun\le \bar \eun(A)\dun,
\end{equation}
for some small enough constant $\bar \eun(A)>0$. 
Thus, 
%\eqref{detM} holds and 
$M$ is invertible.\\ 
Let us just note that replacing the radial derivative ``$\partial_{e_1}$'' by the ``$\partial_\zeta$'', the derivative with respect to 
%\begin{equation}\label{defzeta}
$\zeta=-\arg\tanh |d|$
%\end{equation} 
already defined in \eqref{defzeta},
we see that $\partial_\zeta = (1-|d|^2) \partial_{e_1}$. Therefore, the second column in the (new) Jacobian becomes of order $1$. This means that we may work in a neighborhood of $(\kappa^*(\bar d, \bar \nu), \bar \nu,\bar d)$ where 
\begin{equation}\label{equiv0}
|\arg\tanh |d| - \arg\tanh |\bar d||
\le C(A)\|\bar q\|_{\q H} 
%\le C\epsilon_0|\bar d|
\le C(A)\eun,
\mbox{ hence }\frac 1{C(A)} \le \frac{1-|d|}{1-|\bar d|}\le C(A),
\end{equation}
where $\bar q = v-\kappa^*(\bar d,\bar \nu)$.\\
{\it Conclusion}: From items 1-, 2- and 3- above, we see that the implicit function theorem applies and given $v\in \H$, $\bar \nu$ and $\bar d$ satisfying \eqref{condnu} with $\eun$ 
satisfying \eqref{esmall}, 
%small enough, 
we get the existence of other parameters $d(v)$ and $\nu(v)$ such that \eqref{ortho} holds. 
Since we know from item 3- above that the two first lines of the Jacobian are of order $(1-|d|)^{-1}$ and the others are of order $(1-|d|)^{-1/2}$, which is smaller, using \eqref{equiv0}, we deduce that
\begin{equation}\label{mimi}
\frac{|\nu-\bar \nu|}{1-|\bar d|}+\frac{|d-\bar d|}{\sqrt{1-|\bar d|}}\le C(A)\|\bar q\|_{\q H}.
\end{equation}
In the following, we may take the constant $\bar \eun(A)$ introduced in \eqref{esmall}  even smaller.
Using \eqref{condnu} and the fact that $|\bar d|\ge \delta_1$, we see that
\begin{equation}\label{majdi}
|d-\bar d|\le C(A)\eun\sqrt{1-|\bar d|}\le \frac{|\bar d|}2,\mbox{ hence }\frac{|\bar d|}2 \le |d|\le \frac 32 |\bar d|. 
\end{equation}
%provided that $\epsilon_0$ is small enough.
Using \eqref{equiv0},
% and taking $\epsilon_0$ small enough, 
we see that
\begin{equation}\label{omkalsoum}
\frac{||\bar d|-|d||}{1-|\bar d|}\le C|\arg\tanh |d| - \arg\tanh |\bar d||
\le C(A)\|\bar q\|_{\q H},
\end{equation}
hence, from \eqref{mimi}, \eqref{condnu2} and \eqref{equiv0}, it follows that
\begin{equation}\label{prea3}
\left|\frac \nu{1-|d|} - \frac {\bar\nu}{1-|\bar d|}\right|\le \frac{|\nu-\bar \nu|}{1-|\bar d|}+\frac{|\nu|}{1-|\bar d|}\frac{||\bar d|-|d||}{1-|d|}\le C(A)\|\bar q\|_{\q H}.
\end{equation}
Using \eqref{mimi}, \eqref{omkalsoum} and \eqref{condnu}, we compute from \eqref{condnu} and \eqref{majdi}
\begin{align}
&2|\bar d||l_{\bar d}(d-\bar d)|=2|\bar d\cdot(d-\bar d)|
=|(d+\bar d)\cdot (d-\bar d)-|d-\bar d|^2|\nonumber\\
&\le ||d|^2-|\bar d|^2|+|d-\bar d|^2
\le C(A)\|\bar q\|_{\q H}(|d|+|\bar d|)(1-|\bar d|)+C(A)\eun\frac{|\bar d|}{\dun}\|\bar q\|_{\q H}(1-|\bar d|)\nonumber\\
&\le C(A)|\bar d|\|\bar q\|_{\q H}(1-|\bar d|),\label{radial}
\end{align}
where the projection $l_{\bar d}$ has been introduced right before \eqref{yortho}. 
Using \eqref{condnu2} and the fact that $|\bot_{\bar d}(d-\bar d)|\le |d-\bar d|$, 
where the projection $\bot_{\bar d}$ has also been introduced right before \eqref{yortho},
we see from \eqref{mimi}, \eqref{radial} and \eqref{condnu} that
\begin{equation}\label{arga}
\frac{|\nu-\bar \nu|}{1-|\bar d|}+\frac{|l_{\bar d}(d-\bar d)|}{1-|\bar d|}
+\frac{|\bot_{\bar d}(d-\bar d)|}{\sqrt{1-|\bar d|}}\le C(A)\|\bar q\|_{\q H}\le C(A)\eun.
\end{equation}
Therefore, we see from \eqref{condnu2} that Claim \ref{propk*} applies and using \eqref{prea3}, \eqref{equiv0} and \eqref{arga}, we get
\[
\|\kappa^*(\bar d, \bar \nu) - \kappa^*(d,\nu)\|_{\q H} \le C(A)\|\bar q\|_{\q H}=C(A)\|v-\kappa^*(\bar d, \bar \nu)\|_{\q H}.
\]
Thus, 
\[
\|v-\kappa^*(d,\nu)\|_{\q H}\le \|v-\kappa^*(\bar d, \bar \nu)\|_{\q H}+\|\kappa^*(\bar d, \bar \nu) - \kappa^*(d,\nu)\|_{\q H} \le C(A)\|v-\kappa^*(\bar d, \bar \nu)\|_{\q H}.
\]
Using \eqref{equiv0}, \eqref{mimi} and \eqref{prea3},
this concludes the proof of Lemma \ref{lemmod}, in the case where $|\bar d|\ge \dun$, provided that $\eun\le \bar \eun(A) \dun$, for some small enough constant $\bar \eun(A)>0$. We recall that here, the constant $\dun$ is arbitrary in the interval $(0, \frac 14)$.

\bigskip

{\bf Case 2: $|\bar d|\le \dun$}.

 We will show that the conclusion of Lemma \ref{lemmod} holds, provided that $\dun>0$ is chosen small enough. In order to avoid the singularity $\frac 1{|d^*|}$ appearing in \eqref{antigua}, itself coming from the bound on the norm of $\|\partial_{e_j} W_i(d)\|_{\q H'}$ proved in item (iii) of Lemma \ref{eigenld*}, we will choose another basis for the eigenspace of $L_d^*$ corresponding to $\lambda=0$, though keeping the only eigenfunction for $\lambda=1$. More precisely, we introduce 
%for all $i=0,\dots,N$, the function 
$\bar W_i(d)$ defined by 
\begin{equation}\label{defwb}
\bar W_0(d,y) =W_0(d,y)\mbox{ and for all }i=1,\dots,N,\;\;\bar W_{i,2}(d,y)=c_0\ds\frac{y_i+d_i}{(1+d\cdot y)^{\frac {p+1}{p-1}}},
\end{equation}
where $c_0$ is defined in \eqref{defWl2}, and $\bar W_{i,1}(d,y)$ is uniquely determined as the solution $v_1$ of equation \eqref{eqWl1} with $v_2=\bar W_{i,2}(d,y)$. From straightforward calculations, we see that $\bar W_i(d,y)$ for $i=1,\dots,N$ are linearly independent, and span all the $W_i(d)$ for all $i=1,\dots,N$, hence, it is a basis for the eigenspace of $L_d^*$ corresponding to $\lambda=0$. 
%we will replace the vectors $W_i(d)$ by $\bar W_i(d)$ introduced in the Remark following Lemma \ref{eigenld*}. As we said in that remark, the corresponding between the two bases in one to one. 
For that reason, our goal \eqref{ortho} is equivalent to the fact that
\begin{equation}\label{defbpi}
\forall i=0,\dots,N,\;\;\bar \pi^{d^*}_i(q)=0,
\mbox{ where } 
\bar \pi^d_i(q)=\phi\left(\bar W_i(d), q\right).
\end{equation}
Introducing $\bar \Psi$ defined as in \eqref{defPsi}, with $\pi_i^{d^*}$ replaced by  $\bar \pi_i^{d^*}$, the conclusion follows from the application of the implicit function theorem to $\bar \Psi$ near the point given in \eqref{condnu1}. Since $\bar W_{i,2}$ is a $C^1$ function of $d$ (see \eqref{defwb}), using  item (i) in Lemma \ref{clfrancois*}, we see that
\[
\forall |d|<\frac 12\mbox{ and }i=0,\dots,N,\;\;\|\bar W_i(d)\|_{\q H}+\|\nabla_d\bar W_i(d)\|_{\q H}\le C. 
\]
Therefore, items 1- and 2- follow as for $\Psi$, and item 3- reduces to the computation of $\bar \pi_i^0(\partial_\nu \kappa^*(0,\nu))$ and $\bar \pi_i^0(\partial_{d_j} \kappa^*(0,\nu))$, for all $i=0,\dots,N$, $j=1,\dots,N$ and $-1+\frac 1{A+1}\le \nu \le A+1$. But in that case, we see from \eqref{defwb} and \eqref{defWl2} that $\bar W_i(0)=W_i(0)$, hence $\bar \pi_i^0=\pi_i^0$. Since the basis $(e_1(0),\dots,e_N(0))$ is the canonical basis of $\m R^N$, as we have chosen in the remark following Lemma \ref{lembase}, these projections follow from Claim \ref{cltech} with $d=0$, and $\bar M_0$ is invertible for $d=0$, where $\bar M_0$ is the analogous of $M_0$ with $\pi^{d^*}_i$ replaced by $\bar \pi^{d^*}_i$. From continuity with respect to $d$, we deduce that for $\dun$ small enough, $\bar M_0$ remains invertible and so does the Jacobian $\bar M$, provided that $\|q\|_{\q H}$ is small. Therefore, item 3- holds when $\delta_1$ and $\|q\|_{\q H}$ are small enough. Thus, the implicit function theorem applies, and we get the conclusion of Lemma \ref{lemmod}, provided that we fix $\dun< \frac 14$ is small enough and $|\bar d|\le \dun$. Recalling that we have reached the same conclusion in Case 1, when $|\bar d|\ge \dun$, whatever was the value of $\dun$ in the interval $(0, \frac 14)$, this concludes the proof of Lemma \ref{lemmod}. 
\end{proof}

%%%%%%%%%%%%%%%%%%%%%%%%%%%%%%%%%%%%%%%%%%%%%%%%%%%%%%%%%%%%%%%%%%%%%%%%%%
%%%%%%%%%%%%%%%%%%%%%%%%%%%%%%%%%%%%%%%%%%%%%%%%%%%%%%%%%%%%%%%%%%%%%%%%%%
\section{Dynamics in self-similar variables}\label{appdyn}
%%%%%%%%%%%%%%%%%%%%%%%%%%%%%%%%%%%%%%%%%%%%%%%%%%%%%%%%%%%%%%%%%%%%%%%%%%
%%%%%%%%%%%%%%%%%%%%%%%%%%%%%%%%%%%%%%%%%%%%%%%%%%%%%%%%%%%%%%%%%%%%%%%%%%
We prove Proposition \ref{propdyn} here. We have already performed such a technique in our earlier work: \cite{MZjfa07}, \cite{MZajm11}, \cite{MZisol10} and \cite{CZmulti11}. Given that those papers were performed in one space dimension, we have to ask ourselves at each step, how to extend the techniques to higher dimensions.\\
In most cases, the extension can be done in a straightforward way, hence, we will omit the proofs of those cases, and we will refer the reader to our previous papers.\\
Nevertheless, some of the techniques need some special care to be adapted to higher dimensions. We will of course give all the details for them. This is the case for the control of the nonlinear term, where the use of the following Hardy-Sobolev inequality is crucial:
%which we give in Lemma \ref{lemhs} below.  
%%%%%%%%%%%%%%%%%%%%%%%%%%%%%%%%%%%%%%%%%%%%%%%%%%%%%%%%%%%%%%%%%%%%%%%%%%
%%%%%%%%%%%%%%%%%%%%%%%%%%%%%%%%%%%%%%%%%%%%%%%%%%%%%%%%%%%%%%%%%%%%%%%%%%
\begin{lem}[A Hardy-Sobolev identity]\label{lemhs} For all $v\in \H_0$, it holds that 
\begin{align*}
\int_{|z|<1} |v(z)|^{p+1}&(1-|z|^2)^\alpha dz\\%\label{hs}\\
 &\le C\left(\int_{|z|<1}\left( |\nabla v(z)|^2-|z\cdot \nabla v(z)|^2+v(z)^2\right)(1-|z|^2)^\alpha dz
\right)^{\frac {p+1}2}
\end{align*}
where $\alpha$ is introduced in \eqref{defro}. 
\end{lem}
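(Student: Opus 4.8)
The plan is to reduce the inequality, by a density argument, to smooth $v$ compactly supported in the open ball, and then to split the ball into an interior region $\{|z|<1/2\}$ and a boundary layer $\{1/2<|z|<1\}$. On the interior region the weight $(1-|z|^2)^\alpha$ is comparable to $1$ and $|\nabla v|^2-(z\cdot\nabla v)^2\ge \tfrac34|\nabla v|^2$, so the bound there is simply the standard Sobolev embedding $H^1(\{|z|<1/2\})\hookrightarrow L^{p+1}$, which is subcritical precisely because the hypothesis \eqref{condp} is equivalent to $p+1<\tfrac{2N}{N-2}$ (and there is nothing to prove when $N\le 2$). So everything reduces to the boundary layer. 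For $N=1$ this is handled by the elementary argument of the one–dimensional theory: one first proves the pointwise bound $\sup_z|v(z)|^2(1-z^2)^\alpha\le C\|v\|_{\H_0}^2$ by writing $|v(z)|^2(1-z^2)^\alpha=-\int_z^1\partial_t\big(v^2(1-t^2)^\alpha\big)\,dt$, Cauchy–Schwarz, and the $N=1$ case of \eqref{hsajm}; then one writes $|v|^{p+1}(1-z^2)^\alpha=\big[\,|v|^2(1-z^2)^\alpha\big]^{\frac{p-1}2}\,|v|^2(1-z^2)^{\frac{\alpha(3-p)}2}$, bounds the first factor pointwise and integrates the second, observing that $\alpha(3-p)/2\ge\alpha-1$ — exactly where $\alpha=\tfrac2{p-1}-\tfrac{N-1}2<\tfrac2{p-1}$ is used — so that the remaining integral is controlled by $\int v^2\tfrac{\rho}{1-z^2}\le C\|v\|_{\H_0}^2$.

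For $N\ge 2$ the boundary layer is a genuinely $N$-dimensional weighted inequality, and I would use the change of variables $\zeta=z/\sqrt{1-|z|^2}\in\m R^N$ (equivalently $z=\zeta/\sqrt{1+|\zeta|^2}$). Writing $\langle\zeta\rangle=(1+|\zeta|^2)^{1/2}$, $\tilde v=v\circ z$, a direct computation gives
\[
(1-|z|^2)^\alpha\,dz=\langle\zeta\rangle^{-2\mu}d\zeta,\qquad v^2(1-|z|^2)^\alpha\,dz=\tilde v^2\langle\zeta\rangle^{-2\mu}d\zeta,
\]
\[
\big(|\nabla_z v|^2-(z\cdot\nabla_z v)^2\big)(1-|z|^2)^\alpha\,dz=\big(|\nabla_\zeta\tilde v|^2+(\zeta\cdot\nabla_\zeta\tilde v)^2\big)\langle\zeta\rangle^{-2\nu}d\zeta,
\]
with $2\nu=2\alpha+N=\tfrac{p+3}{p-1}$ and $2\mu=2\nu+2=\tfrac{3p+1}{p-1}$; the extra term $(\zeta\cdot\nabla_\zeta\tilde v)^2$ is the crucial structural feature and must be kept. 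Hence it suffices to prove, for $\tilde v\in C^\infty_c(\m R^N)$, the Caffarelli–Kohn–Nirenberg-type inequality
\[
\int_{\m R^N}|\tilde v|^{p+1}\langle\zeta\rangle^{-2\mu}\,d\zeta\le C\Big(\int_{\m R^N}\big(|\nabla\tilde v|^2+(\zeta\cdot\nabla\tilde v)^2\big)\langle\zeta\rangle^{-2\nu}\,d\zeta+\int_{\m R^N}\tilde v^2\langle\zeta\rangle^{-2\mu}\,d\zeta\Big)^{\frac{p+1}2}.
\]

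To prove this I would decompose dyadically, $\m R^N=\{|\zeta|\le1\}\cup\bigcup_{j\ge0}A_j$ with $A_j=\{2^j\le|\zeta|<2^{j+1}\}$: on $\{|\zeta|\le1\}$ the weights are comparable to $1$ and one is again in the subcritical Sobolev range, while on each $A_j$ one rescales to the unit annulus via $\xi=2^{-j}\zeta$, the point being that $(\zeta\cdot\nabla_\zeta\tilde v)^2$ rescales to $(\xi\cdot\nabla_\xi u)^2$ with no power of $2^j$, which provides the radial control needed. One then splits $\tilde v$ into its spherical average $\tilde v_0(|\zeta|)$ and the mean-zero remainder $\tilde v_\perp$: the contribution of $\tilde v_0$ is a one-dimensional weighted Sobolev integral in $|\zeta|$, treated exactly as in the $N=1$ case (with Jensen's inequality to bound the radial energy of $\tilde v_0$ by that of $\tilde v$), and the contribution of $\tilde v_\perp$ is estimated by combining the subcritical Sobolev and Poincaré inequalities on $S^{N-1}$ with the radial bound coming from $(\xi\cdot\nabla_\xi u)^2$, and then summing over $j$. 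The main obstacle — and the place where the range $1<p<1+\tfrac4{N-1}$ is used in full — is precisely this last bookkeeping: the per-annulus estimate carries a factor $2^{cj}$ whose exponent $c$ must be checked negative, which forces both the "marginal" condition $p>1$ and the "structural" condition $\tfrac{p+3}{p-1}>N$, the latter ensuring that the far-field weight dominates so the constant-in-angle mode can be absorbed across scales; everything else is routine interpolation together with the elementary change of variables.
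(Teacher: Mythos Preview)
Your reformulation via $\zeta=z/\sqrt{1-|z|^2}$ is correct: the Jacobian and gradient identities you state are right, and the problem does reduce to the anisotropic weighted inequality on $\m R^N$ that you display, with radial weight $\langle\zeta\rangle^{2-2\nu}$ on $(\partial_r\tilde v)^2$ and the strictly weaker weight $\langle\zeta\rangle^{-2\nu}$ on the angular gradient. This is a genuinely different route from the paper. The paper instead stays in the ball, passes to polar coordinates $z=s\omega$, and makes the \emph{radial} change of variable $r=1-\sqrt{1-s}$ (so $1-s=(1-r)^2$). The point of this substitution is that it equalises the two weights: the degenerate radial factor $(1-s)$ multiplying $(\partial_s V)^2$ is absorbed into the Jacobian, and in the new coordinates both $(\partial_r H)^2$ and $r^{-2}|\nabla_\omega H|^2$ carry the \emph{same} weight $(1-r)^{2\alpha+1}$. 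One is then reduced to an isotropic weighted Sobolev inequality $\|h\|_{L^{p+1}_{(1-|y|^2)^a}}\le C\|h\|_{H^1_{(1-|y|^2)^a}}$ with $a=2\alpha+1$, which the paper proves by interpolating the Sobolev embedding of $g=h(1-|y|^2)^{a/2}\in H^1_0(B)$ with the Hardy inequality $\int g^2(1-|y|^2)^{-2}\le\int|\nabla g|^2$. No dyadic decomposition or spherical-harmonic splitting is needed.

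Your approach, by contrast, preserves the anisotropy, and this is where the sketch has a real gap. After rescaling $A_j=\{2^j\le|\zeta|<2^{j+1}\}$ to the unit annulus and applying subcritical Sobolev plus Poincar\'e to $u_\perp$, the per-annulus bound reads $a_{j,\perp}\le C\,2^{(2\mu-N)(p-1)j/2}\,b_j^{(p+1)/2}$; since $2\mu-N=2\alpha+2>0$, the exponent is \emph{positive}, not negative, throughout the subconformal range. The reason is that Poincar\'e on spheres forces you to use the angular gradient, and the angular gradient only carries the bad weight $\langle\zeta\rangle^{-2\nu}$. The extra radial control $(\zeta\cdot\nabla_\zeta\tilde v)^2$ helps only across scales, not within a single annulus, so a purely local dyadic bound cannot close. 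Your scheme can probably be rescued, but it requires a genuinely multiscale argument for $\tilde v_\perp$ (for instance a further change of radial variable $t=\log\langle r\rangle$ to restore isotropy, which is morally the same trick the paper uses), not just checking a sign. As written, the summation step is not justified.
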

%%%%%%%%%%%%%%%%%%%%%%%%%%%%%%%%%%%%%%%%%%%%%%%%%%%%%%%%%%%%%%%%%%%%%%%%%%
%%%%%%%%%%%%%%%%%%%%%%%%%%%%%%%%%%%%%%%%%%%%%%%%%%%%%%%%%%%%%%%%%%%%%%%%%%
\begin{proof}
See Section \eqref{sechs} below for the proof.
\end{proof}
% \begin{nb} 
% Our proof in one space dimension given in \cite{MZjfa07} cannot be extended to higher dimensions. See Section \eqref{sechs} below for the proof and further comments.
% \end{nb}
Let us first use this estimate to prove Proposition \ref{propdyn}, then we will prove it.

%%%%%%%%%%%%%%%%%%%%%%%%%%%%%%%%%%%%%%%%%%%%%%%%%%%%%%%%%%%%%%%%%%%%%%%%%%
%%%%%%%%%%%%%%%%%%%%%%%%%%%%%%%%%%%%%%%%%%%%%%%%%%%%%%%%%%%%%%%%%%%%%%%%%%
\subsection{Projection of the linearization of equation \eqref{eqw1}}
%%%%%%%%%%%%%%%%%%%%%%%%%%%%%%%%%%%%%%%%%%%%%%%%%%%%%%%%%%%%%%%%%%%%%%%%%%
%%%%%%%%%%%%%%%%%%%%%%%%%%%%%%%%%%%%%%%%%%%%%%%%%%%%%%%%%%%%%%%%%%%%%%%%%%
We linearize equation \eqref{eqw1} around $\kappa^*(d,\nu)$, then project the obtained equation on the different directions of its linear part, proving this way Proposition \ref{propdyn}.
\begin{proof}[Proof of Proposition \ref{propdyn}] Using equation \eqref{eqw1} satisfied by $(w, \partial_s w)(y,s)$ and the decomposition \eqref{defq}, we derive the following equation for $q(y,s)$ for all $s\in [0, \hat s)$:
\begin{equation}\label{eqq0}
\ds\partial_s q =
\bar L(q) +\vc{0}{f(q_1)}-(\nu'-\nu)\partial_\nu \kappa^*(d,\nu)
-\sum_{i=1}^N (d' \cdot e_i)\partial_{e_i} \kappa^*(d,\nu)
\end{equation}
% \begin{align}\label{eqq0}
% \ds\partial_s q &=
% \bar L(q)
% %L_{d^*(s)}(q)+\vc{0}{\bar V(y,s)q_1}
% +\vc{0}{f(q_1)}\\
% &-\sum_{i=1}^N (d'(s) \cdot e_i)\partial_{e_i} \kappa^*(d(s),\nu(s),y)
% -(\nu'(s)-\nu(s))\partial_\nu \kappa^*(d(s),\nu(s),y)\nonumber
% %-d'(s)\cdot \nabla_d \kappa^*(d(s),\nu(s),y)\nonumber
% \end{align}
where 
%$d^*(s) = \frac{d(s)}{1+\nu(s)}$, $L_{d^*(s)}$ is defined in \eqref{defld},
\begin{align}
\bar L\vc{q_1}{q_2}&=\vc{q_2}{\q L q_1+\bar\psi(d,y)q_1-\frac{p+3}{p-1}q_2-2y.\nabla q_2},\label{defld0}\\
f(q_1)&=|\kappa^*_1(d,\nu)+q_1|^{p-1}(\kappa^*_1(d,\nu)+q_1)-\kappa^*_1(d,\nu)^p-p\kappa^*_1(d,\nu)^{p-1}q_1,\nonumber\\
\bar \psi(y,s)&=p\kappa^*_1(d,\nu,y)^{p-1}-\frac{2(p+1)}{(p-1)^2}.\nonumber
\end{align}
Note from \eqref{condmod} that 
%for $\epsilon$ small enough, we have
\begin{equation}\label{equiv}
\frac{|d|}{1+A}\le |d^*|\le A|d|\mbox{ and } \frac 1{A(1+A)} \le \frac{1-|d|}{1-|d^*|}\le A(1+A),
\end{equation}
where $d^* = \frac{d}{1+\nu}$

{\it - Proof of \eqref{first}}: 
%{\it Proof of the first estimate in Proposition \ref{propdyn}}: 
From the orthogonality condition \eqref{kill}, it is convenient to see the linear operator $\bar L$ as a perturbation of the operator $L_{d^*}$ defined in \eqref{defld}, in the sense that
\begin{equation}\label{detail}
\bar L(q) = L_{d^*}(q)+\vc{0}{\bar V(y,s)q_1}
\mbox{ where }
\bar V(y,s) = p\kappa^*_1(d,\nu,y)^{p-1}-p\kappa(d^*,y)^{p-1}.
\end{equation}
 Consider $i=0,\dots,N$. We need to project equation \eqref{eqq0} with the projector $\pi^{d^*}_i$ defined in \eqref{defpdi}, and estimate in the following the different terms (with the term $\bar L(q)$ expanded as in \eqref{detail}). Since $\pi^{d^*}_i$ depends on $W_i^{d^*}$, which is singular at $d^*=0$, our proof splits into two cases: $|d^*|\ge \delta_2$ and $|d^*|\le \delta_2$, where $\delta_2$ will be arbitrary in $(0,\frac 14)$ in the first case, then fixed small enough in the second. We had to do the same in the proof of the modulation result in Lemma \ref{lemmod}.

\bigskip

{\bf Case 1: $|d(s)|\ge \delta_2$.}

In this section, $\delta_2$ is {\it arbitrary} in $(0,\frac 14)$. The constant $C$ may depend on $A$ in the following.
From Section C.2 page 2896 in \cite{MZisol10}, we know that
\begin{align}
|\pi^{d^*}_i(\partial_s q)|&\le C\|q\|_{\H}\ds\sum_{j=1}^N|{d^*}'.e_j|\|\partial_{e_j}W_i(d^*)\|_{\H'},\label{dsq}&
\pi^{d^*}_i(L_{d^*}(q))=\lambda_i \pi^{d^*}_i(q)=0,&\\
|\bar V(y,s)|&\le \ds\frac{C(A)|\nu|}{1-|d|}\kappa(d^*,y)^{p-1}.\label{barv}
\end{align}

- Since ${d^*}'.e_j=-\frac{\nu'|d|\delta_{j,1}}{(1+\nu)^2}+\frac{d'.e_j}{1+\nu}$, using \eqref{dsq}, item (iii) in Lemma \ref{eigenld*} and \eqref{equiv}, we see that
\[
|\pi^{d^*}_i(\partial_s q)|\le C\|q\|_{\H}\left[\frac {|\nu|+|\nu'-\nu|+|d' \cdot e_1|}{|d|(1-|d|)}+\sum_{j=2}^N\frac{|d' \cdot e_j|}{|d|\sqrt{1-|d|}}\right].
\]

\medskip

- Since we know from the definition \eqref{defkd} of $\kappa(d^*,y)$ that
\begin{equation}\label{kp1}
\kappa(d^*,y)\le C(1-|y|^2)^{-\frac 1{p-1}}, 
\end{equation}
we use the definition \eqref{defpdi} of $\pi^{d^*}_i$, \eqref{barv}, the Cauchy-Schwarz inequality and the identity \eqref{hsajm} to write 
\begin{align*}
&\left|\pi^{d^*}_i \vc{0}{\bar V(y,s)q_1}\right|
\le \frac{C|\nu|}{1-|d|}\int_{|y|<1} |W_{i,2}(d^*,y)|q_1(y,s)|\frac{\rho(y)}{1-|y|^2} dy\\
&\le \frac{C|\nu|}{1-|d|}\|W_{i,2}(d^*)\|_{L^2_{\frac \rho{1-|y|^2}}}\|q_1\|_{L^2_{\frac \rho{1-|y|^2}}}\le \frac{C|\nu|}{1-|d|}\|q\|_{\q H}.
%&\le \frac{C|\nu|}{1-|d^*|}\|W_{i,2}(d^*)\|_{L^2_{\frac \rho{1-|y|^2}}}\|q\|_{\q H}.
\end{align*}

% - Since we have for all $|y|<1$, $|d|<1$ and $i=2,\dots,N$, 
% \[
% \frac{|y\cdot e_i|}{1+d\cdot y}\le \frac C{\sqrt{1-|d|^2}},
% \]
% it follows from the definitions of $W_{i,2}(d,y)$ and $\kappa(d,y)$ given in Lemma \ref{eigenld*} and \eqref{defkd}, together with \eqref{bounds} that for all $i=0,\dots,N$, 
% \[
% |W_{i,2}(d,y)|\le C \kappa(d,y),
% \]
% hence, from
- Using \eqref{wkd}, 
the definition \eqref{defpdi} of the projection $\pi_i^{d^*}$ and \eqref{kp2}, we see that
\begin{equation*}%\label{red}
\left|\pi^{d^*}_i \vc{0}{f(q_1)}\right|\le C(A) \iint \kappa(d^*,y)|f(q_1)|\rho dy.
\end{equation*}
Since we see from the definition of $f(q_1)$ given in \eqref{defld0} that
\begin{equation}\label{bfq1}
|f(q_1)|\le C \delta_{p>2}|q_1|^p+C\kappa_1^*(d,\nu)^{p-2}q_1^2,
\end{equation}
we write 
\[
%\left|\pi^{d^*}_i \vc{0}{f(q_1)}\right|\le 
\iint \kappa(d^*,y)|f(q_1)|\rho dy
\le C(A) \delta_{p>2}\iint \kappa(d^*)|q_1|^p \rho dy + C(A) \iint \kappa(d^*)^{p-1}q_1^2 \rho dy.
\]
Using H\"older's inequality, the Hardy-Sobolev estimate of Lemma \ref{lemhs}, \eqref{kp1}, \eqref{boundkd} and the embedding \eqref{hsajm}, we write
\begin{align*}
\iint \kappa(d^*)|q_1|^p \rho dy &\le \|\kappa(d^*)\|_{L^{p+1}_\rho}\|q_1\|_{L^{p+1}_\rho}^p\le C \|\kappa(d^*)\|_{\q H_0}\|q\|_{\q H}^p\le C\|q\|_{\q H}^p,\\
\iint \kappa(d^*)^{p-1}q_1^2 \rho dy&\le C \iint q_1^2 \frac \rho{1-|y|^2} dy\le C \|q\|_{\q H}^2.
\end{align*}
Thus, using \eqref{condmod}, we see that for $\eb\le 1$, we have
\begin{equation}\label{boundfq1}
\left|\pi^{d^*}_i \vc{0}{f(q_1)}\right|\le C(A) \iint \kappa(d^*,y)|f(q_1)|\rho dy\le C(A) \|q\|_{\q H}^2.
\end{equation}

- Using Claim \ref{cltech} for the projections of the derivatives of $\kappa^*(d,\nu)$, we write from equation \eqref{eqq0} and the above-stated estimates (starting first with $i=1$, then $i=0$ and finally $i=2,\dots,N$)
\begin{align*}
&\frac {|\nu'-\nu|+|d' \cdot e_1|}{1-|d|}+\sum_{i=2}^N\frac{|d' \cdot e_i|}{\sqrt{1-|d|}}\\
%\le& C\|q\|_{\H}\left[\frac {|\nu|+|\nu'-\nu|+|d' \cdot e_1|}{|d^*|(1-|d^*|)}+\sum_{j=2}^N\frac{|d' \cdot e_j|}{|d|\sqrt{1-|d|}}\right]+C\|q||_{\q H}^2.
\le& \frac C{\ddeux}\|q\|_{\H}\left[\frac {|\nu|+|\nu'-\nu|+|d' \cdot e_1|}{1-|d^*|}+\sum_{j=2}^N\frac{|d' \cdot e_j|}{\sqrt{1-|d|}}\right]+C\|q\|_{\q H}^2.
\end{align*}
Using \eqref{condmod}, \eqref{equiv} and taking $\frac{\eb}{\ddeux}$ small enough yields the first estimate in Proposition \ref{propdyn} with $C_2=\frac{C(A)}{\ddeux}$, if $|d(s)|\ge \ddeux$.

\bigskip

{\bf Case 2: $|d(s)|\le \ddeux$.}

In this section, we will fix $\ddeux$ small enough in $(0, \frac 14)$. Since $\pi_i^{d^*}$ becomes singular as $d\to 0$ (see definition \ref{lemexpansion} and Lemma \ref{eigenld*}), we will project equation \eqref{eqq0} with the projector $\bar \pi_i^{d^*}$ defined in \eqref{defbpi}, which is not singular. 
%However, it has a small drawback, in the sense that we loose the orthogonality we had in item (ii) of Lemma \ref{eigenld*}, with the vectors $W_i^{d^*}$. Never mind, as we will see, when $d=0$, we do have orthogonality. Therefore, taking $\ddeux$ small enough, we will almost have orthogonality, and that will be enough for our argument. Furthermore, 
Since all the functions we are handling are $C^1$ for $(\nu,d) \in [-1+\frac 1A, A]\times B(0, \ddeux)$, we will always compute the following quantities for $d=0$, then add $O(d)$ to the result (note that here, the notation $g_1=O(g_2)$ stands for $|g_1|\le C(A)|g_2|$).

\medskip

Applying the projector $\bar \pi_i^{d^*}$ defined in \eqref{defbpi} to the different terms appearing in equation \eqref{eqq0} and arguing as for Case 1 and Section C.2 page 2896 in \cite{MZisol10}, we see that
\begin{align}
|\bar \pi_i^{d^*}(\partial_s q)| = |{d_i^*}'|\sum_{i=1}^N|\phi(\partial_{d_i} \bar W_i(d^*), q)|\le C(A)\|q\|_{\q H}\left[|d'|+|\nu|+|\nu-\nu'|\right],\nonumber\\
\bar \pi_i^{d^*}(L_{d^*}(q))=0,\;\;
|\bar \pi_i^{d^*}(0,\bar V(y,s)q_1)|\le C(A)|\nu|\|q\|_{\q H},\;\;
|\bar \pi_i^{d^*}(0,f(q_1))|\le C(A)\|q\|_{\q H}^2,\nonumber\\
|\bar \pi_i^{d^*}(\partial_\nu\kappa^*(d,\nu))- \bar \pi_i^0(\partial_\nu\kappa^*(0,\nu))|
%\le C(A)|d|,\\
+|\bar \pi_i^{d^*}(\partial_{d_j}\kappa^*(d,\nu))- \bar \pi_i^0(\partial_{d_j}\kappa^*(0,\nu))|\le C(A)|d|.\label{proj0}
\end{align}
% The orthogonality problem is in \eqref{proj0}.
Since we have by definitions \eqref{defpdi} and \eqref{defbpi} together with Lemma \ref{lembase} that $\bar \pi_i^0= \pi_i^0$, and $(e_i(0),\dots,e_N(0))$ is the canonical basis of $\m R^N$, we can use Claim \ref{cltech} with $d=0$ to estimate the projections in \eqref{proj0}. Writing the last term in the first line of equation \eqref{eqq0} as $\sum_{i=1}^N d_j' \partial_{d_j}\kappa^*(d,\nu)$, then using the above estimates (starting first with $i=1$, then $i=0$ and finally $i=2,\dots,N$), we see that
\[
|\nu'-\nu|+|d'|
\le C(A)(\ddeux+\|q\|_{\H})\left[|\nu|+|\nu'-\nu|+|d'|\right]+C(A)\|q\|_{\q H}^2.
\]
Using \eqref{condmod}, fixing $\ddeux=\ddeux(A)$ small enough, then taking $\eb$  small enough yields the first estimate in Proposition \ref{propdyn} with $C_2=C(A)$, if $|d(s)|\le \ddeux$. Since we have already proved that estimate when $|d(s)|\le \ddeux$ with a constant $C_2=\frac{C(A)}{\ddeux}$ and $\ddeux$ has just been fixed now, this concludes the proof of the first estimate in Proposition \ref{propdyn}.

\bigskip

{\it - Proof of \eqref{second} and \eqref{third}}: As for Claim 4.8 page 2867 in \cite{MZisol10}, the idea is simple: choose the right multiplying factor for equation \eqref{eqq0}, then integrate. More precisely, \\
- For the proof of \eqref{second}, we compute $\frac d{ds}\|q\|_{\q H}^2=2\phi(q, \partial_s q)$, and use equation \eqref{eqq0} to estimate this;\\
- For the proof of \eqref{third}, we find a Lyapunov functional for equation \eqref{eqq0}, by multiplying the equation on $q_1$ derived from \eqref{eqq0} by $\partial_s q_1\rho$ then integrating on the unit ball. Without the modulation terms, that functional would be $\frac 12 \bar \varphi(q,q)-\iint \q F(q_1)\rho dy$, where
\begin{align}
\bar \varphi\left(q, r\right)&= \int_{|y|<1} \left(-\bar \psi(d,y)q_1r_1+\nabla q_1\cdot \nabla r_1-(y\cdot \nabla r_1)(y\cdot \nabla q_1)+q_2r_2\right)\rho dy,\label{defphib}\\
\q F(q_1)& = \int_0^{q_1}f(\xi) d\xi = \frac{|\kappa^*_1+q_1|^{p+1}}{p+1}-\frac{{\kappa^*_1}^{p+1}}{p+1}-{\kappa^*_1}^p q_1 - \frac p2 {\kappa^*_1}^{p-1}q_1^2, \label{defF}
\end{align}
and $\bar\psi(d,y)$ is defined in \eqref{defld0}. Because we ``killed'' the nonnegative directions in \eqref{kill}, we will see that our Lyapunov functional controls the square of the norm of the solution, if \eqref{cond0} holds.
However, because of the modulation, we need to slightly change the functionals we intend to study, by defining:
\begin{equation}\label{defh}
\begin{array}{rl}
h_1(s)& = \frac 12\|q\|_{\q H}^2-\iint \q F(q_1)\rho dy,\\
h_2(s)& = \frac 12 \bar \varphi(q,q)-\iint \q F(q_1)\rho dy+\eta_0 \iint q_1 q_2 \rho dy,
\end{array}
\end{equation}
where $\eta_0>0$ will be fixed later as a small enough universal constant.
Then, we clearly see that the following identity allows to conclude:

\medskip

 {\it There exist $\delta>0$ such that 
\begin{equation}\label{lemlyap}
\begin{array}{rl}
\forall s\in [0, \hat s),&\delta h_1(s) \le \|q(s)\|_{\q H}^2 \le \delta^{-1}h_1(s)\mbox{ and }h_1'(s)\le \delta^{-1} h_1(s),\\
\forall s\in [0, \tilde s),&\delta h_2(s) \le \|q(s)\|_{\q H}^2 \le \delta^{-1}h_2(s)\mbox{ and }h_2'(s)\le -\delta h_2(s).
% \forall s\in [\bar s, \hat s],&\delta h_i(s) \le \|q(s)\|_{\q H}^2 \le \delta^{-1}h_i(s)\mbox{ for }i=1,2,\\
% &h_1'(s)\le \delta^{-1} h_1(s),
% \mbox{ and if \eqref{cond0} holds, then }h_2'(s)\le -\delta h_2(s).
\end{array}
\end{equation}
}
It remains then to prove \eqref{lemlyap} in order to conclude. 
The proof follows the proof of Claim 4.8 page 2898 in \cite{MZisol10}. For that reason, we will recall estimates from that paper, and only stress the novelties. We claim that \eqref{lemlyap} follows from the following:
%%%%%%%%%%%%%%%%%%%%%%%%%%%%%%%%%%%%%%%%%%%%%%%%%%%%%%%%%%%%%%%%%%%
%%%%%%%%%%%%%%%%%%%%%%%%%%%%%%%%%%%%%%%%%%%%%%%%%%%%%%%%%%%%%%%%%%%
\begin{lem}\label{lemproj*} There exists $\evingt>0$ such that if $\epsilon_0\le \evingt$, then for all $s\in [0, \hat s)$, we have:
\begin{align}
\frac 12 \frac d{ds}\|q\|_{\q H}^2\le& 
\iint q_2f(q_1) \rho dy+C(A) \|q\|_{\q H}^2 ,\label{eq-1}\\
\iint q_2f(q_1) \rho dy\le& \frac d{ds} \iint \q F(q_1)\rho dy+ C(A)\epsilon_0\|q\|_{\q H}^2,\label{eq0}\\
\left|\iint \q F(q_1) \rho dy\right|\le& C(A) \|q\|_{\H}^{\bar p+1}\le C(A) \epsilon_0^{\bar p-1}\|q\|_{\H}^2.\label{eq2a}
\end{align}
where $\bar p = \min(p,2)$.\\
Moreover, there exists $\eta_3(A)>0$ small enough such that 
if $\eta\le \eta_3(A)$, then, for all $s\in [0, \tilde s)$,
% if \eqref{cond0} holds with $\eta_2\le \bar \eta_2(A)$, then,
\begin{align}
&\frac 12 \frac d{ds}\bar\varphi(q,q)\le - 2\alpha\iint q_{2}^2 \frac \rho{1-|y|^2} dy+\iint q_2f(q_1) \rho dy
+C(A)\left(\epsilon_0
+\eta
\right)\|q\|_{\H}^2,\label{eq1}\\
&\frac {\bar\varphi(q,q)}{C} \le \|q\|_{\H}^2\le C \bar\varphi(q,q),
\label{eq2}\\
&\frac d{ds}\iint q_1q_2 \rho \le
-\frac 7{10}\bar\varphi(q,q)+ C \iint q_{2}^2 \frac \rho{1-|y|^2}dy.\label{eq3}
\end{align}
\end{lem}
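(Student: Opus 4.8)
**Proof plan for Lemma \ref{lemproj*}.**

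The strategy is to obtain each of the six inequalities by testing equation \eqref{eqq0} against the appropriate multiplier, handling the modulation terms as lower-order perturbations controlled by estimate \eqref{first} of Proposition \ref{propdyn}, and handling the nonlinear terms through the Hardy–Sobolev inequality of Lemma \ref{lemhs}. Since the one-dimensional analogues of all these computations appear in Claim 4.8 of \cite{MZisol10}, I would only write out carefully the new ingredients coming from dimension $N\ge 2$ and refer to \cite{MZisol10} for the rest.

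\medskip

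\textbf{Step 1: the $\q H$-norm derivative \eqref{eq-1}.} I would compute $\tfrac12\tfrac{d}{ds}\|q\|_{\q H}^2 = \phi(q,\partial_s q)$, substitute equation \eqref{eqq0}, and split it into: (a) the linear term $\phi(q, \bar L q)$, which after integration by parts equals $-\tfrac{p+3}{p-1}\int q_2^2 \rho + \int \bar\psi(d,y) q_1 q_2\rho + (\text{l.o.t.})$ and is bounded by $C(A)\|q\|_{\q H}^2$ using $|\bar\psi(d,y)|\le C/(1-|y|^2)$ and \eqref{hsajm}; (b) the source term $\int q_2 f(q_1)\rho$, kept as is; (c) the modulation contributions $-(\nu'-\nu)\phi(q,\partial_\nu\kappa^*) - \sum(d'\cdot e_i)\phi(q,\partial_{e_i}\kappa^*)$, which by \eqref{first}, the bounds on $\|\partial_\nu\kappa^*\|_{\q H}$ and $\|\partial_{e_i}\kappa^*\|_{\q H}$ from Claim \ref{propk*}, and \eqref{condmod} are $O(\|q\|_{\q H}^2)$. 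This gives \eqref{eq-1}.

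\medskip

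\textbf{Step 2: the potential-energy identity \eqref{eq0} and bound \eqref{eq2a}.} Here $\tfrac{d}{ds}\iint \q F(q_1)\rho = \iint f(q_1)\partial_s q_1\,\rho$; writing $\partial_s q_1 = q_2 - (\nu'-\nu)\partial_\nu\kappa_1^* - \sum(d'\cdot e_i)\partial_{e_i}\kappa_1^*$ from \eqref{eqq0}, the first term reproduces $\int q_2 f(q_1)\rho$, and the modulation remainders must be estimated by $C(A)\epsilon_0\|q\|_{\q H}^2$ — this uses \eqref{bfq1}, Hölder, Lemma \ref{lemhs}, and the $\q H$-boundedness of the derivatives of $\kappa^*$, exactly in the spirit of the computation of \eqref{boundfq1}. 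The bound \eqref{eq2a} follows directly by integrating $|f|$ from \eqref{bfq1}, using Lemma \ref{lemhs} for the $|q_1|^{p+1}$ piece and \eqref{hsajm} together with $\kappa^*_1\le C(1-|y|^2)^{-1/(p-1)}$ (which holds by \eqref{kp1} and \eqref{equiv}) for the $\kappa_1^{*\,p-1}q_1^2$ piece, then \eqref{condmod}.

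\medskip

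\textbf{Step 3: the $\bar\varphi$-energy estimate \eqref{eq1}, equivalence \eqref{eq2}, and the cross term \eqref{eq3}.} For \eqref{eq1} I would multiply the second-order scalar equation for $q_1$ (namely $\partial_s^2 q_1 = \q L q_1 + \bar\psi q_1 - \tfrac{p+3}{p-1}\partial_s q_1 - 2y\cdot\nabla\partial_s q_1 + f(q_1) + \text{mod.\ terms}$) by $\partial_s q_1\,\rho$ and integrate, getting $\tfrac12\tfrac{d}{ds}\bar\varphi(q,q) = -2\alpha\int q_2^2\tfrac{\rho}{1-|y|^2} + \int q_2 f(q_1)\rho + (\text{terms from } \partial_s\bar\psi) + (\text{modulation})$; the key new point is that $\partial_s\bar\psi = p(p-1)\kappa_1^{*\,p-2}\partial_s\kappa_1^*$ carries a factor of $\nu'$ and of $d'$, which by \eqref{first} and \eqref{condmod} produces the $C(A)(\epsilon_0+\eta)\|q\|_{\q H}^2$ gain, using crucially that $\tfrac{|\nu|}{1-|d|}\le\eta$ on $[0,\tilde s)$. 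The equivalence \eqref{eq2} is exactly Proposition \ref{lemdefpos} applied to $q$ — but note $q$ is orthogonal to the eigenspaces of $L_{d^*}$ via \eqref{kill}, while $\bar\varphi=\varphi_d$ uses the potential $\bar\psi(d,y)=p\kappa_1^*(d,\nu)^{p-1}-\tfrac{2(p+1)}{(p-1)^2}$ rather than $\psi(d^*,y)$; closing this gap is the delicate point (see below), and requires controlling $\bar V = p\kappa_1^{*\,p-1} - p\kappa(d^*)^{p-1}$ from \eqref{barv} by $C(A)\tfrac{|\nu|}{1-|d|}\kappa(d^*)^{p-1}\le C(A)\eta\,\kappa(d^*)^{p-1}$, so that for $\eta$ small $\bar\varphi$ is a small perturbation of $\varphi_{d^*}$ and \eqref{i} of Proposition \ref{lemdefpos} still applies. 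Finally \eqref{eq3} comes from differentiating $\iint q_1 q_2\rho$, using the equation to replace $\partial_s q_1$ and $\partial_s q_2$, integrating by parts, and absorbing cross terms via Cauchy–Schwarz and $\eta$-smallness, exactly as in \cite{MZisol10}.

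\medskip

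\textbf{Main obstacle.} The hardest step is \eqref{eq2} — showing that the modified quadratic form $\bar\varphi$, built with the ``shifted'' potential $\bar\psi(d,y)$ associated to $\kappa^*(d,\nu)$ rather than $\kappa(d^*)$, is still coercive and bounded on the space $\q H^{d^*}_-$ where the solution lives because of the orthogonality conditions \eqref{kill}. This forces one to quantify how close $\bar\varphi$ is to $\varphi_{d^*}$ in operator norm, uniformly in $|d|<1$ and in $\tfrac{\nu}{1-|d|}\in[-1+\tfrac1A,\eta]$, which is where the bound \eqref{barv} on $\bar V$ and the smallness of $\eta$ (relative to $A$ and to the constant $C_0$ of Proposition \ref{lemdefpos}) get used; the uniformity as $|d|\to1$ is the genuinely $N$-dimensional difficulty, and it is precisely the reason the statement requires $\eta\le\eta_3(A)$. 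Once \eqref{eq2} is in hand, combining \eqref{eq-1}–\eqref{eq3} with the definitions \eqref{defh} and choosing $\eta_0$ small gives \eqref{lemlyap}: $h_1$ controls $\|q\|_{\q H}^2$ from above and below and grows at most like $e^{C(A)s}$ (yielding \eqref{second}), while on $[0,\tilde s)$ the extra dissipation $-2\alpha\int q_2^2\tfrac{\rho}{1-|y|^2}$ together with \eqref{eq3} makes $h_2$ a strict Lyapunov functional decaying like $e^{-\delta s}$ (yielding \eqref{third}).
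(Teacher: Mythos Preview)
Your proposal is correct and follows essentially the same approach as the paper: multiplier computations on equation \eqref{eqq0}, with modulation terms controlled via \eqref{first} and nonlinear terms via the Hardy--Sobolev inequality of Lemma \ref{lemhs}; the paper likewise refers to \cite{MZisol10} for the one-dimensional template and writes out only the $N$-dimensional novelties. Your identification of \eqref{eq2} as the delicate point, and your resolution of it by comparing $\bar\varphi$ to $\varphi_{d^*}$ through the bound \eqref{barv} on $\bar V$ (giving $|\bar\varphi(q,q)-\varphi_{d^*}(q,q)|\le C(A)\eta\|q\|_{\q H}^2$ via \eqref{hsajm}) and then invoking Proposition \ref{lemdefpos} together with \eqref{kill}, matches the paper's argument exactly.

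One small correction in Step~2: the identity $\tfrac{d}{ds}\iint \q F(q_1)\rho = \iint f(q_1)\,\partial_s q_1\,\rho$ is incomplete, because $\q F$ defined in \eqref{defF} depends on $s$ also through $\kappa_1^*(d(s),\nu(s),\cdot)$. The missing term is $\iint(\partial_{\kappa_1^*}\q F)\,\partial_s\kappa_1^*\,\rho$, where $\partial_{\kappa_1^*}\q F = f(q_1) - \tfrac{p(p-1)}{2}\kappa_1^{*\,p-2}q_1^2$ and $\partial_s\kappa_1^* = \nu'\partial_\nu\kappa_1^* + \sum_i (d'\cdot e_i)\partial_{e_i}\kappa_1^*$; the paper packages this into the remainder $R_2$ together with a term $C|\nu|\iint|\partial_\nu\kappa_1^*|\,|f(q_1)|\,\rho$. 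These extra contributions are handled by precisely the tools you already list (\eqref{bfq1}, \eqref{dnde}, \eqref{peik*}, \eqref{first}, Lemma \ref{lemhs}), so the omission does not affect your overall strategy.
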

%%%%%%%%%%%%%%%%%%%%%%%%%%%%%%%%%%%%%%%%%%%%%%%%%%%%%%%%%%%%%%%%%%%
%%%%%%%%%%%%%%%%%%%%%%%%%%%%%%%%%%%%%%%%%%%%%%%%%%%%%%%%%%%%%%%%%%%
\noindent Indeed, if $\epsilon_0\le \epsilon_3$, we see from this lemma and \eqref{defh}
% and \eqref{condmod} 
that for all $s\in [0, \hat s)$,
\[
%\forall s\in [\bar s, \hat s),\;\;
|h_1(s) - \frac 12 \|q\|_{\q H}^2|\le C(A)\epsilon_0^{\bar p-1}\|q\|_{\q H}^2
\mbox{ and }h_1'\le C(A) \|q\|_{\q H}^2.
\]
Furthermore, if $\eta \le \eta_3$, then we have for all $s\in [0, \tilde s)$,
%if \eqref{cond0} holds with $\eta_2$ small enough, then 
\begin{align*}
|h_2(s) - \frac 12 \bar \varphi(q,q)|&\le (C(A)\epsilon_0^{\bar p-1}+C\eta_0)\|q\|_{\q H}^2,\\
h_2'(s)&\le \left(-2\alpha+C\eta_0\right)\iint q_2^2 \frac \rho{1-|y|^2} dy 
+\left(C(A) (\epsilon_0+\eta)-\frac 7{10}\eta_0\right)\bar \varphi(q,q).
\end{align*}
Fixing first $\eta_0=\eta_0(N,p)>0$ small enough so that the multiplying factor in front of $\iint q_2^2 \frac \rho{1-|y|^2} dy $ in the above inequality is negative, 
then imposing the conditions $\eta\le \eta_2(A)$ and $\epsilon_0\le \epsilon_2(A)$ for some $\eta_2(A)>0$ and $\epsilon_2(A)>0$ small enough, 
% then fixing $\eta_2=\eta_2(A)$ small enough and imposing the condition $\epsilon_0\le \epsilon_2(A)$ for some $\epsilon_2(A)$ small enough, 
we see from \eqref{eq2} that \eqref{lemlyap} holds, and so does \eqref{second} and \eqref{third} in Proposition \ref{propdyn}. It remains then to justify Lemma \ref{lemproj*} in order to conclude the proof of \eqref{lemlyap} and Proposition \ref{propdyn} too.

\begin{proof}[Proof of Lemma \ref{lemproj*}] Following our techniques performed for the proof of Lemma C.2 page 2896 in \cite{MZisol10}, recalling identity (3.41) page 2857 in that paper, using \eqref{equiv} and \eqref{estl2}, we see that for all $s\in [0, \hat s)$, 
\begin{align}
\frac 12 \frac d{ds}\|q\|_{\q H}^2&\le \iint q_2f(q_1) \rho dy+C(A) \|q\|_{\q H}^2+CR_1,\label{eqcarre}\\
\iint q_2f(q_1) \rho dy&\le \frac d{ds}\iint \q F(q_1) \rho dy
+
 C|\nu|\iint |\partial_\nu\kappa^*_1(d,\nu)||f(q_1)|\rho dy+CR_2,\label{eqnl}\\
\frac 12 \frac d{ds} \bar\varphi(q,q)&\le -2\alpha \iint q_2^2 \frac \rho{1-|y|^2} dy+\iint q_2f(q_1) \rho dy+C(A)R_1+CR_2\label{eqa-}\\
% &\left(\frac 12 \bar\varphi(q,q)-\iint \q F(q_1) \rho dy\right)'
% \le -\frac 4{p-1} \iint q_2^2 \frac \rho{1-|y|^2} dy\nonumber\\%\label{eqa-} \\
% +&C|\nu'|\iint |\partial_{\nu} \kappa^*_1(d,\nu)|(\kappa^*_1(d,\nu))^{p-2}q_1^2 \rho dy
% +C\|q\|_{\q H}|\nu'-\nu|\|\partial_\nu \kappa^*_1(d,\nu)\|_{\q H}\nonumber\\
% +&C\sum_{i=1}^N |d'\cdot e_i|\left(\iint |\partial_{e_i} \kappa^*_1(d,\nu)|(\kappa^*_1(d,\nu))^{p-2}q_1^2 \rho dy+\|q\|_{\q H}\|\partial_{e_i} \kappa^*(d,\nu)\|_{\q H}\right)\nonumber\\
% +&C|\nu|\iint |\partial_\nu\kappa^*_1(d,\nu)||f(q_1)|\rho dy,\nonumber\\
\frac d{ds} \iint q_1 q_2 \rho dy &\le -\frac 9{10}\bar \varphi(q,q)+C\iint q_2^2 \frac \rho{1-|y|^2}+\iint q_1 f(q_1)\rho dy+CR_1,\label{q1q2}\\
%+&C\|q\|_{\q H}\left(|\nu'-\nu|\|\partial_\nu \kappa^*(d,\nu)\|_{\q H} +\sum_{i=1}^N |d'\cdot e_i|\|\partial_{e_i} \kappa^*(d,\nu)\|_{\q H}\right).\nonumber 
|\partial_\nu \kappa^*_1(d,\nu)|&+|\partial_{e_1} \kappa^*_1(d,\nu)|\le C(A) \frac {\kappa(d^*,y)}{1-|d|},\label{dnde}
\end{align}
where
\begin{align}
 R_1(s) &= \|q\|_{\q H}\left(|\nu'-\nu|\|\partial_\nu \kappa^*_1(d,\nu)\|_{\q H}+\sum_{i=1}^N |d'\cdot e_i|\|\partial_{e_i} \kappa^*(d,\nu)\|_{\q H}\right),\label{defR1}\\
R_2(s) &=|\nu'|\iint |\partial_{\nu} \kappa^*_1(d,\nu)|(\kappa^*_1(d,\nu))^{p-2}q_1^2 \rho dy\nonumber\\
+&\sum_{i=1}^N |d'\cdot e_i|\iint |\partial_{e_i} \kappa^*_1(d,\nu)|(\kappa^*_1(d,\nu))^{p-2}q_1^2 \rho dy.\nonumber
\end{align}
Using \eqref{kp2}, \eqref{dnde}, \eqref{peik*} and the interpolation identity \eqref{hsajm}, we see that
\begin{align*}
\iint \left(|\partial_{\nu} \kappa^*_1(d,\nu)|+|\partial_{e_1} \kappa^*_1(d,\nu)|\right)(\kappa^*_1(d,\nu))^{p-2}q_1^2 \rho dy
%+\iint |\partial_{e_1} \kappa^*_1(d,\nu)|(\kappa^*_1(d,\nu))^{p-2}q_1^2 \rho dy
&\le \frac {C(A)}{1-|d|}\|q\|_{\q H}^2,\\%\label{dndek}\\
\mbox{and if }i=2,\dots,N,\;\;\iint |\partial_{e_i} \kappa^*_1(d,\nu)|(\kappa^*_1(d,\nu))^{p-2}q_1^2 \rho dy
&\le \frac {C(A)}{\sqrt{1-|d|}}\|q\|_{\q H}^2.%\label{d2k}
\end{align*}
Using \eqref{normdnk}, \eqref{normd2k} and the differential inequalities \eqref{first} satisfied by the parameters $\nu$ and $d$, together with \eqref{condmod}, we see that
\begin{equation}\label{R1R2}
|R_1(s)| \le C(A)\left(\epsilon_0+\frac{|\nu|}{1-|d|}\right)\|q\|_{\q H}^2\mbox{ and }|R_2(s)|\le C(A)\epsilon_0\|q\|_{\q H}^2.
\end{equation}

{\it - Proof of \eqref{eq-1}}: The estimate follows directly from \eqref{eqcarre}, \eqref{R1R2} and \eqref{condmod}.

\medskip

{\it - Proof of \eqref{eq0}}: This is a direct consequence of \eqref{eqnl}, \eqref{dnde}, \eqref{boundfq1} and \eqref{R1R2}.

\medskip

{\it - Proof of \eqref{eq2a}}: Noting that 
$|\q F(q_1)|\le C|q_1|^{p+1}+C\delta_{p\ge 2}\kappa^*_1(d,\nu)^{p-2}|q_1|^3$, by definition \eqref{defF}, 
and arguing as for \eqref{boundfq1}, then, using \eqref{condmod}, we get \eqref{eq2a} (note that the Hardy-Sobolev estimate of Lemma \ref{lemhs} is again crucial here).

\medskip

For the following estimates, we assume that $s\in [0, \hat s)$, hence \eqref{cond0} holds. 

\medskip

{\it - Proof of \eqref{eq1}}: This is a direct consequence of \eqref{eqa-}, \eqref{R1R2} and \eqref{cond0}.

\medskip

{\it - Proof of \eqref{eq2}}: Using the definitions \eqref{defphib}, \eqref{defphid} and \eqref{barv} of $\bar \varphi(q,q)$, $\varphi_{d^*}(q,q)$ and $|\bar V|$, together with \eqref{kp1}, the embedding \eqref{hsajm} and \eqref{cond0}, we see that
\[
|\bar\varphi(q,q)- \varphi_{d^*}(q,q)|=\iint |\bar V|q_1^2\rho dy
\le \frac{C(A)|\nu|}{1-|d|}\iint q_1^2 \frac\rho{1-|y|^2} dy
\le C(A)\eta \|q\|_{\q H}^2.
\]
Using the expansion \eqref{expansion} for $q(y,s)$, we see from \eqref{kill} and \eqref{ii}
%(ii) of Proposition \ref{lemdefpos} 
that \eqref{eq2} follows for $\eta\le \bar\eta_3(A)$ for some $\bar\eta_3(A)$ small enough.

\medskip

{\it - Proof of \eqref{eq3}}: Since $|q_1f(q_1)|\le C|q_1|^{p+1}+C\delta_{p\ge 2}\kappa^*_1(d,\nu)^{p-2}|q_1|^3$ from \eqref{bfq1}, arguing as for \eqref{boundfq1}, we see that
\[
\left|\iint q_1 f(q_1)\rho dy \right|\le C(A)\|q\|_{\q H}^{\bar p+1} \le \frac{\bar\varphi(q,q)}{100},
\]
for $\epsilon_0$ small enough, where we used \eqref{eq2} together with the smallness condition in \eqref{condmod} for the last inequality.\\
Plugging this estimate in \eqref{q1q2}, and using \eqref{R1R2}, we get \eqref{eq3} for $\eta$ and $\epsilon_0$ small enough. This concludes the proof of Lemma \ref{lemproj*}. 
\end{proof}
Since we have already seen after the statement of that lemma that it implies identity \eqref{lemlyap}, this concludes also the proof of identity \eqref{lemlyap}.
%\end{proof}
Since estimates \eqref{second} and \eqref{third} of Proposition \ref{propdyn} are direct consequences of \eqref{lemlyap}, this concludes the proof of Proposition \ref{propdyn}, assuming that the Hardy-Sobolev estimate stated in Lemma \ref{lemhs} holds (see the next subsection for that proof).
\end{proof}

%%%%%%%%%%%%%%%%%%%%%%%%%%%%%%%%%%%%%%%%%%%%%%%%%%%%%%%%%%%%%%%%%%%%%%%%%%
%%%%%%%%%%%%%%%%%%%%%%%%%%%%%%%%%%%%%%%%%%%%%%%%%%%%%%%%%%%%%%%%%%%%%%%%%%
\subsection{A Hardy-Sobolev inequality}\label{sechs}
%%%%%%%%%%%%%%%%%%%%%%%%%%%%%%%%%%%%%%%%%%%%%%%%%%%%%%%%%%%%%%%%%%%%%%%%%%
%%%%%%%%%%%%%%%%%%%%%%%%%%%%%%%%%%%%%%%%%%%%%%%%%%%%%%%%%%%%%%%%%%%%%%%%%%
We prove the Hardy-Sobolev estimate of Lemma \ref{lemhs} here. Note that we have already proved the one-dimensional case in Lemma 2.2 page 51 in \cite{MZjfa07}, thanks to the transformation $y = \tanh \xi$ with $\xi \in \m R$. In higher dimensions, we can't do that, and we need to build-up a completely new framework to get the result. Let us give the details in the following.

\medskip

First, taking
\[
q=2^*=\frac{2N}{N-2}\mbox{ if }N\ge 3\mbox{ and any }q\ge 2\mbox{ if }N=2,
\] 
we give a series of useful lemmas and then give the proof of Lemma \ref{lemhs} at the end: 
%%%%%%%%%%%%%%%%%%%%%%%%%%%%%%%%%%%%%%%%%%%%%%%%%%%%%%%%%%%%%%%%%%%%%%%%%%
%%%%%%%%%%%%%%%%%%%%%%%%%%%%%%%%%%%%%%%%%%%%%%%%%%%%%%%%%%%%%%%%%%%%%%%%%%
\begin{lem}\label{lem0} For any $g\in H^1_0(B)$, we have the following identities:
\begin{align*}
&(i) 
\left(\iint |g(y)|^qdy\right)^{2/q}\le C \iint |\nabla g(y)|^2dy,\\
&(ii) \iint \frac{g(y)^2}{(1-|y|^2)^2} dy \le \iint |\nabla g(y)|^2dy.
\end{align*}
%for $q=2^*$ if $N\ge 3$ and any $q\ge 2$ if $N=2$.
\end{lem}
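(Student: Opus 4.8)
The two inequalities are standard, but since the paper wants a self-contained argument for the Hardy–Sobolev estimate of Lemma \ref{lemhs}, I would record their proofs as follows. For (i), this is just the Sobolev embedding $H^1_0(B) \hookrightarrow L^{2^*}(B)$ when $N\ge 3$, applied to a bounded domain, or the embedding $H^1_0(B)\hookrightarrow L^q(B)$ for every finite $q$ when $N=2$; I would invoke the classical Gagliardo–Nirenberg–Sobolev inequality on $\m R^N$ (extending $g$ by zero outside $B$) and use $\|\nabla g\|_{L^2(\m R^N)}=\|\nabla g\|_{L^2(B)}$. No computation is needed beyond citing the standard statement.

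For (ii), the natural approach is a Hardy-type inequality with respect to the distance to the boundary $\partial B$, i.e.\ $\mathrm{dist}(y,\partial B)\sim 1-|y|$ near $\partial B$, and here $\frac 1{(1-|y|^2)^2}=\frac 1{(1-|y|)^2(1+|y|)^2}\le \frac 1{(1-|y|)^2}$. I would prove it by a one-parameter family of test-vector-field / integration-by-parts computations: write $\iint \frac{g^2}{(1-|y|^2)^2}dy$ and integrate by parts using the vector field $X(y) = \frac{y}{2}$, noting that $\dv\!\left(\frac{y}{1-|y|^2}\right) = \frac{N}{1-|y|^2} + \frac{2|y|^2}{(1-|y|^2)^2}\ge \frac{2|y|^2}{(1-|y|^2)^2}$. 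A cleaner route: use the substitution into radial coordinates $y=r\omega$ and reduce to the one-dimensional Hardy inequality $\int_0^1 \frac{u(r)^2}{(1-r)^2}r^{N-1}dr \le C \int_0^1 u'(r)^2 r^{N-1}dr$ for $u\in H^1_0(0,1)$ together with its spherical-harmonic refinement; but the integration-by-parts version is shorter. Concretely, for $g\in C_c^\infty(B)$,
\begin{align*}
\iint \frac{g(y)^2}{(1-|y|^2)^2}dy &= -\frac 12\iint g(y)^2\, \dv\!\left(\frac{y}{1-|y|^2}\right)dy + \frac N2\iint\frac{g(y)^2}{1-|y|^2}dy\cdot 0
\end{align*}
— here I would be careful to set up the identity $\dv\!\left(\frac{y}{1-|y|^2}\right)=\frac{N}{1-|y|^2}+\frac{2|y|^2}{(1-|y|^2)^2}$, so that integrating by parts against $g^2$ gives $\iint \frac{2|y|^2 g^2}{(1-|y|^2)^2}\le -\iint\nabla(g^2)\cdot\frac{y}{1-|y|^2}\le 2\left(\iint\frac{g^2}{(1-|y|^2)^2}\right)^{1/2}\left(\iint|\nabla g|^2\right)^{1/2}$ after Cauchy–Schwarz, and then absorb: on $\{|y|\ge 1/2\}$ the factor $|y|^2$ is bounded below, and on $\{|y|\le 1/2\}$ the weight $\frac1{(1-|y|^2)^2}$ is bounded, so the full integral is controlled by $\iint|\nabla g|^2$ after a trivial Poincaré-type estimate on the inner ball. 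Finally one passes from $C_c^\infty(B)$ to $H^1_0(B)$ by density.

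The main obstacle is purely bookkeeping: making the integration by parts rigorous near $\partial B$ (the weight is singular there), which is handled by first proving the inequality with $1-|y|^2$ replaced by $1-|y|^2+\delta$ and letting $\delta\to 0$ by monotone convergence, or equivalently by working with $g\in C_c^\infty(B)$ where all boundary terms vanish and then using density. There is no real analytic difficulty; the substance of Lemma \ref{lemhs} lies in the subsequent lemmas that combine (i) and (ii) with the weight $(1-|y|^2)^\alpha$, not here.
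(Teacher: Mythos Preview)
Your treatment of (i) matches the paper's: it is just the Sobolev embedding after extending by zero.

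For (ii), you have the right vector field and the right divergence identity, but the way you organize the argument (Cauchy--Schwarz on the cross term, then an inner/outer ball splitting plus Poincar\'e) only yields
\[
\iint \frac{g^2}{(1-|y|^2)^2}\,dy \le C\iint |\nabla g|^2\,dy
\]
with some unspecified constant $C>1$, not the constant $1$ that the lemma asserts. The paper instead expands the square
\[
0\le \iint \Big|\nabla g + \gamma\,\frac{y\,g}{1-|y|^2}\Big|^2 dy
= \iint|\nabla g|^2 + 2\gamma\iint \frac{g\,y\cdot\nabla g}{1-|y|^2} + \gamma^2\iint\frac{|y|^2 g^2}{(1-|y|^2)^2},
\]
uses your divergence computation to bound the cross term by $-2\gamma\iint g^2/(1-|y|^2)^2$, then bounds $|y|^2\le 1$ in the last term and takes $\gamma=1$ to get constant exactly $1$.

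This is not cosmetic. The constant $1$ in (ii) is used in the proof of Lemma~\ref{lem1} in the range $0\le a\le 2$, $a\neq 1$: there one combines \eqref{rabih} with \eqref{sob} and needs the coefficient $(-a^2+2a+C\epsilon)$ to be strictly less than $1$ so as to absorb. If (ii) carried a constant $C_0>1$, the absorption would require $-a^2+2a<1/C_0$, which fails when $a$ is close to $1$; and in the application (proof of Lemma~\ref{lemhs}) one takes $a=2\alpha+1$, which is near $1$ exactly when $p$ is near the conformal exponent. So your route, as written, would leave a gap in Lemma~\ref{lem1}. The fix is immediate: replace your Cauchy--Schwarz step by the completion-of-squares above, keeping everything else (density of $C_c^\infty(B)$, no boundary terms) unchanged.
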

%%%%%%%%%%%%%%%%%%%%%%%%%%%%%%%%%%%%%%%%%%%%%%%%%%%%%%%%%%%%%%%%%%%%%%%%%%
%%%%%%%%%%%%%%%%%%%%%%%%%%%%%%%%%%%%%%%%%%%%%%%%%%%%%%%%%%%%%%%%%%%%%%%%%%
\begin{proof}The first identity follows from the Sobolev embedding. In order to prove the second identity, we consider $\gamma\ge 0$ and write
\begin{align}
0&\le \iint \left|\nabla g +\gamma y \frac{g}{1-|y|^2}\right|^2 dy\label{prodrem}\\
& = \iint |\nabla g|^2 dy + 2\gamma  \iint \frac{gy\cdot\nabla g}{(1-|y|^2)} dy +\gamma^2\iint \frac{g^2|y|^2}{(1-|y|^2)^2}dy.\nonumber
\end{align}
Since 
\begin{align*}
&2\iint  \frac{gy\cdot \nabla g}{(1-|y|^2)} dy = \iint  \frac{y\cdot \nabla (g^2)}{(1-|y|^2)} dy=-\iint  g^2\dv\left( \frac{y}{(1-|y|^2)}\right) dy\\
%=&-\iint z^2\left(\frac N {(1-|y|^2)}+\frac {2|y|^2}{(1-|y|^2)^2}\right) dy\\
=&-\iint g^2\left(\frac {N-2} {(1-|y|^2)}+\frac {2}{(1-|y|^2)^2}\right) dy \le -2\iint \frac{g^2}{(1-|y|^2)^2}dy,
\end{align*}
 we write from \eqref{prodrem}
\[
0\le \iint |\nabla g(y)|^2dy+(\gamma^2-2\gamma)\iint \frac{g^2}{(1-|y|^2)^2}dy.
\]
Taking $\gamma=1$, we 
%minimize $\gamma^2-2\gamma$ and 
get the conclusion.
\end{proof}
%%%%%%%%%%%%%%%%%%%%%%%%%%%%%%%%%%%%%%%%%%%%%%%%%%%%%%%%%%%%%%%%%%%%%%%%%%
%%%%%%%%%%%%%%%%%%%%%%%%%%%%%%%%%%%%%%%%%%%%%%%%%%%%%%%%%%%%%%%%%%%%%%%%%%
%\begin{lem}[Lemma 0]\label{lem0} We have the following identities, for any $z\in H^1_0(B)$:
%\begin{align*}
%%(i) 
%\left(\iint |z(y)|^6dy\right)^{1/3} &\le C \iint |\nabla z(y)|^2dy,\\
%%(ii)
% \frac 14 \iint \frac{z(y)^2}{(1-|y|)^2} dy &\le \iint |\nabla z(y)|^2dy.
%\end{align*}
%\end{lem}
%%%%%%%%%%%%%%%%%%%%%%%%%%%%%%%%%%%%%%%%%%%%%%%%%%%%%%%%%%%%%%%%%%%%%%%%%%
%%%%%%%%%%%%%%%%%%%%%%%%%%%%%%%%%%%%%%%%%%%%%%%%%%%%%%%%%%%%%%%%%%%%%%%%%%
%\begin{proof}The first identity is simply the Sobolev embedding. In order to prove the second identity, we consider $\gamma\ge 0$ and write
%\begin{align}
%0&\le \iint |\frac y{|y|}\cdot\nabla z +\gamma \frac{z}{1-|y|}|^2 dy\label{prodrem}\\
%& = \iint |\frac y{|y|}\cdot\nabla z|^2 dy + 2\gamma  \iint \frac{zy\cdot\nabla z}{|y|(1-|y|)} dy +\gamma^2\iint \frac{z^2}{(1-|y|)^2}dy.\nonumber
%\end{align}
%Since 
%\begin{align*}
%&2\iint  \frac{zy\cdot \nabla z}{|y|(1-|y|)} dy = \iint  \frac{y\cdot \nabla (z^2)}{|y|(1-|y|)} dy=-\iint  z^2\dv\left( \frac{y}{|y|(1-|y|)}\right) dy\\
%% \iint \frac {z^2y}{|y|(1-|y|)^2}dy
%=&-\iint z^2\left(\frac{N-1}{|y|(1-|y|)}+\frac 1{(1-|y|)^2}\right) dy \le -\iint \frac{z^2}{(1-|y|)^2}dy
%\end{align*}
%and $|\frac y{|y|}\cdot\nabla z|\le |\nabla z|$, we write from \eqref{prodrem}
%\[
%0\le \iint |\nabla z(y)|^2dy+(\gamma^2-\gamma)\iint \frac{z^2}{(1-|y|)^2}dy.
%\]
%Taking $\gamma=\frac 12$, we minimize $\gamma^2-\gamma$ and get the conclusion.
%\end{proof}
Now, we give the following lemma:
%%%%%%%%%%%%%%%%%%%%%%%%%%%%%%%%%%%%%%%%%%%%%%%%%%%%%%%%%%%%%%%%%%%%%%%%%%
%%%%%%%%%%%%%%%%%%%%%%%%%%%%%%%%%%%%%%%%%%%%%%%%%%%%%%%%%%%%%%%%%%%%%%%%%%
\begin{lem}\label{lem1} Consider $a\ge 0$ with $a\neq 1$. Then, 
%for any $h$ such that the right-hand side is finite, we have
\begin{align*}
&\left(\iint |h|^q(1-|y|^2)^{qa/2} dy \right)^{2/q}+\iint h^2(1-|y|^2)^{a-2} dy\\
\le& C \iint \left(|\nabla h|^2+ h^2\right) (1-|y|^2)^ady.
\end{align*}
\end{lem}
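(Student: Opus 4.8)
The idea is to remove the boundary weight by the substitution $g=h(1-|y|^2)^{a/2}$ and to reduce the whole estimate to Lemma~\ref{lem0}. First I would prove the inequality for $h\in C^1_c(B)$ — so that the integrations by parts below are legitimate, there are no boundary terms, and the quantities $\iint h^2(1-|y|^2)^{a-k}\,dy$ are finite — the general case following by approximating $h$ by the truncations $h\chi_n$, with $\chi_n$ a smooth cut-off supported in $\{|y|<1-1/n\}$, and passing to the limit, using that the constant produced below is independent of $h$. For such $h$ the function $g=h(1-|y|^2)^{a/2}$ belongs to $C^1_c(B)\subset H^1_0(B)$, and
\[
\iint |g|^q\,dy=\iint |h|^q(1-|y|^2)^{qa/2}\,dy,\qquad
\iint \frac{g^2}{(1-|y|^2)^2}\,dy=\iint h^2(1-|y|^2)^{a-2}\,dy,
\]
so that, by items (i) and (ii) of Lemma~\ref{lem0}, the whole left-hand side of the Lemma is $\le C\iint|\nabla g|^2\,dy$. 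Hence it suffices to prove
\begin{equation}\label{plan-goal}
\iint |\nabla g|^2\,dy\le C(N,a)\,I,\qquad I:=\iint\bigl(|\nabla h|^2+h^2\bigr)(1-|y|^2)^a\,dy.
\end{equation}

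Write $w=1-|y|^2$ and $J_k=\iint h^2 w^{a-k}\,dy$ for $k=0,1,2$. Since $\nabla g=w^{a/2}\nabla h-aw^{a/2-1}yh$, expanding $|\nabla g|^2$, rewriting the cross term via $2h(y\cdot\nabla h)=y\cdot\nabla(h^2)$ and integrating by parts ($h$ compactly supported), and using $\dv(w^{a-1}y)=Nw^{a-1}-2(a-1)|y|^2w^{a-2}$ together with $|y|^2w^{a-2}=w^{a-2}-w^{a-1}$, one arrives after collecting terms at the identity
\[
\iint |\nabla g|^2\,dy=\iint w^a|\nabla h|^2\,dy+a(2-a)\,J_2+a(N-2+a)\,J_1 .
\]
The term $J_2$ is the only genuinely dangerous one, because $w^{a-2}$ is strongly singular at $\partial B$; but Lemma~\ref{lem0}(ii) gives $J_2=\iint g^2w^{-2}\,dy\le \iint|\nabla g|^2\,dy$, so substituting the identity and using the algebraic fact $1-a(2-a)=(1-a)^2$ we obtain
\[
(1-a)^2\,J_2\le \iint w^a|\nabla h|^2\,dy+a(N-2+a)\,J_1\le I+a(N-2+a)\,J_1 .
\]
This is precisely where the hypothesis $a\neq 1$ enters: it makes the coefficient $(1-a)^2$ of $J_2$ strictly positive, so that $J_2$ can be brought over to the left.

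It then remains to absorb $J_1$. By Cauchy--Schwarz $J_1=\iint (h^2w^{a-2})^{1/2}(h^2w^a)^{1/2}\,dy\le J_2^{1/2}J_0^{1/2}$, and $J_0=\iint h^2w^a\,dy\le I$, so $J_1\le J_2^{1/2}I^{1/2}$; plugging this in and applying Young's inequality absorbs a small multiple of $J_2$ into the left-hand side and yields $J_2\le C(N,a)\,I$ (when $a(N-2+a)\le 0$ the bound $(1-a)^2J_2\le I$ is even immediate). Feeding $J_2\le C(N,a)I$ and $J_1\le\tfrac12(J_2+I)$ back into the identity for $\iint|\nabla g|^2$ gives \eqref{plan-goal}, which completes the argument; the case $a=0$ is contained in it (then $g=h$, the extra terms vanish) and is in any case immediate from Lemma~\ref{lem0}. \emph{The main obstacle} is the appearance of the singular quantity $J_2=\iint h^2(1-|y|^2)^{a-2}\,dy$ on both sides of the computation; the mechanism that resolves it is the identity $1-a(2-a)=(1-a)^2$ combined with Lemma~\ref{lem0}(ii), which is exactly why $a=1$ must be excluded — and the approximation step from $C^1_c(B)$ to general $h$, while routine, should be done with the explicit cut-offs indicated above.
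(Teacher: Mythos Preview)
Your proof is correct and follows essentially the same approach as the paper: both set $g=h(1-|y|^2)^{a/2}$, expand $\iint|\nabla g|^2$, integrate the cross term by parts, and then reduce to Lemma~\ref{lem0}, with the Hardy inequality Lemma~\ref{lem0}(ii) controlling the singular quantity $J_2$ and the hypothesis $a\neq 1$ entering through the coefficient $(1-a)^2$. The only difference is organizational: the paper bounds $J_1$ by $\epsilon J_2+\epsilon^{-1}J_0$ before invoking Lemma~\ref{lem0}(ii) and then splits into the cases $a>2$ (where $-a^2+2a<0$ kills $J_2$ directly) and $0\le a\le 2$, whereas you keep the exact identity for $\iint|\nabla g|^2$, substitute $J_2\le\iint|\nabla g|^2$ first, and handle $J_1$ afterwards via Cauchy--Schwarz and Young, which treats all $a\neq 1$ uniformly without a case split.
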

%%%%%%%%%%%%%%%%%%%%%%%%%%%%%%%%%%%%%%%%%%%%%%%%%%%%%%%%%%%%%%%%%%%%%%%%%%
%%%%%%%%%%%%%%%%%%%%%%%%%%%%%%%%%%%%%%%%%%%%%%%%%%%%%%%%%%%%%%%%%%%%%%%%%%
\begin{proof} Consider $a\ge 0$ with $a\neq 1$ and $h$ such that
\begin{equation}\label{hypo-new}
\iint \left(|\nabla h(y)|^2+h(y)^2\right)(1-|y|^2)^a dy<+\infty.
\end{equation}
%Using the Sobolev embedding for $z(y)=h(y) (1-|y|^2)^{a/2}$ (we need first to justify that $z\in H^1_0(B)$; this may be done by proving the lemma for $h\in C^\infty$ first then extend it by density), we write 
%\begin{equation}
%\left(\iint |h(y)|^6(1-|y|^2)^{3a} dy \right)^{1/3}
%\le C \iint |\nabla (h(1-|y|^2)^{a/2})|^2 dy.\label{sobolev-reduit}
%\end{equation}
Let us compute
\begin{align*}
&\iint |\nabla (h(1-|y|^2)^{a/2})|^2 dy
=\iint |\nabla h (1-|y|^2)^{a/2}-ayh(1-|y|^2)^{\frac a2 -1}|^2 dy\\
=&\iint |\nabla h|^2 (1-|y|^2)^ady-2a\iint hy\cdot \nabla h (1-|y|^2)^{a-1}dy\\
+&a^2\iint h^2|y|^2(1-|y|^2)^{a-2}dy.
\end{align*}
Using integration by parts, we write
\begin{align*}
&-2a\iint h y\cdot \nabla h (1-|y|^2)^{a-1}dy
=-a \iint y\cdot \nabla (h^2) (1-|y|^2)^{a-1}dy\\
=& a \iint h^2 \dv \left(y(1-|y|^2)^{a-1}\right) dy\\
=&a \iint h^2 \left(N(1-|y|^2)^{a-1}-2(a-1)|y|^2(1-|y|^2)^{a-2}\right)dy.
\end{align*}
Therefore, we get
\begin{align}
&\iint |\nabla (h(1-|y|^2)^{a/2})|^2 dy
\le \iint |\nabla h|^2 (1-|y|^2)^ady\nonumber\\
+&Na\iint h^2 (1-|y|^2)^{a-1}dy+(a^2 -2a(a-1))\iint h^2|y|^2(1-|y|^2)^{a-2} \nonumber\\
=&\iint |\nabla h|^2 (1-|y|^2)^ady\nonumber\\
+&[Na+a^2-2a]\iint h^2 (1-|y|^2)^{a-1}dy+(-a^2+2a)\iint h^2 (1-|y|^2)^{a-2}dy\nonumber. 
\end{align}
Since for all $\epsilon>0$, we have
\[
 \iint h^2 (1-|y|^2)^{a-1}dy \le \epsilon \iint h^2 (1-|y|^2)^{a-2}dy+\frac 1\epsilon \iint h^2 (1-|y|^2)^ady,
\]
we get 
\begin{align}
&\iint |\nabla (h(1-|y|^2)^{a/2})|^2 dy
\le \iint |\nabla h|^2 (1-|y|^2)^ady\nonumber\\
+&\frac C\epsilon\iint h^2 (1-|y|^2)^ady+(-a^2+2a+C\epsilon)\iint h^2 (1-|y|^2)^{a-2}dy. \label{rabih}
\end{align}
Now, we claim that 
\begin{align}
&\iint |\nabla (h(1-|y|)^{a/2})|^2 dy\label{concl}
\le& C\iint |\nabla h|^2 (1-|y|)^ady+C\iint h^2 (1-|y|)^ady.%\nonumber
\end{align}
Indeed:\\
- If $a> 2$, then we may choose $\epsilon>0$ small enough so that $-a^2+ 2a+C \epsilon< 0$, and \eqref{concl} follows from \eqref{rabih}.\\
- If $0\le a\le 2$ and $a\neq 1$, then, we write the following from Lemma \ref{lem0}:  
\begin{equation}\label{sob}
\iint h^2 (1-|y|^2)^{a-2}dy\le \iint |\nabla (h(1-|y|^2)^{a/2})|^2 dy.
\end{equation} 
Using \eqref{rabih} and \eqref{sob}, we see that
\begin{align*}
&\iint |\nabla (h(1-|y|^2)^{a/2})|^2 dy
\le \iint |\nabla h|^2 (1-|y|^2)^ady\\
+&\left(-a^2+2a+C\epsilon\right)\iint |\nabla (h(1-|y|^2)^{a/2})|^2 dy+\frac C\epsilon \iint h^2 (1-|y|)^ady. 
\end{align*}
Since $a\neq 1$, we may choose $\epsilon>0$ small enough so that  $-a^2+2 a+C\epsilon<1$, hence, \eqref{concl} follows. Using \eqref{hypo-new}, we see that $g(y)\equiv h(y) (1-|y|^2)^{a/2}\in H^1_0(B)$. Applying Lemma \ref{lem0} to $g$, we conclude the proof of Lemma \ref{lem1}.
\end{proof}
%Let us note that we improve the Hardy inequality of \cite{MZajm03} stated in \eqref{hsajm}:
%%%%%%%%%%%%%%%%%%%%%%%%%%%%%%%%%%%%%%%%%%%%%%%%%%%%%%%%%%%%%%%%%%%%%%%%%%
%%%%%%%%%%%%%%%%%%%%%%%%%%%%%%%%%%%%%%%%%%%%%%%%%%%%%%%%%%%%%%%%%%%%%%%%%%
%\begin{cor} If $p\neq 1+\frac 4{N+1}$, then 
%\[
%\iint |h|^2(1-|y|^2)^{\alpha-2}dy \le C\left(\iint |\nabla h|^2(1-|y|)^{\alpha+1} dy+\iint h^2 (1-|y|)^\alpha dy\right)^{\frac {p+1}2}.
%\]
%\end{cor}
%%%%%%%%%%%%%%%%%%%%%%%%%%%%%%%%%%%%%%%%%%%%%%%%%%%%%%%%%%%%%%%%%%%%%%%%%%
%%%%%%%%%%%%%%%%%%%%%%%%%%%%%%%%%%%%%%%%%%%%%%%%%%%%%%%%%%%%%%%%%%%%%%%%%%
%\begin{nb}
%Any slight improvement of the constant in the second identity in Lemma \ref{lem0} will remove the condition $p\neq 1+\frac 4{N+1}$.
%\end{nb}
Now, we claim the following:
%%%%%%%%%%%%%%%%%%%%%%%%%%%%%%%%%%%%%%%%%%%%%%%%%%%%%%%%%%%%%%%%%%%%%%%%%%
%%%%%%%%%%%%%%%%%%%%%%%%%%%%%%%%%%%%%%%%%%%%%%%%%%%%%%%%%%%%%%%%%%%%%%%%%%
\begin{lem}\label{lem3}
If 
\begin{equation}\label{conda0}
0\le a\le \frac 4{p-1}+2-N\mbox{ and }a\neq 1,
\end{equation}
 then we have 
\[
\left(\iint |h|^{p+1}(1-|y|^2)^a dy \right)^{\frac 2{p+1}}
\le C \iint \left(|\nabla h|^2+h^2\right) (1-|y|^2)^a dy
\]
% \begin{align*}
% &\left(\iint |h|^{p+1}(1-|y|^2)^a dy \right)^{\frac 2{p+1}}\\
% \le& C \iint |\nabla h|^2 (1-|y|^2)^a dy+C\iint |h|^2(1-|y|^2)^ady .
% \end{align*}
\end{lem}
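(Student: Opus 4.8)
The statement of Lemma \ref{lem3} is an interpolation between an $L^{p+1}$ estimate (with the weight $(1-|y|^2)^a$) and the Dirichlet-type energy $\iint(|\nabla h|^2+h^2)(1-|y|^2)^a\,dy$, and the natural route is to combine the two previously established lemmas (Lemma \ref{lem0} and Lemma \ref{lem1}) with a Hölder interpolation. Concretely, the plan is to write $|h|^{p+1}(1-|y|^2)^a$ as a product of a piece that is controlled in the Sobolev exponent $q=2^*$ (with a weight of the form $(1-|y|^2)^{qa/2}$) and a piece controlled in $L^2$ with a weight of the form $(1-|y|^2)^{a-2}$, then apply Hölder's inequality with the right pair of conjugate exponents so that the resulting weights and exponents match, and finally invoke Lemma \ref{lem1} to bound both factors by $\iint(|\nabla h|^2+h^2)(1-|y|^2)^a\,dy$.

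\textbf{Key steps in order.} First I would fix $q=2^*=\frac{2N}{N-2}$ (or any $q\ge 2$ when $N=2$) as in Lemma \ref{lem1}, and look for $\theta\in[0,1]$ and an exponent split such that
\[
p+1 = \theta q + (1-\theta)\cdot 2,
\]
i.e.\ $\theta=\frac{p-1}{q-2}$, and then check that the condition $\theta\in[0,1]$ is exactly (or is implied by) the upper bound on $a$ and the subconformality condition \eqref{condp} on $p$ — this is where the hypothesis $a\le \frac 4{p-1}+2-N$ in \eqref{conda0} will be used, since it encodes precisely that the weight exponents can be matched. Second, I would apply Hölder's inequality with exponents $\frac1\theta$ and $\frac1{1-\theta}$ to the integrand $|h|^{\theta q}(1-|y|^2)^{\theta\cdot qa/2}\cdot |h|^{2(1-\theta)}(1-|y|^2)^{(1-\theta)(a-2)}$, verifying that $\theta\cdot\frac{qa}{2}+(1-\theta)(a-2)=a$ (this is the weight-matching identity, which pins down the relation between $a$, $p$, $N$). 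Third, I would bound the first Hölder factor by $\bigl(\iint|h|^q(1-|y|^2)^{qa/2}\,dy\bigr)^{2/q}$ raised to the appropriate power and the second by $\iint h^2(1-|y|^2)^{a-2}\,dy$ raised to its power, each of which is $\le C\iint(|\nabla h|^2+h^2)(1-|y|^2)^a\,dy$ by Lemma \ref{lem1} (using $a\ge 0$, $a\ne 1$ from \eqref{conda0}). Combining and taking the $\frac 2{p+1}$ power gives the claim. I would also handle the boundary cases where $\theta=0$ or $\theta=1$ (so that only one of the two lemmas is needed) separately and trivially.

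\textbf{Main obstacle.} The only genuinely delicate point is the bookkeeping of exponents: one must verify that the arithmetic constraint coming from matching both the power of $|h|$ and the power of $(1-|y|^2)$ in the Hölder splitting is consistent, and that consistency holds \emph{precisely} under hypothesis \eqref{conda0} together with the subconformal range \eqref{condp} of $p$ (which guarantees $q>p+1$, hence $\theta<1$, when $N\ge 3$; for $N=2$ one simply picks $q$ large enough). In particular one should double-check the borderline $a=\frac4{p-1}+2-N$, where $\theta$ hits its extreme value, and confirm the inequality still closes; the exclusion $a\ne 1$ is inherited directly from Lemma \ref{lem1} and needs no extra work. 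Everything else is a routine application of Hölder plus the two cited lemmas, so no further subtlety is expected.
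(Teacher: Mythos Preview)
Your overall strategy---H\"older interpolation between the $L^q$ and $L^2$ estimates of Lemma~\ref{lem1}---is exactly the paper's approach, but there is a bookkeeping error in your weight-matching step. The identity
\[
\theta\cdot\tfrac{qa}{2}+(1-\theta)(a-2)=a
\]
does \emph{not} hold for all $a$ in the range \eqref{conda0}; a short computation shows it holds \emph{only} at the upper endpoint $a=\frac{4}{p-1}+2-N$. For smaller $a$ the left side is strictly less than $a$. This is not fatal: since $1-|y|^2\le 1$, having the total weight exponent $\le a$ is enough (you first bound $(1-|y|^2)^a\le(1-|y|^2)^{\theta qa/2+(1-\theta)(a-2)}$, then apply H\"older). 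The paper does the equivalent thing from the other side: it lets the second weight be the free exponent $b$ determined by exact H\"older matching, then verifies $b\ge a-2$ under \eqref{conda0}, and uses $(1-|y|^2)^b\le(1-|y|^2)^{a-2}$ before invoking Lemma~\ref{lem1}. Either route closes.

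A second minor slip: the condition $\theta\in(0,1)$ has nothing to do with $a$. Since $\theta=\frac{p-1}{q-2}$ depends only on $p$ and $q$, the requirement $\theta<1$ is just $p+1<q$, which for $N\ge3$ (with $q=2^*$) follows from the subconformal bound \eqref{condp} (subconformal implies Sobolev subcritical), and for $N=2$ is arranged by taking $q$ large. The hypothesis on $a$ enters only through the weight comparison above.
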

%%%%%%%%%%%%%%%%%%%%%%%%%%%%%%%%%%%%%%%%%%%%%%%%%%%%%%%%%%%%%%%%%%%%%%%%%%
%%%%%%%%%%%%%%%%%%%%%%%%%%%%%%%%%%%%%%%%%%%%%%%%%%%%%%%%%%%%%%%%%%%%%%%%%%
\begin{proof}Taking $q> p+1>2$
%Since $2<p+1<2^*$, 
we interpolate as follows
\begin{align*}
\iint |h|^{p+1}&(1-|y|^2)^a dy\\%\label{holder}\\
 &\le \left(\iint |h|^q (1-|y|^2)^{\frac{qa}2}dy\right)^{\eta} \left(\iint |h|^{2} (1-|y|^2)^{\bb}dy\right)^{1-\eta}
\end{align*}
where 
\begin{equation}\label{defeta}
p+1=q\eta+2(1-\eta)\mbox{ and }a=\frac{qa}2\eta+\bb(1-\eta),
\end{equation}
that is
\[
\eta=\frac{p+1-2}{q-2}
%=\frac{(p-1)(N-2)}4
\mbox{ and }\bb = \frac{((3-p)q-4)a}{2(q-p-1)}.
%\bb =\frac{(4-N(p-1))a}{4-(p-1)(N-2)}.
\]
If $N\ge 3$, we have $q=2^*$ and we see from \eqref{conda0} that
\[
\bb -(a-2)=-\frac{2(p-1)[a+N-2-\frac 4{p-1}]}{4-(N-2)(p-1)}\ge 0.
\]
If $N=2$, then we see from \eqref{conda0} that
\[
\bb -(a-2) = \frac{a(p-1)(-q+2)}{q-p-1}+2\searrow -\frac{(p-1)a}2+2\ge 0\mbox{ as }q\to \infty
\]
hence, taking $q$ large enough, we will have $b\ge a-2$.\\
 Therefore, 
\begin{align*}
\iint |h|^{p+1}&(1-|y|^2)^a dy\\%\label{holder}\\
& \le\left(\iint |h|^q (1-|y|^2)^{\frac{qa}2}dy\right)^{\eta} \left(\iint |h|^{2} (1-|y|^2)^{a-2}dy\right)^{1-\eta}.
\end{align*}
Since $a\neq 1$,
% and $\frac{2^*\eta}2+1-\eta=\frac{p+1}2$, 
the conclusion follows from Lemma \ref{lem1} and \eqref{defeta}.
\end{proof}

\bigskip

Now, we are ready to give the proof of Lemma \ref{lemhs}.
%%%%%%%%%%%%%%%%%%%%%%%%%%%%%%%%%%%%%%%%%%%%%%%%%%%%%%%%%%%%
%%%%%%%%%%%%%%%%%%%%%%%%%%%%%%%%%%%%%%%%%%%%%%%%%%%%%%%%%%%%
\begin{proof}[Proof of Lemma \ref{lemhs}]
Using radial and angular coordinates for $v(z)$:
% we write similarly
\[
v(z)=V(s,\omega)\mbox{ with }z = s\omega,\;\;s=|z|\mbox{ and }\omega \in \m S^{N-1}
\]
and introducing $\partial_s V$ and $\nabla_\omega V$ such that
\[
\partial_s V = \frac z{|z|}\cdot \nabla v\mbox{ and }\nabla v= \partial_s V \frac z{|z|}+\frac 1{|z|}\nabla_\omega V,
\]
we see that $z\cdot \nabla_\omega V=0$, hence
\[
|\nabla v(z)|^2= (\partial_s V(s,\omega))^2+\frac 1{s^2} (\nabla_\omega V(s,\omega))^2\mbox{ and }|z \cdot \nabla v(z)|^2 = s^2 (\partial_s V(s,\omega))^2. 
\]
Therefore,
\begin{align}
&\iint [|\nabla v(z)|^2-(z \cdot \nabla v(z))^2](1-|z|^2)^\alpha dy\label{gradVrad}\\
=&\int_0^1 \int_{\m S^{N-1}}\left((\partial_s V(s,\omega))^2(1-s^2)+\frac 1{s^2} |\nabla_\omega V(s,\omega)|^2\right) (1-s^2)^\alpha s^{N-1} ds d\omega.\nonumber
%+&\int_0^1 \int_{\m S^{N-1}}\frac 1{s^2} |\nabla_\omega V(s,\omega)|^2(1-s^2)^\alpha s^{N-1} ds d\omega.\nonumber
\end{align}
Transforming similarly the wighted $L^{p+1}$ and $L^2$ norms, we transform  the aimed identity in Lemma \ref{lemhs} to the following:
\begin{align}
&\int_0^1 \int_{\m S^{N-1}}|V(s,\omega)|^{p+1}(1-s^2)^\alpha s^{N-1} ds d\omega\label{estV}\\
&\le C\int_0^1 \int_{\m S^{N-1}}\left((\partial_s V(s,\omega))^2(1-s^2)+
\frac{|\nabla_\omega V(s,\omega)|^2}{s^2}+|V(s,\omega)|^2\right)(1-s^2)^\alpha s^{N-1} ds d\omega.\nonumber
\end{align}
Making the following change of variables:
% \eqref{chng} becomes one-dimensional:
\[
h(y)=H(r,\omega) = V(s,\omega)=v(z)\mbox{ with }
%s=2r-r^2
r=\psi(s)=1-\sqrt{1-s},
\]
and introducing $a=2\alpha+1=\frac 4{p-1}+2-N$ (note that $a\neq 1$ and $a\ge 0$ from the condition \eqref{condp} on $p$), we apply Lemma \ref{lem3} to $h(y)$ in the $(r,\omega)$ coordinates: 
\begin{align}
&\int_0^1 \int_{\m S^{N-1}}|H(r,\omega)|^{p+1}(1-r^2)^a r^{N-1} dr d\omega\label{estH}\\
&\le C\int_0^1 \int_{\m S^{N-1}}\left((\partial_r H(r,\omega))^2+
\frac 1{r^2}|\nabla_\omega V(r,\omega)|^2+|V(r,\omega)|^2\right)(1-r^2)^a r^{N-1} dr d\omega.\nonumber
\end{align}
Transforming all the integrals in this identity as follows (note that we use the weight $1-r$ instead of $1-r^2$ for convenience), we write
\begin{align*}
&\int_0^1 \int_{\m S^{N-1}}|H(r,\omega)|^{p+1}(1-r)^ar^{N-1}dr d\omega
=\frac 12\int_0^1 \int_{\m S^{N-1}}|V(s,\omega)|^{p+1}(1-s)^\alpha \psi(s)^{N-1}ds d\omega,\\
&\int_0^1 \int_{\m S^{N-1}}|H(r,\omega)|^2(1-r)^ar^{N-1}dr d\omega
=\frac 12\int_0^1 \int_{\m S^{N-1}}|V(s,\omega)|^2(1-s)^\alpha \psi(s)^{N-1}ds d\omega,\\
& \int_0^1 \int_{\m S^{N-1}}|\partial_r H(r,\omega)|^2(1-r)^ar^{N-1} dr d\omega
=2\int_0^1 \int_{\m S^{N-1}}|\partial_s V(s,\omega)|^2(1-s)^{\alpha+1}\psi(s)^{N-1}ds d\omega,\\
& \int_0^1 \int_{\m S^{N-1}}|\nabla_\omega H(r,\omega)|^2(1-r)^ar^{N-1} dr d\omega
=\frac 12\int_0^1 \int_{\m S^{N-1}}|\nabla_\omega V(s,\omega)|^2(1-s)^{\alpha}\psi(s)^{N-1}ds d\omega.
\end{align*}
Since
\[
\forall s\in (0,1),\;\;\frac sC\le \psi(s) \le Cs,
\]
we see that \eqref{estV} follows from \eqref{estH}. This concludes the proof of Lemma \ref{lemhs}. 
\end{proof}

%%%%%%%%%%%%%%%%%%%%%%%%%%%%%%%%%%%%%%%%%%%%%%%%%%%%%%%%%%%%%%%%%%%%%%
%%%%%%%%%%%%%%%%%%%%%%%%%%%%%%%%%%%%%%%%%%%%%%%%%%%%%%%%%%%%%%%%%%%%%%%
\section{Some properties of $\kappa^*(d,\nu)$ and the Lyapunov functional $E$}\label{secal}
%%%%%%%%%%%%%%%%%%%%%%%%%%%%%%%%%%%%%%%%%%%%%%%%%%%%%%%%%%%%%%%%%%%%%%
%%%%%%%%%%%%%%%%%%%%%%%%%%%%%%%%%%%%%%%%%%%%%%%%%%%%%%%%%%%%%%%%%%%%%%%
In this section, we give some properties of $\kappa^*(d,\nu)$ defined in \eqref{defk*} and the Lyapunov function $E$ defined in \eqref{defenergy}. In particular, we prove Claim \ref{cltech}.
Let us first recall the following estimate from \cite{MZjfa07} (see Claim 4.3 page 84 in that paper):
%%%%%%%%%%%%%%%%%%%%%%%%%%%%%%%%%%%%%%%%%%%%%%%%%%%%%%%%%%%%%%%%%%%%%%
%%%%%%%%%%%%%%%%%%%%%%%%%%%%%%%%%%%%%%%%%%%%%%%%%%%%%%%%%%%%%%%%%%%%%%%
\begin{cl}[Integral computation table]\label{cltech0}Consider for some $\gamma>-1$ and $\beta\in \m R$ the following integral
\[
I(d)= \int_{-1}^1 \frac{(1-\xi^2)^\gamma}{(1+d \xi)^\beta}dy\mbox{ where }d\in(-1,1).
\]
Then, there exists $K(\gamma, \beta,N)>0$ such that the following limits hold as $|d|\to 1$:\\
(i) if $\gamma+1-\beta>0$, then $I(d)\to K$,\\
(ii) if $\gamma+1-\beta=0$, then $I(d)|\log(1-|d|)|\to K$,\\
(iii) if $\gamma+1-\beta<0$, then $I(d)(1-|d|)^{- (\gamma+1)+\beta}\to K$.
\end{cl}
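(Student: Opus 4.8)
The integral $I(d)$ is of the classical Euler-Beta type near the endpoint where the denominator is small, so the strategy is to localize the behavior of $I(d)$ as $|d|\to 1$ near the singular endpoint $\xi=-\operatorname{sgn}(d)$ (say $d\to 1^-$, the case $d\to -1^+$ being symmetric by $\xi\mapsto -\xi$). Away from that endpoint the integrand is bounded uniformly in $d$, so that part contributes an $O(1)$ term, which is harmless in cases (ii) and (iii) and only perturbs the constant in case (i). The heart of the matter is therefore the contribution of a fixed small neighborhood $[-1,-1+\eta]$ of $\xi=-1$.

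\textbf{Main computation.} On $[-1,-1+\eta]$ write $\xi=-1+t$ with $t\in[0,\eta]$, so $1-\xi^2 = t(2-t)$ and $1+d\xi = (1-d) + d t$. Then
\[
\int_{-1}^{-1+\eta}\frac{(1-\xi^2)^\gamma}{(1+d\xi)^\beta}\,d\xi
= \int_0^\eta \frac{\bigl(t(2-t)\bigr)^\gamma}{\bigl((1-d)+dt\bigr)^\beta}\,dt.
\]
Since $\gamma>-1$, the factor $(2-t)^\gamma$ is bounded above and below on $[0,\eta]$, so up to constants this behaves like $\int_0^\eta t^\gamma\bigl((1-d)+dt\bigr)^{-\beta}\,dt$. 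Now substitute $t=(1-d)u$ (legitimate since $1-d\to 0^+$), giving, with $\epsilon=1-d$,
\[
\epsilon^{\gamma+1-\beta}\int_0^{\eta/\epsilon}\frac{u^\gamma}{(1+du)^\beta}\,du
\sim \epsilon^{\gamma+1-\beta}\int_0^{\eta/\epsilon}\frac{u^\gamma}{(1+u)^\beta}\,du .
\]
The large-$u$ behavior of the integrand is $u^{\gamma-\beta}$, so the tail integral $\int^{\eta/\epsilon}$ converges as $\epsilon\to 0$ precisely when $\gamma-\beta<-1$, i.e. $\gamma+1-\beta<0$; it diverges logarithmically when $\gamma+1-\beta=0$; and it diverges like $(\eta/\epsilon)^{\gamma+1-\beta}$ when $\gamma+1-\beta>0$. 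Matching the three regimes:
\emph{(i)} if $\gamma+1-\beta>0$, the prefactor $\epsilon^{\gamma+1-\beta}$ cancels the divergence and one gets a finite nonzero limit; combined with the $O(1)$ contribution from away from the endpoint, $I(d)\to K$ for some $K>0$ (here it is cleanest to go back to the original integral and note convergence by dominated convergence, since the singularity is integrable).
\emph{(ii)} if $\gamma+1-\beta=0$, then $\epsilon^{\gamma+1-\beta}=1$ and $I(d)\sim \log(1/\epsilon)=|\log(1-|d|)|$ times a constant, so $I(d)|\log(1-|d|)|\to K$.
\emph{(iii)} if $\gamma+1-\beta<0$, the tail integral converges to a positive constant and $I(d)\sim K\,\epsilon^{\gamma+1-\beta}=K(1-|d|)^{-(\gamma+1)+\beta}$.

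\textbf{Expected obstacle.} There is no deep obstacle: the only care needed is bookkeeping the constants and the $O(1)$ ``bulk'' term so as to be sure the limiting constant $K$ is strictly positive (not accidentally zero) in case (i), and checking that the error terms from replacing $(1+du)$ by $(1+u)$ and truncating at $\eta/\epsilon$ are genuinely lower order in each regime. One clean way to package all three cases at once is to write $I(d)=\int_{-1}^1 (1-\xi)^\gamma(1+\xi)^\gamma(1+d\xi)^{-\beta}\,d\xi$ and recognize it, after the affine change of variables sending $[-1,1]$ to $[0,1]$, as a Gauss hypergeometric integral, whose asymptotics near the singular parameter are standard; but the elementary endpoint analysis above is self-contained and suffices. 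Since the statement is quoted verbatim from Claim~4.3 of \cite{MZjfa07}, it is legitimate simply to cite that reference, and the sketch above is the argument behind it.
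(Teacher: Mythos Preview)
The paper does not prove this claim at all: it is introduced with the words ``Let us first recall the following estimate from \cite{MZjfa07} (see Claim 4.3 page 84 in that paper)'' and no argument is given. You correctly note this at the end of your proposal, so in that sense you match the paper exactly; the elementary endpoint analysis you supply is the natural argument behind the cited result and is essentially correct.

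One small inconsistency worth flagging: in case (ii) your own computation yields $I(d)\sim C\,|\log(1-|d|)|$, which means $I(d)\,/\,|\log(1-|d|)|\to K$, not $I(d)\,|\log(1-|d|)|\to K$. The product, as literally written in the statement, would diverge. This is a typo in the statement as reproduced here (and you have simply echoed it); your analysis is right and the intended conclusion is that $I(d)$ behaves like a constant times $|\log(1-|d|)|$.
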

%%%%%%%%%%%%%%%%%%%%%%%%%%%%%%%%%%%%%%%%%%%%%%%%%%%%%%%%%%%%%%%%%%%%%%
%%%%%%%%%%%%%%%%%%%%%%%%%%%%%%%%%%%%%%%%%%%%%%%%%%%%%%%%%%%%%%%%%%%%%%
Then, we give the proof of Lemma \ref{cltech}:
\begin{proof}[Proof of Claim \ref{cltech}] Using the definition \eqref{defk*} of $\kappa^*(d,\nu)$, we see that
\begin{eqnarray*}
\pnu \kappa^*(d,\nu,y) &=& \vc{-\frac{2\kappa_0}{p-1}\frac{(1-|d|^2)^{\frac 1{p-1}}}{(1+\nu+d\cdot y)^{\frac {p+1}{p-1}}}}{-\frac{2\kappa_0}{p-1}\frac{(1-d^2)^{\frac 1{p-1}}}{(1+\nu+d\cdot y)^{\frac {p+1}{p-1}}}+\frac{2(p+1)}{(p-1)^2}\kappa_0 \nu \frac{(1-|d|^2)^{\frac 1{p-1}}}{(1+\nu+d\cdot y)^{\frac {2p}{p-1}}}},\\
\nabla_d \kappa^*(d,\nu,y)&=&\vc{-\frac{2\kappa_0}{p-1}\frac{(1-|d|^2)^{\frac {2-p}{p-1}}}{(1+\nu+d\cdot y)^{\frac 2{p-1}}}d- \frac{2\kappa_0}{p-1}\frac{(1-|d|^2)^{\frac 1{p-1}}}{(1+\nu+d\cdot y)^{\frac {p+1}{p-1}}}y}
{\frac{4\kappa_0\nu}{(p-1)^2}\frac{(1-|d|^2)^{\frac {2-p}{p-1}}}{(1+\nu+d\cdot y)^{\frac {p+1}{p-1}}}d+ \frac{2(p+1)\kappa_0\nu}{(p-1)^2}\frac{(1-|d|^2)^{\frac 1{p-1}}}{(1+\nu+d\cdot y)^{\frac {2p}{p-1}}}y}.
\end{eqnarray*}
Since we see from these expressions and Lemma \ref{eigenld*} that $\partial_\nu \kappa^*(d,\nu,y)$, $\partial_{e_1} \kappa^*(d,\nu,y)=\frac d{|d|} \cdot \nabla_d \kappa^*(d,\nu,y)$, $\frac{W_{0,1}(d,y)}{1-|y|^2}$ and $W_{1,1}(d,y)$ depend only on $|d|$ and the one-dimensional variable $y\cdot e_1 = y \cdot \frac d{|d|}$, it follows by definition \eqref{defpdi} of the projections $\pi_0^{d^*}$ and $\pi_1^{d^*}$ that the computation of the projections in estimate \eqref{old} in Claim \ref{cltech} reduces to the one-dimensional case, already treated in Claim 2.2 in \cite{MZisol10} (note that \eqref{equiv} holds here with $A$ replaced by $B$).
Furthermore, taking $i=2,\dots,N$, by definitions \eqref{defpdi} and \eqref{defphi} of $\pi_i^{d^*}$ and the inner product $\phi$, Lemma \ref{eigenld*} and \eqref{y.nabla}, we see from separation of variables in the integrals that the projections in the first line of \eqref{new} are zero.\\
For that reason, we focus in the remaining part on the projections of $\partial_{e_i} \kappa^*(d,\nu,y)$ for $i=2,\dots,N$.
Since $\partial_{e_i}\kappa^*=e_i \cdot \nabla_d \kappa^*$, using the eigenvalues of $L_d$ introduced in Lemma \ref{l10}, we see after straightforward calculations that
\begin{equation}\label{dejk}
\frac 1{L_2} \partial_{e_i}\kappa^*(d,\nu,y) = -F_i(d^*,y) + \frac{(p+1)\nu}{(p-1)(1+\nu)}\vc{0}
{\frac{(1-|d^*|^2)^{\frac{p+1}{2(p-1)}}}{(1+d^*\cdot y)^{\frac{2p}{p-1}}}y\cdot e_i}
\end{equation}
where 
\begin{equation}\label{deflambda}
L_2 = \frac{2\kappa_0(1-|d|^2)^{\frac 1{p-1}}(1+\nu)^{-\frac {p+1}{p-1}}}{(p-1)(1-|d^*|^2)^{\frac {p+1}{2(p-1)}}} =\frac{2\kappa_0\lambda(1+\nu)^{-1}}{(p-1)\sqrt{1-{|d^*|}^2}},\;\;
\lambda(d,\nu)=\frac{(1-|d|^2)^{\frac 1{p-1}}}{[(1+\nu)^2-|d|^2]^{\frac 1{p-1}}}.
\end{equation}
Using again the definitions \eqref{defpdi} and \eqref{defphi} of $\pi_j^{d^*}$ and the inner product $\phi$ together with the orthogonality relation in Lemma \ref{eigenld*} and separation of variables in the integrals, we also get the left estimate in the second line of estimate \eqref{new}. We are left only with the last estimate to prove.\\
Using 
\eqref{condnuB}
%\eqref{condnu} 
and \eqref{equiv} (which holds here with $A$ replaced by $B$), we see that for some $C^*(B)>0$, we have
\begin{equation}\label{estl2}
\frac 1{C^*} \le \lambda \le C^*
%\mbox{ and }|\nu| \le 2\epsilon_0,
\mbox{ and }-1+\frac 1B\le \nu \le B,
\mbox{ hence }
\frac 1{C^*\sqrt{1-|d|}}\le L_2 \le \frac {C^*}{\sqrt{1-|d|}}.
\end{equation}
Using \eqref{dejk}, the definition \eqref{defpdi} of $\pi_j^{d^*}$, Lemma \ref{eigenld*}, together with the transformation \eqref{defW} and \eqref{yortho}, we write
\begin{align}
\pi_i^{d^*}(\frac 1{L_2} \partial_{e_i}\kappa^*(d,\nu,y))&=-1+\frac{c_0(p+1)\nu}{(p-1)(1+\nu)}(1-|d^*|^2)^{\frac{p+1}{p-1}}\iint \frac{(y\cdot e_i)^2}{(1+d^*\cdot y)^{\frac{3p+1}{p-1}}}\rho(y) dy\nonumber\\
&=c_0\left( -\frac 1{c_0}+\frac{(p+1)\nu}{(p-1)(1+\nu)}\frac {J}{N-1}\right)\label{start}
%&= -1 +c_0\frac{(p+1)\nu}{(p-1)(1+\nu)}\int_{|z|<1}\frac{z_i^2}{1-|d^*|z_1}\rho(z) dz.
\end{align}
%Since $1-|z|\le 1-|z_1| \le 1-|d^*|z_1$, the last estimate in Claim \ref{cltech} follows from \eqref{estl2} and \eqref{condnu}.
where from \eqref{defWl2} and symmetry,
\begin{equation}\label{defIJ}
\frac 1{c_0}=2\alpha\int z_1^2\frac{\rho(z)}{1-|z|^2}dz,\;\;
%\mbox{ and }
J=(N-1)\int_{|z|<1}\frac{z_i^2}{1-|d^*|z_1}\rho(z) dz=
\int_{|z|<1}\frac{|\tilde z|^2}{1-|d^*|z_1}\rho(z) dz
\end{equation}
with $\tilde z=(z_2,\dots,z_N)\in \m R^{N-1}$. Since $0<J\le C$ and $\nu \ge -1+\frac 1B$ from \eqref{condnuB}, we readily see that
\[
\pi_i^{d^*}\left(\frac 1{L_2} \partial_{e_i}\kappa^*(d,\nu,y) \right)\ge -C -CB,
\]
and the lower bound in the last estimate of \eqref{new} follows from \eqref{estl2}. It only remains to get the upper bound.\\ 
Making the change of variables $\tilde z=\zeta\sqrt{1-z_1^2}$ with $\zeta\in \m R^{N-1}$ in \eqref{defIJ}, we see from the definition \eqref{defro} of $\rho(z)$ that
\[
\frac 1{c_0}=2\alpha J_{\alpha-1}\int_{-1}^1 y_1^2 (1-y_1^2)^{\beta -1} dy_1\mbox{ and }J=\bar J\int_{|\zeta|<1}|\zeta|^2(1-|\zeta|^2)^\alpha d\zeta
\]
with
\begin{equation}\label{defJxi}
\beta=\frac 2{p-1},\;\;\bar J=\int_{-1}^1 \frac{(1-y_1^2)^\beta}{1-|d^*|y_1} dy_1\mbox{ and }
J_\sigma = \int_{|\zeta|<1}(1-|\zeta|^2)^\sigma d\zeta.
\end{equation}
Since
\[
\bar J=\frac 12\left(\int_{-1}^1 \frac{(1-y_1^2)^\beta}{1-|d^*|y_1} dy_1
+\int_{-1}^1 \frac{(1-y_1^2)^\beta}{1+|d^*|y_1} dy_1\right)
=\int_{-1}^1 \frac{(1-y_1^2)^\beta}{1-|d^*|^2y_1^2} dy_1\le I_{\beta-1}
\]
where
\[
I_\sigma=\int_{-1}^1 (1-y_1^2)^\sigma dy_1,
\]
and 
\begin{equation}\label{ipp}
\int_{|\zeta|<1}|\zeta|^2(1-|\zeta|^2)^\alpha d\zeta=\frac{\alpha(N-1)(p-1)^2}{4(p+1)}J_{\alpha-1},\;\;
\int_{-1}^1 y_1^2 (1-y_1^2)^{\beta -1} dy_1=\frac{p-1}{p+3}I_{\beta-1}
\end{equation}
(see below for a proof of this fact), we write from \eqref{start}, the above estimates and the fact that $p$ is subconformal (see \eqref{condp}),
\begin{align*}
\pi_i^{d^*}\left(\frac 1{L_2} \partial_{e_i}\kappa^*(d,\nu,y) \right)
\le &c_0\left( -\frac 1{c_0} +\frac{(p+1)}{(p-1)}\frac{I_{\beta-1}}{(N-1)}\int_{|\zeta|<1}|\zeta|^2(1-|\zeta|^2)^\alpha d\zeta\right)\\
=&-1+\frac{p+3}8<\frac{(2-N)}{2(N-1)}\le 0.
\end{align*}
Using \eqref{estl2}, we get the upper bound in the last estimate of \eqref{new}. It remains to prove \eqref{ipp} in order to conclude. 

\medskip

{\it Proof of \eqref{ipp}}: We only prove the first estimate, since the second follows in the same way, and is even easier.
Integrating by parts in the unit ball of $\m R^{N-1}$ and using the definition \eqref{defJxi} of $J_\sigma$, we see that for $\sigma>0$,
\begin{align}
&\int_{|\zeta|<1}|\zeta|^2(1-|\zeta|^2)^{\sigma-1} d\zeta
=-\frac 1{2\sigma}\int_{|\zeta|<1}\zeta \cdot \nabla(1-|\zeta|^2)^{\sigma} d\zeta\nonumber\\
=&\frac {N-1}{2\sigma}\int_{|\zeta|<1}(1-|\zeta|^2)^{\sigma} d\zeta
=\frac {N-1}{2\sigma}I_\sigma,\label{green}
\end{align}
on the one hand. Since we also have on the other hand, $\int_{|\zeta|<1}|\zeta|^2(1-|\zeta|^2)^{\sigma-1} d\zeta=I_{\sigma-1}-I_\sigma$, it follows that
$I_{\sigma -1}=(1+\frac{N-1}{2\sigma})I_\sigma$. Using this identity with $\sigma=\alpha+1$, then, $\sigma=\alpha$, where $\alpha$ is given in \eqref{defro}, we get the result from \eqref{green} applied with $\sigma=\alpha+1$. This concludes the proof of \eqref{ipp} and Claim \ref{cltech} too.
\end{proof}
Now, we give more properties of $\kappa^*(d,\nu)$ in the following:
%%%%%%%%%%%%%%%%%%%%%%%%%%%%%%%%%%%%%%%%%%%%%%%%%%%%%%%%%%%%%%%%%%%%%%
%%%%%%%%%%%%%%%%%%%%%%%%%%%%%%%%%%%%%%%%%%%%%%%%%%%%%%%%%%%%%%%%%%%%%%
\begin{cl}[Some properties of $\kappa^*(d,\nu)$]\label{propk*}$ $\\
(i) For all $B\ge 2$, 
%there exists $\etrois(B)>0$ such that if 
$|d|<1$ and $-1+\frac 1B \le \frac{\nu}{1-|d|}\le B$,
% $|\nu|\le \etrois(1-|d|)$, 
we have
\begin{align}
\kappa^*_1(d,\nu,y)=\lambda(d,\nu)\kappa(d^*,y) \le C(B)\kappa(d^*,y)&\le \frac{C(B)}{(1-|y|^2)^{\frac 1{p-1}}},\label{kp2}\\
\ds\left\|\kappa^*\left(d,\nu\right)\right\|_{\H}
\le C\lambda\left(1+1_{\{\nu<0\}}\frac{|\nu|}{\sqrt{1-|d|^2}}\lambda^{\frac{p-1}2}\right)&\le C(B),\label{normk*}\\
% \le C\lambda &\le C(B),\label{normk*}\\
%  \le C\lambda +C1_{\{\nu<0\}}&\frac{|\nu|}{\sqrt{1-d^2}}\lambda^{\frac{p+1}2},\label{normk*}\\
\mbox{for }i=2,\dots,N,\;\;|\partial_{e_i} \kappa^*_1(d,\nu,y)|\le C(B)\frac{|y\cdot e_i|}{1+d^*\cdot y}\kappa(d^*,y)& \le \frac {C(B)\kappa(d^*,y)}{\sqrt{1-|d|}},\label{peik*}\\
\|\partial_\nu \kappa^*(d,\nu)\|_{\q H}+\|\partial_{e_1} \kappa^*(d,\nu)\|_{\q H}&\le \frac {C(B)}{1-|d|},\label{normdnk}\\
\|\partial_{e_i} \kappa^*(d,\nu)\|_{\q H}&\le \frac {C(B)}{\sqrt{1-|d|}},\label{normd2k}
\end{align}
where $d^*=\frac d{1+\nu}$ and $\lambda(d,\nu)$ is introduced in \eqref{deflambda}.\\
(ii) For all $|d|<1$ and $\nu >-1+|d|$, we have
\[
E(\kappa^*(d,\nu))=\frac{E(\kappa_0)}{p-1}\lambda^2\left(p+1+2\lambda^{p-1}\left(\frac{\nu^2}{(1-|d|^2)}-1\right)\right)
\]
where $\lambda$ is defined in \eqref{deflambda}.\\
(iii)  We have $\sup_{\{|d|<1,\;\frac\nu{1-|d|}=\sigma\}}E(\kappa^*(d,\nu))\to -\infty$ as $\sigma\searrow -1$.\\  
% (iiibis)  We have $\inf_{\{|d|<1,\;\frac\nu{1-|d|}=\sigma\}}\|\kappa^*(d,\nu)\|_{\q H}\to \infty$ as $\sigma\searrow -1$.\\  
(iv) We have $\sup_{\{|d|<1,\;\frac\nu{1-|d|}=\sigma\}}\|\kappa^*(d,\nu)\|_{\q H}\to 0$ as $\sigma\to \infty$.\\
(v) For all $B\ge 2$, there exists $\equatre(B)>0$ such that if $|d_1| <1$, $|d_2|<1$, 
\begin{equation}\label{prox}
-1+\frac 1B \le \frac{\nu_i}{1-|d_i|}\le B\mbox{ for }i=1,\;2,
\mbox{ and }|l_{d_1}(d_1-d_2)|+|d_1-d_2|^2\le \equatre(1-|d_1|),
\end{equation}
then, it follows that for some $C^*=C^*(B)$, 
\begin{align*}
\|\kappa^*(d_1,\nu_1) - \kappa^*(d_2, \nu_2)\|_{\q H}
\le& C^*\left|\frac{\nu_1}{1-|d_1|}-\frac{\nu_2}{1-|d_2|}\right|
+C^*\left|\arg \tanh |d_1|-\arg \tanh |d_2|\right|\\
&+C^* \frac{|l_{d_1}(d_2 -d_1)|}{1-|d_1|}+C^*\frac{|d_2-d_1|}{\sqrt{1-|d_1|}}.
\end{align*}
\end{cl}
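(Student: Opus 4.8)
**Proof of Claim \ref{propk*} (v): the plan.**

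The plan is to reduce everything to Lipschitz estimates on the map $(d,\nu)\mapsto \kappa^*(d,\nu)$ in the $\H$ norm, using the chain rule and the gradient bounds \eqref{normdnk}, \eqref{normd2k} already established in item (i). First I would introduce the substitution parameters that appear on the right-hand side: set $\zeta_i=\arg\tanh|d_i|$ and $\sigma_i=\frac{\nu_i}{1-|d_i|}$ for $i=1,2$, and write $d_i=(\tanh\zeta_i)\,\omega_i$ with $\omega_i=\frac{d_i}{|d_i|}\in S^{N-1}$ (handling $d_i=0$ separately, where the estimate is easy since then $l_{d_1}\equiv 0$). The point of this change of variables is that, along the curve $s\mapsto \kappa^*(d(s),\nu(s))$ obtained by linearly interpolating $(\zeta,\sigma,\omega)$, the derivative $\frac{d}{ds}\kappa^*$ is a linear combination of $\partial_\nu\kappa^*$, $\partial_{e_1}\kappa^*$ and $\partial_{e_j}\kappa^*$ ($j\ge 2$) with coefficients controlled respectively by $|\dot\sigma|\cdot(1-|d|)$, $|\dot\zeta|\cdot(1-|d|^2)$ (since $\partial_\zeta=(1-|d|^2)\partial_{e_1}$, exactly as noted after \eqref{equiv0}), and $|\dot\omega|\cdot|d|$. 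Multiplying by the bounds $\|\partial_\nu\kappa^*\|_\H+\|\partial_{e_1}\kappa^*\|_\H\le \frac{C(B)}{1-|d|}$ and $\|\partial_{e_j}\kappa^*\|_\H\le\frac{C(B)}{\sqrt{1-|d|}}$ from \eqref{normdnk}–\eqref{normd2k} makes the $(1-|d|)$-factors cancel in the first two terms and leaves a $\frac{|d|}{\sqrt{1-|d|}}|\dot\omega|$ term for the angular part.

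The second step is to integrate $\frac{d}{ds}\kappa^*$ over $s\in[0,1]$ and bound each contribution. The $\sigma$- and $\zeta$-contributions give directly $C^*|\sigma_1-\sigma_2|$ and $C^*|\zeta_1-\zeta_2|$, i.e. the first two terms on the right-hand side of the claim, provided the condition \eqref{prox} guarantees that the whole segment stays in a region where $-1+\frac{1}{B'}\le\frac{\nu(s)}{1-|d(s)|}\le B'$ and $\frac{1-|d(s)|}{1-|d_1|}$ is bounded above and below; this is where I would use the smallness of $\epsilon_{20}$. For the angular part I need $\frac{|d|\,|\omega_1-\omega_2|}{\sqrt{1-|d_1|}}\le C^*\frac{|d_1-d_2|}{\sqrt{1-|d_1|}}$, which follows because $|d|\,|\omega_1-\omega_2|\le C(|l_{d_1}(d_1-d_2)|+|d_1-d_2|)$ near $\omega_1$ when $|d_1|$ is bounded away from $0$ — the angular displacement is essentially the orthogonal-to-$d_1$ component of $d_1-d_2$ up to higher order — and this is dominated by $|d_1-d_2|$ anyway; combined with the crude estimate it even recovers the sharper mixed form with $|l_{d_1}(d_2-d_1)|$. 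Here I would lean on Lemma \ref{lembase} ($|\nabla e_i(d)|\le C/|d|$) and elementary spherical geometry, exactly the kind of computation done around \eqref{proche}–\eqref{radial} in the proof of Lemma \ref{lemmod}.

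The main obstacle I expect is bookkeeping the uniformity in $d$ as $|d|\to 1$: one must check that along the interpolating segment $\sqrt{1-|d(s)|}$ and $1-|d(s)|$ do not degrade the constant, which requires that the linear interpolation in $\zeta$ (not in $d$ directly!) keeps $1-|d(s)|$ comparable to $1-|d_1|$, and that $\lambda(d(s),\nu(s))$ stays in a compact subset of $(0,\infty)$; both are consequences of \eqref{prox} with $\epsilon_{20}(B)$ small, via \eqref{estl2}. A secondary subtlety is the case $|d_1|$ small or $d_1=0$, where $l_{d_1}$ degenerates and the basis $e_i(d)$ is singular; there one simply uses $|d_1-d_2|\le |d_1|+|d_2|\le$ small, the bound $\|\nabla_{d,\nu}\kappa^*\|_\H\le C(B)$ (uniform near $d=0$, since the singularities in \eqref{normd2k} are $(1-|d|)^{-1/2}$, harmless there), and the one-dimensional-type estimates, closing the argument. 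Once these uniformities are in place, the triangle inequality over the three segments (moving $\sigma$, then $\zeta$, then $\omega$) yields the stated bound.
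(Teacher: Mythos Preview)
Your approach is correct and genuinely different from the paper's. The paper does not integrate the gradient bounds from item (i); instead it proceeds by reduction in three steps. In Step~1 ($\nu_1=\nu_2=0$) it uses the Lorentz transform $\q T_d$ (Lemma~\ref{0lemeffect}(i),(iv)) to write $\|\kappa(d_1)-\kappa(d_2)\|_{\H_0}\le C|\theta_{-d_1}(d_2)|$ and then reads off the radial and orthogonal components of $\theta_{-d_1}(d_2)$ from \eqref{yortho}. In Step~2 ($d_2=0$) it observes that $\kappa^*(d_1,\nu_1)-\kappa^*(0,\nu_2)$ depends only on the one scalar variable $y\cdot d_1/|d_1|$, so via \eqref{defnhz1} the estimate collapses to the one-dimensional result of \cite{MZisol10}. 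In Step~3 it inserts the intermediate point $d_3=\frac{|d_1|}{|d_2|}d_2$ (collinear with $d_2$, norm $|d_1|$), applies the triangle inequality, and controls $\|\kappa^*(d_3,\nu_1)-\kappa^*(d_2,\nu_2)\|$ by Step~2 and $\|\kappa^*(d_1,\nu_1)-\kappa^*(d_3,\nu_1)\|$ by Step~1 plus a direct $L^2_\rho$ computation on the $\nu$-component.

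Your route---reparametrize by $(\zeta,\sigma,\omega)$ and integrate \eqref{normdnk}--\eqref{normd2k} along a three-segment path---is more elementary and self-contained: it bypasses both the Lorentz machinery and the one-dimensional reduction, and (once the path is shown to stay in the admissible region, which you correctly identify as the place where \eqref{prox} and the smallness of $\epsilon_{20}$ enter) it yields the bound with only three of the four right-hand terms, hence a slightly stronger statement. The paper's decomposition, on the other hand, makes the radial/angular splitting structural rather than computational and recycles tools ($\q T_d$, the 1D case) that are central elsewhere; this is why it produces the separate $|l_{d_1}(d_2-d_1)|/(1-|d_1|)$ term explicitly. Your handling of small $|d_1|$ as a separate easy case is also adequate, since there the derivative bounds of item~(i) are uniformly $O(1)$.
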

%%%%%%%%%%%%%%%%%%%%%%%%%%%%%%%%%%%%%%%%%%%%%%%%%%%%%%%%%%%%%%%%%%%%%%
%%%%%%%%%%%%%%%%%%%%%%%%%%%%%%%%%%%%%%%%%%%%%%%%%%%%%%%%%%%%%%%%%%%%%%
\begin{nb} We recall from the paragraph right before \eqref{yortho} that when $d_1=0$, $l_{d_1}(d_2)$ and $\bot_{d_1}(d_2)$ are respectively the component of $d_2$ along $d_1$ and orthogonal to $d_1$, and when $d_1=0$,  $l_0(d_2)=0$ and $\bot_0(d_2)=d_2$. 
\end{nb}
\begin{proof} (i) Consider $B\ge 2$. Note that \eqref{estl2} holds and so does \eqref{equiv}, with $A$ replaced by $B$.\\
{\it - Proof of \eqref{kp2}}: This directly follows from the definitions \eqref{defkd} and \eqref{defk*} of $\kappa(d^*,y)$ and $\kappa^*_1(d,\nu)$ together with \eqref{kp1}.\\
{\it - Proof of \eqref{normk*}}: From rotation invariance of the expression of the norm \eqref{defnh} and identity \eqref{defnhz1}, we reduce to the case where $d=(|d|,0,\dots,0)$, hence to one-space dimension, 
and the result follows from Lemma A.2 page 2878 in \cite{MZisol10}.\\
% already treated in Lemma A.2 page 2878 in \cite{MZisol10}. From that result, we know that
% \[
% \ds\left\|\kappa^*\left(d,\nu\right)\right\|_{\H}\le C\lambda\left(1+1_{\{\nu<0\}}\frac{|\nu|}{\sqrt{1-|d|^2}}\lambda^{\frac{p-1}2}\right).
% \]
% Since we know by definition \eqref{deflambda} of $\lambda$ that 
% \[
% \frac{\nu^2\lambda^{p-1}}{1-|d|^22}=\frac{\nu^2}{(1+\nu)^2-|d|^2}
% \]
 {\it - Proof of \eqref{peik*}}: This follows from the expressions \eqref{dejk} and \eqref{deffid} of $\partial_{e_i} \kappa^*_1(d,\nu)$ and $F_i(d^*)$.\\
{\it - Proof of \eqref{normdnk}}: Let us recall from the proof of Claim 3.2 given in page 2852 of \cite{MZisol10} that
\[
\|\partial_\nu \kappa^*(d,\nu)\|_{\q H} +\|\partial_{e_1} \kappa^*(d,\nu)\|_{\q H}
\le \frac C{1-|d|} \left(\|F_1(d^*)\|_{\q H}+\|F_0(d^*)\|_{\q H} + |\nu|\gamma(d^*)\right)
\]
where $\gamma(d^*) = \left\|\frac{(1-|d^*|^2)^{\frac p{p-1}}}{(1+d^*\cdot y)^{\frac{2p}{p-1}}}\right\|_{L^2_\rho}\le C(1-|d|)^{-1/2}$ from Claim \ref{cltech0}. Since $|\nu|\le A(1-|d|)$, using item (ii) in Lemma \ref{l10}, we get the result.\\
{\it - Proof of \eqref{normd2k}}: Consider $i=2,\dots,N$. From the expression \eqref{dejk} of $\partial_{e_i} \kappa^*$ and \eqref{estl2}, we see that
\[
\|\partial_{e_i} \kappa^*(d,\nu)\|_{\q H}
\le \frac {C(B)}{\sqrt{1-|d|}} \left(\|F_i(d^*)\|_{\q H}+ |\nu|\gamma_i(d^*)\right)
\]
where $\gamma_i(d^*)= \left\|\frac{(1-|d^*|^2)^{\frac {p+1}{2(p-1)}}}{(1+d^*\cdot y)^{\frac{2p}{p-1}}}y\cdot e_i\right\|_{L^2_\rho}\le C(1-|d|)^{-1/2}$ from Claim \ref{cltech0}. We then conclude as for \eqref{normdnk}. 
%This concludes the proof of Claim \ref{propk*}.

\medskip

\noindent (ii) Consider $|d|<1$ and $\nu >-1+|d|$. From rotation invariance of the expression of $E$ \eqref{defenergy}, we reduce to the case where $d=(|d|,0,\dots,0)$. In this case $\kappa^*(d,\nu,y)$ depends only on $y_1$, and we see that the $N$-dimensional expression of $E$ is equal to the one-dimensional expression, up to a multiplying constant depending only on $N$. Thus, we reduce to the one-dimensional case, which is given in Lemma A.2 page 2878 in \cite{MZisol10}.

\medskip

\noindent (iii) From straightforward computations, we see that when $\nu=(-1+\epsilon)(1-|d|)$ where $\epsilon \in (0,1)$, we have 
\[
\lambda^{p-1} =\frac{1+|d|}{\epsilon(2|d|+\epsilon(1-|d|))}\ge \frac 1\epsilon
\mbox{ and }\frac{\nu^2}{1-|d|^2} -1 = \frac{-2|d|-(2\epsilon -\epsilon^2)(1-|d|)}{1+|d|}\le -\frac{\lambda^{1-p}}\epsilon,
\]
hence, from item (ii), we see that
\[
E(\kappa^*(d,\nu) \le \frac{E(\kappa_0)}{p-1}\epsilon^{-\frac 2{p-1}}\left(p+1-\frac 2\epsilon\right)\to -\infty\mbox{ as }\epsilon\searrow 0.
\]

\medskip

\noindent (iv) Consider $\nu=\sigma(1-|d|)$. Since we have by definition \eqref{deflambda},
\[
\lambda^{p-1}=\frac{1-|d|^2}{(1+\nu)^2-|d|^2}=\frac{1-|d|}{1+\nu -|d|}\cdot \frac{1+|d|}{1+\nu +|d|}\le\frac 1{1+\sigma},
\]
and $\nu\ge 0$, the conclusion follows from \eqref{normk*}.

\medskip

\noindent (v) We proceed in three steps.

\medskip

{\bf Step 1: The case where $\nu_1=\nu_2=0$}. 

Using the definitions \eqref{defk*} and \eqref{defkd} of $\kappa^*(d,\nu)$ and $\kappa(d)$, together with items (i) and (iv) in Lemma \ref{0lemeffect}, then using the local continuity near $0$, we write
\[
\|\kappa^*(d_1,\nu_1)-\kappa^*(d_2,\nu_2)\|_{\q H} =
\|\kappa(d_1)-\kappa(d_2)\|_{\q H_0}
\le C\|\kappa_0-\kappa(\theta_{-d_1}(d_2))\|_{\q H_0}\le C|\theta_{-d_1}(d_2)|
\]
where $\theta_{-d_1}(d_2)$ is defined in \eqref{deftd}.
Decomposing $\theta_{-d_2}(d_1)$ as in the paragraph right before \eqref{yortho}, we see that
\begin{align*}
l_{d_1}(\theta_{-d_1}(d_2))&=\frac{l_{d_1}(d_2)-|d_1|}{1-d_1\cdot d_2}=
\frac{l_{d_1}(d_2 -d_1)}{1-d_1\cdot d_2},\\
\bot_{d_1}(\theta_{-d_1}(d_2))&=\frac{\sqrt{1-|d_1|^2}}{1-d_1\cdot d_2}\bot_{d_1}(d_2)= \frac{\sqrt{1-|d_1|^2}}{1-d_1\cdot d_2}\bot_{d_1}(d_2-d_1).
\end{align*}
Since we see from \eqref{prox} that
\begin{equation}\label{dalida}
1-d_1\cdot d_2 =1-|d_1|^2-|d_1|l_{d_1}(d_2-d_1)\ge (1-|d_1|^2)(1-\equatre),
\end{equation}
we get the desired estimate.

\medskip

{\bf Step 2: The case where $d_2=0$}

In this case, we see that the function $y\mapsto\kappa^*(d_1,\nu_1,y)-\kappa^*(0,\nu_2,y)$ is either independent from $y$ (if $d_1=0$), or a function of the one-dimensional variable $y\cdot \frac {d_1}{|d_1|}$ (if $d_1\neq 0$). Using \eqref{defnhz1}, we see that we reduce to the one-dimensional case, already treated in item (ii) of Lemma A.2 page 2878 in \cite{MZisol10}.

\medskip

{\bf Step 3: The case where $d_2\neq 0$}

In this case, we introduce 
\[
d_3 = \frac{|d_1|}{|d_2|}d_2\mbox{ and }\nu_3 = \nu_1
\]
(in other words, $d_3$ is collinear to $d_2$ with norm $|d_1|$). 
By a triangular inequality, we see that
\begin{equation}\label{triangle}
\|\kappa^*(d_1,\nu_1) - \kappa^*(d_2, \nu_2)\|_{\q H} 
\le \|\kappa^*(d_1,\nu_1) - \kappa^*(d_3, \nu_1)\|_{\q H} 
+\|\kappa^*(d_3, \nu_1) - \kappa^*(d_2, \nu_2)\|_{\q H}.
\end{equation}
Concerning the second norm on the right-hand side of this inequality, noting that the function $y\mapsto\kappa^*(d_3,\nu_1,y)-\kappa^*(d_2,\nu_2,y)$ is 
% either independent from $y$ (if $d_2=0$), or 
a function of the one-dimensional variable $y\cdot \frac {d_2}{|d_2|}$,
% (if $d_2\neq 0$), 
we proceed as in Step 2, and recalling that $|d_3|=|d_1|$, we write 
\begin{equation}\label{oned}
\|\kappa^*(d_3,\nu_1) - \kappa^*(d_2, \nu_2)\|_{\q H} 
\le C^*\left|\frac {\nu_1}{1-|d_1|} - \frac {\nu_2}{1-|d_2|}\right|
+C^*|\arg\tanh |d_1| -\arg\tanh|d_2||
\end{equation}
(here and in the following, $C^*=C^*(B)>0$).\\
We now handle the first norm in the right-hand side of \eqref{triangle}. Introducing $d_i^*=\frac {d_i}{1+\nu_i}$, we see that for $\etrois$ small enough, we have
\begin{equation*}%\label{dcdb}
|d_3|=|d_1|,\;\;\nu_3=\nu_1,\;\;|d_3^*|=|d_1^*|\mbox{ and }d_3^*-d_1^*=\frac{d_3-d_1}{1+\nu_1}\mbox{ with }\frac 1B\le 1+\nu_1\le B+1.
\end{equation*}
(this identity will be frequently used in the following, together with \eqref{equiv} and \eqref{estl2} which hold here with $A$ replaced by $B$). Using the definitions \eqref{defk*} and \eqref{defkd} of $\kappa^*$ and $\kappa$, we write
\begin{equation}\label{ch1}
\|\kappa^*(d_1,\nu_1) - \kappa^*(d_3, \nu_3)\|_{\q H}
\le C^*\|\kappa(d_1^*,y)-\kappa(d_3^*,y)\|_{\q H_0}+C^* J^{1/2}
\end{equation}
where
\[
J =\nu_1^2(1-|d_1|)^{\frac 2{p-1}}\iint \left|\frac 1{(1+d_3^*\cdot y)^{\frac{p+1}{p-1}}}-\frac 1{(1+d_1^*\cdot y)^{\frac{p+1}{p-1}}}\right|^2 \rho dy.
\]
Using the result of Step 1, we see that 
\begin{align}
\|\kappa(d_3^*,y)-\kappa(\hat d_1^*,y)\|_{\q H_0}
&\le C \frac{|l_{d_1^*}(d_3^* -d_1^*)|}{1-|d_1^*|}+C\frac{|\bot_{d_1^*}(d_3^*-d_1^*)|}{\sqrt{1-|d_1^*|}},\nonumber\\
&\le C^* \frac{|l_{d_1}(d_3 -d_1)|}{1-|d_1|}+C^*\frac{|\bot_{d_1}(d_3-d_1)|}{\sqrt{1-|d_1|}}.\label{ch1.5}
\end{align}
Since $|X^{-\frac{p+1}{p-1}} - Y^{-\frac{p+1}{p-1}}|\le \frac{p+1}{p-1}|(X^{-\frac{2p}{p-1}}+Y^{-\frac{2p}{p-1}}|$ 
%$|X^{-\alpha} - Y^{-\alpha}|\le |\alpha|(X^{-\alpha-1}+Y^{-\alpha-1}|$ 
for any $X>0$ and $Y>0$,
% and $\alpha=\frac{p+1}{p-1}$, 
using the calculation rule of Claim \ref{cltech0} and \eqref{prox}, we write 
\begin{equation}\label{ch2}
J\le C \nu_1^2|d_1-d_3|^2\int_{|z|<1} \frac{(1-|d_1^*|)^{\frac 2{p-1}}\rho(z)}{(1+|d_1^*| z_1)^{\frac{4p}{p-1}}}dz\le C \nu_1^2\frac{|d_1-d_3|^2}{(1-|d_1^*|)^3}
\le C^* \frac{|d_1-d_3|^2}{1-|d_1|}.
\end{equation}
Since $|d_3-d_2|=||d_1|-|d_2||$, we see that
\begin{equation}\label{ch4}
|l_{d_1}(d_3 -d_1)|\le |l_{d_1}(d_2 -d_1)|+||d_1|-|d_2||\mbox{ and }
|\bot_{d_1}(d_3-d_1)|\le |d_1-d_2|,
\end{equation}
Using \eqref{prox} and arguing as for \eqref{dalida}, we see that
\begin{equation}\label{lip}
\frac{||d_2|-|d_1||}{1-|d_1|}\le C|\arg\tanh |d_1| -\arg\tanh|d_2||.
\end{equation}
Using \eqref{triangle}-\eqref{lip}, we get the conclusion of item (v) in Claim \ref{propk*}.
\end{proof}

\bigskip

In the following, we recall from \cite{MZajm11}, \cite{MZjfa07} and \cite{MZcmp08} some properties of the Lyapunov functional $E$ defined in \eqref{defenergy}.
%%%%%%%%%%%%%%%%%%%%%%%%%%%%%%%%%%%%%%%%%%%%%%
%%%%%%%%%%%%%%%%%%%%%%%%%%%%%%%%%%%%%%%%%%%%%%
\begin{lem}[Properties of the functional $E$ \eqref{defenergy}]\label{lemE}$ $\\
(i) For all $r$ and $v$ in $\q H$, we have
\[
|E(r) - E(v)|\le C(1+\|r\|_{\q H}^p+\|v\|_{\q H}^p)\|r-v\|_{\q H}.
\]
(ii) Consider $w$ a solution of equation \eqref{eqw} such that $E(w(s_0))<0$. Then, $w(y,s)$ cannot be defined for all $(y,s) \in B(0,1) \times [s_0, \infty)$.
% (ii) If $u$ is a blow-up solution of equation \eqref{equ} with blow-up graph $t=T(x)$, then, for all $x_0\in \m R^N$ and $s\ge -\log T(x_0)$, we have $E(w_{x_0}(s))\ge 0$, where $w_{x_0}$ is defined in \eqref{defw}.
\end{lem}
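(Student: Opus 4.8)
\textbf{Proof proposal for Lemma \ref{lemE}.}

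The plan is to prove the two items separately, following the classical Giga--Kohn / Merle--Zaag strategy already used in our one-dimensional papers. For item (i), I would expand the difference $E(r) - E(v)$ directly from the definition \eqref{defenergy}. The quadratic part $\frac12\iint(r_2^2 + |\nabla r_1|^2 - (y\cdot\nabla r_1)^2 + \frac{2(p+1)}{(p-1)^2}r_1^2)\rho\,dy$ is, up to the constant $\frac12$, precisely $\|\cdot\|_{\q H}^2$, so its difference is controlled by $C(\|r\|_{\q H} + \|v\|_{\q H})\|r-v\|_{\q H}$ via the elementary identity $\|a\|^2 - \|b\|^2 = \langle a+b, a-b\rangle$ and Cauchy--Schwarz. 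The only nontrivial term is the superquadratic one, $\frac1{p+1}\iint(|r_1|^{p+1} - |v_1|^{p+1})\rho\,dy$. Here I would use the pointwise bound $\big||a|^{p+1} - |b|^{p+1}\big| \le C(|a|^p + |b|^p)|a-b|$, then H\"older's inequality with exponents $\frac{p+1}{p}$ and $p+1$ to get $\iint(|r_1|^p + |v_1|^p)|r_1-v_1|\rho\,dy \le C(\|r_1\|_{L^{p+1}_\rho}^p + \|v_1\|_{L^{p+1}_\rho}^p)\|r_1-v_1\|_{L^{p+1}_\rho}$, and finally the Hardy--Sobolev inequality of Lemma \ref{lemhs} to bound each $L^{p+1}_\rho$ norm by the corresponding $\q H_0$ norm, hence by the $\q H$ norm. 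Collecting the quadratic and superquadratic estimates yields item (i).

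For item (ii), I would argue by contradiction: suppose $w$ is defined on $B(0,1)\times[s_0,\infty)$. Recall from \eqref{dE} (or from \cite{MZajm03}) that $E(w(s))$ is nonincreasing, so $E(w(s)) \le E(w(s_0)) \equiv -c_0 < 0$ for all $s \ge s_0$. The standard mechanism is to track the concavity of a suitable weighted $L^2$-type quantity. Set $M(s) = \iint w(y,s)^2 \rho\,dy$ (or a shifted variant adapted to the $-2y\cdot\nabla\partial_s w$ drift term). Differentiating twice in $s$ using equation \eqref{eqw}, integrating by parts, and substituting the energy identity, one obtains a differential inequality of the form $M''(s) \ge c\,|E(w(s))| \ge c\,c_0 > 0$ plus lower-order terms that can be absorbed — more precisely, after the computation one reaches $M'' \ge (1+\eta)\frac{(M')^2}{M}$ type behavior or directly $M'' \ge c > 0$ together with a sign control forcing $M$ to grow without the solution staying in $\q H$; this is exactly the Levine-type concavity argument. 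Since $w(s) \in \q H$ for all $s$ would keep $M(s)$ bounded on compact intervals yet the inequality forces $M$ (or $M'$) to blow up in finite $s$, we reach a contradiction.

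The main obstacle is item (ii): the drift terms $-\frac{p+3}{p-1}\partial_s w$ and $-2y\cdot\nabla\partial_s w$ in equation \eqref{eqw} make the naive second derivative of $\iint w^2\rho$ messy, and one must choose the multiplier and the auxiliary functional carefully (as in \cite{MZajm11} and \cite{MZcmp08}) so that the bad cross terms are either nonnegative or dominated by $\iint(\partial_s w)^2\frac{\rho}{1-|y|^2}$, which in turn is controlled by $-\frac{d}{ds}E$. Since the full details of this computation are carried out in our earlier work, I would state the differential inequality, indicate the choice of functional, and refer to \cite{MZajm11}, \cite{MZjfa07} and \cite{MZcmp08} for the routine but lengthy verification, exactly as the excerpt's phrasing ("we recall from \cite{MZajm11}, \cite{MZjfa07} and \cite{MZcmp08}") suggests.
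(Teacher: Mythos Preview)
Your proposal is correct and matches the paper's approach. For item (i), the paper says exactly what you do: it is a direct consequence of the definitions of $E$ and $\|\cdot\|_{\q H}$ together with the Hardy--Sobolev inequality of Lemma \ref{lemhs}; you have simply spelled out the H\"older step explicitly. For item (ii), the paper does not reproduce the argument at all but refers to Theorem 2 in Antonini--Merle \cite{AMimrn01}, which is precisely the Levine-type concavity computation in similarity variables that you sketch; note that the correct reference is \cite{AMimrn01}, not \cite{MZajm11}, \cite{MZjfa07}, \cite{MZcmp08} (the sentence ``we recall from \ldots'' preceding the lemma is a general header for the whole section, not the attribution for (ii)).
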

%%%%%%%%%%%%%%%%%%%%%%%%%%%%%%%%%%%%%%%%%%%%%%
%%%%%%%%%%%%%%%%%%%%%%%%%%%%%%%%%%%%%%%%%%%%%%
\begin{proof}$ $\\
(i) As for Claim B.1 page 622 in \cite{MZajm11}, this is a direct consequence of the definitions \eqref{defenergy} and \eqref{defnh} of $E(v)$ and $\|v\|_{\q H}$, together with the Hardy-Sobolev identity of Lemma \ref{lemhs}.\\
(ii) See Theorem 2 page 1147 in Antonini and Merle \cite{AMimrn01}.
%(ii) See Appendix A page 119 in \cite{MZjfa07}.
\end{proof}
%\bibliography{../ref}
\def\cprime{$'$} \def\cprime{$'$}
\providecommand{\bysame}{\leavevmode\hbox to3em{\hrulefill}\thinspace}
\providecommand{\MR}{\relax\ifhmode\unskip\space\fi MR }
% \MRhref is called by the amsart/book/proc definition of \MR.
\providecommand{\MRhref}[2]{%
  \href{http://www.ams.org/mathscinet-getitem?mr=#1}{#2}
}
\providecommand{\href}[2]{#2}

%%%%%%%%%%%%%%%%%%%%%%%%%%%%%%%%%%%%%%%%%%%%%%
%%%%%%%%%%%%%%%%%%%%%%%%%%%%%%%%%%%%%%%%%%%%%%
\noindent{\bf Address}:\\
Universit\'e de Cergy Pontoise, D\'epartement de math\'ematiques, 
2 avenue Adolphe Chauvin, BP 222, F-95302 Cergy Pontoise cedex, France.\\
\vspace{-7mm}
\begin{verbatim}
e-mail: merle@math.u-cergy.fr
\end{verbatim}
Universit\'e Paris 13, Sorbonne Paris Cit\'e, LAGA, CNRS (UMR 7539), 
99 avenue J.B. Cl\'ement, F-93430 Villetaneuse, France.\\
\vspace{-7mm}
\begin{verbatim}
e-mail: Hatem.Zaag@univ-paris13.fr
\end{verbatim}
\end{document}